\documentclass[11pt,reqno]{amsart}
\usepackage[utf8]{inputenc}
\usepackage[T1]{fontenc}
\usepackage{amsmath,amssymb,amsthm,mathtools}
\usepackage[margin=2.3cm]{geometry}
\usepackage{longtable,booktabs,array}
\usepackage{enumitem}
\usepackage[colorlinks=true,linkcolor=blue,citecolor=blue,urlcolor=blue]{hyperref}

\numberwithin{equation}{section}
\newtheorem{theorem}{Theorem}[section]
\newtheorem{proposition}[theorem]{Proposition}
\newtheorem{lemma}[theorem]{Lemma}
\newtheorem{corollary}[theorem]{Corollary}
\newtheorem*{theoremA}{Theorem A}
\theoremstyle{definition}
\newtheorem{definition}[theorem]{Definition}

\theoremstyle{remark}
\newtheorem{remark}[theorem]{Remark}

\newcommand{\C}{\mathbb{C}}
\newcommand{\Ann}{\operatorname{Ann}}
\newcommand{\Aut}{\operatorname{Aut}}
\newcommand{\Orb}{\operatorname{Orb}}
\newcommand{\Hom}{\operatorname{Hom}}
\newcommand{\rank}{\operatorname{rank}}
\newcommand{\Z}{\mathrm{Z}^{2}}
\newcommand{\B}{\mathrm{B}^{2}}
\newcommand{\HH}{\mathrm{H}^{2}}
\newcommand{\T}{\mathbf{T}}
\newcommand{\E}{\mathbf{E}}
\newcommand{\RR}[2]{\mathcal{R}^{#1}_{#2}}
\newcommand{\M}[1]{\mathcal{M}_{#1}}
\newcommand{\A}[1]{\mathcal{A}_{#1}}
\newcommand{\D}[1]{\Delta_{#1}}
\newcommand{\N}[1]{\nabla_{#1}}
\newcommand{\sspan}[1]{\langle #1\rangle}

\allowdisplaybreaks
\title[Five-dimensional nilpotent right alternative algebras]{The algebraic classification of five-dimensional nilpotent right alternative algebras}

\author{Zohida Raxmatova}
\address{New Uzbekistan University, Tashkent, Uzbekistan}
\email{rahmatovazohida04@gmail.com}

\author{Aloberdi Sattarov}
\address{Turan International University, Namangan, Uzbekistan}
\email{saloberdi90@mail.ru}

\subjclass[2020]{17A30, 17D15, 17A60}
\keywords{right alternative algebra, nilpotent algebra, central extension, algebraic classification, second cohomology}

\begin{document}

\begin{abstract}
We develop the method of central extensions (the Skjelbred--Sund method) for nilpotent right alternative algebras over the field of complex numbers and apply it to obtain the algebraic classification of five-dimensional nilpotent right alternative algebras.
More precisely, we classify, up to isomorphism, all complex five-dimensional nilpotent right alternative algebras that have no annihilator component (that is, which are not a direct sum of a smaller algebra and a one-dimensional algebra with zero product) and which are not $2$-step nilpotent.
Every such algebra is a non-split central extension of a nontrivial nilpotent right alternative algebra of dimension three (by a two-dimensional space) or of dimension four (by a one-dimensional space).
For each of the relevant three- and four-dimensional algebras we compute the second cohomology space, the automorphism group and its action on the second cohomology, and we determine all orbits that give non-split extensions.
The resulting list consists of $124$ algebras and families of algebras; it contains $29$ algebras with two-dimensional annihilator and $95$ algebras with one-dimensional annihilator.
\end{abstract}

\maketitle

\tableofcontents

\section{Introduction}

The algebraic classification (up to isomorphism) of algebras of small dimension in a variety defined by a family of polynomial identities is a classical problem of the theory of non-associative algebras.
There are many results on the algebraic classification of small-dimensional nilpotent algebras in the varieties of Jordan, Lie, Leibniz, Zinbiel, Novikov, right commutative and many other algebras; see, for example, \cite{AKK23,AKKS21,AKKS21b,AFK22,AK21,CGS12,dG07,HA16,JKK21,KM21} and the survey \cite{K23}.
The main tool for the classification of nilpotent algebras is the method of central extensions.
It was introduced by Skjelbred and Sund \cite{SS78} for Lie algebras, and it was later adapted to many other varieties of algebras; our presentation follows the approach of Hegazi, Abdelwahab and Calder\'on Mart\'{\i}n \cite{HAC16}.

An algebra $\mathcal{A}$ is called \emph{right alternative} if it satisfies the identity
\[
(xy)y=x(yy).
\]
Right alternative algebras form one of the classical varieties of non-associative algebras: alternative algebras are exactly the algebras that are simultaneously left and right alternative.
The algebraic and geometric classification of complex nilpotent right alternative algebras of dimension at most four was obtained by Ismailov, Kaygorodov and Mustafa \cite{IKM22}.
The aim of the present paper is to give the algebraic classification of complex five-dimensional nilpotent right alternative algebras.

Two classes of algebras have to be separated from the outset.
\begin{itemize}[leftmargin=*]
\item If $\mathcal{A}=\mathcal{A}'\oplus \C e$ is the direct sum of an algebra $\mathcal{A}'$ and a one-dimensional algebra with zero product, then $\mathcal{A}$ is right alternative (nilpotent) if and only if $\mathcal{A}'$ is; such an algebra is said to have an \emph{annihilator component}.
Five-dimensional algebras of this form are in bijection with the four-dimensional nilpotent right alternative algebras, which are known \cite{IKM22}.
In the language of central extensions they are exactly the \emph{split} central extensions (Lemma~\ref{lem:component}).
\item An algebra $\mathcal{A}$ is \emph{$2$-step nilpotent} if $(xy)z=x(yz)=0$ for all $x,y,z\in\mathcal{A}$.
Every $2$-step nilpotent algebra is right alternative, so their classification does not involve the right alternative identity at all; it is a problem of linear algebra (classification of subspaces of bilinear forms under the action of the general linear group), see, e.g., \cite{IKP22}.
In \cite{IKM22} the $2$-step nilpotent algebras are called ``non-pure''.
\end{itemize}

Our main result is the following theorem.

\begin{theoremA}
Let $\mathcal{A}$ be a five-dimensional complex nilpotent right alternative algebra which has no annihilator component and which is not $2$-step nilpotent.
Then $\mathcal{A}$ is isomorphic to one of the algebras $\A{1},\dots,\A{124}$ listed in Theorem~\ref{thm:main}.
The algebras $\A{1},\dots,\A{29}$ have a two-dimensional annihilator, and the algebras $\A{30},\dots,\A{124}$ have a one-dimensional annihilator.
The listed algebras are pairwise non-isomorphic, except for the isomorphisms between members of the same family that are indicated explicitly in Theorem~\ref{thm:main}.
\end{theoremA}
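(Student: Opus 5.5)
The plan is to run the Skjelbred--Sund machinery for the right alternative variety in the form of \cite{HAC16}. Let $\mathcal{A}$ be as in Theorem~A and put $s=\dim\Ann(\mathcal{A})$. Then $\mathcal{A}$ is a central extension of $\mathcal{A}/\Ann(\mathcal{A})$ by $\Ann(\mathcal{A})$, and the extension is non-split (Lemma~\ref{lem:component}) because $\mathcal{A}$ has no annihilator component.

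First I will reduce the possible quotients. Since $\mathcal{A}$ is nilpotent, $s\ge 1$. If the quotient had zero product, then $\mathcal{A}^2\subseteq\Ann(\mathcal{A})$ and $\mathcal{A}$ would be $2$-step nilpotent, which is excluded. So the quotient $\mathcal{B}=\mathcal{A}/\Ann(\mathcal{A})$ is a nontrivial nilpotent right alternative algebra of dimension $5-s$.

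We also know $\dim\mathcal{B}\ge 3$, because every nilpotent algebra of dimension at most $2$ with nonzero product is $\C e_1$-generated with $e_1e_1=e_2$, i.e.\ $2$-step. Hence $s\in\{1,2\}$ and $\mathcal{B}$ runs over the nontrivial $3$- and $4$-dimensional algebras of \cite{IKM22}. A standard filter then applies: since $\Ann(\mathcal{B})\cap\Ann(\theta)=0$ is required for $\mathcal{A}$ to have annihilator exactly $\C^s$, only $\mathcal{B}$ with annihilator small enough relative to $s$ contribute.

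For each such $\mathcal{B}$ I will do the following in turn.
\begin{enumerate}[leftmargin=*]
\item Compute the space $\Z(\mathcal{B})$ of bilinear maps $\theta$ satisfying the linearized cocycle condition
\[
\theta(xy,z)+\theta(xz,y)=\theta(x,yz+zy),
\]
which is exactly the condition that $\mathcal{B}_\theta$ be right alternative when $\theta$ lands in the annihilator.
\item Quotient by the coboundaries $\B(\mathcal{B})$ to obtain $\HH(\mathcal{B})$, and write a basis in terms of the forms $\Delta_{ij}$.
\item Compute $\Aut(\mathcal{B})$ as a matrix group and its action $\theta\mapsto\phi^{T}\theta\phi$ on cohomology classes.
\item Classify the $\Aut(\mathcal{B})$-orbits on the Grassmannian of $s$-dimensional subspaces $W\subseteq\HH(\mathcal{B})$ with $\Ann(\mathcal{B})\cap\bigcap_{\theta\in W}\Ann(\theta)=0$.
\end{enumerate}
By the main lemma of \cite{HAC16}, these orbits are in bijection with the isomorphism classes of non-split extensions with exactly $s$-dimensional annihilator. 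Each orbit representative then gives an algebra $\A{i}$.

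Pairwise non-isomorphism comes for free: distinct quotients $\mathcal{B}$, distinct $s$, or distinct orbits give non-isomorphic algebras. The identifications within families are recorded from the residual stabilizer action on the parameters, typically $\alpha\mapsto\pm\alpha$ or $\alpha\mapsto\alpha^{-1}$.

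The main obstacle is step (4) for the $4$-dimensional algebras with large $\HH$, when $s=1$. Here one must split into cases by which coordinates of $\theta$ vanish and normalize them successively with diagonal and unipotent automorphisms. The case analysis is long and error-prone, and continuous parameters appear exactly when the torus part of $\Aut(\mathcal{B})$ is too small. I would first pass to the stabilizer of the "leading" coefficients (those affecting $\Ann(\theta)$ condition), scale those to $1$, and then kill remaining coordinates by unipotent elements. Whatever invariants survive become the family parameters, and their residual symmetries give the stated isomorphisms. The $s=2$ case over $3$-dimensional $\mathcal{B}$ is smaller, though it requires classifying $2$-planes rather than lines.
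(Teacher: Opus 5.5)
Your outline is the same as the paper's proof: Lemmas~\ref{lem:quotient}, \ref{lem:component} and~\ref{lem:orbits}, followed by orbit computations over the nontrivial $3$- and $4$-dimensional quotients. However, one reduction step is argued incorrectly, and the part that actually constitutes Theorem~A is not carried out.

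Your reason for $\dim\mathcal{B}\ge3$ is a non sequitur. That $\mathcal{B}\cong\RR21$ is $2$-step says nothing about $\mathcal{A}$; for example, $\RR35$ is a central extension of $\RR21$ and is not $2$-step. Moreover, many quotients you keep ($\RR31,\dots,\RR34(\alpha)$ and all $\M{i}$) are $2$-step too. The correct reason is cohomological. For $\RR21$ one finds $\Z(\RR21)=\sspan{\D{11},\D{12}+\D{21}}$ and $\B(\RR21)=\sspan{\D{11}}$, so $\dim\HH(\RR21)=1<3$ and $\T_3(\RR21)=\emptyset$. (The paper itself simply asserts $s\in\{1,2\}$.)

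Your ``standard filter'' is also misstated. Whether $\mathcal{B}$ contributes is not governed by the size of $\Ann(\mathcal{B})$ relative to $s$. For instance, $\M1$ has a $3$-dimensional annihilator and gives $\A{48}$--$\A{53}$, while $\RR42$ has a $1$-dimensional one and gives nothing. The actual criterion is $\T_s(\mathcal{B})\neq\emptyset$, and the paper discards the algebras failing it in Corollaries~\ref{cor:empty3} and~\ref{cor:empty4}.

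The main gap is that none of the spaces $\HH(\mathcal{B})$, the groups $\Aut(\mathcal{B})$, or the orbit decompositions is computed. So nothing in the proposal shows that the outcome is exactly $\A1,\dots,\A{124}$ with only the stated coincidences, and your plan misjudges where those coincidences come from. The isomorphisms $\A{92}(\alpha,\gamma)\cong\A{92}(\gamma^3\alpha^{-1},\gamma)$, $\A{96}(\alpha,\beta)\cong\A{96}(\beta,\alpha)$ and $\A{101}(\alpha)\cong\A{101}\big(-\frac{2\alpha+3}{\alpha+2}\big)$ are not of the type $\alpha\mapsto\pm\alpha$ or $\alpha\mapsto\alpha^{-1}$ that you anticipate. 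On $\M7(\gamma)$, $\M{11}$ and $\M{12}$ the connected automorphisms preserve the relevant ratios exactly. The identifications come solely from the generator-swapping automorphisms $\sigma$, $\sigma_p$ outside the identity component. If your step~(3) parametrizes only the connected family of automorphism matrices, as is customary, step~(4) will wrongly separate these pairs. You must determine the full automorphism group there.
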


The list of Theorem~A consists of $93$ individual algebras, $27$ one-parameter families and $4$ two-parameter families (Corollary~\ref{cor:count}).
In particular, there are infinitely many isomorphism classes of such algebras.

In Section~\ref{sec:prelim} we collect the necessary definitions and the classification of nilpotent right alternative algebras of dimension at most four.
In Section~\ref{sec:method} we develop the method of central extensions for right alternative algebras and prove all the auxiliary statements that are needed; in particular, we show that every algebra of Theorem~A is a non-split central extension of a three-dimensional algebra by a two-dimensional space or of a four-dimensional algebra by a one-dimensional space (Lemma~\ref{lem:quotient}).
In Section~\ref{sec:data} we compute, for every relevant algebra of dimension three and four, the second cohomology space, the automorphism group and the set of cocycles giving non-split extensions.
Sections~\ref{sec:ann2}, \ref{sec:pure} and~\ref{sec:twostep} contain the orbit computations.
The final list and the counting of the isomorphism classes are given in Section~\ref{sec:main}.

All algebras and vector spaces are finite-dimensional over the field $\C$ of complex numbers (all arguments work over any algebraically closed field of characteristic zero).
In multiplication tables only the nonzero products of basis elements are written.
For a basis $e_1,\dots,e_n$ of an algebra and a subspace or a family of vectors we use $\sspan{\cdot}$ for the linear span.
\section{Preliminaries}\label{sec:prelim}

Let $\mathcal{A}$ be an algebra, i.e.\ a vector space with a bilinear multiplication $(x,y)\mapsto xy$.
The \emph{associator} of $x,y,z\in\mathcal{A}$ is $(x,y,z)=(xy)z-x(yz)$.
The algebra $\mathcal{A}$ is \emph{right alternative} if $(x,y,y)=0$ for all $x,y\in\mathcal{A}$.

\begin{lemma}\label{lem:linearization}
An algebra $\mathcal{A}$ over a field of characteristic different from $2$ is right alternative if and only if
\begin{equation}\label{eq:linRA}
(x,y,z)+(x,z,y)=0\qquad\text{for all } x,y,z\in\mathcal{A}.
\end{equation}
\end{lemma}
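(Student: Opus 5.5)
The proof is the standard linearization argument, using only that the associator $(x,y,z)=(xy)z-x(yz)$ is trilinear, since multiplication is bilinear.

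For the direction \eqref{eq:linRA} $\Rightarrow$ right alternativity, we put $z=y$ in \eqref{eq:linRA}. This gives $2(x,y,y)=0$ for all $x,y\in\mathcal{A}$. Because the characteristic is different from $2$, we may divide by $2$ and obtain $(x,y,y)=0$, which is the definition of a right alternative algebra.

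For the converse, assume $(x,u,u)=0$ for all $x,u\in\mathcal{A}$ and substitute $u=y+z$. Expanding by linearity in the second and third arguments gives
\[
0=(x,y+z,y+z)=(x,y,y)+(x,y,z)+(x,z,y)+(x,z,z).
\]
The first and last terms vanish by hypothesis, so $(x,y,z)+(x,z,y)=0$, which is \eqref{eq:linRA}. This direction does not use the assumption on the characteristic.

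The only point that needs care is the division by $2$ in the first direction. This is exactly where the hypothesis on the characteristic enters. Over $\C$, which is the setting of the paper, it is automatic.
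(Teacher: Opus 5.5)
Your proof is correct and matches the paper's argument: linearize $(x,y+z,y+z)-(x,y,y)-(x,z,z)=0$ for one direction, and set $z=y$ to get $2(x,y,y)=0$ for the other. Your remark that only the second direction needs the characteristic assumption is also accurate.
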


\begin{proof}
The associator is trilinear.
If $\mathcal{A}$ is right alternative, then $0=(x,y+z,y+z)-(x,y,y)-(x,z,z)=(x,y,z)+(x,z,y)$.
Conversely, putting $z=y$ in \eqref{eq:linRA} gives $2(x,y,y)=0$, hence $(x,y,y)=0$.
The last statement follows from trilinearity and the symmetry of \eqref{eq:linRA} in $y,z$.
\end{proof}

For an algebra $\mathcal{A}$ put $\mathcal{A}^{1}=\mathcal{A}$ and $\mathcal{A}^{k+1}=\sum_{i=1}^{k}\mathcal{A}^{i}\mathcal{A}^{k+1-i}$.
The algebra $\mathcal{A}$ is \emph{nilpotent} if $\mathcal{A}^{n}=0$ for some $n$.
The \emph{annihilator} of $\mathcal{A}$ is the ideal
\[
\Ann(\mathcal{A})=\{x\in\mathcal{A} : x\mathcal{A}+\mathcal{A}x=0\}.
\]
If $\mathcal{A}\neq0$ is nilpotent and $k$ is the largest integer with $\mathcal{A}^{k}\neq0$, then $\mathcal{A}^{k}\mathcal{A}+\mathcal{A}\mathcal{A}^{k}\subseteq\mathcal{A}^{k+1}=0$, so $0\neq\mathcal{A}^{k}\subseteq\Ann(\mathcal{A})$.
Thus \emph{every nonzero nilpotent algebra has a nonzero annihilator}.

A subspace $I\subseteq\Ann(\mathcal{A})$ is an \emph{annihilator component} of $\mathcal{A}$ if $\mathcal{A}=\mathcal{A}'\oplus I$ for an ideal $\mathcal{A}'$ of $\mathcal{A}$ (then $\mathcal{A}$ is the direct sum of the algebra $\mathcal{A}'$ and the algebra $I$ with zero multiplication).
An algebra is \emph{$2$-step nilpotent} if $\mathcal{A}^{2}\subseteq\Ann(\mathcal{A})$, i.e.\ if $(xy)z=x(yz)=0$ for all $x,y,z$.
Since all associators of a $2$-step nilpotent algebra vanish, we have:

\begin{lemma}\label{lem:2step}
Every $2$-step nilpotent algebra is a nilpotent right alternative algebra.
\end{lemma}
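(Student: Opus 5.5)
The statement has two parts, nilpotency and the right alternative identity, and I will verify them directly from the definition of a $2$-step nilpotent algebra $\mathcal{A}$, i.e.\ from $\mathcal{A}^{2}\subseteq\Ann(\mathcal{A})$.

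\emph{Associators vanish.} First I observe that every associator of $\mathcal{A}$ is zero. For $x,y,z\in\mathcal{A}$ we have $xy\in\mathcal{A}^{2}\subseteq\Ann(\mathcal{A})$, so $(xy)z=0$. Similarly $yz\in\Ann(\mathcal{A})$, so $x(yz)=0$. Hence $(x,y,z)=0$ for all $x,y,z$. In particular $(x,y,y)=0$, which is exactly the right alternative identity. This holds in any characteristic, so Lemma~\ref{lem:linearization} is not even needed.

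\emph{Nilpotency.} Next I show $\mathcal{A}^{3}=0$ using the recursive definition of the powers. By definition, $\mathcal{A}^{3}=\mathcal{A}^{1}\mathcal{A}^{2}+\mathcal{A}^{2}\mathcal{A}^{1}$. Both summands are spanned by products in which one factor lies in $\mathcal{A}^{2}\subseteq\Ann(\mathcal{A})$, so both summands vanish and $\mathcal{A}^{3}=0$. Hence $\mathcal{A}$ is nilpotent, which is all the definition requires. If one wants the stronger fact $\mathcal{A}^{n}=0$ for every $n\ge3$, it follows by a quick induction: each $\mathcal{A}^{k}$ with $k\ge2$ lies in $\mathcal{A}^{2}$, and every term $\mathcal{A}^{i}\mathcal{A}^{n-i}$ with $n\ge3$ has a factor of exponent at least $2$.

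There is no real obstacle here. The only point requiring care is to note that the generated powers $\mathcal{A}^{k}$ are spans of products, so that vanishing on generators suffices, and this is immediate from bilinearity.
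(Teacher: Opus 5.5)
Your proposal is correct and follows the same route as the paper, which simply observes that all associators of a $2$-step nilpotent algebra vanish (so $(x,y,y)=0$) and, implicitly, that $\mathcal{A}^{3}=0$. Your explicit check of $\mathcal{A}^{3}=0$ from the recursive definition of the powers, and your remark that Lemma~\ref{lem:linearization} is not needed, are both accurate.
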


An algebra with zero multiplication will be called \emph{trivial}.
Clearly $\mathcal{A}$ is $2$-step nilpotent if and only if the quotient algebra $\mathcal{A}/\Ann(\mathcal{A})$ is trivial.

Using the classification obtained in \cite{IKM22}, we present the classification of nilpotent right alternative algebras of dimensions up to four.
Following \cite{IKM22}, a nilpotent right alternative algebra is called \emph{pure} if it is not $2$-step nilpotent.

\begin{theorem}[\cite{IKM22}]\label{thm:dim3}
There are no nonzero nilpotent algebras of dimension one, and every nontrivial two-dimensional nilpotent right alternative algebra is isomorphic to
\[
\RR{2}{1}:\ e_1e_1=e_2 .
\]
Every nontrivial three-dimensional nilpotent right alternative algebra is isomorphic to one of the following pairwise non-isomorphic algebras:
\[
\begin{array}{ll}
\RR31: & e_1e_1=e_2;\\
\RR32: & e_1e_1=e_3,\ e_2e_2=e_3;\\
\RR33: & e_1e_2=e_3,\ e_2e_1=-e_3;\\
\RR34(\alpha): & e_1e_1=\alpha e_3,\ e_2e_1=e_3,\ e_2e_2=e_3,\qquad \alpha\in\C;\\
\RR35: & e_1e_1=e_2,\ e_1e_2=e_3,\ e_2e_1=e_3.
\end{array}
\]
\end{theorem}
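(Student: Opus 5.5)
This theorem is quoted from \cite{IKM22}, but I would verify it directly by elementary case analysis in dimensions one, two and three. Dimension one is immediate. If $\mathcal{A}=\C e$ is nilpotent, then $e e=\lambda e$, and $\mathcal{A}^{n}$ is spanned by $\lambda^{n-1}e$. Nilpotency therefore forces $\lambda=0$, so $\mathcal{A}$ is trivial, which is what ``no nonzero nilpotent algebras'' means here. For a nontrivial algebra of dimension two I would use that $\Ann(\mathcal{A})\neq0$ and $\mathcal{A}^2\neq\mathcal{A}$ (since $\mathcal{A}$ is nilpotent), so $\mathcal{A}^2$ is one-dimensional and contained in the annihilator. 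Choose $e_2$ spanning $\mathcal{A}^2$ and $e_1\notin\mathcal{A}^2$. Then $e_1e_1=\lambda e_2$, and $\lambda\neq0$ because $\mathcal{A}$ is nontrivial; rescaling gives $\RR21$.

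For dimension three I would split by $\dim\mathcal{A}^2$, which is $1$ or $2$ by nilpotency.

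\emph{Case $\dim\mathcal{A}^2=1$.} Here $\mathcal{A}^3\subsetneq\mathcal{A}^2$, so $\mathcal{A}^3=0$, the algebra is $2$-step nilpotent, and the right alternative identity holds automatically by Lemma~\ref{lem:2step}. Write $\mathcal{A}^2=\sspan{e_3}$. The multiplication is then a nonzero bilinear form $B$ on $V=\mathcal{A}/\mathcal{A}^2$ (dimension $2$), considered up to $GL(V)$ and scalars. Decompose $B$ into symmetric part $S$ and skew part $K$. If $K=0$, the rank of $S$ is $1$ or $2$: rank $1$ gives $e_1e_1=e_3$, which is $\RR21\oplus\C$, i.e.\ the $\RR31$ of the statement after relabeling; rank $2$ gives $\RR32$. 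If $S=0$ we get $\RR33$. If both are nonzero, I would normalize the skew part and use the residual $SL_2$-type freedom on $S$. This yields the pencil normal form $e_1e_1=\alpha e_3$, $e_2e_1=e_3$, $e_2e_2=e_3$, with $\alpha$ an invariant. The invariant is essentially $\det S$ relative to $K$ under the induced action, so distinct $\alpha$ give non-isomorphic algebras, and this produces $\RR34(\alpha)$.

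\emph{Case $\dim\mathcal{A}^2=2$.} Then $\mathcal{A}$ is generated by a single element $e_1$. Put $e_2=e_1e_1$. Since $\mathcal{A}^3=\sspan{e_3}$ is one-dimensional and central, I would write $e_1e_2=a e_3$ and $e_2e_1=b e_3$. The right alternative identity $(e_1,e_1,e_1)=0$ gives $a=b$. Changing $e_1\mapsto e_1+\gamma e_2+\delta e_3$ and rescaling, I would reach $\RR35$, using $a\neq0$, which holds because $\mathcal{A}^3\neq0$. I would still need to check the remaining products $e_1e_3=e_3e_1=e_2e_2=0$, which follow from $\mathcal{A}^4=0$.

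The main obstacle is the classification of the mixed symmetric/skew forms in the $2$-step case: getting the exact normal form $\RR34(\alpha)$ and showing that $\alpha$ is a genuine invariant. Pairwise non-isomorphism of the remaining algebras follows from the invariants $\dim\mathcal{A}^2$, $\dim\Ann$, and the rank and symmetry type of $B$.
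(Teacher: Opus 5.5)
The paper gives no proof of this statement; it is quoted from \cite{IKM22}. A direct verification like yours is therefore a reasonable alternative, and most of it is sound. The dimension-one and dimension-two cases are correct. So are the strict inclusions $\mathcal{A}^2\subsetneq\mathcal{A}$ and $\mathcal{A}^3\subsetneq\mathcal{A}^2$: if $\mathcal{A}^3=\mathcal{A}^2$, then $\mathcal{A}^{m+1}\supseteq\mathcal{A}\mathcal{A}^m+\mathcal{A}^m\mathcal{A}$ forces $\mathcal{A}^m=\mathcal{A}^2$ for all $m$. The congruence classification in the $2$-step case also works. Writing $K=s\left(\begin{smallmatrix}0&1\\-1&0\end{smallmatrix}\right)$, the invariant of the mixed forms is $\det S/s^2$, and it equals $4\alpha-1$ for $\RR34(\alpha)$.

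\textbf{The gap is in the case $\dim\mathcal{A}^2=2$.} You derive $e_2e_2=0$ from ``$\mathcal{A}^4=0$'', and then $a\neq0$ from $\mathcal{A}^3\neq0$. But $\mathcal{A}^4=0$ does not follow from nilpotency. We have $\mathcal{A}^4=\mathcal{A}\mathcal{A}^3+\mathcal{A}^2\mathcal{A}^2+\mathcal{A}^3\mathcal{A}$, and the middle term is not controlled by $\mathcal{A}^3$ being central. Two examples show the argument cannot work as written:
\begin{itemize}
\item The algebra $e_1e_1=e_2$, $e_1e_2=e_2e_1=e_3$, $e_2e_2=e_3$ is nilpotent ($\mathcal{A}^5=0$) and satisfies $(e_1,e_1,e_1)=0$, yet $\mathcal{A}^4=\mathcal{A}^3=\C e_3\neq0$.
\item The algebra $e_1e_1=e_2$, $e_2e_2=e_3$ is nilpotent with $\mathcal{A}^3\neq0$, but $a=b=0$.
\end{itemize}
So nilpotency together with $(e_1,e_1,e_1)=0$ cannot yield your normal form.

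Centrality of $e_3$ is fine. A nonzero $xe_3$ or $e_3x$ would be a nonzero multiple of $e_3$, and iterating it would contradict nilpotency. That argument, however, says nothing about $e_2e_2$.

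The missing step is a second application of the right alternative identity. Since $e_3$ is central, $(e_2,e_1,e_1)=(e_2e_1)e_1-e_2(e_1e_1)=-e_2e_2$, hence $e_2e_2=0$. Only then is $\mathcal{A}^3=\C\, e_1e_2+\C\, e_2e_1=\C\, ae_3$, which gives $a\neq0$. Rescaling $e_3$ alone then gives $\RR35$; no substitution $e_1\mapsto e_1+\gamma e_2+\delta e_3$ is needed. Finally, $\RR35$ is right alternative because it is associative (it is $t\C[t]/(t^4)$).
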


Note that $\RR31=\RR21\oplus\C e_3$ and that $\RR35$ is the only pure algebra of this list.

\begin{theorem}[\cite{IKM22}]\label{thm:dim4pure}
Every four-dimensional pure nilpotent right alternative algebra is isomorphic to one of the following pairwise non-isomorphic algebras:
\[
\begin{array}{ll}
\RR41: & e_1e_1=e_2,\ e_1e_2=e_3,\ e_2e_1=e_3;\\
\RR42: & e_1e_1=e_2,\ e_1e_2=e_4,\ e_2e_1=e_4,\ e_3e_3=e_4;\\
\RR43: & e_1e_1=e_2,\ e_1e_2=e_4,\ e_2e_1=e_4,\ e_3e_1=e_4,\ e_3e_3=e_4;\\
\RR44: & e_1e_1=e_2,\ e_1e_2=e_4,\ e_2e_1=e_4,\ e_3e_1=e_4;\\
\RR45: & e_1e_2=e_3,\ e_1e_3=e_4,\ e_2e_1=-e_3;\\
\RR46: & e_1e_2=e_3,\ e_1e_3=e_4,\ e_2e_1=-e_3,\ e_2e_2=e_4;\\
\RR47: & e_2e_1=e_3,\ e_2e_2=e_3,\ e_2e_3=e_4,\ e_3e_2=e_4;\\
\RR48: & e_1e_1=e_4,\ e_2e_1=e_3,\ e_2e_2=e_3,\ e_2e_3=e_4,\ e_3e_2=e_4;\\
\RR49: & e_1e_1=e_2,\ e_1e_2=e_3,\ e_2e_1=e_3,\ e_1e_3=e_4,\ e_2e_2=e_4,\ e_3e_1=e_4.
\end{array}
\]
\end{theorem}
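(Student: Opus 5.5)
This theorem is cited from \cite{IKM22}, but it can be rederived with the central extension method that the paper develops for dimension five. Let $\mathcal{A}$ be four-dimensional, pure, nilpotent and right alternative. Then $\Ann(\mathcal{A})\neq0$, and $\mathcal{A}/\Ann(\mathcal{A})$ is a nontrivial nilpotent right alternative algebra (purity) of dimension at most three. Its dimension is at least two, since a one-dimensional nilpotent algebra is trivial. So the quotient is $\RR21$ with $\dim\Ann=2$, or one of the nontrivial three-dimensional algebras of Theorem~\ref{thm:dim3} with $\dim\Ann=1$. I would also use that the extension is non-split. This is automatic when we count only algebras without annihilator component; a pure four-dimensional algebra with an annihilator component is $\RR35\oplus\C e_4$, which appears as $\RR41$ in the list.

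\emph{Step 1.} For each candidate quotient $\mathcal{B}$ I would compute $\Z(\mathcal{B})$, the space of bilinear forms $\theta$ with $\theta(xy,z)-\theta(x,yz)+\theta(xz,y)-\theta(x,zy)=0$, using the linearized identity of Lemma~\ref{lem:linearization}. I would then compute $\B(\mathcal{B})$ and $\HH=\Z/\B$ in the basis of elementary forms $\Delta_{ij}$. After that, I would compute $\Aut(\mathcal{B})$ and its action $\phi\cdot\theta=\theta(\phi\,\cdot,\phi\,\cdot)$.

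\emph{Step 2.} Next I discard two kinds of cocycles. The first kind are those with $\theta^{\perp}\cap\Ann(\mathcal{B})\neq0$, where $\theta^\perp$ is the common radical; these give extensions in which $\Ann$ grows, so they are already counted via a smaller quotient. The second kind are those making $\mathcal{A}$ two-step, which happens exactly when $\mathcal{B}$ is trivial. I then classify the orbits of $\Aut(\mathcal{B})$ on the remaining one-dimensional subspaces of $\HH$. For $\mathcal{B}=\RR21$ one needs two-dimensional subspaces, and their associativity conditions will force $e_1e_2=e_2e_1$ into the new annihilator.

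\emph{Step 3.} I expect essentially all pure algebras to come from the quotients $\RR35$ and the non-pure algebras $\RR32,\RR33,\RR34(\alpha)$, and also from $\RR31$, whose one-dimensional extensions give $\RR41$--$\RR44$.
\begin{itemize}
\item Extensions of $\RR35$ give $\RR49$; the other cocycle classes there are coboundary or split.
\item Extensions of $\RR33$ give $\RR45,\RR46$.
\item Extensions of $\RR34(\alpha)$ give $\RR47,\RR48$. The right alternative constraint should kill the $\alpha$-dependence here, since $\alpha$ must vanish up to automorphism.
\end{itemize}

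\emph{Main obstacle.} The hardest step is the orbit computation for $\RR31$ and $\RR34(\alpha)$. Their automorphism groups are large, and the cocycle space mixes degree-two and degree-three terms. I would first normalize by the triangular part of $\Aut$ to kill coboundary coordinates, and then split into cases on whether particular coefficients vanish. The proof would finish by checking pairwise non-isomorphism with invariants: $\dim\mathcal{A}^2$, $\dim\mathcal{A}^3$, $\dim\Ann$, the isomorphism type of $\mathcal{A}/\Ann$, and commutativity of $\mathcal{A}^2$ products.
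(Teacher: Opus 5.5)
Your route is the right one. It is the Skjelbred--Sund procedure of Section~\ref{sec:method} run one dimension lower, which is the standard way such lists are obtained; the paper itself gives no proof of this theorem and only cites \cite{IKM22}. As written, though, the proposal only announces the outcome of the orbit computations, and some concrete steps are wrong.

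\textbf{The non-isomorphism step fails.} The invariants you list are $\dim\mathcal{A}^2=2$, $\dim\mathcal{A}^3=1$, $\dim\Ann(\mathcal{A})=1$, the type of $\mathcal{A}/\Ann(\mathcal{A})$, and (non-)commutativity. They take identical values on three pairs:
$\RR{4}{3}$ and $\RR{4}{4}$ (both over $\RR{3}{1}$), $\RR{4}{5}$ and $\RR{4}{6}$ (both over $\RR{3}{3}$), and $\RR{4}{7}$ and $\RR{4}{8}$ (both over $\RR{3}{4}(0)$). So they cannot show the list is irredundant.

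The right tool is Lemma~\ref{lem:orbits}: extensions of the same quotient are isomorphic if and only if their classes lie in the same $\Aut$-orbit of $\T_1$. So you must separate the orbits; for example, for $\RR{3}{1}$ the condition $\alpha_4=0$ is invariant because $\alpha_4^*=\alpha_4t^2$.

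If you prefer intrinsic invariants, write $x=ae_1+be_2+ce_3+de_4$ and use the cone $\{x:x^2=0\}$:
\begin{itemize}
\item in $\RR{4}{4}$ it is $\sspan{e_2,e_3,e_4}$, while in $\RR{4}{3}$ it is the plane $\sspan{e_2,e_4}$;
\item in $\RR{4}{5}$ it is the reducible cone $ac=0$, while in $\RR{4}{6}$ it is the irreducible cone $ac+b^2=0$.
\end{itemize}
For the last pair, the left annihilator $\{x:x\mathcal{A}=0\}$ is $\sspan{e_1,e_4}$ in $\RR{4}{7}$ but $\C e_4$ in $\RR{4}{8}$.

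\textbf{The bookkeeping needs correcting.}

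First, $\RR{4}{1}$ is not among the extensions of $\RR{3}{1}$ that survive your Step~2. It is $(\RR{3}{1})_\theta$ with $\theta=\D{12}+\D{21}$, and $e_3\in\Ann(\theta)\cap\Ann(\RR{3}{1})$. In the basis of Lemma~\ref{lem:R31action}, a class is admissible if and only if $\alpha_1\neq0$ and in addition $\alpha_4\neq0$ or $\alpha_2\neq\alpha_3$. The orbits are those of $\N1+\N4$, $\N1+\N3+\N4$ and $\N1+\N3$, giving exactly $\RR{4}{2}$, $\RR{4}{3}$ and $\RR{4}{4}$.

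Second, $\RR{4}{1}$ is the case $\dim\Ann=2$, and the reason it is the only one is not an ``associativity condition''. We have $\Z(\RR{2}{1})=\sspan{\D{11},\D{12}+\D{21}}$ and $\B(\RR{2}{1})=\sspan{\D{11}}$, so $\dim\HH(\RR{2}{1})=1$ and $\T_2(\RR{2}{1})=\emptyset$. Hence every such algebra is split, and is therefore isomorphic to $\RR{3}{5}\oplus\C e_4$.

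Third, $\alpha$ is not removed ``up to automorphism'' for $\RR{3}{4}(\alpha)$; these algebras are pairwise non-isomorphic. Rather, for $\RR{3}{2}$ and for $\RR{3}{4}(\alpha)$ with $\alpha\neq0$, no cocycle involves $e_3$, so $\T_1=\emptyset$. Only $\RR{3}{4}(0)$ contributes, through the extra class $[\D{23}+\D{32}]$.

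\textbf{The orbit computations still have to be done.} With the data of Propositions~\ref{prop:data3} and~\ref{prop:aut} they are short:
\begin{itemize}
\item for $\RR{3}{3}$, the orbits are $\N4$ and $\N3+\N4$;
\item for $\RR{3}{4}(0)$, after killing $\alpha_2,\alpha_3$ with $u,v$, the orbits are $\N4$ and $\N1+\N4$;
\item for $\RR{3}{5}$ there is a single class.
\end{itemize}
Until these are carried out, the proposal is a plan rather than a proof.
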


Note that $\RR41=\RR35\oplus\C e_4$.

\begin{theorem}[\cite{IKM22}]\label{thm:dim4nonpure}
Every nontrivial four-dimensional $2$-step nilpotent right alternative algebra is isomorphic to one of the following pairwise non-isomorphic algebras:
\[
\begin{array}{ll}
\M1: & e_1e_1=e_2;\\
\M2: & e_1e_1=e_2,\ e_3e_3=e_4;\\
\M3: & e_1e_2=e_3,\ e_2e_1=-e_3;\\
\M4(\alpha): & e_1e_1=e_3,\ e_1e_2=e_3,\ e_2e_2=\alpha e_3;\\
\M5: & e_1e_1=e_3,\ e_1e_2=e_3,\ e_2e_1=e_3;\\
\M6: & e_1e_2=e_3,\ e_2e_1=e_4,\ e_2e_2=-e_3;\\
\M7(\gamma): & e_1e_1=e_3,\ e_1e_2=e_4,\ e_2e_1=-\gamma e_3,\ e_2e_2=-e_4;\\
\M8(\alpha): & e_1e_1=e_4,\ e_1e_2=\alpha e_4,\ e_2e_1=-\alpha e_4,\ e_2e_2=e_4,\ e_3e_3=e_4;\\
\M9: & e_1e_2=e_4,\ e_1e_3=e_4,\ e_2e_1=-e_4,\ e_2e_2=e_4,\ e_3e_1=e_4;\\
\M{10}: & e_1e_1=e_4,\ e_1e_2=e_4,\ e_2e_1=-e_4,\ e_3e_3=e_4;\\
\M{11}: & e_1e_2=e_3,\ e_2e_1=e_4;\\
\M{12}: & e_1e_1=e_4,\ e_1e_2=e_3,\ e_2e_1=-e_3,\ e_2e_2=2e_3+e_4;\\
\M{13}: & e_2e_1=e_4,\ e_2e_2=e_3;\\
\M{14}(\lambda): & e_1e_2=e_4,\ e_2e_1=\lambda e_4,\ e_2e_2=e_3;\\
\M{15}: & e_1e_2=e_4,\ e_2e_1=-e_4,\ e_3e_3=e_4.
\end{array}
\]
\end{theorem}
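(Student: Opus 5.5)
Since every $2$-step nilpotent algebra is right alternative (Lemma~\ref{lem:2step}), the right alternative identity plays no role here, and the statement is a problem of linear algebra. I would prove it in the language of central extensions of trivial algebras. Let $\mathcal{A}$ be a nontrivial four-dimensional $2$-step nilpotent algebra. Then $\mathcal{A}/\Ann(\mathcal{A})$ is trivial of some dimension $m$, and $\mathcal{A}$ is a central extension of $\C^{m}$ by $\Ann(\mathcal{A})\cong\C^{k}$, with $m+k=4$.

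For a trivial base, every bilinear map is a cocycle and there are no coboundaries. So $\mathcal{A}$ is determined by $k$ bilinear forms $\theta_1,\dots,\theta_k$ on $\C^{m}$. Such a datum is non-split with annihilator exactly $\C^{k}$ precisely when two conditions hold:
\begin{itemize}[leftmargin=*]
\item the forms $\theta_1,\dots,\theta_k$ are linearly independent;
\item the intersection of their left and right radicals is zero.
\end{itemize}
Isomorphism classes of such algebras correspond to orbits of $k$-dimensional subspaces $\sspan{\theta_1,\dots,\theta_k}$ under $GL_m$ acting by congruence. Algebras with an annihilator component are obtained from lower-dimensional ones by adding $\C e$. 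Hence it suffices to classify the algebras without annihilator component in dimensions $2$, $3$ and $4$, and then pad with trivial summands.

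The small cases come first.
\begin{itemize}[leftmargin=*]
\item Dimension $2$ ($m=1$, $k=1$): the only form is $x^{2}$, which gives $\RR21$ and hence $\M1$.
\item Dimension $3$ with $m=2$, $k=1$: I need the congruence classes of $2\times2$ matrices with no common radical vector. From the canonical forms for congruence (symmetric part together with skew part, up to scaling), these are the algebras $\RR32$, $\RR33$ and $\RR34(\alpha)$. After adding $\C e_4$, they give $\M3$ and $\M4(\alpha)$, with $\M5$ appearing as the exceptional member that must be separated from the family.
\item Dimension $3$ with $m=1$, $k=2$: this case is impossible, since the forms on $\C^{1}$ span only a one-dimensional space.
\end{itemize}

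Genuinely four-dimensional algebras come in two types.
\begin{itemize}[leftmargin=*]
\item $m=3$, $k=1$: a single bilinear form on $\C^{3}$, up to congruence and scaling, whose left and right radicals meet trivially. I would use the Horn--Sergeichuk canonical forms for congruence, discard those with a common radical vector, and obtain $\M8(\alpha)$, $\M9$, $\M{10}$, $\M{15}$ and $\M2$ (the last via $e_1e_1=e_2$, $e_3e_3=e_4$ after rewriting). Here I must watch which of these are really of type $(m,k)=(2,2)$.
\item $m=2$, $k=2$: two-dimensional subspaces of the four-dimensional space of bilinear forms on $\C^{2}$, under $GL_2$ congruence. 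I would dualize: such a subspace is determined by its two-dimensional annihilator in $(\C^2\otimes\C^2)$, equivalently by the complementary pencil. I would then split by invariants, namely the dimension of the span of the symmetric parts, the dimension of the span of the skew parts, and the existence of rank-one elements, and normalize each case. This should yield $\M6$, $\M7(\gamma)$, $\M{11}$, $\M{12}$, $\M{13}$ and $\M{14}(\lambda)$.
\end{itemize}

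The main obstacle is the $m=2$, $k=2$ orbit problem. Both the normalization of pencils of $2\times2$ bilinear forms under congruence and the determination of the exact parameter identifications inside $\M7(\gamma)$ and $\M{14}(\lambda)$ (which parameter values give isomorphic algebras) require care. My first step there would be to decompose each form into symmetric plus skew parts. The skew parts span at most a one-dimensional space, since $\Lambda^2\C^2$ is one-dimensional. I would then reduce to classifying the symmetric parts modulo that line, where binary quadratic forms are classified by the position of their roots.

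Finally, pairwise non-isomorphism would be checked by invariants:
\begin{itemize}[leftmargin=*]
\item $\dim\mathcal{A}^{2}$ and $\dim\Ann(\mathcal{A})$;
\item whether $\mathcal{A}$ is commutative or anticommutative;
\item the ranks of the symmetric and skew parts;
\item within families, the cross-ratio-type invariant of the pencil.
\end{itemize}
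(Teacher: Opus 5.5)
The paper gives no proof of this theorem; it quotes it from \cite{IKM22}. Your reduction is the right framework: a $2$-step nilpotent algebra is a central extension of a trivial algebra, so one classifies $GL_m$-orbits of $k$-dimensional spaces of bilinear forms on $\C^m$ with no common radical vector. The gap is that the proposal only asserts the outcome of each orbit computation, and those assertions are wrong. Carried out correctly, your plan does not produce the printed list.

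First, in $\M2$ one has $\mathcal{A}^2=\Ann(\mathcal{A})=\sspan{e_2,e_4}$. So $\M2$ is of type $(m,k)=(2,2)$: it is the symmetric pencil $\sspan{\D{11},\D{33}}$ on $\sspan{e_1,e_3}$. It cannot come from a single form on $\C^3$, and it is missing from your $(2,2)$ list.

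Second, and more seriously, the Horn--Sergeichuk forms of size $3$ without a $J_1(0)$ summand are $\Gamma_1^{\oplus3}$, $H_2(\mu)\oplus\Gamma_1$, $J_2(0)\oplus\Gamma_1$, $\Gamma_2\oplus\Gamma_1$, $\Gamma_3$ and $J_3(0)$. The first three give $\M8(\alpha)$ and, at $\mu=-1$, $\M{15}$; $\Gamma_2\oplus\Gamma_1$ gives $\M{10}$; $\Gamma_3$ gives $\M9$. But $J_3(0)$ gives $\mathcal{N}\colon e_1e_2=e_4,\ e_2e_3=e_4$, which is not in the list:
\begin{itemize}
\item $\dim\mathcal{N}^2=\dim\Ann(\mathcal{N})=1$, so the only candidates are $\M8(\alpha)$, $\M9$, $\M{10}$, $\M{15}$.
\item The symmetric form induced by $xy+yx$ on $\mathcal{N}/\Ann(\mathcal{N})$ has rank $2$, with radical spanned by $e_1-e_3$. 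It has rank $3$ for $\M8(\alpha)$ and $\M9$, and rank $1$ for $\M{15}$.
\item The elements of $\mathcal{N}$ commuting with everything are $\sspan{e_1+e_3,e_4}$, and all of them square to zero. In $\M{10}$ the element $e_3$ commutes with everything and $e_3e_3=e_4$.
\end{itemize}

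Your final step, separating the list by invariants, must also fail, because the list is not irredundant:
\begin{itemize}
\item $\M8(\alpha)\cong\M8(-\alpha)$ via $e_1\leftrightarrow e_2$.
\item $\M7(1)\cong\M{13}$. In $\M7(1)$ one has $(ae_1+be_2)(ce_1+de_2)=(a-b)(ce_3+de_4)$, and $e_1\mapsto e_1+e_2$, $e_2\mapsto e_1$, $e_3\mapsto e_3$, $e_4\mapsto e_3+e_4$ is an isomorphism $\M{13}\to\M7(1)$.
\end{itemize}

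If you do the $(2,2)$ case as you suggest, take the symmetric line $\C q=W\cap S^2(\C^2)^*$ and classify the second generator modulo $q$ under the stabilizer of $\C q$. You find:
\begin{itemize}
\item $W$ symmetric gives $\M2$ and $\M{14}(1)$.
\item $q$ of rank $1$ gives $\M{14}(\lambda)$ for $\lambda\neq1$, $\M{13}$ as the value $\lambda=\infty$ of the same invariant, and the isolated orbit $\M7(0)$.
\item $q=\ell_1\ell_2$ of rank $2$ gives $\M{11}$, $\M6$, and a one-parameter family. Writing the generator as $a\ell_1^2+b\ell_2^2+s\,\ell_1\wedge\ell_2$, the family's invariant is $ab/s^2$. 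It equals $-1/\gamma$ on $\M7(\gamma)$ for $\gamma\neq0,1$, and the missing value $-1$ is $\M{12}$.
\end{itemize}
So there are no identifications inside $\M7(\gamma)$ ($\gamma\neq0,1$) or $\M{14}(\lambda)$ ($\lambda\neq1$), but $\M7(1)$ duplicates $\M{13}$.

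In short, the method is sound but your claimed results of the case analysis are not. Executing your plan proves a corrected statement: add $\mathcal{N}$, drop $\M7(1)$, and identify $\alpha$ with $-\alpha$ in $\M8$. It does not prove the theorem as printed.
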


We have $\M1=\RR21\oplus\C e_3\oplus\C e_4$, $\M2=\RR21\oplus\RR21$ and $\M3=\RR33\oplus\C e_4$; the algebras $\M4(\alpha)$ and $\M5$ also have an annihilator component $\C e_4$. We shall use the parameter $\lambda$ for the algebras $\M{14}(\lambda)$, in order to reserve the letters $\alpha,\beta$ for the parameters of the five-dimensional algebras.
\section{The method of central extensions}\label{sec:method}

Throughout this section $\mathcal{A}$ is a right alternative algebra and $\mathbb{V}$ is a vector space.

\begin{definition}
A bilinear map $\theta\colon\mathcal{A}\times\mathcal{A}\to\mathbb{V}$ is called a \emph{cocycle} if
\begin{equation}\label{eq:cocycle}
\theta(xy,z)-\theta(x,yz)=\theta(x,zy)-\theta(xz,y)\qquad\text{for all }x,y,z\in\mathcal{A}.
\end{equation}
The cocycles form a vector space $\Z(\mathcal{A},\mathbb{V})$.
For a linear map $f\colon\mathcal{A}\to\mathbb{V}$ we define $\delta f\colon\mathcal{A}\times\mathcal{A}\to\mathbb{V}$ by $\delta f(x,y)=f(xy)$, and we put $\B(\mathcal{A},\mathbb{V})=\{\delta f: f\in\Hom(\mathcal{A},\mathbb{V})\}$.
The \emph{second cohomology space} is $\HH(\mathcal{A},\mathbb{V})=\Z(\mathcal{A},\mathbb{V})/\B(\mathcal{A},\mathbb{V})$.
We write $[\theta]$ for the class of a cocycle $\theta$.
\end{definition}

\begin{lemma}\label{lem:coboundary}
$\B(\mathcal{A},\mathbb{V})\subseteq\Z(\mathcal{A},\mathbb{V})$.
\end{lemma}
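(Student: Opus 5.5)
We check directly that $\theta=\delta f$ satisfies the cocycle identity \eqref{eq:cocycle}, using only linearity of $f$ and the right alternative identity of $\mathcal{A}$ in its linearized form (Lemma~\ref{lem:linearization}). Fix $f\in\Hom(\mathcal{A},\mathbb{V})$ and $x,y,z\in\mathcal{A}$. By the definition $\delta f(u,v)=f(uv)$, the left-hand side of \eqref{eq:cocycle} is
\[
\delta f(xy,z)-\delta f(x,yz)=f\bigl((xy)z\bigr)-f\bigl(x(yz)\bigr)=f\bigl((x,y,z)\bigr),
\]
and the right-hand side is
\[
\delta f(x,zy)-\delta f(xz,y)=f\bigl(x(zy)\bigr)-f\bigl((xz)y\bigr)=-f\bigl((x,z,y)\bigr).
\]

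So \eqref{eq:cocycle} for $\delta f$ is equivalent to $f\bigl((x,y,z)+(x,z,y)\bigr)=0$. Since $\mathcal{A}$ is right alternative and we work over $\C$ (characteristic $\neq2$), Lemma~\ref{lem:linearization} gives $(x,y,z)+(x,z,y)=0$, and applying $f$ yields the required identity. Hence $\delta f\in\Z(\mathcal{A},\mathbb{V})$. Finally, $\B(\mathcal{A},\mathbb{V})$ is a subspace because $f\mapsto\delta f$ is linear, so the inclusion $\B(\mathcal{A},\mathbb{V})\subseteq\Z(\mathcal{A},\mathbb{V})$ is an inclusion of vector spaces and $\HH(\mathcal{A},\mathbb{V})$ is well defined.

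There is no real obstacle. The only point needing care is the signs: one must confirm that the two sides of \eqref{eq:cocycle} correspond to $(x,y,z)$ and $-(x,z,y)$, so that the condition matches the linearized identity \eqref{eq:linRA}.
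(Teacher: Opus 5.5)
Your proof is correct and essentially identical to the paper's: both observe that for $\theta=\delta f$ the difference of the two sides of the cocycle identity equals $f\big((x,y,z)+(x,z,y)\big)$, which vanishes by Lemma~\ref{lem:linearization}. Your sign bookkeeping, with the left side giving $f((x,y,z))$ and the right side giving $-f((x,z,y))$, is exactly right.
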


\begin{proof}
For $\theta=\delta f$ the difference of the two sides of \eqref{eq:cocycle} equals $f\big((x,y,z)+(x,z,y)\big)$, which vanishes by Lemma~\ref{lem:linearization}.
\end{proof}

\begin{definition}
For a bilinear map $\theta\colon\mathcal{A}\times\mathcal{A}\to\mathbb{V}$ the \emph{central extension} $\mathcal{A}_{\theta}$ of $\mathcal{A}$ by $\mathbb{V}$ is the vector space $\mathcal{A}\oplus\mathbb{V}$ with the product
\[
[x+v,\,y+w]_{\theta}=xy+\theta(x,y),\qquad x,y\in\mathcal{A},\ v,w\in\mathbb{V}.
\]
If $\dim\mathbb{V}=s$ we speak of an $s$-dimensional central extension.
\end{definition}

\begin{lemma}\label{lem:RAext}
The algebra $\mathcal{A}_{\theta}$ is right alternative if and only if $\theta\in\Z(\mathcal{A},\mathbb{V})$.
If $\mathcal{A}$ is nilpotent, then so is $\mathcal{A}_\theta$.
\end{lemma}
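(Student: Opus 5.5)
The plan is to compute the associator of $\mathcal{A}_\theta$ directly and compare it with the associator of $\mathcal{A}$. Write elements of $\mathcal{A}_\theta$ as $x+v$ with $x\in\mathcal{A}$ and $v\in\mathbb{V}$. The product $[\cdot,\cdot]_\theta$ ignores the $\mathbb{V}$-components of its arguments, and $\theta$ takes values in $\mathbb{V}$. Hence
\[
[[x+v,y+w]_\theta,z+u]_\theta=[xy+\theta(x,y),z]_\theta=(xy)z+\theta(xy,z),
\]
and similarly $[x+v,[y+w,z+u]_\theta]_\theta=x(yz)+\theta(x,yz)$. So the associator in $\mathcal{A}_\theta$ equals
\[
(x,y,z)+\theta(xy,z)-\theta(x,yz),
\]
where the first term lies in $\mathcal{A}$ and the remaining terms lie in $\mathbb{V}$.

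Next I will invoke Lemma~\ref{lem:linearization}, which applies since $\operatorname{char}\C=0$. It says that $\mathcal{A}_\theta$ is right alternative if and only if the associator is antisymmetric in its last two arguments. Since $\mathcal{A}\oplus\mathbb{V}$ is a direct sum, this condition splits into its $\mathcal{A}$-component and its $\mathbb{V}$-component. The $\mathcal{A}$-component is $(x,y,z)+(x,z,y)=0$, which holds because $\mathcal{A}$ is right alternative, again by Lemma~\ref{lem:linearization}. The $\mathbb{V}$-component is
\[
\theta(xy,z)-\theta(x,yz)+\theta(xz,y)-\theta(x,zy)=0,
\]
which after rearranging is exactly the cocycle identity \eqref{eq:cocycle}. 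The $\mathbb{V}$-parts $v,w,u$ of the arguments play no role, so this gives both directions of the equivalence.

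For nilpotency, I will show by induction on $k$ that $(\mathcal{A}_\theta)^{k+1}\subseteq\mathcal{A}^{k+1}+\theta\big(\sum_{i=1}^{k}\mathcal{A}^i\times\mathcal{A}^{k+1-i}\big)$. The key facts are that products in $\mathcal{A}_\theta$ only see $\mathcal{A}$-components, and that the $\mathcal{A}$-component of $(\mathcal{A}_\theta)^i$ lies in $\mathcal{A}^i$. Now suppose $\mathcal{A}^n=0$. Taking $k=2n$, every term $\mathcal{A}^i\times\mathcal{A}^{2n+1-i}$ has $i\ge n$ or $2n+1-i\ge n$, so one of its factors is zero. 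Using that the powers $\mathcal{A}^j$ are decreasing in $j$, this gives $(\mathcal{A}_\theta)^{2n+1}=0$. An alternative argument is that $(\mathcal{A}_\theta)^{n}\subseteq\mathbb{V}\subseteq\Ann(\mathcal{A}_\theta)$, which forces $(\mathcal{A}_\theta)^{n+1}=0$.

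The only real point of care is the nilpotency bookkeeping. In particular, one has to confirm that the powers $\mathcal{A}^j$ form a decreasing chain, which follows by a quick induction from the definition of $\mathcal{A}^{k+1}$. The equivalence itself is immediate from the computation above.
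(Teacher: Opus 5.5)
Your proof is correct and is essentially the paper's: the same computation of the associator $(x,y,z)+\theta(xy,z)-\theta(x,yz)$, the same splitting of $(X,Y,Z)_\theta+(X,Z,Y)_\theta$ into $\mathcal{A}$- and $\mathbb{V}$-components via Lemma~\ref{lem:linearization}, and the same induction on powers. The only difference is that you obtain $(\mathcal{A}_\theta)^{2n+1}=0$ where the paper obtains $(\mathcal{A}_\theta)^{2n}=0$, which is immaterial.

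You should, however, drop the ``alternative argument'', because it is false. The inclusion $(\mathcal{A}_\theta)^n\subseteq\mathbb{V}$ only kills the two extreme summands of $(\mathcal{A}_\theta)^{n+1}=\sum_{i=1}^{n}(\mathcal{A}_\theta)^i(\mathcal{A}_\theta)^{n+1-i}$. It does not kill the middle ones, whose $\mathbb{V}$-parts $\theta(\mathcal{A}^i,\mathcal{A}^{n+1-i})$ need not vanish. For instance, take $\mathcal{A}=\RR{2}{1}$ (so $n=3$) and $\theta=\D{22}$ with $\mathbb{V}=\C e_3$. Then $(\mathcal{A}_\theta)^3=\C e_3\subseteq\mathbb{V}$, but $0\neq e_3=(e_1e_1)(e_1e_1)\in(\mathcal{A}_\theta)^4$.
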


\begin{proof}
Let $X=x+v$, $Y=y+w$, $Z=z+u$ with $x,y,z\in\mathcal{A}$ and $v,w,u\in\mathbb{V}$.
Since $\mathbb{V}$ annihilates $\mathcal{A}_\theta$, the associator in $\mathcal{A}_\theta$ is
\[
(X,Y,Z)_{\theta}=[[x,y]_\theta,z]_\theta-[x,[y,z]_\theta]_\theta=(x,y,z)+\theta(xy,z)-\theta(x,yz).
\]
Hence
\[
(X,Y,Z)_\theta+(X,Z,Y)_\theta=\big((x,y,z)+(x,z,y)\big)+\big(\theta(xy,z)-\theta(x,yz)+\theta(xz,y)-\theta(x,zy)\big).
\]
The first bracket lies in $\mathcal{A}$ and vanishes because $\mathcal{A}$ is right alternative; the second lies in $\mathbb{V}$.
By Lemma~\ref{lem:linearization}, $\mathcal{A}_\theta$ is right alternative if and only if the second bracket vanishes for all $x,y,z$, i.e.\ if and only if $\theta$ is a cocycle.
Finally, since $\theta$ takes values in $\mathbb{V}\subseteq\Ann(\mathcal{A}_\theta)$, an induction on $k$ gives $\mathcal{A}_\theta^{k}\subseteq\mathcal{A}^{k}+\sum_{i+j=k}\theta(\mathcal{A}^{i},\mathcal{A}^{j})$ for $k\ge2$.
If $\mathcal{A}^{n}=0$, then $\mathcal{A}^{2n}_\theta\subseteq\sum_{i+j=2n}\theta(\mathcal{A}^i,\mathcal{A}^j)=0$, because $i+j=2n$ forces $i\geq n$ or $j\geq n$.
\end{proof}

The \emph{annihilator of a bilinear map} $\theta$ is $\Ann(\theta)=\{x\in\mathcal{A}:\theta(x,\mathcal{A})+\theta(\mathcal{A},x)=0\}$.

\begin{lemma}\label{lem:annext}
$\Ann(\mathcal{A}_\theta)=\big(\Ann(\theta)\cap\Ann(\mathcal{A})\big)\oplus\mathbb{V}$.
\end{lemma}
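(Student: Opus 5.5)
We prove the two inclusions directly from the definition of the product $[\cdot,\cdot]_\theta$. Every element of $\mathcal{A}_\theta=\mathcal{A}\oplus\mathbb{V}$ can be written uniquely as $X=x+v$ with $x\in\mathcal{A}$ and $v\in\mathbb{V}$. For any $Y=y+w$ we have
\[
[X,Y]_\theta=xy+\theta(x,y),\qquad [Y,X]_\theta=yx+\theta(y,x).
\]
The product does not involve $v$ or $w$ at all, so $\mathbb{V}\subseteq\Ann(\mathcal{A}_\theta)$. Moreover, $X\in\Ann(\mathcal{A}_\theta)$ if and only if $x+0\in\Ann(\mathcal{A}_\theta)$. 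This reduces the problem to deciding when an element $x\in\mathcal{A}$ lies in $\Ann(\mathcal{A}_\theta)$.

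For $x\in\mathcal{A}$, the condition $[x,y]_\theta=[y,x]_\theta=0$ for all $y\in\mathcal{A}$ is the vanishing of $xy+\theta(x,y)$ and $yx+\theta(y,x)$. Since $xy,yx\in\mathcal{A}$ and $\theta(x,y),\theta(y,x)\in\mathbb{V}$, and the sum $\mathcal{A}\oplus\mathbb{V}$ is direct, each such element vanishes exactly when both its $\mathcal{A}$-component and its $\mathbb{V}$-component vanish. So the condition splits as $xy=yx=0$ together with $\theta(x,y)=\theta(y,x)=0$ for all $y$, that is, $x\in\Ann(\mathcal{A})\cap\Ann(\theta)$.

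Combining the two paragraphs gives $\Ann(\mathcal{A}_\theta)=\big(\Ann(\theta)\cap\Ann(\mathcal{A})\big)+\mathbb{V}$. The sum is direct because the first summand lies in $\mathcal{A}$ and $\mathcal{A}\cap\mathbb{V}=0$.

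There is no real obstacle here. The only point requiring care is the component separation in the direct sum $\mathcal{A}\oplus\mathbb{V}$, together with checking that the $\mathbb{V}$-part of $X$ plays no role in either product.
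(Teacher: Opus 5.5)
Your proof is correct and follows essentially the same argument as the paper: you observe that the product $[x+v,y+w]_\theta=xy+\theta(x,y)$ ignores the $\mathbb{V}$-components, and then separate the $\mathcal{A}$- and $\mathbb{V}$-parts using $\mathcal{A}\cap\mathbb{V}=0$. You also make explicit the points the paper leaves implicit, namely that $\mathbb{V}\subseteq\Ann(\mathcal{A}_\theta)$ and that the sum is direct.
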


\begin{proof}
An element $x+v$ ($x\in\mathcal{A}$, $v\in\mathbb{V}$) lies in $\Ann(\mathcal{A}_\theta)$ if and only if $xy+\theta(x,y)=0$ and $yx+\theta(y,x)=0$ for all $y\in\mathcal{A}$.
Since $\mathcal{A}\cap\mathbb{V}=0$, this means $x\mathcal{A}=\mathcal{A}x=0$ and $\theta(x,\mathcal{A})=\theta(\mathcal{A},x)=0$.
\end{proof}

\begin{lemma}\label{lem:cohomologous}
If $\theta,\vartheta\in\Z(\mathcal{A},\mathbb{V})$ and $\vartheta-\theta=\delta f$, then the map $\Phi(x+v)=x+v+f(x)$ is an isomorphism $\mathcal{A}_\theta\to\mathcal{A}_\vartheta$.
In particular, if $\theta\in\B(\mathcal{A},\mathbb{V})$, then $\mathcal{A}_\theta\cong\mathcal{A}\oplus\mathbb{V}$ is the direct sum of $\mathcal{A}$ and the trivial algebra $\mathbb{V}$.
\end{lemma}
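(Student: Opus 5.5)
The plan is to check directly that $\Phi$ is a linear bijection and that it preserves products; the ``in particular'' part then follows by specialising to $\theta=0$.

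First, $\Phi$ is linear, since $f$ is linear. It is bijective because it has the explicit inverse $x+v\mapsto x+v-f(x)$. This inverse is well defined because $f(x)\in\mathbb{V}$ and the $\mathcal{A}$-component of $\Phi(x+v)$ is still $x$. Equivalently, with respect to the decomposition $\mathcal{A}\oplus\mathbb{V}$, $\Phi$ is unipotent block-triangular.

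Next, take $X=x+v$ and $Y=y+w$ and compare the two sides.
\begin{itemize}
\item The left side is $\Phi([X,Y]_\theta)=\Phi(xy+\theta(x,y))=xy+\theta(x,y)+f(xy)$. Here $\theta(x,y)\in\mathbb{V}$ plays the role of the $\mathbb{V}$-component, and $xy$ is the $\mathcal{A}$-component.
\item For the right side, the $\mathbb{V}$-components of arguments are irrelevant in $[\cdot,\cdot]_\vartheta$. Hence $[\Phi(X),\Phi(Y)]_\vartheta=xy+\vartheta(x,y)$.
\end{itemize}
The two sides agree exactly when $\vartheta(x,y)=\theta(x,y)+f(xy)$. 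This is the hypothesis $\vartheta-\theta=\delta f$.

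For the second statement, write $\theta=\delta f$ and apply the first part with the pair $(0,\theta)$, since $\theta-0=\delta f$. The zero map is a cocycle, so this gives $\mathcal{A}_0\cong\mathcal{A}_\theta$. Here $\mathcal{A}_0$ is by definition $\mathcal{A}\oplus\mathbb{V}$ with product $xy$ on $\mathcal{A}$ and $\mathbb{V}$ annihilating everything. That is precisely the direct sum of the algebra $\mathcal{A}$ and the trivial algebra $\mathbb{V}$.

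There is no real obstacle here. The only point requiring care is the order in which the isomorphism is applied when specialising to $\theta=0$.
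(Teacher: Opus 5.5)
Your proposal is correct and follows the paper's proof: show $\Phi$ is bijective, then compare $\Phi([x+v,y+w]_\theta)=xy+\theta(x,y)+f(xy)$ with $[\Phi(x+v),\Phi(y+w)]_\vartheta=xy+\vartheta(x,y)$. Your derivation of the ``in particular'' part, applying the first part to the pair $(0,\theta)$, fills in a step the paper leaves implicit.
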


\begin{proof}
$\Phi$ is bijective, and
$\Phi([x+v,y+w]_\theta)=xy+\theta(x,y)+f(xy)=xy+\vartheta(x,y)=[\Phi(x+v),\Phi(y+w)]_\vartheta$.
\end{proof}

The following result shows that every algebra with a non-zero annihilator is a central extension of a smaller-dimensional
algebra.

\begin{lemma}\label{lem:quotient}
Let $\mathcal{A}$ be an $n$-dimensional right alternative algebra with $m=\dim\Ann(\mathcal{A})\neq0$.
Then there exist an $(n-m)$-dimensional right alternative algebra $\mathcal{A}'$, an $m$-dimensional space $\mathbb{V}$ and a cocycle $\theta\in\Z(\mathcal{A}',\mathbb{V})$ with $\Ann(\mathcal{A}')\cap\Ann(\theta)=0$ such that $\mathcal{A}\cong\mathcal{A}'_\theta$.
Moreover $\mathcal{A}'\cong\mathcal{A}/\Ann(\mathcal{A})$; in particular $\mathcal{A}'$ is unique up to isomorphism.
\end{lemma}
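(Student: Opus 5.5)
The plan is to construct the data directly from a complement of the annihilator. Fix a complementary subspace $\mathcal{A}_0$ with $\mathcal{A}=\mathcal{A}_0\oplus\Ann(\mathcal{A})$, and put $\mathbb{V}=\Ann(\mathcal{A})$, so $\dim\mathbb{V}=m$. Let $\pi\colon\mathcal{A}\to\mathcal{A}_0$ and $\rho\colon\mathcal{A}\to\mathbb{V}$ be the two projections along this decomposition. On $\mathcal{A}'=\mathcal{A}_0$ define the product $x\ast y=\pi(xy)$ and the bilinear map $\theta(x,y)=\rho(xy)$.

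Since $\Ann(\mathcal{A})$ is an ideal, the map $\pi$ induces a linear isomorphism $\mathcal{A}/\Ann(\mathcal{A})\to\mathcal{A}_0$. Under it, the quotient product corresponds to $\ast$, so $\mathcal{A}'\cong\mathcal{A}/\Ann(\mathcal{A})$ as algebras. The quotient of a right alternative algebra by an ideal is right alternative, since the identity $(x,y,y)=0$ passes to images. Hence $\mathcal{A}'$ is right alternative of dimension $n-m$, and uniqueness up to isomorphism follows because $\mathcal{A}/\Ann(\mathcal{A})$ is intrinsic.

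Next I compare $\mathcal{A}$ with $\mathcal{A}'_\theta$. The identity map $\mathcal{A}_0\oplus\mathbb{V}\to\mathcal{A}$ is a linear bijection. For $x,y\in\mathcal{A}_0$ and $v,w\in\mathbb{V}$ we have $(x+v)(y+w)=xy=\pi(xy)+\rho(xy)=[x+v,y+w]_\theta$, because $v$ and $w$ annihilate $\mathcal{A}$. So this map is an algebra isomorphism $\mathcal{A}'_\theta\cong\mathcal{A}$. Since $\mathcal{A}'_\theta$ is right alternative, Lemma~\ref{lem:RAext} gives $\theta\in\Z(\mathcal{A}',\mathbb{V})$.

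It remains to show $\Ann(\mathcal{A}')\cap\Ann(\theta)=0$, and this is the only step needing care. Lemma~\ref{lem:annext} gives $\Ann(\mathcal{A}'_\theta)=(\Ann(\theta)\cap\Ann(\mathcal{A}'))\oplus\mathbb{V}$. Transporting through the isomorphism, the left side is $\Ann(\mathcal{A})=\mathbb{V}$. Comparing dimensions forces $\Ann(\theta)\cap\Ann(\mathcal{A}')=0$.
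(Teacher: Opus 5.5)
Your proposal is correct and follows the paper's proof essentially step by step. Both arguments take a complement of $\Ann(\mathcal{A})$, define the product and $\theta$ via the two projections, use the identity map as the isomorphism $\mathcal{A}'_\theta\cong\mathcal{A}$, and then invoke Lemma~\ref{lem:RAext} for the cocycle condition and Lemma~\ref{lem:annext} for $\Ann(\mathcal{A}')\cap\Ann(\theta)=0$.
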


\begin{proof}
Let $\mathcal{A}'$ be a complement of $\mathbb{V}:=\Ann(\mathcal{A})$ in $\mathcal{A}$ and let $P\colon\mathcal{A}\to\mathcal{A}'$ be the projection along $\mathbb{V}$.
Define $[x,y]'=P(xy)$ for $x,y\in\mathcal{A}'$.
Since $\mathbb{V}$ annihilates $\mathcal{A}$, for all $x,y\in\mathcal{A}$ we have $xy=P(x)P(y)$, hence $P(xy)=[P(x),P(y)]'$.
Thus $P$ is a surjective homomorphism with kernel $\Ann(\mathcal{A})$, so $(\mathcal{A}',[\cdot,\cdot]')\cong\mathcal{A}/\Ann(\mathcal{A})$ is right alternative.
Put $\theta(x,y)=xy-[x,y]'\in\mathbb{V}$ for $x,y\in\mathcal{A}'$.
Then the identity map of the vector space $\mathcal{A}=\mathcal{A}'\oplus\mathbb{V}$ is an isomorphism $\mathcal{A}'_\theta\to\mathcal{A}$; by Lemma~\ref{lem:RAext}, $\theta$ is a cocycle.
By Lemma~\ref{lem:annext}, $\mathbb{V}=\Ann(\mathcal{A})=(\Ann(\theta)\cap\Ann(\mathcal{A}'))\oplus\mathbb{V}$, hence $\Ann(\theta)\cap\Ann(\mathcal{A}')=0$.
\end{proof}

Fix a basis $e_1',\dots,e_s'$ of $\mathbb{V}$ (in the applications, $e_{n+1},\dots,e_{n+s}$).
Every bilinear map $\theta\colon\mathcal{A}\times\mathcal{A}\to\mathbb{V}$ can be written uniquely as $\theta=\sum_{i=1}^{s}\theta_i e_i'$ with bilinear forms $\theta_i\colon\mathcal{A}\times\mathcal{A}\to\C$.
Clearly $\theta\in\Z(\mathcal{A},\mathbb{V})$ if and only if all $\theta_i\in\Z(\mathcal{A},\C)$, $\theta\in\B(\mathcal{A},\mathbb{V})$ if and only if all $\theta_i\in\B(\mathcal{A},\C)$, and $\Ann(\theta)=\bigcap_i\Ann(\theta_i)$.
We write $\Z(\mathcal{A})$, $\B(\mathcal{A})$, $\HH(\mathcal{A})$ for $\Z(\mathcal{A},\C)$, $\B(\mathcal{A},\C)$, $\HH(\mathcal{A},\C)$.

\begin{definition}
A central extension of $\mathcal{A}$ is called \emph{split} if it has an annihilator component, and \emph{non-split} otherwise.
\end{definition}

\begin{lemma}\label{lem:component}
Let $\theta=\sum_{i=1}^{s}\theta_i e_i'\in\Z(\mathcal{A},\mathbb{V})$ with $\Ann(\theta)\cap\Ann(\mathcal{A})=0$.
Then $\mathcal{A}_\theta$ has an annihilator component if and only if the classes $[\theta_1],\dots,[\theta_s]$ are linearly dependent in $\HH(\mathcal{A})$.
\end{lemma}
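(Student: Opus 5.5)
We prove the two implications separately, working throughout inside $\mathcal{A}_\theta=\mathcal{A}\oplus\mathbb{V}$ and using Lemma~\ref{lem:annext}. Since $\Ann(\theta)\cap\Ann(\mathcal{A})=0$, that lemma gives $\Ann(\mathcal{A}_\theta)=\mathbb{V}$.

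\emph{Dependence implies a component.} Suppose $\sum_i c_i[\theta_i]=0$ with $c\neq0$. We first change the basis of $\mathbb{V}$. Choose an invertible matrix whose first row is $(c_1,\dots,c_s)$, and write $\theta=\sum_i\theta_i'e_i''$ in the new basis $e_1'',\dots,e_s''$. The new components are $\theta_i'=\sum_j a_{ij}\theta_j$, so after this linear change we may assume $[\theta_1]=0$, i.e. $\theta_1=\delta f$ for some linear $f\colon\mathcal{A}\to\C$. Next put $\vartheta=\theta-\delta f\,e_1'$. Then $\vartheta$ is cohomologous to $\theta$, and Lemma~\ref{lem:cohomologous} gives $\mathcal{A}_\theta\cong\mathcal{A}_\vartheta$. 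The cocycle $\vartheta$ takes values in $\mathbb{V}'=\sspan{e_2',\dots,e_s'}$, so $\mathcal{A}_\vartheta=(\mathcal{A}\oplus\mathbb{V}')\oplus\C e_1'$. Here $\mathcal{A}\oplus\mathbb{V}'$ is an ideal and $\C e_1'$ annihilates everything, so $\C e_1'$ is an annihilator component.

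\emph{A component implies dependence.} Suppose $\mathcal{A}_\theta=\mathcal{I}\oplus I$ with $0\neq I\subseteq\Ann(\mathcal{A}_\theta)=\mathbb{V}$ and $\mathcal{I}$ an ideal. We may shrink $I$ to a single vector $e\in I$: enlarging $\mathcal{I}$ by a complement of $\C e$ in $I$ keeps it an ideal, since it only adds annihilator elements.

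Next we show $\mathcal{A}_\theta^2\subseteq\mathcal{I}$. The multiplication of $\mathcal{A}_\theta$ satisfies $(a+\lambda e)(b+\mu e)=ab$ for $a,b\in\mathcal{I}$, and $ab\in\mathcal{I}$ because $\mathcal{I}$ is an ideal. Now choose a basis of $\mathbb{V}$ of the form $e_1'=e,e_2',\dots,e_s'$, and let $\pi\colon\mathcal{A}_\theta\to\C$ be the coordinate functional along $e$ that vanishes on $\mathcal{I}$. This $\pi$ is a well-defined linear functional on $\mathcal{A}\oplus\mathbb{V}$. From $\mathcal{A}_\theta^2\subseteq\mathcal{I}$ we get $\pi(xy+\theta(x,y))=0$ for all $x,y\in\mathcal{A}$, that is,
\[
\pi|_{\mathbb{V}}\circ\theta=-\delta(\pi|_{\mathcal{A}}).
\]

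Write $\pi|_{\mathbb{V}}(e_i')=c_i$. Then $\sum_ic_i\theta_i\in\B(\mathcal{A})$. Finally $(c_1,\dots,c_s)\neq0$, because $\pi(e)=1$ and $e\in\mathbb{V}$. Hence the classes $[\theta_i]$ are linearly dependent.

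The main obstacle is bookkeeping in the converse. The component $I$ sits inside $\mathbb{V}$, but the ideal $\mathcal{I}$ need not contain $\mathcal{A}$. This is why the argument uses the functional $\pi$ rather than trying to split the vector space directly.
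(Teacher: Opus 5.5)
Your proof is correct and follows essentially the same route as the paper: a change of basis in $\mathbb{V}$ that makes one component a coboundary, followed by Lemma~\ref{lem:cohomologous}, and for the converse a linear functional vanishing on the complementary ideal and nonzero on $\mathbb{V}$, which yields $\sum_i\pi(e_i')\theta_i=-\delta(\pi|_{\mathcal{A}})$. Your extra steps (shrinking $I$ to $\C e$ and rebasing $\mathbb{V}$ at $e$) are harmless but unnecessary, since $\pi|_{\mathbb{V}}\neq0$ already gives a nontrivial relation in the original basis.
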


\begin{proof}
Suppose that $\sum_i c_i[\theta_i]=0$ with $(c_1,\dots,c_s)\neq0$, i.e.\ $\sum_ic_i\theta_i=\delta g$ for a linear form $g$ on $\mathcal{A}$.
Choose a basis $e_1'',\dots,e_s''$ of $\mathbb{V}$ such that the linear form $\ell(\sum_i a_ie_i')=\sum_ic_ia_i$ satisfies $\ell(e_j'')=0$ for $j<s$ and $\ell(e_s'')=1$.
Writing $\theta=\sum_j\theta_j''e_j''$ and applying $\ell$, we get $\theta_s''=\ell\circ\theta=\sum_ic_i\theta_i=\delta g$.
By Lemma~\ref{lem:cohomologous} we may replace $\theta$ by the cohomologous cocycle $\theta-\delta g\,e_s''$, whose last coordinate is zero.
Then $e_s''\in\Ann(\mathcal{A}_\theta)$ and $\mathcal{A}_\theta=\big(\mathcal{A}\oplus\sspan{e_1'',\dots,e_{s-1}''}\big)\oplus\C e_s''$, where the first summand is an ideal; so $\C e''_s$ is an annihilator component.

Conversely, let $\mathcal{A}_\theta=\mathcal{B}\oplus I$ with $\mathcal{B}$ an ideal and $0\neq I\subseteq\Ann(\mathcal{A}_\theta)$.
By Lemma~\ref{lem:annext} and the hypothesis, $\Ann(\mathcal{A}_\theta)=\mathbb{V}$, so $I\subseteq\mathbb{V}$.
Let $\pi\colon\mathcal{A}_\theta\to\C$ be a linear form with $\pi(\mathcal{B})=0$ and $\pi|_{I}\neq0$; then $\pi|_\mathbb{V}\neq 0$.
Since $[\mathcal{A}_\theta,\mathcal{A}_\theta]_\theta=[\mathcal{B},\mathcal{B}]_\theta\subseteq\mathcal{B}$, for all $x,y\in\mathcal{A}$ we get
$0=\pi(xy+\theta(x,y))=\pi(xy)+\sum_i\pi(e_i')\theta_i(x,y)$.
Hence $\sum_i\pi(e_i')\theta_i=-\delta(\pi|_{\mathcal{A}})\in\B(\mathcal{A})$ with $(\pi(e_1'),\dots,\pi(e_s'))\neq0$.
\end{proof}

The group $\Aut(\mathcal{A})$ acts on $\Z(\mathcal{A})$ by $(\phi\theta)(x,y)=\theta(\phi(x),\phi(y))$.
Since $\phi(\delta f)=\delta(f\circ\phi)$, the subspace $\B(\mathcal{A})$ is invariant, and we obtain an action on $\HH(\mathcal{A})$ and on the Grassmannian $G_s(\HH(\mathcal{A}))$ of $s$-dimensional subspaces: $\phi\sspan{[\theta_1],\dots,[\theta_s]}=\sspan{[\phi\theta_1],\dots,[\phi\theta_s]}$.
If $\sspan{[\theta_1],\dots,[\theta_s]}=\sspan{[\vartheta_1],\dots,[\vartheta_s]}$, then $\bigcap_i\Ann(\theta_i)\cap\Ann(\mathcal{A})=\bigcap_i\Ann(\vartheta_i)\cap\Ann(\mathcal{A})$, because $\B(\mathcal{A})$ consists of forms vanishing on $\Ann(\mathcal{A})\times\mathcal{A}$ and $\mathcal{A}\times\Ann(\mathcal{A})$.
Hence the set
\[
\T_s(\mathcal{A})=\Big\{W=\sspan{[\theta_1],\dots,[\theta_s]}\in G_s(\HH(\mathcal{A})) :\ \bigcap_{i=1}^{s}\Ann(\theta_i)\cap\Ann(\mathcal{A})=0\Big\}
\]
is well defined and $\Aut(\mathcal{A})$-invariant.
Let $\E(\mathcal{A},\mathbb{V})$ be the set of all non-split central extensions $\mathcal{A}_\theta$ with $\theta=\sum_i\theta_ie_i'$ and $\Ann(\theta)\cap\Ann(\mathcal{A})=0$.
By Lemma~\ref{lem:component}, $\mathcal{A}_\theta\in\E(\mathcal{A},\mathbb{V})$ if and only if $\sspan{[\theta_1],\dots,[\theta_s]}\in\T_s(\mathcal{A})$.

\begin{lemma}\label{lem:orbits}
Let $\mathcal{A}_\theta,\mathcal{A}_\vartheta\in\E(\mathcal{A},\mathbb{V})$ with $\theta=\sum_i\theta_ie'_i$ and $\vartheta=\sum_i\vartheta_ie'_i$.
Then $\mathcal{A}_\theta\cong\mathcal{A}_\vartheta$ if and only if
$\Orb\sspan{[\theta_1],\dots,[\theta_s]}=\Orb\sspan{[\vartheta_1],\dots,[\vartheta_s]}$.
\end{lemma}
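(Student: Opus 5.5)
The plan is to prove both directions separately, following the standard Skjelbred--Sund argument as presented in \cite{HAC16}, adapted to right alternative algebras.

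\emph{($\Leftarrow$)} Suppose $\phi\in\Aut(\mathcal{A})$ satisfies $\phi\sspan{[\theta_1],\dots,[\theta_s]}=\sspan{[\vartheta_1],\dots,[\vartheta_s]}$. Then there is an invertible matrix $(a_{ij})$ and a linear map $f\colon\mathcal{A}\to\mathbb{V}$ with
\[
\phi\vartheta_i...
\]
More precisely, I will arrange the identity $\sum_j a_{ij}\,\theta_j(\phi x,\phi y)=\vartheta_i(x,y)+f_i(xy)$, choosing the matrix so that the map goes from $\vartheta$ to $\theta$. Let $\psi\in GL(\mathbb{V})$ be defined by the transposed matrix, so that $\psi\circ\theta\circ(\phi\times\phi)=\vartheta+\delta f$. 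The candidate isomorphism is
\[
\Phi(x+v)=\phi(x)+\psi^{-1}\big(v\big)+\text{correction},
\]
that is, $\Phi\colon\mathcal{A}_{\vartheta+\delta f}\to\mathcal{A}_\theta$, $x+v\mapsto\phi(x)+\psi^{-1}(v)$. A one-line computation shows that $\Phi$ is multiplicative:
\[
\Phi(xy+\vartheta(x,y)+f(xy))=\phi(x)\phi(y)+\theta(\phi x,\phi y).
\]
Composing with the isomorphism $\mathcal{A}_\vartheta\cong\mathcal{A}_{\vartheta+\delta f}$ of Lemma~\ref{lem:cohomologous} gives $\mathcal{A}_\theta\cong\mathcal{A}_\vartheta$. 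This direction is routine bookkeeping.

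\emph{($\Rightarrow$)} Let $\Phi\colon\mathcal{A}_\vartheta\to\mathcal{A}_\theta$ be an isomorphism. Since $\Ann(\theta)\cap\Ann(\mathcal{A})=0$, Lemma~\ref{lem:annext} gives $\Ann(\mathcal{A}_\theta)=\mathbb{V}$, and likewise $\Ann(\mathcal{A}_\vartheta)=\mathbb{V}$. An isomorphism maps annihilator onto annihilator, so $\Phi(\mathbb{V})=\mathbb{V}$. Write $\Phi(x)=\phi(x)+g(x)$ for $x\in\mathcal{A}$, with $\phi\colon\mathcal{A}\to\mathcal{A}$ and $g\colon\mathcal{A}\to\mathbb{V}$, and put $\psi=\Phi|_\mathbb{V}\in GL(\mathbb{V})$. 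Then $\phi$ is bijective, because $\Phi$ induces a bijection $\mathcal{A}_\vartheta/\mathbb{V}\to\mathcal{A}_\theta/\mathbb{V}$ and both quotients are identified with $\mathcal{A}$. Expanding $\Phi([x,y]_\vartheta)=[\Phi x,\Phi y]_\theta$ and separating the $\mathcal{A}$- and $\mathbb{V}$-components gives two identities:
\[
\phi(xy)=\phi(x)\phi(y),
\]
\[
g(xy)+\psi(\vartheta(x,y))=\theta(\phi x,\phi y).
\]
The first says $\phi\in\Aut(\mathcal{A})$. The second says $\phi\theta=\psi\circ\vartheta+\delta g$. Taking coordinates, each $[\phi\theta_i]$ is a linear combination of the $[\vartheta_j]$, with an invertible coefficient matrix coming from $\psi$. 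Hence $\phi\sspan{[\theta_i]}=\sspan{[\vartheta_j]}$, so the two orbits coincide.

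The only delicate point is the step $\Phi(\mathbb{V})=\mathbb{V}$. This is exactly where the hypothesis that the extensions lie in $\E(\mathcal{A},\mathbb{V})$ is used, through the condition $\Ann(\theta)\cap\Ann(\mathcal{A})=0$. Everything else is matching matrix conventions: one has to track whether $\psi$ or $\psi^{-1}$ appears, and that is harmless because spans are invariant under invertible changes of basis.
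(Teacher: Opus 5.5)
Your proof is correct and follows the paper's argument essentially verbatim. For $(\Leftarrow)$ the paper writes the single map $\Psi(x+v)=\phi(x)+f(x)+Mv$ from $\mathcal{A}_\theta$ to $\mathcal{A}_\vartheta$ rather than composing with Lemma~\ref{lem:cohomologous}, and for $(\Rightarrow)$ it argues exactly as you do, via $\Ann(\mathcal{A}_\theta)=\Ann(\mathcal{A}_\vartheta)=\mathbb{V}$ and a comparison of $\mathbb{V}$-components. In a write-up, remove the unfinished display and the ``+correction'' term in the first direction. Also say explicitly that $(a_{ij})$ is invertible because both spans are $s$-dimensional, which holds by Lemma~\ref{lem:component} since the extensions are non-split.
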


\begin{proof}
Suppose $\phi\sspan{[\vartheta_i]}_i=\sspan{[\theta_i]}_i$ for some $\phi\in\Aut(\mathcal{A})$.
Then there is an invertible matrix $(m_{ij})$ and linear forms $f_i$ with $\vartheta_i\circ(\phi\times\phi)=\sum_jm_{ij}\theta_j+\delta f_i$.
Let $M\colon\mathbb{V}\to\mathbb{V}$ be the invertible linear map with $\sum_i\big(\sum_j m_{ij}\theta_j\big)e'_i=M\big(\sum_j\theta_je'_j\big)$ and let $f=\sum_i f_ie_i'$.
Then $\vartheta(\phi x,\phi y)=M\theta(x,y)+f(xy)$, and the map $\Psi(x+v)=\phi(x)+f(x)+Mv$ satisfies
\[
\Psi([x+v,y+w]_\theta)=\phi(xy)+f(xy)+M\theta(x,y)=\phi(x)\phi(y)+\vartheta(\phi x,\phi y)=[\Psi(x+v),\Psi(y+w)]_\vartheta,
\]
so $\Psi$ is an isomorphism.
Conversely, let $\Psi\colon\mathcal{A}_\theta\to\mathcal{A}_\vartheta$ be an isomorphism.
By Lemma~\ref{lem:annext}, $\Ann(\mathcal{A}_\theta)=\Ann(\mathcal{A}_\vartheta)=\mathbb{V}$, so $\Psi(\mathbb{V})=\mathbb{V}$ and $\Psi$ induces an automorphism $\phi$ of $\mathcal{A}_\theta/\mathbb{V}\cong\mathcal{A}$.
Thus $\Psi(x)=\phi(x)+g(x)$ for $x\in\mathcal{A}$, with $g\colon\mathcal{A}\to\mathbb{V}$ linear, and $\Psi|_{\mathbb{V}}=M$ is invertible.
Comparing the $\mathbb{V}$-components of $\Psi([x,y]_\theta)=[\Psi x,\Psi y]_\vartheta$ gives $M\theta(x,y)+g(xy)=\vartheta(\phi x,\phi y)$.
Taking coordinates, every $\phi\vartheta_i$ is a linear combination of $\theta_1,\dots,\theta_s$ modulo $\B(\mathcal{A})$, and since $M$ is invertible, $\sspan{[\phi\vartheta_i]}_i=\sspan{[\theta_i]}_i$.
\end{proof}

By Lemma~\ref{lem:quotient} and Lemma~\ref{lem:orbits}, the classification of the algebras of Theorem~A reduces to the following procedure, applied to every nontrivial nilpotent right alternative algebra $\mathcal{A}$ of dimension $3$ (with $s=2$) and of dimension $4$ (with $s=1$):
\begin{enumerate}[label=(\arabic*)]
\item determine $\HH(\mathcal{A})$, $\Ann(\mathcal{A})$ and $\Aut(\mathcal{A})$;
\item determine the set of $\Aut(\mathcal{A})$-orbits on $\T_s(\mathcal{A})$;
\item for every orbit, construct the extension corresponding to a representative.
\end{enumerate}
Five-dimensional algebras obtained from non-isomorphic algebras $\mathcal{A}$, or from different values of $s$, are non-isomorphic, since $\mathcal{A}\cong\mathcal{B}/\Ann(\mathcal{B})$ and $s=\dim\Ann(\mathcal{B})$ are isomorphism invariants of $\mathcal{B}$.

\textbf{Notation.}\label{subsec:notation}
Let $\mathcal{A}$ have basis $e_1,\dots,e_n$.
We denote by $\D{ij}$ the bilinear form with $\D{ij}(e_l,e_m)=\delta_{il}\delta_{jm}$; the forms $\D{ij}$ ($1\le i,j\le n$) form a basis of the space of bilinear forms on $\mathcal{A}$.
For every algebra $\mathcal{A}$ we fix cocycles whose classes $\N1,\N2,\dots$ form a basis of $\HH(\mathcal{A})$, and we write $\theta=\sum_i\alpha_i\N{i}$ for a class of $\HH(\mathcal{A})$.
For $\phi\in\Aut(\mathcal{A})$ we write $\phi\theta=\sum_i\alpha_i^{*}\N{i}$; the \emph{action formulas} express $\alpha^{*}_i$ in terms of $\alpha_1,\alpha_2,\dots$ and of the entries of $\phi$.
Automorphisms are described by the images of the basis vectors.
For a one-dimensional extension we write the class $\theta$ itself instead of $\sspan{\theta}$ when no confusion is possible; the new basis vector is $e_5$.
For a two-dimensional extension $\sspan{\theta_1,\theta_2}$ of a three-dimensional algebra, the new basis vectors are $e_4$ (corresponding to $\theta_1$) and $e_5$ (corresponding to $\theta_2$).
Scalar multiples of a class define the same point of $G_1(\HH(\mathcal{A}))$; we use this tacitly when we ``normalize'' a coefficient to $1$.
\section{Cohomology and automorphisms of the algebras of dimension three and four}\label{sec:data}

Now we obtain the classification of 5-dimensional nilpotent right alternative algebras by constructing central extensions of three and four dimensional nilpotent algebras. In this section, we give the description of the second cohomology space and automorphism of  three-dimensional nilpotent right alternative algebras \cite{IKM22}.

%
%

\begin{proposition}\label{prop:data3}
The second cohomology spaces of the three-dimensional nilpotent right alternative algebras are:
\[
\begin{array}{ll}
\HH(\RR31)=\sspan{[\D{12}+\D{21}],[\D{13}],[\D{31}],[\D{33}]}, & \Ann(\RR31)=\sspan{e_2,e_3};\\
\HH(\RR32)=\sspan{[\D{12}],[\D{21}],[\D{22}]}, & \Ann(\RR32)=\sspan{e_3};\\
\HH(\RR33)=\sspan{[\D{11}],[\D{12}],[\D{22}],[\D{13}],[\D{23}]}, & \Ann(\RR33)=\sspan{e_3};\\
\HH(\RR34(\alpha))=\sspan{[\D{11}],[\D{12}],[\D{22}]}\ (\alpha\neq0), & \Ann(\RR34(\alpha))=\sspan{e_3};\\
\HH(\RR34(0))=\sspan{[\D{11}],[\D{12}],[\D{22}],[\D{23}+\D{32}]}, & \\
\HH(\RR35)=\sspan{[\D{13}+\D{22}+\D{31}]}. &
\end{array}
\]
\end{proposition}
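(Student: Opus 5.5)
For each algebra $\mathcal{A}$ of Theorem~\ref{thm:dim3} the computation is finite-dimensional linear algebra, and I would organise it the same way every time. Write a general bilinear form $\theta=\sum_{i,j}c_{ij}\D{ij}$. The cocycle condition \eqref{eq:cocycle} is linear in $\theta$ and symmetric in $y,z$, so it suffices to impose it on basis triples $(e_a,e_b,e_c)$ with $b\le c$. This produces a homogeneous linear system in the $c_{ij}$, whose solution space is $\Z(\mathcal{A})$.

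The coboundaries are easy to read off from the multiplication table. Since $\delta f(e_i,e_j)=f(e_ie_j)$, the space $\B(\mathcal{A})$ is spanned by the forms $\delta e_k^{*}=\sum_{e_ie_j\ni e_k}(\text{coefficient})\,\D{ij}$ for $k$ ranging over a basis of $\mathcal{A}^2$. The quotient $\Z(\mathcal{A})/\B(\mathcal{A})$ is then presented by choosing representatives that avoid the coboundary directions. For example:
\begin{itemize}
\item in $\RR32$ we have $\B=\sspan{\D{11}+\D{22}}$, so $\D{22}$ represents classes modulo $\D{11}$;
\item in $\RR33$ we have $\B=\sspan{\D{12}-\D{21}}$, so $\D{21}$ is eliminated;
\item in $\RR34(\alpha)$ we have $\B=\sspan{\alpha\D{11}+\D{21}+\D{22}}$, so $\D{21}$ is eliminated.
\end{itemize}
The annihilators are read directly from the tables.

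The systems themselves are short. Most associators in these algebras vanish, because $\mathcal{A}^3=0$ for every algebra except $\RR35$. When $\mathcal{A}^3=0$, the cocycle condition reduces to $\theta(xy,z)+\theta(xz,y)=\theta(x,yz)+\theta(x,zy)$, with both sides computed from $\mathcal{A}^2\subseteq\Ann(\mathcal{A})$. So the only unknowns that are constrained are the $c_{ij}$ with $i$ or $j$ indexing $\mathcal{A}^2$; all forms on a complement of $\mathcal{A}^2$ are automatically cocycles.

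Take $\RR31$ as an example. Here $\mathcal{A}^2=\sspan{e_2}$, and the constraints on $c_{2j}$ and $c_{j2}$ come from triples involving $e_1e_1=e_2$. The triple $(e_1,e_1,e_1)$ gives $2c_{21}=2c_{12}$. The triples $(e_i,e_1,e_1)$ with $i\ne1$ give $c_{22}=0$ and $c_{i2}=0$ for $i=3$. The triples $(e_1,e_1,e_3)$ give $c_{23}=0$. In total, $\Z$ is spanned by $\D{11},\D{12}+\D{21},\D{13},\D{31},\D{33}$, and modulo $\D{11}$ this yields the four stated classes. The same routine handles $\RR32$, $\RR33$ and $\RR34(\alpha)$, with $\mathcal{A}^2=\sspan{e_3}$. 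There I expect exactly the conditions $c_{3j}=c_{j3}=0$, except that in $\RR33$ the terms $\D{13},\D{23}$ survive. The reason is that $e_1e_2=-e_2e_1$ makes the relevant combinations $\theta(e_1e_2,e_1)+\theta(e_1e_1,e_2)$ consistent.

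The case split at $\alpha=0$ in $\RR34$ needs separate attention. When $\alpha=0$ we have $e_1e_1=0$, which removes one equation, and the extra class $\D{23}+\D{32}$ survives. When $\alpha\neq0$, the triple $(e_1,e_1,e_2)$ (or $(e_2,e_1,e_1)$) forces $c_{23}=c_{32}=0$. I would write out these equations explicitly and compare the two cases.

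The main obstacle is $\RR35$, the only algebra with nonzero associators, where $\mathcal{A}^3=\sspan{e_3}$. Here all nine coefficients interact. I would list the triples $(e_a,e_b,e_c)$ with $b\le c$ up to total degree $4$ (degrees $e_1{=}1,e_2{=}2,e_3{=}3$). Triples of higher degree give $0=0$, since products of degree $\ge4$ vanish.

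This grading is the key simplification. The cocycle equation is homogeneous of degree $a+b+c$, so it only relates coefficients $c_{ij}$ with $i+j=a+b+c$. Triples of degree $3$ involve $c_{11}$ and $c_{12},c_{21}$, and these coefficients are free except as fixed by the coboundaries. At degree $4$, the triple $(e_1,e_1,e_1)$ gives a relation between $c_{22}$, $c_{13}$ and $c_{31}$, and further degree-$4$ triples do not arise. At degree $\ge5$, triples such as $(e_1,e_1,e_2)$ and $(e_2,e_1,e_1)$ force $c_{23}=c_{32}=c_{33}=0$.

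The coboundaries are $\D{11}$ (from $e_2^{*}$) and $\D{12}+\D{21}$ (from $e_3^{*}$). Carefully checking that the degree-$3$ cocycles are $\D{11},\D{12}+\D{21}$ only, and that degree $4$ leaves exactly the one class $\D{13}+\D{22}+\D{31}$, is the delicate bookkeeping step. I would verify it by direct substitution into \eqref{eq:cocycle}.
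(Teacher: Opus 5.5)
Your method is the standard direct computation behind this proposition, which the paper records without proof. You solve \eqref{eq:cocycle} on basis triples $(e_a,e_b,e_c)$ with $b\le c$, divide out the coboundaries $\delta e_k^*$, and use the grading for $\RR{3}{5}$. Your outcomes for $\RR{3}{1}$, $\RR{3}{2}$, $\RR{3}{3}$ and $\RR{3}{4}(\alpha)$ check out.

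One explanation is off. In $\RR{3}{3}$ the coefficients $c_{13},c_{23}$ survive because $yz+zy=0$ for all $y,z$. The right-hand side $\theta(x,yz)+\theta(x,zy)$ therefore vanishes identically, so these coefficients never occur in any equation. The combination $\theta(e_1e_2,e_1)+\theta(e_1e_1,e_2)=c_{31}$ that you cite is in fact one of the equations forcing $c_{31}=0$.

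\textbf{The gap is in $\RR{3}{5}$.} This is the case you single out as delicate, and the bookkeeping you describe is wrong: followed literally, it does not give the stated answer.
\begin{itemize}
\item The triple $(e_1,e_1,e_1)$ has degree $3$, not $4$. It gives $c_{21}=c_{12}$, so $c_{12},c_{21}$ are not free. Meanwhile $c_{11}$ has degree $2$ and occurs in no equation at all.
\item The degree-$4$ triples are $(e_1,e_1,e_2)$ and $(e_2,e_1,e_1)$. They give $c_{22}+c_{31}=2c_{13}$ and $c_{31}=c_{22}$. You need both: a single relation among $c_{13},c_{22},c_{31}$ would leave a two-dimensional space of degree-$4$ classes, since there are no degree-$4$ coboundaries.
\item Your assertion that triples of degree $>4$ give $0=0$ is false, and it contradicts your own next paragraph. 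Although $e_ae_b=0$ for $a+b\ge 4$, the term $\theta(e_ae_b,e_c)$ with $a+b\le 3$ survives.
\item Concretely, $(e_1,e_1,e_3)$ gives $c_{23}=0$, $(e_1,e_2,e_2)$ (or $(e_3,e_1,e_1)$) gives $c_{32}=0$, and $(e_1,e_2,e_3)$ gives $c_{33}=0$. Stopping at degree $4$ would leave $\D{23},\D{32},\D{33}$ as spurious extra classes.
\end{itemize}
With the correct equations, $\Z(\RR{3}{5})=\sspan{\D{11},\D{12}+\D{21},\D{13}+\D{22}+\D{31}}$. Since $\B(\RR{3}{5})=\sspan{\D{11},\D{12}+\D{21}}$, this yields $\HH(\RR{3}{5})=\sspan{[\D{13}+\D{22}+\D{31}]}$, as stated.
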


\begin{corollary}\label{cor:empty3}
$\T_2(\mathcal{A})=\emptyset$ for $\mathcal{A}\in\{\RR32,\RR34(\alpha)_{\alpha\neq0},\RR35\}$.
\end{corollary}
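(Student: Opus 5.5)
We show $\T_2(\mathcal{A})=\emptyset$ directly from the definition of $\T_s(\mathcal{A})$ and Proposition~\ref{prop:data3}. For each of the three algebras we check that either $\dim\HH(\mathcal{A})<2$, so $G_2(\HH(\mathcal{A}))$ is empty, or every two-dimensional subspace $W=\sspan{[\theta_1],[\theta_2]}$ has $\Ann(\theta_1)\cap\Ann(\theta_2)\cap\Ann(\mathcal{A})\neq0$. Recall that this intersection depends only on $W$ and not on the chosen representatives, as observed before the definition of $\T_s(\mathcal{A})$. So we may use the representatives listed in Proposition~\ref{prop:data3}.

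\emph{Case $\RR35$.} Here $\HH(\RR35)$ is one-dimensional, so there is no two-dimensional subspace and $\T_2(\RR35)=\emptyset$ trivially.

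\emph{Case $\RR32$.} We have $\Ann(\RR32)=\sspan{e_3}$ and $\HH(\RR32)=\sspan{[\D{12}],[\D{21}],[\D{22}]}$. Every cocycle class is represented by $\theta=a\D{12}+b\D{21}+c\D{22}$, and none of these basis forms involves the index $3$. Hence $\theta(e_3,\mathcal{A})=\theta(\mathcal{A},e_3)=0$ for every such $\theta$, so $e_3\in\Ann(\theta_1)\cap\Ann(\theta_2)\cap\Ann(\RR32)$ for any pair. Thus no subspace, of any dimension, lies in $\T_s$.

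\emph{Case $\RR34(\alpha)$, $\alpha\neq0$.} The argument is identical. Here $\Ann=\sspan{e_3}$ and $\HH=\sspan{[\D{11}],[\D{12}],[\D{22}]}$, and all the representative forms vanish whenever one argument is $e_3$. So $e_3$ lies in the relevant intersection for every $W$.

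The only point needing care is the annihilator computation: that $\Ann(\RR32)$ and $\Ann(\RR34(\alpha))$ are exactly $\sspan{e_3}$. We read this off the multiplication tables. In $\RR32$, $e_1$ and $e_2$ have nonzero squares. In $\RR34(\alpha)$, $e_2e_2\ne0$ and $e_1e_1=\alpha e_3\neq0$ when $\alpha\neq0$; a general element $x_1e_1+x_2e_2$ must then satisfy $\alpha x_1+x_2=0$ and $x_2=0$. The cohomology bases themselves are taken as given from Proposition~\ref{prop:data3}.

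Finally, we must make sure that nonzero coboundary shifts cannot change the conclusion. This holds because coboundaries $\delta f$ vanish on $\Ann(\mathcal{A})\times\mathcal{A}$ and $\mathcal{A}\times\Ann(\mathcal{A})$, which is exactly the representative-independence already noted in the paper.
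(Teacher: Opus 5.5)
Your proof is correct and is essentially the paper's argument: $\dim\HH(\RR35)=1$, and for $\RR32$ and $\RR34(\alpha)$ with $\alpha\neq0$ no basis cocycle involves $e_3$, so $e_3\in\Ann(\theta)\cap\Ann(\mathcal{A})$ for every cocycle $\theta$. Only the inclusion $e_3\in\Ann(\mathcal{A})$ is actually used, so your check that the annihilator is exactly $\sspan{e_3}$ is harmless but unnecessary.
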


\begin{proof}
$\dim\HH(\RR35)=1<2$.
For $\RR32$ and $\RR34(\alpha)$, $\alpha\neq0$, no basis cocycle involves $e_3$, hence $e_3\in\Ann(\theta)\cap\Ann(\mathcal{A})$ for every cocycle $\theta$.
\end{proof}

\begin{proposition}\label{prop:data4pure}
For the pure four-dimensional  nilpotent right alternative algebras we have $\Ann(\RR41)=\sspan{e_3,e_4}$, $\Ann(\RR4i)=\sspan{e_4}$ for $2\le i\le 9$, and
\[
\begin{array}{l}
\HH(\RR41)=\sspan{[\D{14}],[\D{13}+\D{22}+\D{31}],[\D{41}],[\D{44}]},\\
\HH(\RR42)=\HH(\RR43)=\sspan{[\D{13}],[\D{12}+\D{21}],[\D{31}]},\\
\HH(\RR44)=\sspan{[\D{13}],[\D{12}+\D{21}],[\D{33}],[\D{14}+\D{22}+\D{23}+\D{32}+\D{41}]},\\
\HH(\RR45)=\sspan{[\D{11}],[\D{12}],[\D{22}],[\D{14}+\D{41}],[\D{23}],[\D{24}-\D{33}+\D{42}]},\\
\HH(\RR46)=\sspan{[\D{11}],[\D{12}],[\D{13}],[\D{23}],[\D{14}+\D{32}+\D{41}],[\D{24}-\D{33}+\D{42}]},\\
\HH(\RR47)=\HH(\RR48)=\sspan{[\D{11}],[\D{12}],[\D{21}]},\\
\HH(\RR49)=\sspan{[\D{14}+\D{23}+\D{32}+\D{41}]}.
\end{array}
\]
\end{proposition}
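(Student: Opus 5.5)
For each algebra $\RR4i$ the plan is a direct linear computation with three parts: find the annihilator, find the cocycle space $\Z(\RR4i)$, and find the coboundary space $\B(\RR4i)$. The annihilator comes straight from the multiplication table. For $\RR41$, both $e_3$ and $e_4$ annihilate everything: $e_3$ appears only as a product and $e_4$ does not occur at all. For $2\le i\le 9$, each of $e_1,e_2,e_3$ has a nonzero product with some basis vector, while $e_4$ occurs only as a product. The one case to check explicitly is $\RR47$, where $e_1$ occurs only on the left, in $e_1e_2=e_3$; this already makes it non-annihilating. In every case I would write out the linear system for $x=\sum x_ie_i$ satisfying $xe_j=e_jx=0$ and confirm that the solution space is $\sspan{e_4}$.

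The coboundaries are also immediate. $\B(\RR4i)$ is spanned by the forms $\delta e_k^*=\sum_{e_ae_b\ni e_k}c_{ab}^k\D{ab}$, one for each $k$ with $e_k\in\RR4i^2$. For example, $\B(\RR42)=\sspan{\D{11},\D{12}+\D{21}+\D{33}}$ and $\B(\RR45)=\sspan{\D{12}-\D{21},\D{13}}$.

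The main work is computing $\Z(\RR4i)$. Write $\theta=\sum c_{ab}\D{ab}$ and impose the cocycle identity \eqref{eq:cocycle} on all triples of basis vectors $(e_i,e_j,e_k)$. This identity is symmetric in $y,z$, so the triples with $j\le k$ suffice, giving at most $40$ linear equations in $16$ unknowns. Most of these equations are trivial: a term $\theta(xy,z)$ survives only when $xy\neq0$. So I would list, for each algebra, just the nonzero products and only the equations in which they occur. For instance, for $\RR42$ the triple $(e_1,e_1,e_1)$ gives $\theta(e_2,e_1)=\theta(e_1,e_2)$. The triple $(e_1,e_1,e_2)$ gives
\[
\theta(e_2,e_2)-\theta(e_1,e_4)=\theta(e_1,e_4)-\theta(e_4,e_1),
\]
and the triple $(e_1,e_3,e_3)$ gives $\theta(e_1,e_4)=0$. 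Solving these small systems is routine, and the solution space should have dimension $\dim\HH+\dim\B$. I would then pick representatives of $\Z/\B$ and reduce them modulo the coboundaries to the displayed normal forms.

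I would organise the cases by inheritance. $\RR43$ and $\RR44$ differ from $\RR42$ only by extra products, so most of their equations can be reused; the same holds for $\RR46$ relative to $\RR45$, and for $\RR48$ relative to $\RR47$.

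The step I expect to be most delicate is the pair $\RR45$, $\RR46$, which have the largest cohomology, dimension $6$. There the coupled equations, coming from triples such as $(e_2,e_1,e_3)$ and $(e_1,e_2,e_2)$, produce the mixed classes $\D{14}+\D{41}$ (respectively $\D{14}+\D{32}+\D{41}$) and $\D{24}-\D{33}+\D{42}$. Sign errors are easy to make at this point. As a safeguard, I would verify a posteriori that each listed representative satisfies \eqref{eq:cocycle} on every triple. I would also check that the classes are independent modulo $\B$ and that their number equals $16-\operatorname{rank}(\text{system})-\dim\B$. Similarly, for $\RR49$ I would confirm that the final system has rank $16-1-3=12$.
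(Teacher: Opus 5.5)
Your plan is correct and is the same direct linear computation the paper relies on; the paper states these data without a written proof. Carrying it out confirms every annihilator and every listed basis, with $\dim\Z=6,5,5,6,8,8,5,5,4$ for $\RR41,\dots,\RR49$, and $\dim\B=2$ in all cases except $\dim\B(\RR49)=3$.

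Two harmless slips should be fixed. In $\RR47$ the product is $e_2e_1=e_3$, so $e_1$ occurs only on the right. And $\RR44$ is not $\RR42$ plus extra products, since it lacks $e_3e_3=e_4$; it is $\RR43$ with that product removed. This is exactly why the class $\D{14}+\D{22}+\D{23}+\D{32}+\D{41}$ survives for $\RR44$: the triple $(e_1,e_3,e_3)$ no longer forces $\theta(e_1,e_4)=0$.
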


\begin{remark}
From the above previous proposition, we obtain that any extension of the algebras $\RR42, \RR43, \RR47$ and $\RR48$  are split. Therefore, is sufficient to consider for the algebras $\RR41$, $\RR44$,  $\RR45$, $\RR46$ and $\RR49.$\end{remark}

\begin{proposition}\label{prop:data4twostep}
For the $2$-step nilpotent four-dimensional algebras we have $\Ann(\M1)=\sspan{e_2,e_3,e_4}$; $\Ann(\M2)=\sspan{e_2,e_4}$; $\Ann(\M i)=\sspan{e_3,e_4}$ for $i\in\{3,4,5,6,7,11,12,13,14\}$; $\Ann(\M i)=\sspan{e_4}$ for $i\in\{8,9,10,15\}$; and
\[
\begin{array}{l}
\HH(\M1)=\sspan{[\D{13}],[\D{14}],[\D{12}+\D{21}],[\D{31}],[\D{33}],[\D{34}],[\D{41}],[\D{43}],[\D{44}]},\\
\HH(\M2)=\sspan{[\D{13}],[\D{12}+\D{21}],[\D{31}],[\D{34}+\D{43}]},\\
\HH(\M3)=\sspan{[\D{11}],[\D{12}],[\D{13}],[\D{14}],[\D{22}],[\D{23}],[\D{24}],[\D{41}],[\D{42}],[\D{43}],[\D{44}]},\\
\HH(\M4(\alpha))=\sspan{[\D{11}],[\D{12}],[\D{14}],[\D{21}],[\D{24}],[\D{41}],[\D{42}],[\D{44}]}\quad(\alpha\neq0),\\
\HH(\M4(0))=\sspan{[\D{11}],[\D{14}],[\D{21}],[\D{22}],[\D{24}],[\D{13}+\D{31}],[\D{41}],[\D{42}],[\D{44}]},\\
\HH(\M5)=\sspan{[\D{11}],[\D{12}],[\D{14}],[\D{22}],[\D{24}],[\D{41}],[\D{42}],[\D{44}]},\\
\HH(\M6)=\sspan{[\D{11}],[\D{12}],[\D{13}-\D{23}+\D{31}-\D{32}],[\D{24}+\D{42}],[\D{14}+\D{31}+\D{42}]},\\
\HH(\M7(\gamma))=\sspan{[\D{21}],[\D{12}],[\D{13}-\gamma\D{23}+\D{31}-\gamma\D{32}],[\D{14}-\D{24}+\D{41}-\D{42}]}\quad(\gamma\neq1),\\
\HH(\M7(1))=\sspan{[\D{11}],[\D{12}],[\D{13}-\D{23}+\D{31}-\D{41}],[\D{32}-\D{41}],[\D{14}-\D{24}+\D{41}-\D{42}]},\\
\HH(\M8(\alpha))=\HH(\M{10})=\HH(\M{15})=\sspan{[\D{11}],[\D{12}],[\D{13}],[\D{21}],[\D{22}],[\D{23}],[\D{31}],[\D{32}]},\\
\HH(\M9)=\sspan{[\D{11}],[\D{12}],[\D{13}],[\D{21}],[\D{22}],[\D{23}],[\D{32}],[\D{33}]},\\
\HH(\M{11})=\sspan{[\D{11}],[\D{22}],[\D{13}+\D{31}],[\D{14}+\D{31}],[\D{23}+\D{42}],[\D{24}+\D{42}]},\\
\HH(\M{12})=\sspan{[\D{11}],[\D{12}],[\D{14}+\D{23}+\D{32}+\D{41}],[\D{13}+2\D{23}-\D{24}+\D{31}+2\D{32}-\D{42}]},\\
\HH(\M{13})=\sspan{[\D{11}],[\D{12}],[\D{23}+\D{32}],[\D{31}-\D{42}],[\D{24}+\D{42}]},\\
\HH(\M{14}(\lambda))=\sspan{[\D{11}],[\D{12}],[(\lambda+1)\D{31}+\D{24}],[\D{23}+\D{32}],[\D{13}-\lambda\D{31}+\D{42}]}\quad(\lambda\neq0,-1),\\
\HH(\M{14}(0))=\sspan{[\D{11}],[\D{21}],[\D{24}+\D{31}],[\D{23}+\D{32}],[\D{14}+\D{41}],[\D{13}+\D{42}]},\\
\HH(\M{14}(-1))=\sspan{[\D{11}],[\D{12}],[\D{14}],[\D{24}],[\D{23}+\D{32}],[\D{13}+\D{31}+\D{42}]}.
\end{array}
\]
\end{proposition}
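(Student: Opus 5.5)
The statement to prove is Proposition~\ref{prop:data4twostep}, a list of annihilators and second cohomology spaces for the algebras $\M1,\dots,\M{15}$. The plan is a uniform linear-algebra computation, carried out algebra by algebra.

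For each $\M i$, a $2$-step nilpotent algebra with basis $e_1,\dots,e_4$, I proceed in four steps.

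\textbf{Step 1: the annihilator.} Read $\Ann(\M i)$ off the multiplication table. A basis vector lies in the annihilator exactly when it occurs in no nonzero product, on either side; for the general element one checks that no linear combination of the remaining vectors has this property.

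\textbf{Step 2: the coboundaries.} Since $\M i^2=\sspan{\text{products}}$ is spanned by the vectors $e_3,e_4$ (or only $e_2,e_4$, or only $e_4$) appearing on the right-hand sides of the table, $\B(\M i)$ is spanned by the forms $\delta e_k^*$ for those $e_k\in\M i^2$. For example, for $\M6$ we get $\delta e_3^*=\D{12}-\D{22}$ and $\delta e_4^*=\D{21}$. Hence $\dim\B(\M i)=\dim\M i^2$.

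\textbf{Step 3: the cocycles.} The key simplification is that $\M i$ is $2$-step nilpotent, so $xy\in\Ann(\M i)$ for all $x,y$. Writing $\theta=\sum c_{lm}\D{lm}$, the cocycle identity \eqref{eq:cocycle} becomes
\[
\theta(e_le_m,e_k)-\theta(e_l,e_me_k)+\theta(e_le_k,e_m)-\theta(e_l,e_ke_m)=0
\]
for all $l\le$ and $m\le k$. Both sides are linear in the $c$'s, and only products $e_le_m\neq0$ contribute. Each such identity is a short linear relation among coefficients $c_{pq}$ with $p$ or $q$ in $\M i^2$. I solve this system for each algebra, which is routine but must be done carefully because there are up to $4\cdot\binom{5}{2}$ constraints. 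The coefficients $c_{lm}$ with $l,m\notin\M i^2$ are unconstrained.

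\textbf{Step 4: the quotient.} Choose representatives of $\Z/\B$ by using the coboundaries to eliminate one coefficient in each $\delta e_k^*$. Then compare the result with the listed basis; the count is $\dim\HH=\dim\Z-\dim\M i^2$.

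Several algebras need a case split on their parameter, because the rank of the linear system for $\Z$ jumps at special values:
\begin{itemize}[leftmargin=*]
\item $\M4(\alpha)$ at $\alpha=0$;
\item $\M7(\gamma)$ at $\gamma=1$;
\item $\M{14}(\lambda)$ at $\lambda=0$ and $\lambda=-1$.
\end{itemize}
I detect these values as the ones where a determinant of the coefficient matrix of the relations vanishes, and treat each such value separately.

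\textbf{Cross-check.} For $\M1,\M2,\M3,\M4,\M5$, which split off an annihilator component, the result should be compatible with the known cohomology of the smaller summands, for instance $\HH(\RR33)$ from Proposition~\ref{prop:data3}. This gives a check on the computation.

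The main obstacle is purely bookkeeping: solving the cocycle systems correctly for the algebras $\M6,\M7,\M{11},\dots,\M{14}$, where the relations mix several coefficients. I would organize this by writing, for each pair of products, the relation coming from the symmetrized associator in $(y,z)$.
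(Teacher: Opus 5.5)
Your proposal is correct and follows essentially the same route the paper indicates (it states this proposition without a written proof and only remarks on the rank drops): solve the linear cocycle system, which only involves values of $\theta$ with one argument in $\mathcal{A}^2$, divide by $\B(\mathcal{A})$ of dimension $\dim\mathcal{A}^2$, and treat separately the parameter values $\alpha=0$, $\gamma=1$, $\lambda\in\{0,-1\}$ where the rank drops (for $\M8(\alpha)$ it does not). One correction to your cross-check: $\M2=\RR21\oplus\RR21$ has no annihilator component, since its annihilator equals $\M2^2$; moreover $\HH$ of a direct sum also contains mixed classes such as $[\D{14}]$, $[\D{41}]$, $[\D{43}]$ for $\M3=\RR33\oplus\C e_4$, so any comparison with the summands must account for them.
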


\begin{remark}
The dimension of $\HH(\mathcal{A})$ jumps exactly at the parameter values singled out above: $\dim\HH(\RR34(\alpha))=3$ for $\alpha\neq0$ and $4$ for $\alpha=0$; $\dim\HH(\M4(\alpha))=8$ for $\alpha\neq0$ and $9$ for $\alpha=0$; $\dim\HH(\M7(\gamma))=4$ for $\gamma\neq1$ and $5$ for $\gamma=1$; $\dim\HH(\M{14}(\lambda))=5$ for $\lambda\notin\{0,-1\}$ and $6$ for $\lambda\in\{0,-1\}$; and $\dim\HH(\M8(\alpha))=8$ for all $\alpha$.
Indeed, the rank of the linear system defining $\Z(\mathcal{A})$ drops only at the zeros of a suitable maximal minor, and these zeros are exactly the values listed.
\end{remark}

\begin{corollary}\label{cor:empty4}
$\T_1(\mathcal{A})=\emptyset$ for $\mathcal{A}\in\{\RR42,\RR43,\RR47,\RR48,\M4(\alpha)_{\alpha\neq0},\M5,\M8(\alpha),\M9,\M{10},\M{15}\}$.
\end{corollary}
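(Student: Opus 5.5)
The plan is to show that for each listed algebra $\mathcal{A}$ (all of which have $\Ann(\mathcal{A})=\sspan{e_4}$, or $\sspan{e_3,e_4}$ for $\M4(\alpha)$ and $\M5$), every class in $\HH(\mathcal{A})$ has a representative vanishing on $\Ann(\mathcal{A})$. For $s=1$, $\T_1(\mathcal{A})$ consists of the lines $\sspan{[\theta]}$ with $\Ann(\theta)\cap\Ann(\mathcal{A})=0$. By the remark preceding the definition of $\T_s$, this condition does not depend on the representative, since coboundaries vanish on $\Ann(\mathcal{A})\times\mathcal{A}$ and on $\mathcal{A}\times\Ann(\mathcal{A})$. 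It therefore suffices to check it on the basis representatives of $\HH(\mathcal{A})$ listed in Propositions~\ref{prop:data4pure} and~\ref{prop:data4twostep}. If none of these basis cocycles $\D{ij}$ involves an index of a vector spanning $\Ann(\mathcal{A})$, then every linear combination $\theta$ satisfies $\Ann(\mathcal{A})\subseteq\Ann(\theta)$. Hence $\Ann(\theta)\cap\Ann(\mathcal{A})=\Ann(\mathcal{A})\neq0$ for every $\theta$, and $\T_1(\mathcal{A})=\emptyset$.

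The case check then reads off the tables. For $\RR42,\RR43$ the basis is $\D{13},\D{12}+\D{21},\D{31}$, and for $\RR47,\RR48$ it is $\D{11},\D{12},\D{21}$; none involves index $4$, and $\Ann=\sspan{e_4}$. For $\M8(\alpha),\M{10},\M{15}$ the basis uses only $\D{ij}$ with $i,j\le3$, and likewise for $\M9$; again $\Ann=\sspan{e_4}$.

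For $\M4(\alpha)$ with $\alpha\neq0$ and for $\M5$, the annihilator is $\sspan{e_3,e_4}$, and the basis cocycles do involve index $4$, for instance $\D{14}$ and $\D{44}$. They do not involve index $3$, however. So $e_3\in\Ann(\theta)\cap\Ann(\mathcal{A})$ for all $\theta$, and $\T_1=\emptyset$. This is the step needing care: one must use $e_3$, not $e_4$. Conceptually, $e_4$ spans an annihilator component of these algebras, and the statement reflects that.

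The main possible obstacle is that the conclusion rests entirely on the correctness of the cohomology computations in Propositions~\ref{prop:data4pure} and~\ref{prop:data4twostep}, which are taken as given. If I wished to make the argument self-contained, I would check directly, for each algebra, that every cocycle coincides modulo $\B(\mathcal{A})$ with a form that vanishes when an argument lies in the relevant annihilator vector. For $\M8$ and its relatives, I would substitute $x$ or $z$ equal to $e_4$ into \eqref{eq:cocycle} and use $e_4=\delta$-image, that is, $e_4\in\mathcal{A}^2$. I would not carry this out here, relying instead on the stated propositions.
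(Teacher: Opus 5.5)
Your argument is correct and is essentially the paper's proof. Read off from Propositions~\ref{prop:data4pure} and~\ref{prop:data4twostep} that no basis cocycle involves $e_4$ (resp.\ $e_3$ for $\M4(\alpha)$, $\alpha\neq0$, and $\M5$), so that vector lies in $\Ann(\theta)\cap\Ann(\mathcal{A})$ for every $\theta$; your remark that coboundaries vanish on the annihilator makes explicit the representative-independence the paper leaves implicit.

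Your opening framing (``every class has a representative vanishing on $\Ann(\mathcal{A})$'') is too strong for $\M4(\alpha)$ and $\M5$, where $\D{14}$ and $\D{44}$ do not vanish on $e_4$. Your actual case check nevertheless uses $e_3$ correctly there, which is all that is needed.
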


\begin{proof}
For $\RR42,\RR43,\RR47,\RR48,\M8(\alpha),\M9,\M{10},\M{15}$ we have $e_4\in\Ann(\mathcal{A})$ and no basis cocycle involves $e_4$; for $\M4(\alpha)$ ($\alpha\neq0$) and $\M5$ we have $e_3\in\Ann(\mathcal{A})$ and no basis cocycle involves $e_3$.
In all cases this vector lies in $\Ann(\theta)\cap\Ann(\mathcal{A})$ for every cocycle $\theta$.
\end{proof}

The following proposition lists the automorphism groups of the algebras for which $\T_s(\mathcal{A})\neq\emptyset$.
All parameters are complex numbers subject only to the condition that the determinant of the matrix is nonzero.

\begin{proposition}\label{prop:aut}
The automorphism groups are as follows.
Every automorphism $\phi$ is written as its matrix with respect to the basis $e_1,\dots,e_n$: the $j$-th column of the matrix is the coordinate vector of $\phi(e_j)$, so that $\phi(e_j)=\sum_i\phi_{ij}e_i$.

\begin{itemize}
\item For the algebras $\RR31$, $\RR33$ and $\RR34(0)$, the corresponding matrices have the following forms, respectively:
$$\begin{pmatrix}x&0&0\\ y&x^2&u\\ z&0&t\end{pmatrix}, \ \begin{pmatrix}x&y&0\\ z&t&0\\ u&w&xt-yz\end{pmatrix}, \ \begin{pmatrix}x&x-y&0\\ 0&y&0\\ u&v&xy\end{pmatrix};$$

\item For the algebras $\RR41$, $\RR44$, $\RR45$, $\RR46$ and $\RR49$, the corresponding matrices have the following forms, respectively:
\[\begin{pmatrix}x&0&0&0\\ y&x^2&0&0\\ z&2xy&x^3&u\\ t&0&0&v\end{pmatrix}, \ \begin{pmatrix}x&0&0&0\\ y&x^2&0&0\\ z&0&x^2&0\\ t&x(2y+z)&u&x^3\end{pmatrix}, \ \begin{pmatrix}x&0&0&0\\ z&y&0&0\\ 0&0&xy&0\\ t&u&0&x^2y\end{pmatrix},
\]
\[\begin{pmatrix}x&0&0&0\\ y&x^2&0&0\\ -\frac{y^2}{x}&-2xy&x^3&0\\ t&u&-x^2y&x^4\end{pmatrix}, \ \begin{pmatrix}x&0&0&0\\ y&x^2&0&0\\ z&2xy&x^3&0\\ t&y^2+2xz&3x^2y&x^4\end{pmatrix}.
\]

\item For the algebras $\M1$, $\M2$, $\M3$, $\M4(0), \M6, \M7(\gamma), \gamma\neq1, \M7(1), \M{11}, \M{12}, \M{13}$ and $\M{14}(\lambda)$, the corresponding matrices have the following forms, respectively:

\[\begin{pmatrix}x&0&0&0\\ t&x^2&w&g\\ u&0&y&j\\ v&0&h&z\end{pmatrix}, \ \begin{pmatrix}x&0&0&0\\ t&x^2&z&0\\ 0&0&y&0\\ u&0&v&y^2\end{pmatrix}, \ \begin{pmatrix}x&y&0&0\\ t&z&0&0\\ u&w&xz-yt&h\\ v&g&0&j\end{pmatrix}, \ \begin{pmatrix}x&0&0&0\\ y-x&y&0&0\\ u&v&xy&j\\ t&w&0&z\end{pmatrix},
\]
\[\begin{pmatrix}x&0&0&0\\ 0&x&0&0\\ y&z&x^2&0\\ t&u&0&x^2\end{pmatrix}, \ \begin{pmatrix}x&0&0&0\\ 0&x&0&0\\ y&z&x^2&0\\ t&u&0&x^2\end{pmatrix}, \ \begin{pmatrix}x&z&0&0\\ x-y+z&y&0&0\\ u&v&x(y-z)&z(y-z)\\ t&w&(x-y+z)(y-z)&y(y-z)\end{pmatrix},
\]
\[\begin{pmatrix}x&0&0&0\\ 0&y&0&0\\ z&u&xy&0\\ t&v&0&xy\end{pmatrix},\ \begin{pmatrix}x&0&0&0\\ 0&x&0&0\\ y&z&x^2&0\\ u&v&0&x^2\end{pmatrix},
\ \begin{pmatrix}x&y&0&0\\ 0&z&0&0\\ t&u&z^2&0\\ v&w&yz&xz\end{pmatrix},
\ \begin{pmatrix}x&y&0&0\\ 0&z&0&0\\ t&v&z^2&0\\ u&w&(\lambda+1)yz&xz\end{pmatrix}.
\]
\end{itemize}
Moreover, the algebras $\M{2}, \M7(\gamma)), \gamma\notin\{0,1\}, \M{11}$ and $\M{12}$ also have an automorphism defined by the product $\phi\sigma$, where the $\sigma$ are respectively as follows:
\[\begin{pmatrix}0&0&1&0\\ 0&0&0&1\\ 1&0&0&0\\ 0&1&0&0\end{pmatrix}, \ \begin{pmatrix}0&\gamma p&0&0\\ p&0&0&0\\ 0&0&0&-\gamma^2p^2\\ 0&0&-p^2&0\end{pmatrix},\ \begin{pmatrix}0&1&0&0\\ 1&0&0&0\\ 0&0&0&1\\ 0&0&1&0\end{pmatrix}, \begin{pmatrix}0&p&0&0\\ p&0&0&0\\ 0&0&-p^2&2p^2\\ 0&0&0&p^2\end{pmatrix},\ p\neq0.\]

\end{proposition}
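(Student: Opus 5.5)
The statement to prove is Proposition~\ref{prop:aut}, the list of automorphism groups. Since every algebra involved is nilpotent with a very short multiplication table, the plan is a direct computation, carried out uniformly for each algebra $\mathcal{A}$ with basis $e_1,\dots,e_n$. We write an arbitrary linear map as $\phi(e_j)=\sum_i\phi_{ij}e_i$ and impose $\phi(e_ie_j)=\phi(e_i)\phi(e_j)$ for all pairs $(i,j)$. This gives a system of polynomial equations in the $\phi_{ij}$, and the claim is that its solutions with $\det\phi\neq0$ are exactly the displayed matrices, together with the extra components $\phi\sigma$ in the cases $\M2$, $\M7(\gamma)$, $\M{11}$, $\M{12}$. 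The inclusion ``$\supseteq$'' is a routine check: substitute the displayed matrices (and $\sigma$) into the multiplication table. The real work is ``$\subseteq$''.

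For ``$\subseteq$'' we first use the characteristic subspaces. Every automorphism preserves $\mathcal{A}^2$, $\mathcal{A}^3,\dots$ and $\Ann(\mathcal{A})$, and these are listed in Propositions~\ref{prop:data3}, \ref{prop:data4pure} and \ref{prop:data4twostep}. This immediately forces most of the zero pattern. For example:
\begin{itemize}[leftmargin=*]
\item For $\RR45$ we have $\mathcal{A}^2=\sspan{e_3,e_4}$, $\mathcal{A}^3=\sspan{e_4}$, and the space $\sspan{e_2,e_3,e_4}$ is characterized as the set of $x$ with $x\mathcal{A}^2=0$.
\item For $\RR41,\RR44,\RR46,\RR49$ the filtration $\mathcal{A}\supset\mathcal{A}^2\supset\cdots$ is determined by $e_1$, so $\phi(e_2)=\phi(e_1)^2$, $\phi(e_3)=\phi(e_1)\phi(e_2)$, and so on. Hence $\phi$ is determined by $\phi(e_1)$ and the images of the remaining generators.
\end{itemize}
After this, only the block acting on $\mathcal{A}/\mathcal{A}^2$ is unknown, together with the entries mapping the generators into $\mathcal{A}^2$. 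We then impose the remaining product relations coefficient by coefficient. In the pure cases this yields relations such as $\phi_{33}=x^3$ and $\phi_{42}=x(2y+z)$, and it forces vanishing of off-diagonal entries like $\phi_{21}$ in $\RR45$ via $e_1e_1=0=e_2e_2$.

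For the $2$-step algebras $\M i$ the map $\phi$ is determined by an arbitrary lift of an element $g\in GL(\mathcal{A}/\mathcal{A}^2)$ plus an arbitrary map into $\Ann$. So the task reduces to finding the stabilizer of the defining tuple of bilinear forms $\mathcal{A}/\mathcal{A}^2\times\mathcal{A}/\mathcal{A}^2\to\mathcal{A}^2$ under $GL(\mathcal{A}/\mathcal{A}^2)\times GL(\mathcal{A}^2)$. This explains the free parameters in the last rows and columns.

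The main obstacle is the $2$-step cases where the stabilizer is disconnected or not upper triangular: $\M2$, $\M7(\gamma)$, $\M{11}$ and $\M{12}$. There I would analyze the pencil of bilinear forms on $\mathcal{A}/\mathcal{A}^2$. The lines in $\mathcal{A}/\mathcal{A}^2$ that are ``isotropic'' for the forms, meaning $\bar x\bar x=0$ or $\bar x\mathcal{A}=0$ in a suitable sense, form a finite set of at most two points that $\phi$ must permute. For instance:
\begin{itemize}[leftmargin=*]
\item In $\M{11}$ the squares vanish exactly on the lines $\C e_1$ and $\C e_2$.
\item In $\M7(\gamma)$ the equation $(ae_1+be_2)^2=0$ reduces to $a^2-\gamma b^2=0$ up to the relevant components.
\end{itemize}
Hence $\phi$ either fixes each such line (giving the identity component) or swaps them (giving the $\sigma$-component). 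Then $\phi$ is diagonal or antidiagonal on $\mathcal{A}/\mathcal{A}^2$, and the remaining equations fix the $\mathcal{A}^2$-block. For $\M7(1)$ and $\M4(0)$ the two special lines coincide or degenerate, which gives the non-diagonal but connected forms displayed. One must handle the degenerate parameter values, namely $\gamma\in\{0,1\}$ and $\lambda\in\{0,-1\}$, separately, checking that no extra swap exists there. This case analysis is where errors are most likely, so I would verify each final group by confirming that it acts transitively where claimed and by checking its dimension against $\dim$ of the solution variety.
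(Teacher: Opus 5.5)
Your overall plan is the same direct computation the paper relies on, and the displayed groups are in fact correct: solve $\phi(e_ie_j)=\phi(e_i)\phi(e_j)$ with $\det\phi\neq0$, using characteristic subspaces and, for the $2$-step algebras, the stabilizer of the product map $\mathcal{A}/\mathcal{A}^2\times\mathcal{A}/\mathcal{A}^2\to\mathcal{A}^2$. The problem is the shortcut you propose for the step you call the main obstacle, the disconnected cases. Your criterion ``$\bar x\bar x=0$ or $\bar x\mathcal{A}=0$'' finds the right pair of lines only for $\M{11}$.

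In $\M{7}(\gamma)$ one has $(ae_1+be_2)^2=a(a-\gamma b)e_3+b(a-b)e_4$. For $\gamma\neq1$ this vanishes only when $a=b=0$, and no nonzero element of $\sspan{e_1,e_2}$ annihilates $\mathcal{A}$. So nothing reduces to $a^2-\gamma b^2=0$. Worse, the lines $a=\pm\sqrt{\gamma}\,b$ are the eigenlines of $\sigma_p$, since $\sigma_p(ae_1+be_2)\equiv p(\gamma b\,e_1+a\,e_2)$. Thus $\sigma_p$ \emph{fixes} them, and your dichotomy ``fixes the lines $\Rightarrow$ identity component, swaps them $\Rightarrow$ $\sigma$-component'' would misclassify. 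The criterion is also empty for $\M{2}$, where $(ae_1+be_3)^2=a^2e_2+b^2e_4$, and for $\M{12}$, where $(ae_1+be_2)^2=2b^2e_3+(a^2+b^2)e_4$.

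The invariant that works is the rank of the multiplication operators.

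\emph{$\M{7}(\gamma)$.} Here $vw=w_1(v_1-\gamma v_2)e_3+w_2(v_1-v_2)e_4$, and the two linear forms are independent for $\gamma\neq1$. Hence right multiplication by $w$ has rank one exactly when $w\in\C e_1\cup\C e_2$ modulo $\mathcal{A}^2$, so $\phi$ permutes these two lines.
If $\phi$ fixes them, $e_1e_2=e_4$ and $e_2e_2=-e_4$ force $\phi$ to be scalar on $\sspan{e_1,e_2}$.
If $\phi$ swaps them, then $\phi(e_1)\equiv pe_2$, $\phi(e_2)\equiv qe_1$, and $e_2e_1=-\gamma e_1e_1$ gives $q=\gamma p$. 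This requires $\gamma\neq0$ and is exactly $\sigma_p$.

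\emph{$\M{2}$.} Use the lines where $\dim x\mathcal{A}=1$, namely $\C e_1$ and $\C e_3$.

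\emph{$\M{12}$.} The line $\C e_3$, the image of $xy-yx$, is characteristic. Squares modulo it give the form $a^2+b^2$, and $\sigma_p$ swaps its isotropic lines $a=\pm ib$. But permuting them only makes the induced map on $\sspan{e_1,e_2}$ preserve $a^2+b^2$ up to a scalar. The $e_3$-component of $\phi(x)^2=\phi(x^2)$ is still needed to reduce to scalars or $\sigma_p$. So ``diagonal or antidiagonal'' holds only in the special-line basis, and the remaining equations still constrain $\phi$ on $\mathcal{A}/\mathcal{A}^2$, not only on the $\mathcal{A}^2$-block.

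Two smaller slips:
\begin{itemize}
\item In $\RR{4}{6}$ one has $e_1e_1=0$ and two generators, so $\phi(e_2)=\phi(e_1)^2$ is false. It is the relation $\phi(e_1)^2=0$ that yields $\phi_{31}=-y^2/x$.
\item In $\RR{4}{5}$ the entry $\phi_{21}=z$ is free. What vanishes is $\phi_{12}$, and this is forced by invariance of $\{x: x\mathcal{A}^2=0\}=\sspan{e_2,e_3,e_4}$ (or by $e_2e_3=0$), not by $e_1e_1=0=e_2e_2$.
\end{itemize}
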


\begin{remark}
In each case the number of free parameters of the connected family equals the dimension of the derivation algebra of $\mathcal{A}$, and the solution set of the polynomial system $\phi(e_ie_j)=\phi(e_i)\phi(e_j)$, $\det\phi\neq0$. The additional automorphisms $\sigma$, $\sigma_p$ in (x), (xiv), (xvi) and (xvii) interchange the two generators; they are needed for the isomorphism problem of the extensions of $\M7(\gamma)$, $\M{11}$ and $\M{12}$ (Lemmas~\ref{lem:M7}, \ref{lem:M11} and~\ref{lem:M12}), whereas for $\M2$ the connected component already acts with finitely many orbits (Lemma~\ref{lem:M2}).
\end{remark}
\section{Algebras with two-dimensional annihilator}\label{sec:ann2}

By Proposition~\ref{prop:data3} and Corollary~\ref{cor:empty3}, the five-dimensional algebras of Theorem~A with two-dimensional annihilator are the extensions $\mathcal{A}_W$, $W\in\T_2(\mathcal{A})$, for $\mathcal{A}\in\{\RR31,\RR33,\RR34(0)\}$.
In this section we determine the $\Aut(\mathcal{A})$-orbits on $\T_2(\mathcal{A})$ for these three algebras.
We use the bases $\N1,\N2,\dots$ of $\HH(\mathcal{A})$ in the order of Proposition~\ref{prop:data3}, and the automorphisms of Proposition~\ref{prop:aut}.
A point of $G_2(\HH(\mathcal{A}))$ is written $W=\sspan{\theta_1,\theta_2}$; the new basis vectors of the extension are $e_4$ and $e_5$.

\textbf{Central extensions of $\RR31$.} Let us use the following notations: 
$$\N1=[\D{12}+\D{21}], \ \N2=[\D{13}], \  \N3=[\D{31}], \ \N4=[\D{33}]$$ and $\Ann(\RR31)=\sspan{e_2,e_3}$.

\begin{lemma}\label{lem:R31action}
Let $\theta=\sum_{i=1}^4\alpha_i\N{i}$ and let $\phi\in\Aut(\RR31)$ be as in Proposition~\ref{prop:aut}.
Then $\phi\theta=\sum_i\alpha_i^*\N{i}$ with
\[
\alpha_1^*=\alpha_1x^3,\quad
\alpha_2^*=\alpha_1ux+\alpha_2tx+\alpha_4tz,\quad
\alpha_3^*=\alpha_1ux+\alpha_3tx+\alpha_4tz,\quad
\alpha_4^*=\alpha_4t^2 .
\]
A subspace $W=\sspan{\theta_1,\theta_2}\in G_2(\HH(\RR31))$ lies in $\T_2(\RR31)$ if and only if the coefficient $\alpha_1$ does not vanish identically on $W$.
\end{lemma}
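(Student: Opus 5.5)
The plan is a direct computation in two parts: first the action formulas, then the criterion for membership in $\T_2(\RR31)$.

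\textbf{Action formulas.} By Proposition~\ref{prop:aut}, an automorphism of $\RR31$ is
\[
\phi(e_1)=xe_1+ye_2+ze_3,\qquad \phi(e_2)=x^2e_2,\qquad \phi(e_3)=ue_2+te_3 .
\]
Take the representative cocycle $\theta=\alpha_1(\D{12}+\D{21})+\alpha_2\D{13}+\alpha_3\D{31}+\alpha_4\D{33}$ and evaluate $(\phi\theta)(e_i,e_j)=\theta(\phi e_i,\phi e_j)$ for all $1\le i,j\le3$.

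Since $\phi(e_2)$ and $\phi(e_3)$ have no $e_1$-component, every entry with $i,j\in\{2,3\}$ vanishes, except $(3,3)$, which equals $\alpha_4t^2$. The remaining entries are:
\[
(1,2)=(2,1)=\alpha_1x^3,\qquad (1,3)=\alpha_1xu+\alpha_2xt+\alpha_4zt,\qquad (3,1)=\alpha_1ux+\alpha_3tx+\alpha_4tz .
\]
The $(1,1)$ entry, $2\alpha_1xy+(\alpha_2+\alpha_3)xz+\alpha_4z^2$, multiplies $\D{11}$.

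Because $e_1e_1=e_2$ is the only nonzero product, $\B(\RR31)=\sspan{\D{11}}$. So the $\D{11}$-term is a coboundary and can be dropped. Reading off the coefficients of $\D{12}+\D{21}$, $\D{13}$, $\D{31}$, $\D{33}$ then gives exactly the stated $\alpha_i^*$.

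\textbf{Membership in $\T_2(\RR31)$.} By the discussion before Lemma~\ref{lem:orbits}, the condition $\bigcap_i\Ann(\theta_i)\cap\Ann(\RR31)=0$ does not depend on the choice of cocycle representatives. This is because $\D{11}$ vanishes whenever one argument lies in $\Ann(\RR31)=\sspan{e_2,e_3}$. We may therefore work with the representatives without $\D{11}$.

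\emph{Necessity.} Suppose $\alpha_1$ vanishes on $W$. Then neither $\theta_1$ nor $\theta_2$ involves an index $2$, so $e_2\in\Ann(\theta_1)\cap\Ann(\theta_2)\cap\Ann(\RR31)$. Hence $W\notin\T_2$.

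\emph{Sufficiency.} Suppose $\alpha_1$ does not vanish identically on $W$. Choose a basis with $\alpha_1(\theta_1)\neq0$, and subtract a multiple of $\theta_1$ so that $\alpha_1(\theta_2)=0$. Then $\theta_2=\beta_2\N2+\beta_3\N3+\beta_4\N4$ with $(\beta_2,\beta_3,\beta_4)\neq0$, since $\dim W=2$.

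For $v=ae_2+be_3$ in the common annihilator we have
\[
\theta_2(e_1,v)=\beta_2b,\qquad \theta_2(v,e_1)=\beta_3b,\qquad \theta_2(e_3,v)=\beta_4b .
\]
These all vanish, and some $\beta_i\neq0$, so $b=0$. Then $\theta_1(e_1,v)=\alpha_1a=0$ gives $a=0$. So the intersection is zero and $W\in\T_2$.

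Nothing here is genuinely hard. The only point needing care is the well-definedness of the membership condition modulo coboundaries, and that was already justified in the paper.
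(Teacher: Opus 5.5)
Your proof is correct and follows the paper's argument essentially verbatim. You evaluate $\theta(\phi e_i,\phi e_j)$ and discard the $\D{11}$-coefficient as a coboundary. For membership in $\T_2(\RR31)$, you show that $e_2$ is a common annihilator when $\alpha_1\equiv 0$ on $W$; otherwise you normalize so that $\alpha_1(\theta_2)=0$, which forces $b=0$ and then $a=0$. All nine entries of your explicit computation check out.
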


\begin{proof}
The action formulas are obtained by evaluating $\theta(\phi(e_i),\phi(e_j))$ and reducing modulo $\B(\RR31)=\sspan{\D{11}}$.
For the second claim, let $a e_2+be_3\in\Ann(\RR31)$.
For a cocycle $\theta=\alpha_1(\D{12}+\D{21})+\alpha_2\D{13}+\alpha_3\D{31}+\alpha_4\D{33}$ we have
$\theta(e_1,ae_2+be_3)=a\alpha_1+b\alpha_2$, $\theta(ae_2+be_3,e_1)=a\alpha_1+b\alpha_3$, $\theta(e_3,ae_2+be_3)=\theta(ae_2+be_3,e_3)=b\alpha_4$, and all other values vanish.
If $\alpha_1=0$ on $W$, then $e_2\in\Ann(\theta_1)\cap\Ann(\theta_2)\cap\Ann(\RR31)$.
Conversely, suppose $\alpha_1\neq0$ on $W$; choose the basis so that $\theta_1$ has $\alpha_1=1$ and $\theta_2$ has $\alpha_1=0$, $\theta_2\neq0$.
If $ae_2+be_3$ is annihilated by $\theta_2$ then $b\alpha_2=b\alpha_3=b\alpha_4=0$ for the coefficients of $\theta_2$, whence $b=0$; then $\theta_1(e_1,ae_2)=a=0$.
\end{proof}

\begin{proposition}\label{prop:R31}
Every element of $\T_2(\RR31)$ lies in the $\Aut(\RR31)$-orbit of exactly one of the following subspaces:
\[
\begin{array}{lll}
W_1=\sspan{\N1,\N4}, & W_2=\sspan{\N1+\N2,\N4}, & W_3=\sspan{\N1,\N2+\N4},\\
W_4=\sspan{\N1+\N2,\N2+\N4}, & W_5^{\alpha}=\sspan{\N1+\N4,\N2+\alpha\N3}, & W_6=\sspan{\N1+\N4,\N3},\\
W_7=\sspan{\N1+\N2+\N4,\N2+\N3}, & W_8^{\alpha}=\sspan{\N1,\N2+\alpha\N3}, & W_9=\sspan{\N1,\N3},\\
W_{10}=\sspan{\N1+\N2,\N2+\N3}, & &
\end{array}
\]
where $\alpha\in\C$; moreover $W_5^{\alpha}$ and $W_5^{\beta}$ \textup{(}resp.\ $W_8^\alpha$ and $W_8^\beta$\textup{)} lie in the same orbit only if $\alpha=\beta$.
\end{proposition}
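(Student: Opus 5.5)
The plan is a case analysis on how the two linear functionals ``coefficient $\alpha_1$'' and ``coefficient $\alpha_4$'' restrict to $W$. These are natural because, by Lemma~\ref{lem:R31action}, $\alpha_1^*=\alpha_1x^3$ and $\alpha_4^*=\alpha_4t^2$. So both the vanishing of $\alpha_4$ on $W$ and the linear (in)dependence of $\alpha_1|_W,\alpha_4|_W$ are $\Aut(\RR31)$-invariant. By Lemma~\ref{lem:R31action}, $\alpha_1|_W\neq0$ for every $W\in\T_2(\RR31)$. Hence exactly one of the following holds:
\begin{itemize}
\item[(I)] $\alpha_4|_W=0$;
\item[(II)] $\alpha_1|_W$ and $\alpha_4|_W$ are linearly independent;
\item[(III)] $\alpha_4|_W$ is a nonzero multiple of $\alpha_1|_W$.
\end{itemize}
These three cases should produce $W_8^\alpha,W_9,W_{10}$; then $W_1,\dots,W_4$; then $W_5^\alpha,W_6,W_7$, respectively. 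Since the case is an invariant, no orbit from one case can meet another, and it remains to classify within each case.

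The key observation is that a vector with $\alpha_1=\alpha_4=0$ transforms simply: $(\alpha_2,\alpha_3)\mapsto tx(\alpha_2,\alpha_3)$. Moreover, for any vector the difference $\alpha_2-\alpha_3$ is multiplied by $tx$. In cases (I) and (III) the intersection $W\cap\{\alpha_1=0\}$ is a line spanned by some $\theta_2=\beta_2\N2+\beta_3\N3$, which is a canonically attached line of $W$. Hence the point $[\beta_2:\beta_3]\in\mathbb{P}^1$ is an orbit invariant.

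If $\beta_2\neq0$ we normalize $\theta_2=\N2+\alpha\N3$, where $\alpha$ is an invariant; this gives the uniqueness claims for $W_5^\alpha$ and $W_8^\alpha$. If $\beta_2=0$ we get $\theta_2=\N3$. In both situations we then take $\theta_1$ with $\alpha_1=1$ (and $\alpha_4=1$ in case (III), after scaling $t$ against $x$, using $\alpha_4^*/\alpha_1^*=t^2/x^3$).
\begin{itemize}
\item We use the parameter $u$, which shifts $\alpha_2$ and $\alpha_3$ of $\theta_1$ by the common amount $ux$, and subtract multiples of $\theta_2$, to kill the remaining coordinates of $\theta_1$.
\item When the directions $(1,1)$ and $(\beta_2,\beta_3)$ span $\C^2$, this yields $\theta_1=\N1$, respectively $\N1+\N4$.
\item In the degenerate case $\beta_3=\beta_2$ ($\alpha=1$), only the component of $(a_2,a_3)$ transverse to $(1,1)$ survives. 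I expect this to give the extra orbits $W_{10}$ and $W_7$ with their $\N2$ term, and I will have to check that $W_5^1$ and $W_8^1$ are then still distinct representatives. I expect this bookkeeping to be the main obstacle.
\end{itemize}
The parameter $z$ acts via $\alpha_4tz$ and is only used in case (III) to adjust the normalization.

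In case (II) choose $\theta_1$ with $(\alpha_1,\alpha_4)=(1,0)$ and $\theta_2$ with $(\alpha_1,\alpha_4)=(0,1)$; this basis is determined up to scaling.
\begin{itemize}
\item In $\theta_2=\beta_2\N2+\beta_3\N3+\N4$ the parameter $z$ shifts both $\beta_2$ and $\beta_3$ by $tz$, so $\theta_2$ becomes $\N4+c\N3$ or $\N4+c\N2$.
\item In $\theta_1$ the parameter $u$ does the same job. The invariant pair of differences $\alpha_3-\alpha_2$ on $\theta_1$ and on $\theta_2$, each scaled by $tx$, then decides whether we get $\N1$ or $\N1+\N2$, and $\N4$ or $\N2+\N4$. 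Together with the freedom to rescale $x,t$ independently, this gives exactly $W_1,\dots,W_4$.
\item The four representatives are separated by whether each of these differences vanishes. This is invariant because the basis is canonical up to scaling and the differences transform multiplicatively.
\end{itemize}

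Throughout I will carry out the normal-form steps explicitly with the action formulas of Lemma~\ref{lem:R31action}. For non-equivalence I will use the invariants already named: which of cases (I)--(III) holds, the point $[\beta_2:\beta_3]$, and the vanishing of the differences $\alpha_3-\alpha_2$ on the canonical vectors.
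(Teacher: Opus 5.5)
Your proposal is correct and follows essentially the same route as the paper. The paper uses the same trichotomy according to $\pi(W)=(\alpha_1,\alpha_4)|_W$, the same canonical line $W\cap\{\alpha_1=0\}$ with invariant point $[\beta_2:\beta_3]$, and the same normalization by the $u,z$ shifts and the diagonal scalings. The step you flag as the main obstacle follows at once from your own observation that $\alpha_2-\alpha_3$ is multiplied by $tx$. That observation makes the hyperplane $\{\alpha_2=\alpha_3\}$ $\Aut(\RR31)$-invariant, and this hyperplane contains $W_5^1$ and $W_8^1$ but not $W_7$ and $W_{10}$.
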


\begin{proof}
Consider the linear map $\pi\colon\HH(\RR31)\to\C^2$, $\theta\mapsto(\alpha_1,\alpha_4)$.
By Lemma~\ref{lem:R31action}, $\pi(\phi\theta)=(x^3\alpha_1,t^2\alpha_4)$, so $\dim\pi(W)$, the property ``$\alpha_4\equiv0$ on $W$'' and the subspace $W\cap\ker\pi$ are invariants (the last one up to the action).
By Lemma~\ref{lem:R31action}, $\pi(W)$ contains a vector with $\alpha_1\neq0$.
Note also that the automorphisms with $x=t=1$, $y=0$ act by
\begin{equation}\label{eq:R31shift}
\theta\mapsto\theta+(\alpha_1u+\alpha_4z)(\N2+\N3),
\end{equation}
and the automorphisms with $y=z=u=0$ act diagonally.

\emph{Case 1: $\dim\pi(W)=2$.}
Then $W=\sspan{\N1+a\N2+b\N3,\ \N4+c\N2+d\N3}$.
By \eqref{eq:R31shift} with $u=-b$, $z=-d$ we may assume $b=d=0$.
The diagonal automorphisms send $\sspan{\N1+a\N2,\N4+c\N2}$ to $\sspan{\N1+atx^{-2}\N2,\ \N4+cxt^{-1}\N2}$, and the elements of $\Aut(\RR31)$ preserving the conditions $b=d=0$ have $u=z=0$.
Since $t/x^2$ and $x/t$ can be chosen independently, we obtain exactly the four orbits $W_1,\dots,W_4$ (the conditions $a\neq0$ and $c\neq0$ are invariant: $a\neq 0$ if and only if the $\N2$- and $\N3$-coefficients of the element of $W$ with $\pi=(1,0)$ differ, and similarly for $c$).

\emph{Case 2: $\dim\pi(W)=1$ and $\alpha_4\not\equiv0$ on $W$.}
Then $\pi(W)$ is spanned by $(1,a_4)$ with $a_4\neq0$ and, by a diagonal automorphism with $a_4t^2=x^3$, we may assume $a_4=1$.
Thus $W=\sspan{\N1+\N4+a\N2+b\N3,\ \theta_2}$, where $\theta_2=c\N2+d\N3\neq0$ spans $W\cap\ker\pi$; the point $[c:d]\in\mathbb{P}^1$ is an invariant, because every automorphism multiplies both $c$ and $d$ by $tx$.
If $c\neq d$, then $\N2+\N3$ and $\theta_2$ span $\sspan{\N2,\N3}$, hence, using \eqref{eq:R31shift} and subtracting a multiple of $\theta_2$, we can achieve $a=b=0$; we get $W_5^{\alpha}$ ($\alpha=d/c\neq1$) or $W_6$ ($c=0$).
If $c=d$, then $\theta_2=\N2+\N3$ and we can only achieve $b=0$; the diagonal automorphisms with $t^2=x^3$ multiply $a$ by $tx^{-2}=\pm x^{-1/2}$, so we obtain $W_5^1$ ($a=0$) or $W_7$ ($a\neq0$).
The subspaces $W_5^{1}$ and $W_7$ are in different orbits, since in $W_5^1$ the elements with $\pi=(1,1)$ lie in the span of $\N1+\N4$ and the invariant line $\sspan{\N2+\N3}$, while in $W_7$ they do not, and this property is preserved by the maps \eqref{eq:R31shift} and by the diagonal automorphisms.

\emph{Case 3: $\dim\pi(W)=1$ and $\alpha_4\equiv0$ on $W$.}
Then $W=\sspan{\N1+a\N2+b\N3,\ c\N2+d\N3}$, $(c,d)\neq(0,0)$, and the same argument as in Case~2 (with $z$ playing no role) gives $W_8^{\alpha}$, $W_9$ and $W_{10}$.

The three cases are distinguished by the invariants $\dim\pi(W)$ and $\alpha_4|_W$, and the invariant point $[c:d]$ shows that different values of $\alpha$ give different orbits.
\end{proof}

\begin{corollary}\label{cor:R31}
The non-split central extensions of $\RR31$ by a two-dimensional space are, up to isomorphism, the algebras $\A1,\dots,\A{10}$ below; the algebras $\A5(\alpha)$ \textup{(}resp.\ $\A8(\alpha)$\textup{)} are pairwise non-isomorphic for different values of $\alpha$.
\end{corollary}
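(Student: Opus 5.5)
This corollary follows from Proposition~\ref{prop:R31} combined with the general machinery of Section~\ref{sec:method}. The plan has three steps.

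\textbf{Step 1: translate the ten orbit representatives into multiplication tables.} By Lemma~\ref{lem:quotient}, every algebra in question is of the form $(\RR31)_\theta$ with $\theta=\theta_1e_4+\theta_2e_5$ and $\sspan{[\theta_1],[\theta_2]}\in\T_2(\RR31)$. By Lemma~\ref{lem:orbits}, isomorphism classes correspond bijectively to $\Aut(\RR31)$-orbits on $\T_2(\RR31)$, and Proposition~\ref{prop:R31} gives these orbits as $W_1,\dots,W_{10}$. For each $W_i=\sspan{\theta_1,\theta_2}$ I choose the representative cocycles $\N1=\D{12}+\D{21}$, $\N2=\D{13}$, $\N3=\D{31}$, $\N4=\D{33}$. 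This choice is harmless because Lemma~\ref{lem:cohomologous} shows that cohomologous cocycles give isomorphic extensions. I then write down the product $e_1e_1=e_2$ together with the $e_4$-part coming from $\theta_1$ and the $e_5$-part coming from $\theta_2$. For example:
\begin{itemize}
\item $W_1$ gives $e_1e_1=e_2$, $e_1e_2=e_2e_1=e_4$, $e_3e_3=e_5$;
\item $W_5^\alpha$ gives $e_1e_1=e_2$, $e_1e_2=e_2e_1=e_4$, $e_3e_3=e_4$, $e_1e_3=e_5$, $e_3e_1=\alpha e_5$.
\end{itemize}
The tables for the other representatives are obtained in the same way. These are the algebras $\A1,\dots,\A{10}$.

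\textbf{Step 2: check that the list satisfies the hypotheses of Theorem~A.}
\begin{itemize}
\item \emph{Non-split.} Each $W_i$ lies in $\T_2(\RR31)$, since $\alpha_1\not\equiv0$ on $W_i$ (Lemma~\ref{lem:R31action}), and $\dim W_i=2$. By Lemma~\ref{lem:component}, each extension is non-split, so it has no annihilator component.
\item \emph{Two-dimensional annihilator.} By Lemma~\ref{lem:annext}, $\Ann(\A{i})=\sspan{e_4,e_5}$.
\item \emph{Pure.} Each algebra is not $2$-step nilpotent, because $(e_1e_1)e_1=e_2e_1=e_4\neq0$. Right alternativity follows from Lemma~\ref{lem:RAext}.
\end{itemize}

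\textbf{Step 3: non-isomorphism.} Pairwise non-isomorphism, including the claim that $\A5(\alpha)$ and $\A8(\alpha)$ are distinct for different values of $\alpha$, follows from the orbit-separation part of Proposition~\ref{prop:R31} via Lemma~\ref{lem:orbits}.

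The only real care needed is bookkeeping in Step 1. One must make sure that the basis vector $e_4$ is attached to $\theta_1$ and $e_5$ to $\theta_2$ as in the notation convention, and that the classes are replaced by the stated representative cocycles. Beyond that, all substance is already contained in Proposition~\ref{prop:R31}.
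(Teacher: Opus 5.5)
Your proposal is correct and is essentially the paper's argument: the corollary is Proposition~\ref{prop:R31} read through Lemmas~\ref{lem:component} and~\ref{lem:orbits}, with each orbit representative $W_1,\dots,W_{10}$ written out as a multiplication table, and your tables (including those for $W_1$ and $W_5^{\alpha}$) match $\A1,\dots,\A{10}$. One small correction: the fact that these extensions are exactly the algebras $(\RR31)_\theta$ with $\sspan{[\theta_1],[\theta_2]}\in\T_2(\RR31)$ comes from the definition of $\E(\RR31,\mathbb{V})$ together with Lemma~\ref{lem:component}, not from Lemma~\ref{lem:quotient}; and the purity and annihilator checks in your Step~2 are harmless extras that this corollary does not need.
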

\begin{longtable}{@{}l p{0.80\textwidth}@{}}
$\mathcal{A}_{1}$ & $e_{1}e_{1}=e_{2},\ e_{1}e_{2}=e_{4},\ e_{2}e_{1}=e_{4},\ e_{3}e_{3}=e_{5}$ \\ & \quad {\small $\theta=\langle \nabla_{1},\ \nabla_{4}\rangle$} \\[4pt]
$\mathcal{A}_{2}$ & $e_{1}e_{1}=e_{2},\ e_{1}e_{2}=e_{4},\ e_{1}e_{3}=e_{4},\ e_{2}e_{1}=e_{4}, \ e_{3}e_{3}=e_{5}$ \\ & \quad {\small $\theta=\langle \nabla_{1}+\nabla_{2},\ \nabla_{4}\rangle$} \\[4pt]
$\mathcal{A}_{3}$ & $e_{1}e_{1}=e_{2},\ e_{1}e_{2}=e_{4},\ e_{1}e_{3}=e_{5},\ e_{2}e_{1}=e_{4}, \ e_{3}e_{3}=e_{5}$ \\ & \quad {\small $\theta=\langle \nabla_{1},\ \nabla_{2}+\nabla_{4}\rangle$} \\[4pt]
$\mathcal{A}_{4}$ & $e_{1}e_{1}=e_{2},\ e_{1}e_{2}=e_{4},\ e_{1}e_{3}=e_{4}+e_{5},\ e_{2}e_{1}=e_{4}, \ e_{3}e_{3}=e_{5}$ \\ & \quad {\small $\theta=\langle \nabla_{1}+\nabla_{2},\ \nabla_{2}+\nabla_{4}\rangle$} \\[4pt]
$\mathcal{A}_{5}(\alpha)$ & $e_{1}e_{1}=e_{2},\ e_{1}e_{2}=e_{4},\ e_{1}e_{3}=e_{5},\ e_{2}e_{1}=e_{4}, \ e_{3}e_{1}=\alpha e_{5},\ e_{3}e_{3}=e_{4}$ \\ & \quad {\small $\theta=\langle \nabla_{1}+\nabla_{4},\ \nabla_{2}+\alpha \nabla_{3}\rangle$;\quad $\alpha\in\mathbb{C}$} \\[4pt]
$\mathcal{A}_{6}$ & $e_{1}e_{1}=e_{2},\ e_{1}e_{2}=e_{4},\ e_{2}e_{1}=e_{4},\ e_{3}e_{1}=e_{5}, \ e_{3}e_{3}=e_{4}$ \\ & \quad {\small $\theta=\langle \nabla_{1}+\nabla_{4},\ \nabla_{3}\rangle$} \\[4pt]
$\mathcal{A}_{7}$ & $e_{1}e_{1}=e_{2},\ e_{1}e_{2}=e_{4},\ e_{1}e_{3}=e_{4}+e_{5},\ e_{2}e_{1}=e_{4}, \ e_{3}e_{1}=e_{5},\ e_{3}e_{3}=e_{4}$ \\ & \quad {\small $\theta=\langle \nabla_{1}+\nabla_{2}+\nabla_{4},\ \nabla_{2}+\nabla_{3}\rangle$} \\[4pt]
$\mathcal{A}_{8}(\alpha)$ & $e_{1}e_{1}=e_{2},\ e_{1}e_{2}=e_{4},\ e_{1}e_{3}=e_{5},\ e_{2}e_{1}=e_{4}, \ e_{3}e_{1}=\alpha e_{5}$ \\ & \quad {\small $\theta=\langle \nabla_{1},\ \nabla_{2}+\alpha \nabla_{3}\rangle$;\quad $\alpha\in\mathbb{C}$} \\[4pt]
$\mathcal{A}_{9}$ & $e_{1}e_{1}=e_{2},\ e_{1}e_{2}=e_{4},\ e_{2}e_{1}=e_{4},\ e_{3}e_{1}=e_{5}$ \\ & \quad {\small $\theta=\langle \nabla_{1},\ \nabla_{3}\rangle$} \\[4pt]
$\mathcal{A}_{10}$ & $e_{1}e_{1}=e_{2},\ e_{1}e_{2}=e_{4},\ e_{1}e_{3}=e_{4}+e_{5},\ e_{2}e_{1}=e_{4}, \ e_{3}e_{1}=e_{5}$ \\ & \quad {\small $\theta=\langle \nabla_{1}+\nabla_{2},\ \nabla_{2}+\nabla_{3}\rangle$} \\[4pt]
\end{longtable}

\textbf{Central extensions of $\RR33$.}  Let us use the following notations:
 $$\N1=[\D{11}], \ \N2=[\D{12}], \ \N3=[\D{22}], \ \N4=[\D{13}], \ \N5=[\D{23}],$$ $\Ann(\RR33)=\sspan{e_3}$ and $\B(\RR33)=\sspan{\D{12}-\D{21}}$.
Let $V=\sspan{e_1,e_2}$ and let $V^*$ be its dual space with the coordinate functions $v_1,v_2$.
To $\theta=\sum\alpha_i\N{i}$ we associate the pair $(q_\theta,l_\theta)$ consisting of the quadratic form $q_\theta(v)=\alpha_1v_1^2+\alpha_2v_1v_2+\alpha_3v_2^2$ and the linear form $l_\theta(v)=\alpha_4v_1+\alpha_5v_2$ on $V$; this is a linear isomorphism of $\HH(\RR33)$ onto $S^2V^*\oplus V^*$.

\begin{lemma}\label{lem:R33action}
Let $\phi\in\Aut(\RR33)$ be as in Proposition~\ref{prop:aut}, let $P=\begin{pmatrix}x&y\\ z&t\end{pmatrix}$ and put $m(v)=uv_1+wv_2$.
Then
\[
q_{\phi\theta}(v)=q_\theta(Pv)+l_\theta(Pv)\,m(v),\qquad l_{\phi\theta}(v)=\det P\cdot l_\theta(Pv).
\]
A subspace $W\in G_2(\HH(\RR33))$ lies in $\T_2(\RR33)$ if and only if $l_\theta\neq0$ for some $\theta\in W$.
\end{lemma}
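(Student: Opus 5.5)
The plan is a direct computation, organized so that the quadratic form encodes the class modulo coboundaries.

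\emph{Step 1: reduction to quadratic and linear forms.} Since $\B(\RR33)=\sspan{\D{12}-\D{21}}$, and on the two-dimensional space $V$ this spans the antisymmetric bilinear forms, the class modulo $\B(\RR33)$ of a bilinear form $\beta$ on $V$ is determined by the quadratic form $v\mapsto\beta(v,v)$. In particular $[\D{21}]=[\D{12}]$, and both correspond to $v_1v_2$.

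Every element of $\Z(\RR33)$ is, modulo $\B(\RR33)$, a combination of $\D{11},\D{12},\D{22},\D{13},\D{23}$. So I would first record the following description of any cocycle $\theta$ of this shape. Write $a=v+ce_3$ and $b=v'+c'e_3$ with $v,v'\in V$. Then
\[
\theta(a,b)=\beta_\theta(v,v')+l_\theta(v)\,c',
\]
where $\beta_\theta$ is a bilinear form on $V$ with $\beta_\theta(v,v)=q_\theta(v)$. Note that $\theta$ vanishes whenever its first argument is $e_3$.

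\emph{Step 2: the action formulas.} From the matrix in Proposition~\ref{prop:aut}, for $v=v_1e_1+v_2e_2$ we have
\[
\phi(v)=Pv+m(v)e_3,\qquad \phi(e_3)=\det P\cdot e_3,
\]
where $Pv$ is read in $V$. Substituting into Step 1 gives:
\begin{itemize}[leftmargin=*]
\item $(\phi\theta)(v,v')=\beta_\theta(Pv,Pv')+l_\theta(Pv)\,m(v')$;
\item $(\phi\theta)(v,e_3)=\det P\cdot l_\theta(Pv)$;
\item $(\phi\theta)(e_3,\cdot)=0$.
\end{itemize}
So $\phi\theta$ again has the shape of Step 1. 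Its $V\times V$ part is the bilinear form $\beta_\theta(P\cdot,P\cdot)+l_\theta(P\cdot)\,m(\cdot)$, whose associated quadratic form is $q_\theta(Pv)+l_\theta(Pv)m(v)$. By Step 1, reducing modulo $\B(\RR33)$ yields exactly
\[
q_{\phi\theta}(v)=q_\theta(Pv)+l_\theta(Pv)\,m(v),\qquad l_{\phi\theta}(v)=\det P\cdot l_\theta(Pv).
\]
The only point needing care is that the map $\theta\mapsto(q_\theta,l_\theta)$ is well defined on classes, i.e.\ that it kills $\B(\RR33)$. This holds because $\D{12}-\D{21}$ has zero quadratic form and does not involve $e_3$.

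\emph{Step 3: the condition for $\T_2(\RR33)$.} Since $\Ann(\RR33)=\sspan{e_3}$, we have $W\in\T_2$ if and only if $e_3\notin\Ann(\theta_1)\cap\Ann(\theta_2)$. This condition is independent of the chosen representatives, since coboundaries vanish on $e_3$. By Step 1, $\theta(e_3,\cdot)=0$ and $\theta(v,e_3)=l_\theta(v)$, so $e_3\in\Ann(\theta)$ exactly when $l_\theta=0$. Hence $W\in\T_2$ if and only if some $\theta\in W$ has $l_\theta\neq0$, which is the claimed criterion.

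I expect no real obstacle. The only delicate point is Step 1's bookkeeping: ensuring that the cross term $l_\theta(Pv)m(v')$, which is not symmetric, contributes correctly after passing to the quadratic form. This is handled precisely by the fact that modulo the antisymmetric coboundary only the diagonal values $\beta(v,v)$ matter.
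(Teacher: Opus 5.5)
Your proof is correct and is essentially the paper's argument. The paper also obtains the formulas by direct substitution: it lists the five coordinate formulas for $\alpha_1^*,\dots,\alpha_5^*$, uses $\D{21}\equiv\D{12}$ modulo $\B(\RR{3}{3})$, and reads off the $\T_2$ criterion from the fact that $e_3$ enters $\theta$ only through $\theta(e_1,e_3)=\alpha_4$ and $\theta(e_2,e_3)=\alpha_5$; your coordinate-free substitution $\phi(v)=Pv+m(v)e_3$, $\phi(e_3)=\det P\cdot e_3$ is a cleaner packaging of that same computation.
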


\begin{proof}
The formulas follow from the action formulas
$\alpha_1^*=\alpha_1x^2+\alpha_2xz+\alpha_3z^2+\alpha_4ux+\alpha_5uz$,
$\alpha_2^*=2\alpha_1xy+\alpha_2(tx+yz)+2\alpha_3tz+\alpha_4(uy+wx)+\alpha_5(tu+wz)$,
$\alpha_3^*=\alpha_1y^2+\alpha_2ty+\alpha_3t^2+\alpha_4wy+\alpha_5tw$,
$\alpha_4^*=(xt-yz)(\alpha_4x+\alpha_5z)$,
$\alpha_5^*=(xt-yz)(\alpha_4y+\alpha_5t)$, which are obtained directly (note that $\D{21}\equiv\D{12}$ modulo $\B(\RR33)$).
The only values of $\theta$ involving $e_3$ are $\theta(e_1,e_3)=\alpha_4$ and $\theta(e_2,e_3)=\alpha_5$, which proves the last claim.
\end{proof}

\begin{proposition}\label{prop:R33}
Every element of $\T_2(\RR33)$ lies in the orbit of exactly one of the subspaces
\[
\begin{array}{lllll}
\sspan{\N4,\N5}, & \sspan{\N4,\N1+\N5}, & \sspan{\N4,\N2+\N5}, & \sspan{\N3+\N4,\N1+\N5}, & \sspan{\N4,\N1},\\
\sspan{\N3+\N4,\N1}, & \sspan{\N4,\N2}, & \sspan{\N3+\N4,\N2}, & \sspan{\N4,\N3}, & \sspan{\N4,\N1+\N3}.
\end{array}
\]
\end{proposition}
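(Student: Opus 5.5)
The plan is to work entirely in the model of Lemma~\ref{lem:R33action}. There a class $\theta$ is the pair $(q_\theta,l_\theta)\in S^2V^*\oplus V^*$, and $\phi$ acts by $q\mapsto q\circ P+(l\circ P)\,m$ and $l\mapsto\det P\cdot l\circ P$. Here $P\in GL(V)$ and $m\in V^*$ are arbitrary and independent. For $W\in\T_2(\RR33)$, consider the linear map $\lambda_W\colon W\to V^*$, $\theta\mapsto l_\theta$. Its rank is $1$ or $2$ by the last claim of Lemma~\ref{lem:R33action}, and this rank is an orbit invariant, since $l\mapsto\det P\cdot l\circ P$ is bijective. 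This splits the proof into two cases: the first four subspaces of the list have rank $2$, and the last six have rank $1$.

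\emph{Rank $2$.} Here $W$ is the graph of a linear map $V^*\to S^2V^*$, $l\mapsto q_l$. So $W=\sspan{q_{v_1}+v_1,\ q_{v_2}+v_2}$, and $W$ corresponds to an element $Q\in V\otimes S^2V^*$. The shifts by $m$ replace $q_l$ by $q_l+l\,m$, i.e.\ they add the image of $m$ under the natural embedding $V^*\hookrightarrow V\otimes S^2V^*$ (the map $l\mapsto lm$). Hence the genuine invariant is the class of $Q$ in the quotient $(V\otimes S^2V^*)/V^*$. By the $GL_2$-decomposition $V\otimes S^2V^*\cong(S^3V^*\oplus V^*)\otimes\det^{-1}$, this quotient is the space of binary cubic forms twisted by a power of $\det$. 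Concretely, I will write down the cubic $C_W(v)$ obtained by contracting $Q$ symmetrically, check by direct computation that it is unchanged by the $m$-shifts, and check that it transforms as $C_W\mapsto \det P^{k}\, C_W\circ P$ for some integer $k$. Binary cubics under $GL_2$ (the twist only rescales, and we are over $\C$) fall into four orbits: $0$, a triple root, a double and a simple root, and three distinct roots. I will then identify which representative realizes which type: $\sspan{\N4,\N5}$ is $C=0$, and $\sspan{\N4,\N1+\N5}$, $\sspan{\N4,\N2+\N5}$, $\sspan{\N3+\N4,\N1+\N5}$ realize the other three types in some order. This gives existence, and uniqueness follows because the root configuration is invariant.

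\emph{Rank $1$.} Here $W\cap\ker\lambda_W=\sspan{\theta_2}$ with $\theta_2=(q_2,0)$ and $q_2\neq0$. The line $\sspan{l}=\lambda_W(W)$ distinguishes a point of $\mathbb{P}(V)$, namely the zero of $l$. Choosing $P$, I normalize $l=v_1$; the remaining freedom is the stabilizer of the line $\sspan{v_1}$, i.e.\ matrices $P$ with $y=0$. Next I classify $q_2$ up to scalar by the position of its roots relative to the distinguished root $v_1=0$. There are four types: a double root at it ($v_1^2$), a double root elsewhere ($v_2^2$), simple roots with one of them at it ($v_1v_2$), and simple roots with neither at it ($v_1^2+v_2^2$). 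The stabilizer acts transitively on each type.

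Then I turn to the other generator $\theta_1=(q_1,v_1)$. Using $m$, the summand $v_1m$ lets me remove the $v_1^2$ and $v_1v_2$ terms, and I may also subtract multiples of $\theta_2$. If $q_2$ contains $v_2^2$ (the types $v_2^2$ and $v_1^2+v_2^2$), I obtain $q_1=0$. Otherwise a coefficient $c$ of $v_2^2$ survives, and $c\neq0$ is normalized to $1$ by the diagonal part of the stabilizer, using that $l$ scales by $\det P\cdot x$ while $v_2^2$ scales by $t^2$. The resulting six orbits are $\sspan{\N4,\N1}$, $\sspan{\N3+\N4,\N1}$, $\sspan{\N4,\N2}$, $\sspan{\N3+\N4,\N2}$, $\sspan{\N4,\N3}$ and $\sspan{\N4,\N1+\N3}$.

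The expected main obstacle is twofold. In the rank-$2$ case it is setting up the cubic invariant correctly, so that it is genuinely independent of $m$ and of the choice of basis of $W$, and matching the four representatives to the four cubic types. In the rank-$1$ case it is separating $c=0$ from $c\neq0$. For the latter I will use the invariant ``$q_1$ vanishes at the distinguished point modulo $q_2$,'' which is well defined precisely when $q_2$ itself vanishes there. I will verify that it is preserved by shifts, by rescaling, and by adding multiples of $\theta_2$.
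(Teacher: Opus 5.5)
Your proposal is correct and is essentially the paper's proof. Like the paper, you split by the rank of $\theta\mapsto l_\theta$ on $W$. In rank $2$ you classify by the binary cubic attached to the graph map $Q$ modulo the shifts $l\mapsto l\,m$; this is the paper's $f_W(v)=v_1Q(v_2)(v)-v_2Q(v_1)(v)$, which transforms as $(\det P)^{-2}f_W\circ P$. In rank $1$ you normalize $l=v_1$, classify $q_{\theta_2}$ under the lower-triangular stabilizer, and reduce $q_{\theta_1}$ modulo $v_1V^*$ and $\C q_{\theta_2}$, with the same separating invariants.

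One detail to fix: the cubic must come from the determinant (antisymmetric) pairing, as in $f_W$. A genuinely symmetric contraction such as $v_1Q(v_1)+v_2Q(v_2)$ changes by $(v_1^2+v_2^2)m$ under the shifts, so it is not an invariant.
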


\begin{proof}
Let $L\colon W\to V^*$, $\theta\mapsto l_\theta$.
By Lemma~\ref{lem:R33action}, $\rank L\in\{1,2\}$ is an invariant.

\emph{Case I: $\rank L=2$.}
Then $W=\{(Q(l),l): l\in V^*\}$ for a unique linear map $Q\colon V^*\to S^2V^*$.
Put
\[
f_W(v)=v_1\,Q(v_2)(v)-v_2\,Q(v_1)(v),
\]
a binary cubic form.
The map $Q\mapsto f_W$ from $\Hom(V^*,S^2V^*)$ (dimension $6$) to the space of binary cubic forms (dimension $4$) is surjective, and its kernel consists of the maps $Q(l)=l\cdot m$ with $m\in V^*$: indeed, $v_1Q(v_2)=v_2Q(v_1)$ forces $Q(v_1)=v_1m$ and $Q(v_2)=v_2m$ for some linear form $m$.
By Lemma~\ref{lem:R33action}, the automorphisms with $P=1$ change $Q$ exactly by such maps $l\mapsto l\cdot m$, so $W$ is determined, up to these automorphisms, by $f_W$.
Moreover, $f_{\phi W}(v)=(\det P)^{-2}f_W(Pv)$.
Indeed, by Lemma~\ref{lem:R33action}, $\phi W$ is the graph of the map $Q'(l')=(\det P)^{-1}\,Q\big(l'\circ P^{-1}\big)\circ P+(\det P)^{-1}l'\,m$; the last summand does not contribute to $f_{\phi W}$, and for the basis $a=v_1\circ P^{-1}$, $b=v_2\circ P^{-1}$ of $V^*$ we have $aQ(b)-bQ(a)=\det(P^{-1})\,(v_1Q(v_2)-v_2Q(v_1))$, because the expression $aQ(b)-bQ(a)$ is bilinear and alternating in $(a,b)$.
Evaluating at $Pv$ gives the claim.
Hence the orbits of Case I correspond bijectively to the orbits of binary cubic forms under $f\mapsto c\,f\circ P$ ($c\in\C^*$, $P\in GL_2(\C)$; note that $P=\lambda I$ multiplies $f$ by $\lambda^{-1}$).
There are exactly four such orbits: $f=0$, and $f\neq0$ with a triple root, with a double and a simple root, or with three simple roots in $\mathbb{P}^1$.
For $\sspan{\N4,\N5}$, $\sspan{\N4,\N1+\N5}$, $\sspan{\N4,\N2+\N5}$ and $\sspan{\N3+\N4,\N1+\N5}$ we have $Q(v_1)=0,0,0,v_2^2$ and $Q(v_2)=0,v_1^2,v_1v_2,v_1^2$, so that $f_W=0,\ v_1^3,\ v_1^2v_2,\ v_1^3-v_2^3$, respectively.

\emph{Case II: $\rank L=1$.}
Then $W=\sspan{\theta_1,\theta_2}$ with $l_{\theta_1}\neq0$ and $l_{\theta_2}=0$, so $q_{\theta_2}\neq0$ and the line $\sspan{\theta_2}$ is canonical.
Applying a suitable $P$, we may assume $l_{\theta_1}=v_1$, i.e.\ $\theta_1=\N4+(\text{terms in }\N1,\N2,\N3)$.
The automorphisms preserving the condition $l_{\theta_1}\in\sspan{v_1}$ are those with $y=0$; by Lemma~\ref{lem:R33action} they act on $q_{\theta_2}$ by $q\mapsto q\circ P$, $P=\begin{pmatrix}x&0\\z&t\end{pmatrix}$, and they allow us to change $q_{\theta_1}$ by any multiple of $v_1$ (using $u,w$) and by multiples of $q_{\theta_2}$ (change of basis of $W$).
Note that the property ``$v_1$ divides $q_{\theta_2}$'' is invariant.

(a) If $v_1\nmid q_{\theta_2}$, then the $v_2^2$-coefficient of $q_{\theta_2}$ is nonzero and $q_{\theta_1}$ can be made zero, so $\theta_1=\N4$.
Using $z$ we complete the square in $q_{\theta_2}$, which becomes $av_1^2+v_2^2$ up to a scalar; $a=0$ gives $\sspan{\N4,\N3}$ and $a\neq0$ gives, after scaling $v_1$, $\sspan{\N4,\N1+\N3}$.
These are different orbits since $q_{\theta_2}$ is degenerate in the first case only.

(b) If $v_1\mid q_{\theta_2}$, then $q_{\theta_2}=v_1(av_1+bv_2)$; if $b\neq0$ we can make $a=0$ by means of $z$, so $q_{\theta_2}\in\{v_1v_2,v_1^2\}$ up to scalars, and the rank of $q_{\theta_2}$ is invariant.
Now $q_{\theta_1}$ is determined modulo $v_1V^*$, i.e.\ by its $v_2^2$-coefficient, which is multiplied by $t^2\cdot(\text{nonzero})$; we get $\theta_1\in\{\N4,\N3+\N4\}$.
This gives the four orbits $\sspan{\N4,\N2}$, $\sspan{\N3+\N4,\N2}$, $\sspan{\N4,\N1}$, $\sspan{\N3+\N4,\N1}$.
\end{proof}

\begin{corollary}\label{cor:R33}
The non-split central extensions of $\RR33$ by a two-dimensional space are, up to isomorphism, the algebras $\A{11},\dots,\A{20}$ below.
\end{corollary}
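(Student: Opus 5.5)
This follows the pattern of Corollary~\ref{cor:R31}: it combines Proposition~\ref{prop:R33} with Lemmas~\ref{lem:component} and~\ref{lem:orbits}. By Lemma~\ref{lem:component} and the remark preceding Lemma~\ref{lem:orbits}, the non-split two-dimensional extensions $(\RR33)_\theta$ with $\Ann(\theta)\cap\Ann(\RR33)=0$ are exactly the extensions $(\RR33)_W$ with $W\in\T_2(\RR33)$. By Lemma~\ref{lem:orbits}, two such extensions are isomorphic if and only if the subspaces lie in the same $\Aut(\RR33)$-orbit. Proposition~\ref{prop:R33} supplies exactly ten orbit representatives, and they are pairwise in different orbits. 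Hence there are exactly ten isomorphism classes, and it remains to write down one algebra for each representative.

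For each representative $W=\sspan{\theta_1,\theta_2}$ I will build the multiplication table of $\RR33$ extended by $\theta_1$ with value $e_4$ and $\theta_2$ with value $e_5$. I take the representative cocycles $\D{11},\D{12},\D{22},\D{13},\D{23}$ for $\N1,\dots,\N5$. For example, $\sspan{\N4,\N5}$ gives
\[
e_1e_2=e_3,\quad e_2e_1=-e_3,\quad e_1e_3=e_4,\quad e_2e_3=e_5 .
\]
Likewise $\sspan{\N3+\N4,\N1+\N5}$ gives the same table with the extra products $e_2e_2=e_4$ and $e_1e_1=e_5$. Listing the ten tables in the order of Proposition~\ref{prop:R33} yields $\A{11},\dots,\A{20}$.

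One point needs care. The choice of cocycle representative within a class (for instance $\D{12}$ versus $\D{21}$ for $\N2$, which differ by the coboundary $\D{12}-\D{21}$) does not change the isomorphism type, by Lemma~\ref{lem:cohomologous}. So the tables are well defined up to isomorphism. The displayed tables should then be checked to agree with the chosen representatives.

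The main obstacle is not in this corollary but in Proposition~\ref{prop:R33}, specifically its assertion that the orbits are distinct. For Case~I this rests on the invariant $f_W$: the four representatives have pairwise inequivalent cubic forms, namely $0$, a form with a triple root, a form with a double root, and a form with three simple roots. For Case~II it rests on three invariants: the rank of $q_{\theta_2}$, whether $v_1$ divides $q_{\theta_2}$, and the vanishing of the $v_2^2$-coefficient of $q_{\theta_1}$ modulo $v_1V^*$. The rank of $L$ separates Case~I from Case~II. Since the corollary takes the proposition as given, its proof reduces to this bookkeeping.
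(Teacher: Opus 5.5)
Your proposal is correct and matches how the paper obtains this corollary. The paper reads it off from Proposition~\ref{prop:R33} via Lemmas~\ref{lem:component} and~\ref{lem:orbits}, writing the tables $\A{11},\dots,\A{20}$ from the representative cocycles $\D{11},\D{12},\D{22},\D{13},\D{23}$ in the same order as you do. The invariants you cite for distinctness also match the paper: $f_W$ in Case~I, and in Case~II the rank and $v_1$-divisibility of $q_{\theta_2}$ together with the $v_2^2$-coefficient of $q_{\theta_1}$.
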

\begin{longtable}{@{}l p{0.80\textwidth}@{}}
$\mathcal{A}_{11}$ & $e_{1}e_{2}=e_{3},\ e_{1}e_{3}=e_{4},\ e_{2}e_{1}=-e_{3},\ e_{2}e_{3}=e_{5}$ \\ & \quad {\small $\theta=\langle \nabla_{4},\ \nabla_{5}\rangle$} \\[4pt]
$\mathcal{A}_{12}$ & $e_{1}e_{1}=e_{5},\ e_{1}e_{2}=e_{3},\ e_{1}e_{3}=e_{4},\ e_{2}e_{1}=-e_{3}, \ e_{2}e_{3}=e_{5}$ \\ & \quad {\small $\theta=\langle \nabla_{4},\ \nabla_{1}+\nabla_{5}\rangle$} \\[4pt]
$\mathcal{A}_{13}$ & $e_{1}e_{2}=e_{3}+e_{5},\ e_{1}e_{3}=e_{4},\ e_{2}e_{1}=-e_{3},\ e_{2}e_{3}=e_{5}$ \\ & \quad {\small $\theta=\langle \nabla_{4},\ \nabla_{2}+\nabla_{5}\rangle$} \\[4pt]
$\mathcal{A}_{14}$ & $e_{1}e_{1}=e_{5},\ e_{1}e_{2}=e_{3},\ e_{1}e_{3}=e_{4},\ e_{2}e_{1}=-e_{3}, \ e_{2}e_{2}=e_{4},\ e_{2}e_{3}=e_{5}$ \\ & \quad {\small $\theta=\langle \nabla_{3}+\nabla_{4},\ \nabla_{1}+\nabla_{5}\rangle$} \\[4pt]
$\mathcal{A}_{15}$ & $e_{1}e_{1}=e_{5},\ e_{1}e_{2}=e_{3},\ e_{1}e_{3}=e_{4},\ e_{2}e_{1}=-e_{3}$ \\ & \quad {\small $\theta=\langle \nabla_{4},\ \nabla_{1}\rangle$} \\[4pt]
$\mathcal{A}_{16}$ & $e_{1}e_{1}=e_{5},\ e_{1}e_{2}=e_{3},\ e_{1}e_{3}=e_{4},\ e_{2}e_{1}=-e_{3}, \ e_{2}e_{2}=e_{4}$ \\ & \quad {\small $\theta=\langle \nabla_{3}+\nabla_{4},\ \nabla_{1}\rangle$} \\[4pt]
$\mathcal{A}_{17}$ & $e_{1}e_{2}=e_{3}+e_{5},\ e_{1}e_{3}=e_{4},\ e_{2}e_{1}=-e_{3}$ \\ & \quad {\small $\theta=\langle \nabla_{4},\ \nabla_{2}\rangle$} \\[4pt]
$\mathcal{A}_{18}$ & $e_{1}e_{2}=e_{3}+e_{5},\ e_{1}e_{3}=e_{4},\ e_{2}e_{1}=-e_{3},\ e_{2}e_{2}=e_{4}$ \\ & \quad {\small $\theta=\langle \nabla_{3}+\nabla_{4},\ \nabla_{2}\rangle$} \\[4pt]
$\mathcal{A}_{19}$ & $e_{1}e_{2}=e_{3},\ e_{1}e_{3}=e_{4},\ e_{2}e_{1}=-e_{3},\ e_{2}e_{2}=e_{5}$ \\ & \quad {\small $\theta=\langle \nabla_{4},\ \nabla_{3}\rangle$} \\[4pt]
$\mathcal{A}_{20}$ & $e_{1}e_{1}=e_{5},\ e_{1}e_{2}=e_{3},\ e_{1}e_{3}=e_{4},\ e_{2}e_{1}=-e_{3}, \ e_{2}e_{2}=e_{5}$ \\ & \quad {\small $\theta=\langle \nabla_{4},\ \nabla_{1}+\nabla_{3}\rangle$} \\[4pt]
\end{longtable}

\begin{remark}
The cubic form $f_W$ of Case~I is a complete invariant of the orbits of $\T_2(\RR33)$ on which $L$ is injective; in particular the corresponding four orbits are distinguished by the root multiplicities of $f_W$.
\end{remark}

\textbf{Central extensions of $\RR34(0)$.}   Let us use the following notations:
$$\N1=[\D{11}], \ \N2=[\D{12}], \ \N3=[\D{22}], \N4=[\D{23}+\D{32}]$$ and $\Ann(\RR34(0))=\sspan{e_3}$.

\begin{lemma}\label{lem:R340action}
For $\phi\in\Aut(\RR34(0))$ as in Proposition~\ref{prop:aut} we have
\[
\begin{array}{ll}
\alpha_1^*=\alpha_1x^2, & \alpha_2^*=\alpha_1x(x-y)+\alpha_2xy+\alpha_4uy,\\
\alpha_3^*=\alpha_1y(y-x)+\alpha_2y(x-y)+\alpha_3y^2-\alpha_4y(u-2v), & \alpha_4^*=\alpha_4xy^2 .
\end{array}
\]
A subspace $W\in G_2(\HH(\RR34(0)))$ lies in $\T_2$ if and only if $\alpha_4\not\equiv0$ on $W$.
\end{lemma}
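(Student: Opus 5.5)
The plan is a direct computation of $\theta(\phi(e_i),\phi(e_j))$, followed by a reduction modulo the coboundaries, in the same way as in Lemmas~\ref{lem:R31action} and~\ref{lem:R33action}.

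\emph{Coboundaries and the automorphism.} Since the only nonzero products of $\RR34(0)$ are $e_2e_1=e_2e_2=e_3$, we have $\B(\RR34(0))=\sspan{\D{21}+\D{22}}$. Hence, modulo coboundaries, $\D{21}\equiv-\D{22}$. By Proposition~\ref{prop:aut}, an automorphism $\phi$ is given by
\[
\phi(e_1)=xe_1+ue_3,\qquad \phi(e_2)=(x-y)e_1+ye_2+ve_3,\qquad \phi(e_3)=xy\,e_3 .
\]

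\emph{Evaluating the transformed cocycle.} Take the representative $\theta=\alpha_1\D{11}+\alpha_2\D{12}+\alpha_3\D{22}+\alpha_4(\D{23}+\D{32})$ and write $\phi\theta=\sum_{i,j}c_{ij}\D{ij}$ with $c_{ij}=\theta(\phi e_i,\phi e_j)$. Bilinearity gives the following values.
\begin{itemize}
\item $c_{11}=\alpha_1x^2$, because $\phi(e_1)$ has no $e_2$-component, so the $\alpha_4$-terms cannot contribute.
\item $c_{12}=\alpha_1x(x-y)+\alpha_2xy+\alpha_4uy$.
\item $c_{21}=\alpha_1x(x-y)+\alpha_4uy$, since $\theta$ has no $\D{21}$-term.
\item $c_{22}=\alpha_1(x-y)^2+\alpha_2(x-y)y+\alpha_3y^2+2\alpha_4yv$.
\item $c_{23}=c_{32}=\alpha_4xy^2$.
\item $c_{13}=c_{31}=c_{33}=0$.
\end{itemize}

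\emph{Rewriting in the basis $\N1,\dots,\N4$.} Replacing $c_{21}\D{21}$ by $-c_{21}\D{22}$ gives
\[
\alpha_1^*=c_{11},\qquad \alpha_2^*=c_{12},\qquad \alpha_3^*=c_{22}-c_{21},\qquad \alpha_4^*=c_{23}.
\]
Expanding $c_{22}-c_{21}$ yields the stated formula:
\begin{itemize}
\item the $\alpha_1$-part is $(x-y)^2-x(x-y)=-y(x-y)$;
\item the $\alpha_2$-part is $y(x-y)$;
\item the $\alpha_3$-part is $y^2$;
\item the $\alpha_4$-part is $2yv-uy$.
\end{itemize}
The only place an error could slip in is the bookkeeping of $\D{21}\equiv-\D{22}$ when forming $\alpha_3^*$, so that step needs care.

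\emph{The criterion for $\T_2$.} We have $\Ann(\RR34(0))=\sspan{e_3}$, and coboundaries vanish on $\Ann\times\mathcal{A}$ and $\mathcal{A}\times\Ann$. The only nonzero values of $\theta$ involving $e_3$ are $\theta(e_2,e_3)=\theta(e_3,e_2)=\alpha_4$. Therefore $e_3\in\Ann(\theta_1)\cap\Ann(\theta_2)$ exactly when $\alpha_4$ vanishes on both $\theta_1$ and $\theta_2$. Consequently $W\in\T_2$ if and only if $\alpha_4\not\equiv0$ on $W$.
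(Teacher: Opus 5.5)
Your proposal is correct and follows the paper's route: compute $\theta(\phi(e_i),\phi(e_j))$ for the representative $\alpha_1\D{11}+\alpha_2\D{12}+\alpha_3\D{22}+\alpha_4(\D{23}+\D{32})$, reduce modulo $\B=\sspan{\D{21}+\D{22}}$ using $\D{21}\equiv-\D{22}$, and observe that $e_3$ enters only through $\theta(e_2,e_3)=\theta(e_3,e_2)=\alpha_4$. Your entries $c_{ij}$ and the resulting $\alpha_3^*=c_{22}-c_{21}$ check out and reproduce the stated formulas; the paper only states this computation, and you have written it out.
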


\begin{proof}
The formulas are obtained by a direct computation modulo $\B=\sspan{\D{21}+\D{22}}$.
The vector $e_3$ appears only in $\N4$, via $\theta(e_2,e_3)=\theta(e_3,e_2)=\alpha_4$.
\end{proof}

\begin{proposition}\label{prop:R340}
Every element of $\T_2(\RR34(0))$ lies in the orbit of exactly one of the subspaces
\[
\begin{array}{lll}
\sspan{\N4,\N1+\alpha\N3}\ (\alpha\in\C), & \sspan{\N4,\N1+\N2}, & \sspan{\N4,\N1+\N2+\N3},\\
\sspan{\N4,\N2}, & \sspan{\N1+\N4,\N2}, & \sspan{\N4,\N2+\N3},\\
\sspan{\N1+\N4,\N2+\N3}, & \sspan{\N4,\N3}, & \sspan{\N1+\N4,\N3},
\end{array}
\]
and different values of $\alpha$ give different orbits.
\end{proposition}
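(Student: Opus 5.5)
The plan is to use the action formulas of Lemma~\ref{lem:R340action}, in the same spirit as the proofs of Propositions~\ref{prop:R31} and~\ref{prop:R33}. The linear functional $\theta\mapsto\alpha_4$ is semi-invariant, since $\alpha_4^*=\alpha_4xy^2$. For $W\in\T_2(\RR34(0))$ it is nonzero on $W$, so $W\cap\{\alpha_4=0\}$ is a canonical line $\sspan{\theta_2}$ with $\theta_2=a_1\N1+a_2\N2+a_3\N3\neq0$. We then choose $\theta_1=\N4+b_1\N1+b_2\N2+b_3\N3$, which is determined modulo $\theta_2$. The orbit of $W$ is therefore determined by two pieces of data: the orbit of the line $\sspan{\theta_2}$ under the action restricted to $\{\alpha_4=0\}$, and the class of $\theta_1$ under the stabilizer of that line.

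First I will normalize $\theta_1$. The parameters $u,v$ only enter through the $\alpha_4$-terms. By the formulas, $\alpha_2^*$ contains $\alpha_4uy$ and $\alpha_3^*$ contains $-\alpha_4y(u-2v)$. With $x=y=1$, choosing $u$ kills $b_2$, and then choosing $v$ kills $b_3$. These choices do not affect $\theta_2$, because $\alpha_4=0$ on $\theta_2$. Hence we may take $\theta_1=\N4+b_1\N1$, and the residual freedom is the diagonal-type group in $x,y$, together with the possibility of adding multiples of $\theta_2$.

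Next I will classify $\theta_2$ up to scalars under
\[
\alpha_1^*=a_1x^2,\qquad \alpha_2^*=a_1x(x-y)+a_2xy,\qquad \alpha_3^*=(a_2-a_1)y(x-y)+a_3y^2 .
\]
If $a_1\neq0$, normalize $a_1=1$ and subtract $b_1\theta_2$ from $\theta_1$, leaving $\theta_1=\N4$ after re-killing the $\N2,\N3$ parts with $u,v$.
\begin{itemize}
\item If $a_2\neq1$, the ratio $\alpha_2^*/\alpha_1^*=1+(a_2-1)y/x$ can be made $0$ by the choice of $y/x$. This gives $\sspan{\N4,\N1+\alpha\N3}$.
\item If $a_2=1$, then $\alpha_2^*=x^2$ and $\alpha_3^*=a_3y^2$, so we obtain $\sspan{\N4,\N1+\N2}$ or $\sspan{\N4,\N1+\N2+\N3}$.
\end{itemize}
If $a_1=0$, then $\alpha_2^*=a_2xy$ and $\alpha_3^*=a_2y(x-y)+a_3y^2$.
\begin{itemize}
\item If $a_2\neq0$, normalize $a_2=1$; the $\N3$-coefficient becomes $(x-y)/x+a_3y/x$, which can be made $0$ unless $a_3=1$. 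This gives $\theta_2=\N2$ or $\N2+\N3$.
\item If $a_2=0$, then $\theta_2=\N3$.
\end{itemize}
In these three subcases $b_1$ can no longer be absorbed, but it is rescaled by $x^2/(xy^2)=x/y^2$ (relative to $\alpha_4$), so $b_1\in\{0,1\}$. This produces the remaining six subspaces of the list.

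Finally, I must prove that the listed orbits are distinct and that $\alpha$ is an invariant. The discrete distinctions come from properties that are clearly invariant: whether $a_1=0$ on $\theta_2$; whether $a_2=a_1$ (this condition is preserved, since $\alpha_2^*-\alpha_1^*=(a_2-a_1)xy$); whether $a_3$ is zero in the relevant normal form; and whether $b_1=0$ modulo $\theta_2$ and the $u,v$-shifts. For the family I will look for a rational invariant of $\theta_2$ under the two-parameter group. The natural candidate is something like $(\alpha_2-\alpha_1)^2$ compared with $\alpha_1\alpha_3$, adjusted by the non-homogeneous terms, which on $\N1+\alpha\N3$ should reduce to a function injective in $\alpha$. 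Alternatively, I will compute the stabilizer of the normal form directly: requiring $\alpha_2^*=0$ with $a_2=0$, $a_1=1$ forces $y=x$, and then $\alpha_3^*=\alpha x^2=\alpha\alpha_1^*$, so $\alpha$ is fixed.

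I expect the main obstacle to be the bookkeeping of the stabilizers at each normalization step. The $u,v$-shifts, the change of basis within $W$, and the residual $(x,y)$-action must be checked to be compatible, so that $b_1$ really can be normalized to $0$ or $1$ exactly in the cases claimed and no two listed subspaces are secretly identified. For this I would first compute, for each normal form of $\theta_2$, the subgroup of $(x,y)$ fixing the line $\sspan{\theta_2}$, and only then act on $\theta_1$.
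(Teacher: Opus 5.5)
Your proposal is correct and follows essentially the same route as the paper's proof. The paper likewise uses the canonical line $W\cap\{\alpha_4=0\}$, removes the $\N{2},\N{3}$-parts of $\theta_1$ with $u,v$, and splits according to whether the $\N{1}$-coefficient of $\theta_2$ vanishes, with $b_1\mapsto (x/y^2)b_1$ in that case; it proves invariance of $\alpha$ through the stabilizer $y=x$ of $\N{1}+\alpha\N{3}$, exactly as in your alternative argument.

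The rational invariant you were looking for does exist. In the coordinates $p=\alpha_1$, $q=\alpha_2-\alpha_1$, $r=\alpha_1-\alpha_2+\alpha_3$, the action on $\{\alpha_4=0\}$ is diagonal with weights $x^2,xy,y^2$. Hence $pr/q^2$ is invariant, and it equals $1+\alpha$ on $\N{1}+\alpha\N{3}$. Likewise, $r=0$ is the invariant condition that separates $\N{1}+\N{2}$ from $\N{1}+\N{2}+\N{3}$, and $\N{2}+\N{3}$ from $\N{2}$.
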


\begin{proof}
Let $W\in\T_2$.
Then $W\cap\{\alpha_4=0\}=\sspan{\theta_2}$ is a canonical line, $\theta_2=b\N1+c\N2+d\N3\neq0$, and on it the automorphisms act through $x$ and $y$ only.
Write $W=\sspan{\theta_1,\theta_2}$ with $\theta_1=\N4+a_1\N1+a_2\N2+a_3\N3$.
By Lemma~\ref{lem:R340action}, the parameters $u,v$ allow us to make $a_2=a_3=0$ (the map $(u,v)\mapsto(uy,-y(u-2v))$ is bijective), and they do not affect $\theta_2$; the $\N1$-coefficient of $\theta_1$ is changed only by adding multiples of $\theta_2$.
Therefore: if $b\neq0$, we may assume $\theta_1=\N4$; if $b=0$, then $\theta_1=\N4+a\N1$ and $a$ is multiplied by $x^2/(xy^2)=x/y^2$.

Put $s=y/x$.
\emph{Case $b\neq0$.} Take $b=1$. The action on $\theta_2$ gives, after division by $x^2$,
\[
\N1+c\N2+d\N3\ \longmapsto\ \N1+\big(1+(c-1)s\big)\N2+\big((c-1)s+(1-c+d)s^2\big)\N3 .
\]
If $c\neq1$ we choose $s=1/(1-c)$ and get $\theta_2=\N1+\alpha\N3$ with $\alpha=-1+\frac{1-c+d}{(1-c)^2}$; the forms with $c=0$ are preserved only by $s=1$, which does not change $d$, so $\alpha$ is an invariant.
If $c=1$ (an invariant condition), then $d\mapsto ds^2$ and we get $\N1+\N2$ or $\N1+\N2+\N3$.

\emph{Case $b=0$.} If $c\neq0$ we take $c=1$ and $\theta_2\mapsto\N2+(1+(d-1)s)\N3$.
If $d\neq1$ we get $\theta_2=\N2$, and the stabilizer of this form has $s=1$; then $a\mapsto a/x$, giving $\sspan{\N4,\N2}$ and $\sspan{\N1+\N4,\N2}$.
If $d=1$, $\theta_2=\N2+\N3$ is fixed for all $s$ and $a\mapsto ax/y^2$, giving $\sspan{\N4,\N2+\N3}$ and $\sspan{\N1+\N4,\N2+\N3}$.
If $c=0$ then $\theta_2=\N3$ and, similarly, $\theta_1\in\{\N4,\N1+\N4\}$.
The invariants used ($b=0$ or not, the orbit of the line $\sspan{\theta_2}$, and whether $a=0$) separate all the listed orbits.
\end{proof}

\begin{corollary}\label{cor:R340}
The non-split central extensions of $\RR34(0)$ by a two-dimensional space are, up to isomorphism, the algebras $\A{21},\dots,\A{29}$ below.
\end{corollary}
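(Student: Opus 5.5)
This corollary is read off from Proposition~\ref{prop:R340} by the general machinery of Section~\ref{sec:method}; no new orbit computation is needed. By Lemma~\ref{lem:quotient}, a five-dimensional algebra of Theorem~A with $\Ann(\mathcal{B})$ two-dimensional and $\mathcal{B}/\Ann(\mathcal{B})\cong\RR34(0)$ is isomorphic to $\RR34(0)_\theta$ for a cocycle $\theta=\theta_1e_4+\theta_2e_5$ with $\Ann(\theta)\cap\Ann(\RR34(0))=0$. Since $\mathcal{B}$ has no annihilator component, Lemma~\ref{lem:component} shows that $[\theta_1],[\theta_2]$ are linearly independent. Hence $W=\sspan{[\theta_1],[\theta_2]}\in\T_2(\RR34(0))$.

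Conversely, every $W\in\T_2(\RR34(0))$ gives an algebra in $\E(\RR34(0),\C^2)$. By Lemma~\ref{lem:orbits}, two such extensions are isomorphic exactly when the corresponding subspaces lie in the same $\Aut(\RR34(0))$-orbit.

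The plan is therefore to take the nine orbit representatives of Proposition~\ref{prop:R340}, including the family $\sspan{\N4,\N1+\alpha\N3}$, and write down the extension for each one. The rule is $\N1=\D{11}$, $\N2=\D{12}$, $\N3=\D{22}$ and $\N4=\D{23}+\D{32}$. These are added to the table $e_2e_1=e_3$, $e_2e_2=e_3$ of $\RR34(0)$, with $e_4$ attached to $\theta_1$ and $e_5$ to $\theta_2$. For example, $\sspan{\N4,\N1+\alpha\N3}$ gives
\[
e_1e_1=e_5,\ e_2e_1=e_3,\ e_2e_2=e_3+\alpha e_5,\ e_2e_3=e_4,\ e_3e_2=e_4 .
\]
This is $\A{21}(\alpha)$. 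The remaining eight representatives give $\A{22},\dots,\A{29}$ in the same way.

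Non-isomorphism of distinct entries follows from Lemma~\ref{lem:orbits} together with the uniqueness part of Proposition~\ref{prop:R340}. In particular, different values of $\alpha$ give non-isomorphic algebras. These algebras are also not isomorphic to $\A1,\dots,\A{20}$, because $\mathcal{B}/\Ann(\mathcal{B})$ is an isomorphism invariant and the quotients $\RR31$, $\RR33$, $\RR34(0)$ are pairwise non-isomorphic by Theorem~\ref{thm:dim3}.

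The only step needing any care is the bookkeeping. One must check that the chosen representative cocycles are genuine cocycles and not merely classes. This holds because $\HH$ was computed with these exact representatives in Proposition~\ref{prop:data3}. One must also match each representative correctly to its multiplication table. The substantive content was already done in Proposition~\ref{prop:R340}.
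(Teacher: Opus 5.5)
Your proposal is correct and follows essentially the paper's route. The paper also gives no separate proof of the corollary: it obtains it by combining Lemmas~\ref{lem:quotient}, \ref{lem:component} and~\ref{lem:orbits} with the orbit list of Proposition~\ref{prop:R340}, and writing out the extension for each representative. Your computation of $\A{21}(\alpha)$ is right, and the remaining eight representatives match $\A{22},\dots,\A{29}$ in the same way.
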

\begin{longtable}{@{}l p{0.80\textwidth}@{}}
$\mathcal{A}_{21}(\alpha)$ & $e_{1}e_{1}=e_{5},\ e_{2}e_{1}=e_{3},\ e_{2}e_{2}=e_{3}+\alpha e_{5},\ e_{2}e_{3}=e_{4}, \ e_{3}e_{2}=e_{4}$ \\ & \quad {\small $\theta=\langle \nabla_{4},\ \nabla_{1}+\alpha \nabla_{3}\rangle$;\quad $\alpha\in\mathbb{C}$} \\[4pt]
$\mathcal{A}_{22}$ & $e_{1}e_{1}=e_{5},\ e_{1}e_{2}=e_{5},\ e_{2}e_{1}=e_{3},\ e_{2}e_{2}=e_{3}, \ e_{2}e_{3}=e_{4},\ e_{3}e_{2}=e_{4}$ \\ & \quad {\small $\theta=\langle \nabla_{4},\ \nabla_{1}+\nabla_{2}\rangle$} \\[4pt]
$\mathcal{A}_{23}$ & $e_{1}e_{1}=e_{5},\ e_{1}e_{2}=e_{5},\ e_{2}e_{1}=e_{3},\ e_{2}e_{2}=e_{3}+e_{5}, \ e_{2}e_{3}=e_{4},\ e_{3}e_{2}=e_{4}$ \\ & \quad {\small $\theta=\langle \nabla_{4},\ \nabla_{1}+\nabla_{2}+\nabla_{3}\rangle$} \\[4pt]
$\mathcal{A}_{24}$ & $e_{1}e_{2}=e_{5},\ e_{2}e_{1}=e_{3},\ e_{2}e_{2}=e_{3},\ e_{2}e_{3}=e_{4}, \ e_{3}e_{2}=e_{4}$ \\ & \quad {\small $\theta=\langle \nabla_{4},\ \nabla_{2}\rangle$} \\[4pt]
$\mathcal{A}_{25}$ & $e_{1}e_{1}=e_{4},\ e_{1}e_{2}=e_{5},\ e_{2}e_{1}=e_{3},\ e_{2}e_{2}=e_{3}, \ e_{2}e_{3}=e_{4},\ e_{3}e_{2}=e_{4}$ \\ & \quad {\small $\theta=\langle \nabla_{1}+\nabla_{4},\ \nabla_{2}\rangle$} \\[4pt]
$\mathcal{A}_{26}$ & $e_{1}e_{2}=e_{5},\ e_{2}e_{1}=e_{3},\ e_{2}e_{2}=e_{3}+e_{5},\ e_{2}e_{3}=e_{4}, \ e_{3}e_{2}=e_{4}$ \\ & \quad {\small $\theta=\langle \nabla_{4},\ \nabla_{2}+\nabla_{3}\rangle$} \\[4pt]
$\mathcal{A}_{27}$ & $e_{1}e_{1}=e_{4},\ e_{1}e_{2}=e_{5},\ e_{2}e_{1}=e_{3},\ e_{2}e_{2}=e_{3}+e_{5}, \ e_{2}e_{3}=e_{4},\ e_{3}e_{2}=e_{4}$ \\ & \quad {\small $\theta=\langle \nabla_{1}+\nabla_{4},\ \nabla_{2}+\nabla_{3}\rangle$} \\[4pt]
$\mathcal{A}_{28}$ & $e_{2}e_{1}=e_{3},\ e_{2}e_{2}=e_{3}+e_{5},\ e_{2}e_{3}=e_{4},\ e_{3}e_{2}=e_{4}$ \\ & \quad {\small $\theta=\langle \nabla_{4},\ \nabla_{3}\rangle$} \\[4pt]
$\mathcal{A}_{29}$ & $e_{1}e_{1}=e_{4},\ e_{2}e_{1}=e_{3},\ e_{2}e_{2}=e_{3}+e_{5},\ e_{2}e_{3}=e_{4}, \ e_{3}e_{2}=e_{4}$ \\ & \quad {\small $\theta=\langle \nabla_{1}+\nabla_{4},\ \nabla_{3}\rangle$} \\[4pt]
\end{longtable}
\section{One-dimensional extensions of the pure four-dimensional algebras}\label{sec:pure}

By Corollary~\ref{cor:empty4}, among the pure four-dimensional algebras only $\RR41$, $\RR44$, $\RR45$, $\RR46$ and $\RR49$ have non-split one-dimensional central extensions.
For $s=1$ we have $\T_1(\mathcal{A})=\{\sspan{\theta}: \Ann(\theta)\cap\Ann(\mathcal{A})=0\}$, and we call a class $\theta$ \emph{admissible} if $\sspan\theta\in\T_1(\mathcal{A})$.
We use the bases of Proposition~\ref{prop:data4pure} and the automorphisms of Proposition~\ref{prop:aut}; $\theta=\sum\alpha_i\N{i}$ and $\phi\theta=\sum\alpha_i^*\N{i}$.
Since we work in $G_1(\HH(\mathcal{A}))$, the class $\theta$ may be multiplied by a nonzero scalar.

\textbf{Central extensions of $\RR41$.}
Let us use the following notations: $$\N1=[\D{14}], \ \N2=[\D{13}+\D{22}+\D{31}], \ \N3=[\D{41}], \ \N4=[\D{44}]$$ and
\[
\alpha_1^*=\alpha_1vx+\alpha_2ux+\alpha_4tv,\quad \alpha_2^*=\alpha_2x^4,\quad \alpha_3^*=\alpha_2ux+\alpha_3vx+\alpha_4tv,\quad\alpha_4^*=\alpha_4v^2 .
\]

\begin{proposition}\label{prop:R41}
A class $\theta$ is admissible if and only if $\alpha_2\neq0$ and $(\alpha_1-\alpha_3,\alpha_4)\neq(0,0)$.
The admissible classes form the three orbits of $\N1+\N2+\N4$, $\N1+\N2$ and $\N2+\N4$.
\end{proposition}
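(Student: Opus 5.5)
First I determine admissibility. Here $\Ann(\RR41)=\sspan{e_3,e_4}$. For $\theta=\alpha_1\D{14}+\alpha_2(\D{13}+\D{22}+\D{31})+\alpha_3\D{41}+\alpha_4\D{44}$ and $w=ae_3+be_4$, the only values of $\theta$ involving $w$ are
\[
\theta(e_1,w)=a\alpha_2+b\alpha_1,\qquad \theta(w,e_1)=a\alpha_2+b\alpha_3,\qquad \theta(e_4,w)=\theta(w,e_4)=b\alpha_4 .
\]
If $\alpha_2=0$, then $w=e_3$ lies in $\Ann(\theta)\cap\Ann(\RR41)$. If $\alpha_4=0$ and $\alpha_1=\alpha_3$, then $w=\alpha_1e_3-\alpha_2e_4\neq0$ lies there. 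Conversely, suppose $\alpha_2\neq0$ and $(\alpha_1-\alpha_3,\alpha_4)\neq(0,0)$. Subtracting the first two equations gives $b(\alpha_1-\alpha_3)=0$, and together with $b\alpha_4=0$ this forces $b=0$. Then $a\alpha_2=0$ gives $a=0$. This also shows that the admissibility conditions are the right ones to check against the action formulas. They are $\Aut$-invariant: $\alpha_2^*=\alpha_2x^4$, $\alpha_4^*=\alpha_4v^2$, and $\alpha_1^*-\alpha_3^*=(\alpha_1-\alpha_3)vx$.

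Next I normalize. Since $\alpha_2\neq0$, I scale so that $\alpha_2=1$. The parameter $u$ enters $\alpha_1^*$ and $\alpha_3^*$ equally, through the term $\alpha_2ux$. The parameter $t$ enters both equally as well, through the term $\alpha_4tv$. So $u$ cannot separate $\alpha_1$ from $\alpha_3$, and I use it to kill $\alpha_3$: choose $u$ with $\alpha_3^*=0$, i.e.\ $ux=-(\alpha_3vx+\alpha_4tv)$, taking $t=0$ say. After this step $\theta=\alpha_1\N1+\N2+\alpha_4\N4$, and admissibility reads $(\alpha_1,\alpha_4)\neq(0,0)$.

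The automorphisms preserving $\alpha_3=0$ then act, after rescaling by $x^{-4}$ to keep $\alpha_2=1$, by
\[
\alpha_1\mapsto \alpha_1\,v x^{-3},\qquad \alpha_4\mapsto \alpha_4\,v^2x^{-4}.
\]
There are three cases:
\begin{itemize}
\item If $\alpha_1,\alpha_4\neq0$, choose $v=x^2/\alpha_4^{1/2}$ (any square root). This gives $\alpha_4=1$ and $\alpha_1\mapsto\alpha_1\alpha_4^{-1/2}x^{-1}$, and choosing $x$ gives $\alpha_1=1$. The result is $\N1+\N2+\N4$.
\item If $\alpha_4=0$, then $\alpha_1\neq0$ by admissibility, and choosing $v$ gives $\N1+\N2$.
\item If $\alpha_1=0$, then $\alpha_4\neq0$, and choosing $v$ gives $\N2+\N4$.
\end{itemize}

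Finally, I show the three orbits are distinct using invariants. The vanishing of $\alpha_4$ is invariant, since $\alpha_4^*=\alpha_4v^2$ with $v\neq0$. The vanishing of $\alpha_1-\alpha_3$ is invariant, since it is multiplied by $vx\neq0$. The three representatives have different patterns: $\alpha_4\neq0$ and $\alpha_1-\alpha_3\neq0$; $\alpha_4=0$ and $\alpha_1-\alpha_3\neq0$; $\alpha_4\neq0$ and $\alpha_1-\alpha_3=0$. So they lie in distinct orbits. The only delicate point is checking that $u$ and $t$ never separate $\alpha_1$ from $\alpha_3$, which the action formulas make evident.
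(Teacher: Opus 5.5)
Your proposal is correct and follows essentially the same route as the paper: the same evaluation of $\theta$ on $\Ann(\RR41)$ with the same witnesses $e_3$ and $\alpha_1e_3-\alpha_2e_4$, the same use of $u$ to kill $\alpha_3$, the same stabilizer action ($\alpha_1\mapsto\alpha_1vx$, $\alpha_2\mapsto\alpha_2x^4$, $\alpha_4\mapsto\alpha_4v^2$), and the same invariants $\alpha_4=0$ and $\alpha_1=\alpha_3$ to separate the three orbits. Your three-way split according to which of $\alpha_1,\alpha_4$ vanishes is only a cosmetic reorganization of the paper's split on whether $\alpha_4=0$.
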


\begin{proof}
Let $ae_3+be_4\in\Ann(\RR41)$.
We have $\theta(e_1,ae_3+be_4)=a\alpha_2+b\alpha_1$, $\theta(ae_3+be_4,e_1)=a\alpha_2+b\alpha_3$, $\theta(e_4,ae_3+be_4)=\theta(ae_3+be_4,e_4)=b\alpha_4$, and the other values vanish.
If $\alpha_2=0$ then $e_3\in\Ann(\theta)$; if $\alpha_4=0$ and $\alpha_1=\alpha_3$, then $\alpha_2e_4-\alpha_1e_3\in\Ann(\theta)$.
Conversely, under the stated conditions the equations force $b=0$ and then $a=0$.

Let $\theta$ be admissible.
The parameters $u,t$ add the same quantity to $\alpha_1^*$ and $\alpha_3^*$, and $\alpha^*_1-\alpha^*_3=(\alpha_1-\alpha_3)vx$.
Choosing $u$ we get $\alpha_3=0$, and the stabilizer of this normalization acts by $\alpha_1\mapsto\alpha_1vx$, $\alpha_2\mapsto\alpha_2x^4$, $\alpha_4\mapsto\alpha_4v^2$ (after renormalization).
If $\alpha_4\neq0$ we choose $v$ with $\alpha_4v^2=\alpha_2x^4$ (i.e.\ $\alpha_4=\alpha_2$), and then $\alpha_1/\alpha_2\mapsto\pm\alpha_1/(\alpha_2 x)$; we obtain $\N1+\N2+\N4$ or $\N2+\N4$.
If $\alpha_4=0$ then $\alpha_1\neq0$ and we obtain $\N1+\N2$.
The three classes are separated by the invariant conditions $\alpha_4=0$ and $\alpha_1=\alpha_3$.
\end{proof}

\begin{longtable}{@{}l p{0.90\textwidth}@{}}
$\mathcal{A}_{30}$ & $e_{1}e_{1}=e_{2},\ e_{1}e_{2}=e_{3},\ e_{1}e_{3}=e_{5},\ e_{1}e_{4}=e_{5}, \ e_{2}e_{1}=e_{3},\ e_{2}e_{2}=e_{5},\ e_{3}e_{1}=e_{5},\ e_{4}e_{4}=e_{5}$ \\ & \quad {\small $\theta=\langle \nabla_{1}+\nabla_{2}+\nabla_{4}\rangle$} \\[4pt]
$\mathcal{A}_{31}$ & $e_{1}e_{1}=e_{2},\ e_{1}e_{2}=e_{3},\ e_{1}e_{3}=e_{5},\ e_{1}e_{4}=e_{5}, \ e_{2}e_{1}=e_{3},\ e_{2}e_{2}=e_{5},\ e_{3}e_{1}=e_{5}$ \\ & \quad {\small $\theta=\langle \nabla_{1}+\nabla_{2}\rangle$} \\[4pt]
$\mathcal{A}_{32}$ & $e_{1}e_{1}=e_{2},\ e_{1}e_{2}=e_{3},\ e_{1}e_{3}=e_{5},\ e_{2}e_{1}=e_{3}, \ e_{2}e_{2}=e_{5},\ e_{3}e_{1}=e_{5},\ e_{4}e_{4}=e_{5}$ \\ & \quad {\small $\theta=\langle \nabla_{2}+\nabla_{4}\rangle$} \\[4pt]
\end{longtable}

\textbf{Central extensions of $\RR44$.} Let us use the following notations:
$$\N1=[\D{13}], \ \N2=[\D{12}+\D{21}], \ \N3=[\D{33}], \ \N4=[\D{14}+\D{22}+\D{23}+\D{32}+\D{41}]$$ and
\[
\begin{array}{ll}
\alpha_1^*=x(\alpha_1x^2+\alpha_3xz+\alpha_4u+\alpha_4xy), & \alpha_2^*=x(\alpha_2x^2-\alpha_3xz-\alpha_4u+2\alpha_4xy+2\alpha_4xz),\\
\alpha_3^*=\alpha_3x^4, & \alpha_4^*=\alpha_4x^4 .
\end{array}
\]

\begin{proposition}\label{prop:R44}
A class $\theta$ is admissible if and only if $\alpha_4\neq0$.
Every admissible class lies in the orbit of exactly one class $\alpha\N3+\N4$, $\alpha\in\C$.
\end{proposition}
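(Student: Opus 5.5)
The first claim should follow from the description of the annihilator, and the second from the action formulas displayed just before the statement, together with an explicit solution of a small linear system for the automorphism parameters.

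\emph{Admissibility.} By Proposition~\ref{prop:data4pure}, $\Ann(\RR44)=\sspan{e_4}$, so a class $\theta$ is admissible if and only if $e_4\notin\Ann(\theta)$ for one (equivalently, by the discussion before Lemma~\ref{lem:orbits}, every) cocycle representing $\theta$. Among the basis cocycles $\D{13}$, $\D{12}+\D{21}$, $\D{33}$, $\D{14}+\D{22}+\D{23}+\D{32}+\D{41}$, only the last involves the index $4$. For $\theta=\sum\alpha_i\N{i}$ this gives $\theta(e_1,e_4)=\theta(e_4,e_1)=\alpha_4$, and every other value $\theta(e_j,e_4)$ or $\theta(e_4,e_j)$ vanishes. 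Hence $e_4\in\Ann(\theta)$ exactly when $\alpha_4=0$, which proves the first claim.

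\emph{Existence of the normal form.} Let $\alpha_4\neq0$; after multiplying $\theta$ by a nonzero scalar we may assume $\alpha_4=1$. I would take the automorphism of Proposition~\ref{prop:aut} with $x=1$, $z=0$ and free parameters $y,u$. By the action formulas it gives
\[
\alpha_1^*=\alpha_1+u+y,\qquad \alpha_2^*=\alpha_2-u+2y,\qquad \alpha_3^*=\alpha_3,\qquad \alpha_4^*=1 .
\]
The linear system $\alpha_1+u+y=0$, $\alpha_2-u+2y=0$ has nonzero determinant. Its solution is $y=-(\alpha_1+\alpha_2)/3$ and $u=-\alpha_1-y$. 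With these values $\phi\theta=\alpha_3\N3+\N4$, so every admissible class lies in the orbit of some $\alpha\N3+\N4$ with $\alpha=\alpha_3/\alpha_4$.

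\emph{Uniqueness.} By the action formulas, $\alpha_3^*=\alpha_3x^4$ and $\alpha_4^*=\alpha_4x^4$ for every automorphism, so the ratio $\alpha_3/\alpha_4$ is preserved by $\Aut(\RR44)$. It is also preserved by rescaling $\theta$, so it is a well-defined invariant of the point $\sspan{\theta}\in\T_1(\RR44)$. Consequently $\alpha\N3+\N4$ and $\beta\N3+\N4$ lie in the same orbit only if $\alpha=\beta$.

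The only step that needs care is checking that the $2\times2$ system for $(u,y)$ is nondegenerate, and it is. Setting $z=0$ avoids the $\alpha_3$-dependent terms, so no case split on $\alpha_3$ is needed. I expect no real obstacle; the proposition is essentially a direct consequence of the displayed formulas.
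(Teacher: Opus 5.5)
Your proof is correct and follows the paper's argument. Admissibility holds because $e_4$ enters only through $\N4$. The coefficients $\alpha_1,\alpha_2$ are then killed using the parameters $u$ and $y$, and uniqueness follows from the invariant $\alpha_3/\alpha_4$. The only small difference is that you solve the $2\times 2$ system jointly with $x=1$, $z=0$, while the paper first kills $\alpha_1$ with $u$ and then $\alpha_2$ with $y$.
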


\begin{proof}
The vector $e_4$ spans $\Ann(\RR44)$, and it occurs only in $\N4$ (through $\theta(e_1,e_4)=\theta(e_4,e_1)=\alpha_4$).
If $\alpha_4\neq0$, choose $u$ with $\alpha_1^*=0$; then $\alpha_2^*=x^2\big((\alpha_1+\alpha_2)x+3\alpha_4y+2\alpha_4z\big)$, which vanishes for a suitable $y$.
Since $\alpha_3^*/\alpha_4^*=\alpha_3/\alpha_4$, the number $\alpha=\alpha_3/\alpha_4$ is an invariant.
\end{proof}

\begin{longtable}{@{}l p{0.95\textwidth}@{}}
$\mathcal{A}_{33}(\alpha)$ & $e_{1}e_{1}=e_{2},\ e_{1}e_{2}=e_{4},\ e_{1}e_{4}=e_{5},\ e_{2}e_{1}=e_{4}, \ e_{2}e_{2}=e_{5}$ \\ & $e_{2}e_{3}=e_{5}, \ e_{3}e_{1}=e_{4},\ e_{3}e_{2}=e_{5}, \ e_{3}e_{3}=\alpha e_{5},\ e_{4}e_{1}=e_{5}$ \\ & \quad {\small $\theta=\langle \alpha \nabla_{3}+\nabla_{4}\rangle$;\quad $\alpha\in\mathbb{C}$} \\[4pt]
\end{longtable}

\textbf{Central extensions of $\RR45$.}  Let us use the following notations:
$$\N1=[\D{11}], \ \N2=[\D{12}], \ \N3=[\D{22}], \ \N4=[\D{14}+\D{41}], \ \N5=[\D{23}], \ \N6=[\D{24}-\D{33}+\D{42}]$$ and
\[
\begin{array}{ll}
\alpha_1^*=\alpha_1x^2+\alpha_2xz+\alpha_3z^2+2\alpha_4tx+2\alpha_6tz, &
\alpha_2^*=\alpha_2xy+2\alpha_3yz+2\alpha_4ux+2\alpha_6ty+2\alpha_6uz,\\
\alpha_3^*=y(\alpha_3y+2\alpha_6u), &
\alpha_4^*=x^2y(\alpha_4x+\alpha_6z),\\
\alpha_5^*=\alpha_5xy^2, &
\alpha_6^*=\alpha_6x^2y^2 .
\end{array}
\]

\begin{proposition}\label{prop:R45}
A class $\theta$ is admissible if and only if $(\alpha_4,\alpha_6)\neq(0,0)$.
Every admissible class lies in the orbit of exactly one of
\[
\N6,\ \N1+\N6,\ \N5+\N6,\ \N1+\N5+\N6,\ \N4,\ \N3+\N4,\ \N4+\N5,\ \N3+\N4+\N5.
\]
\end{proposition}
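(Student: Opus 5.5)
We follow the pattern of Propositions~\ref{prop:R41} and~\ref{prop:R44}: first the admissibility criterion, then a normalization via the given action formulas. Since $\Ann(\RR45)=\sspan{e_4}$, we evaluate a cocycle $\theta=\sum\alpha_i\N{i}$ on $e_4$. The only basis cocycles involving $e_4$ are $\N4$, through $\theta(e_1,e_4)=\theta(e_4,e_1)=\alpha_4$, and $\N6$, through $\theta(e_2,e_4)=\theta(e_4,e_2)=\alpha_6$. Hence $e_4\in\Ann(\theta)$ if and only if $\alpha_4=\alpha_6=0$, which proves the first claim.

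\emph{Case A: $\alpha_6\neq0$.}
\begin{itemize}
\item Using $z$, we kill $\alpha_4^*=x^2y(\alpha_4x+\alpha_6z)$ by setting $z=-\alpha_4x/\alpha_6$.
\item Using $u$, we kill $\alpha_3^*=y(\alpha_3y+2\alpha_6u)$.
\item With these values fixed, the $t$-term $2\alpha_6ty$ in $\alpha_2^*$ allows us to kill $\alpha_2^*$.
\item The remaining freedom has $z,u$ determined and $t$ still free in $\alpha_1^*$, but $t$ also enters $\alpha_2^*$. So we must check the order: $\alpha_1^*$ contains $2\alpha_4tx+2\alpha_6tz$, which after $z=-\alpha_4x/\alpha_6$ equals $2tx(\alpha_4-\alpha_4)=0$. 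Hence $t$ does not affect $\alpha_1^*$, and we use $t$ solely for $\alpha_2^*$.
\end{itemize}
The stabilizer of the normal form $\alpha_2=\alpha_3=\alpha_4=0$ has $z=u=t=0$ and acts diagonally:
\[
\alpha_1\mapsto\alpha_1x^2,\qquad \alpha_5\mapsto\alpha_5xy^2,\qquad \alpha_6\mapsto\alpha_6x^2y^2.
\]
Projectively, $\alpha_1/\alpha_6$ scales by $y^{-2}$ and $\alpha_5/\alpha_6$ by $x^{-1}$, independently. Whether $\alpha_1$ and $\alpha_5$ vanish are therefore invariants, and this gives the four orbits $\N6$, $\N1+\N6$, $\N5+\N6$, $\N1+\N5+\N6$.

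\emph{Case B: $\alpha_6=0$, $\alpha_4\neq0$.} This condition is invariant, since $\alpha_6^*=\alpha_6x^2y^2$.
\begin{itemize}
\item Using $t$, we kill $\alpha_1^*$ through the term $2\alpha_4tx$.
\item Using $u$, we kill $\alpha_2^*$ through the term $2\alpha_4ux$.
\end{itemize}
We are left with $\alpha_3^*=\alpha_3y^2$, $\alpha_4^*=\alpha_4x^3y$ and $\alpha_5^*=\alpha_5xy^2$. The ratios $\alpha_3/\alpha_4$ and $\alpha_5/\alpha_4$ scale by $y/x^3$ and $y/x^2$. These are independent characters of $(x,y)$, since the exponent matrix $\begin{pmatrix}-3&1\\-2&1\end{pmatrix}$ has determinant $-1\neq0$. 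So each nonzero ratio can be normalized to $1$ separately, while vanishing is invariant. This gives $\N4$, $\N3+\N4$, $\N4+\N5$, $\N3+\N4+\N5$.

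\emph{Distinctness.} The eight representatives are separated by the invariant conditions $\alpha_6=0$ together with the vanishing or non-vanishing of the normalized coordinates: $\alpha_1,\alpha_5$ in Case A and $\alpha_3,\alpha_5$ in Case B. To be rigorous, these must be stated as invariants of the whole orbit, not just of the normal form. In Case A, $\alpha_5\neq0$ is invariant on all of $\HH(\RR45)$ because $\alpha_5^*=\alpha_5xy^2$. The vanishing of $\alpha_1$ is the delicate point: the cleanest argument is to observe that the stabilizer of the normal form is exactly the diagonal torus, so two normalized classes in the same orbit differ by a diagonal element. The same reasoning applies in Case B for $\alpha_3$. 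Confirming this stabilizer computation, together with the solvability of the normalizing equations in each case (which relies on $x,y\neq0$ from $\det\phi\neq0$), is the step I expect to need the most care.
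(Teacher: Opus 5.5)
Your proof is correct and follows the paper's argument step by step. The paper also kills $\alpha_4,\alpha_3,\alpha_2$ by $z,u,t$ when $\alpha_6\neq0$, notes that $t$ then drops out of $\alpha_1^*$, and uses the diagonal action of the remaining automorphisms; when $\alpha_6=0$ it kills $\alpha_1,\alpha_2$ by $t,u$ and uses the independent factors $y/x^3$ and $y/x^2$. One small correction to your distinctness remark: for $\alpha_6=0$ the automorphisms preserving $\alpha_1=\alpha_2=0$ are not only the diagonal ones, since $z$ stays free with $t,u$ determined by it, but this is harmless because $\alpha_3^*=\alpha_3y^2$ and $\alpha_5^*=\alpha_5xy^2$ hold for every automorphism once $\alpha_6=0$.
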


\begin{proof}
The vector $e_4$ occurs only in $\N4$ and $\N6$ (via $\theta(e_1,e_4)=\theta(e_4,e_1)=\alpha_4$ and $\theta(e_2,e_4)=\theta(e_4,e_2)=\alpha_6$), which gives the first claim.
The condition $\alpha_6=0$ is invariant.

\emph{Case $\alpha_6\neq0$.}
Choose $z$ with $\alpha_4x+\alpha_6z=0$, so that $\alpha_4^*=0$; then $\alpha_1^*$ no longer depends on $t$.
Next choose $u$ with $\alpha_3^*=0$, and finally $t$ with $\alpha_2^*=0$ (the coefficient of $t$ in $\alpha_2^*$ is $2\alpha_6y\neq0$).
Thus $\theta=\alpha_1\N1+\alpha_5\N5+\alpha_6\N6$, and the automorphisms preserving this form have $z=u=t=0$ and act by $\alpha_1\mapsto\alpha_1x^2$, $\alpha_5\mapsto\alpha_5xy^2$, $\alpha_6\mapsto\alpha_6x^2y^2$.
The ratios $\alpha_1/\alpha_6\mapsto(\alpha_1/\alpha_6)y^{-2}$ and $\alpha_5/\alpha_6\mapsto(\alpha_5/\alpha_6)x^{-1}$ can be normalized independently, and whether they vanish is invariant.
We obtain the first four classes.

\emph{Case $\alpha_6=0$, $\alpha_4\neq0$.}
Choose $t$ and $u$ with $\alpha_1^*=\alpha_2^*=0$.
The remaining coefficients transform by $\alpha_3\mapsto\alpha_3y^2$, $\alpha_4\mapsto\alpha_4x^3y$, $\alpha_5\mapsto\alpha_5xy^2$, so $\alpha_3/\alpha_4$ and $\alpha_5/\alpha_4$ are multiplied by $y/x^3$ and $y/x^2$, which can be prescribed independently.
We obtain the last four classes.
\end{proof}

\begin{longtable}{@{}l p{0.90\textwidth}@{}}
$\mathcal{A}_{34}$ & $e_{1}e_{2}=e_{3},\ e_{1}e_{3}=e_{4},\ e_{2}e_{1}=-e_{3},\ e_{2}e_{4}=e_{5}, \ e_{3}e_{3}=-e_{5},\ e_{4}e_{2}=e_{5}$ \\ & \quad {\small $\theta=\langle \nabla_{6}\rangle$} \\[4pt]
$\mathcal{A}_{35}$ & $e_{1}e_{1}=e_{5},\ e_{1}e_{2}=e_{3},\ e_{1}e_{3}=e_{4},\ e_{2}e_{1}=-e_{3}, \ e_{2}e_{4}=e_{5},\ e_{3}e_{3}=-e_{5},\ e_{4}e_{2}=e_{5}$ \\ & \quad {\small $\theta=\langle \nabla_{1}+\nabla_{6}\rangle$} \\[4pt]
$\mathcal{A}_{36}$ & $e_{1}e_{2}=e_{3},\ e_{1}e_{3}=e_{4},\ e_{2}e_{1}=-e_{3},\ e_{2}e_{3}=e_{5}, \ e_{2}e_{4}=e_{5},\ e_{3}e_{3}=-e_{5},\ e_{4}e_{2}=e_{5}$ \\ & \quad {\small $\theta=\langle \nabla_{5}+\nabla_{6}\rangle$} \\[4pt]
$\mathcal{A}_{37}$ & $e_{1}e_{1}=e_{5},\ e_{1}e_{2}=e_{3},\ e_{1}e_{3}=e_{4},\ e_{2}e_{1}=-e_{3}, \ e_{2}e_{3}=e_{5},\ e_{2}e_{4}=e_{5},\ e_{3}e_{3}=-e_{5},\ e_{4}e_{2}=e_{5}$ \\ & \quad {\small $\theta=\langle \nabla_{1}+\nabla_{5}+\nabla_{6}\rangle$} \\[4pt]
$\mathcal{A}_{38}$ & $e_{1}e_{2}=e_{3},\ e_{1}e_{3}=e_{4},\ e_{1}e_{4}=e_{5},\ e_{2}e_{1}=-e_{3}, \ e_{4}e_{1}=e_{5}$ \\ & \quad {\small $\theta=\langle \nabla_{4}\rangle$} \\[4pt]
$\mathcal{A}_{39}$ & $e_{1}e_{2}=e_{3},\ e_{1}e_{3}=e_{4},\ e_{1}e_{4}=e_{5},\ e_{2}e_{1}=-e_{3}, \ e_{2}e_{2}=e_{5},\ e_{4}e_{1}=e_{5}$ \\ & \quad {\small $\theta=\langle \nabla_{3}+\nabla_{4}\rangle$} \\[4pt]
$\mathcal{A}_{40}$ & $e_{1}e_{2}=e_{3},\ e_{1}e_{3}=e_{4},\ e_{1}e_{4}=e_{5},\ e_{2}e_{1}=-e_{3}, \ e_{2}e_{3}=e_{5},\ e_{4}e_{1}=e_{5}$ \\ & \quad {\small $\theta=\langle \nabla_{4}+\nabla_{5}\rangle$} \\[4pt]
$\mathcal{A}_{41}$ & $e_{1}e_{2}=e_{3},\ e_{1}e_{3}=e_{4},\ e_{1}e_{4}=e_{5},\ e_{2}e_{1}=-e_{3}, \ e_{2}e_{2}=e_{5},\ e_{2}e_{3}=e_{5},\ e_{4}e_{1}=e_{5}$ \\ & \quad {\small $\theta=\langle \nabla_{3}+\nabla_{4}+\nabla_{5}\rangle$} \\[4pt]
\end{longtable}

\textbf{Central extensions of $\RR46$.} Let us use the following notations: $$\N1=[\D{11}], \ \N2=[\D{12}], \ \N3=[\D{13}], \ \N4=[\D{23}], \ \N5=[\D{14}+\D{32}+\D{41}], \ \N6=[\D{24}-\D{33}+\D{42}]$$ and
\[
\begin{array}{l}
\alpha_1^*=x^{-2}\big(\alpha_1x^4+\alpha_2x^3y-\alpha_3x^2y^2-\alpha_4xy^3+2\alpha_5tx^3-\alpha_5xy^3+2\alpha_6tx^2y-\alpha_6y^4\big),\\
\alpha_2^*=\alpha_2x^3-2\alpha_3x^2y-3\alpha_4xy^2+2\alpha_5ux-3\alpha_5xy^2+2\alpha_6tx^2+2\alpha_6uy-4\alpha_6y^3,\\
\alpha_3^*=x^2\big(\alpha_3x^2+3\alpha_4xy+\alpha_5xy-2\alpha_6u+4\alpha_6y^2\big),\\
\alpha_4^*=x^4(\alpha_4x+\alpha_6y),\qquad \alpha_5^*=x^4(\alpha_5x+\alpha_6y),\qquad \alpha_6^*=\alpha_6x^6 .
\end{array}
\]

\begin{proposition}\label{prop:R46}
A class $\theta$ is admissible if and only if $(\alpha_5,\alpha_6)\neq(0,0)$.
Every admissible class lies in the orbit of exactly one of
\[
\N6,\ \N1+\N6,\ \alpha\N1+\N4+\N6,\ \alpha\N4+\N5,\ \N3-\tfrac13\N4+\N5 \ (\alpha\in\C),
\]
and different values of $\alpha$ give different orbits.
\end{proposition}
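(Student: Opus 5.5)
The plan follows the pattern of Propositions~\ref{prop:R45} and~\ref{prop:R41}. First, admissibility. By Proposition~\ref{prop:data4pure}, $\Ann(\RR46)=\sspan{e_4}$. Among the basis cocycles, $e_4$ occurs only in $\N5$, through $\theta(e_1,e_4)=\theta(e_4,e_1)=\alpha_5$, and in $\N6$, through $\theta(e_2,e_4)=\theta(e_4,e_2)=\alpha_6$. Hence $e_4\in\Ann(\theta)$ exactly when $\alpha_5=\alpha_6=0$, which gives the first claim. From the action formulas, $\alpha_6^*=\alpha_6x^6$, so the condition $\alpha_6=0$ is invariant, and I split into the two cases below.

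\emph{Case $\alpha_6\neq0$.} I use $y$ to kill $\alpha_4^*=x^4(\alpha_4x+\alpha_6y)$, taking $y=-\alpha_4x/\alpha_6$. Then $u$, which enters $\alpha_3^*$ with coefficient $-2\alpha_6x^2\neq0$, makes $\alpha_3^*=0$. Next $t$, which enters $\alpha_2^*$ with coefficient $2\alpha_6x^2$, makes $\alpha_2^*=0$. After these choices $\alpha_5^*=x^4(\alpha_5-\alpha_4)x$ is not yet normalised, and $\alpha_1^*$ is what remains. The difference $\alpha_5-\alpha_4$ transforms by $x^5$ times itself, so the quantity $(\alpha_5-\alpha_4)/\alpha_6$ is only rescaled.

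The obstacle here is bookkeeping, since the listed representatives $\N6$, $\N1+\N6$ and $\alpha\N1+\N4+\N6$ have $\alpha_4=\alpha_5$ up to a shift by cohomology. I would therefore recheck which combination is fixed: with $y$ chosen to kill $\alpha_5^*$ instead, one gets $\alpha_4^*\propto(\alpha_4-\alpha_5)x^5$. This leaves a normalisable $\N4$-coefficient whose vanishing is invariant. When it is nonzero, scaling by $x$ sets it to $1$ and fixes $x^6$-ratios, leaving $\alpha_1/\alpha_6$ as the invariant parameter $\alpha$, which gives $\alpha\N1+\N4+\N6$. When it is zero, the coefficient $\alpha_1$ scales by $x^{-4}$ relative to $\alpha_6$, giving $\N6$ or $\N1+\N6$. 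In each subcase I must check that the residual stabilizer (where $u$ and $t$ are forced) really acts as claimed on $\alpha_1$.

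\emph{Case $\alpha_6=0$, $\alpha_5\neq0$.} Now $\alpha_4^*/\alpha_5^*=(\alpha_4x)/(\alpha_5x)$ once $y$ no longer appears, so $\alpha_4/\alpha_5=\alpha$ is invariant. I use $u$ (coefficient $2\alpha_5x$ in $\alpha_2^*$) to kill $\alpha_2$ and $t$ (coefficient $2\alpha_5x$ in $\alpha_1^*$) to kill $\alpha_1$. The coefficient $\alpha_3^*=x^3(\alpha_3x+(3\alpha_4+\alpha_5)y)$ can be killed by $y$ unless $3\alpha_4+\alpha_5=0$, that is, unless $\alpha=-\tfrac13$. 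This yields $\alpha\N4+\N5$. In the exceptional case $\alpha=-\tfrac13$, $\alpha_3/\alpha_5$ is invariant up to scaling by $x$, so it is either $0$, giving $-\tfrac13\N4+\N5$ (already in the family), or nonzero, normalised to $\N3-\tfrac13\N4+\N5$.

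Distinctness then follows from the invariants $\alpha_6=0$, the ratio $\alpha_4/\alpha_5$, and the vanishing of the residual coefficient in each case. The main risk is the first case: verifying the precise residual stabilizer and that $\alpha$ is a genuine invariant there.
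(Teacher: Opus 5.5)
Your final route is the paper's. After you switch to choosing $y$ with $\alpha_5x+\alpha_6y=0$ (so $\alpha_5^*=0$ and $\alpha_4^*=x^5(\alpha_4-\alpha_5)$), the coefficients $-2\alpha_6x^2$ of $u$ in $\alpha_3^*$ and $2\alpha_6x^2$ of $t$ in $\alpha_2^*$ do not depend on $y$. So $\alpha_3$ and $\alpha_2$ are still killed, and every class with $\alpha_6\neq0$ reaches $\alpha_1\nabla_{1}+\alpha_4\nabla_{4}+\alpha_6\nabla_{6}$. Your admissibility argument and your case $\alpha_6=0$ are correct and complete.

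\textbf{The gap is in the case $\alpha_6\neq0$.} There you only prove that the normal forms exist. The step you postpone as ``the main risk'' is exactly the uniqueness part of the statement: that $\alpha$ is an invariant of $\alpha\nabla_{1}+\nabla_{4}+\nabla_{6}$, and that $\nabla_{6}$ and $\nabla_{1}+\nabla_{6}$ lie in different orbits. It cannot be left open, but it is a short check, as in the paper:
\begin{itemize}
\item Suppose $\theta$ and $\phi\theta$ both have $\alpha_2=\alpha_3=\alpha_5=0$. Then $\alpha_5^*=x^4\alpha_6y$ forces $y=0$.
\item Next $\alpha_3^*=-2\alpha_6x^2u$ forces $u=0$, and then $\alpha_2^*=2\alpha_6x^2t$ forces $t=0$.
\item So, including the rescaling by $c\in\C^*$, the residual group acts by $(\alpha_1,\alpha_4,\alpha_6)\mapsto c\,(x^2\alpha_1,x^5\alpha_4,x^6\alpha_6)$.
\item If $\alpha_4\neq0$, keeping $\alpha_4=\alpha_6=1$ forces $cx^5=cx^6=1$, i.e.\ $x=c=1$, so $\alpha_1$ is invariant.
\item If $\alpha_4=0$, the ratio $\alpha_1/\alpha_6$ is multiplied by $x^{-4}$, so only its vanishing is invariant.
\end{itemize}

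Also drop the remark that the listed representatives have $\alpha_4=\alpha_5$ ``up to a shift by cohomology''. It is false for $\alpha\nabla_{1}+\nabla_{4}+\nabla_{6}$. The invariant $\alpha_4-\alpha_5\mapsto x^5(\alpha_4-\alpha_5)$ is exactly what separates that family from $\nabla_{6}$ and $\nabla_{1}+\nabla_{6}$.

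Your first normalization (killing $\alpha_4$ with $y$) was not wrong. It simply leads to the equivalent normal forms $\alpha\nabla_{1}+c\nabla_{5}+\nabla_{6}$. For instance, $x=1$, $y=-1$, $u=\tfrac12$, $t=0$ sends $\alpha\nabla_{1}+\nabla_{4}+\nabla_{6}$ to $\alpha\nabla_{1}-\nabla_{5}+\nabla_{6}$. You would then have had to match those forms with the list.
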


\begin{proof}
The vector $e_4$ occurs only in $\N5$ and $\N6$, which gives the first claim; the condition $\alpha_6=0$ is invariant.

\emph{Case $\alpha_6\neq0$.}
Choose $y$ with $\alpha_5x+\alpha_6y=0$; then $\alpha_5^*=0$ and $\alpha_4^*=x^5(\alpha_4-\alpha_5)$.
With this choice, the coefficients of $t$ in $\alpha_1^*$ and of $u$ in $\alpha_2^*$ vanish, while $\alpha_3^*$ contains $-2\alpha_6x^2u$ and $\alpha_2^*$ contains $2\alpha_6x^2t$.
Hence we may assume $\alpha_2=\alpha_3=\alpha_5=0$.
The automorphisms preserving these conditions have $y=t=u=0$ and act by $\alpha_1\mapsto\alpha_1x^2$, $\alpha_4\mapsto\alpha_4x^5$, $\alpha_6\mapsto\alpha_6x^6$, i.e.\ $\alpha_1/\alpha_6\mapsto x^{-4}\alpha_1/\alpha_6$ and $\alpha_4/\alpha_6\mapsto x^{-1}\alpha_4/\alpha_6$.
If $\alpha_4\neq0$ we normalize $\alpha_4=\alpha_6=1$, which determines $x$ uniquely, so that $\alpha=\alpha_1$ is an invariant: this gives $\alpha\N1+\N4+\N6$.
If $\alpha_4=0$ we obtain $\N6$ or $\N1+\N6$.

\emph{Case $\alpha_6=0$, $\alpha_5\neq0$.}
Now $\alpha_4^*/\alpha_5^*=\alpha_4/\alpha_5=:\alpha$ is an invariant.
The parameters $t$ and $u$ allow us to obtain $\alpha_1^*=\alpha_2^*=0$, and $\alpha_3^*=x^3\big(\alpha_3x+(3\alpha_4+\alpha_5)y\big)$.
If $\alpha\neq-\frac13$ we choose $y$ with $\alpha_3^*=0$ and obtain $\alpha\N4+\N5$.
If $\alpha=-\frac13$, then $\alpha_3/\alpha_5\mapsto x^{-1}\alpha_3/\alpha_5$, and we obtain $-\frac13\N4+\N5$ or $\N3-\frac13\N4+\N5$.
\end{proof}

\begin{longtable}{@{}l p{0.90\textwidth}@{}}
$\mathcal{A}_{42}$ & $e_{1}e_{2}=e_{3},\ e_{1}e_{3}=e_{4},\ e_{2}e_{1}=-e_{3},\ e_{2}e_{2}=e_{4}, \ e_{2}e_{4}=e_{5},\ e_{3}e_{3}=-e_{5},\ e_{4}e_{2}=e_{5}$ \\ & \quad {\small $\theta=\langle \nabla_{6}\rangle$} \\[4pt]
$\mathcal{A}_{43}$ & $e_{1}e_{1}=e_{5},\ e_{1}e_{2}=e_{3},\ e_{1}e_{3}=e_{4},\ e_{2}e_{1}=-e_{3}, \ e_{2}e_{2}=e_{4},\ e_{2}e_{4}=e_{5},\ e_{3}e_{3}=-e_{5},\ e_{4}e_{2}=e_{5}$ \\ & \quad {\small $\theta=\langle \nabla_{1}+\nabla_{6}\rangle$} \\[4pt]
$\mathcal{A}_{44}(\alpha)$ & $e_{1}e_{1}=\alpha e_{5},\ e_{1}e_{2}=e_{3},\ e_{1}e_{3}=e_{4},\ e_{2}e_{1}=-e_{3}, \ e_{2}e_{2}=e_{4}$ \\ & $e_{2}e_{3}=e_{5},\ e_{2}e_{4}=e_{5},\ e_{3}e_{3}=-e_{5}, \ e_{4}e_{2}=e_{5}$ \\ & \quad {\small $\theta=\langle \alpha \nabla_{1}+\nabla_{4}+\nabla_{6}\rangle$;\quad $\alpha\in\mathbb{C}$} \\[4pt]
$\mathcal{A}_{45}(\alpha)$ & $e_{1}e_{2}=e_{3},\ e_{1}e_{3}=e_{4},\ e_{1}e_{4}=e_{5},\ e_{2}e_{1}=-e_{3}, \ e_{2}e_{2}=e_{4},\ e_{2}e_{3}=\alpha e_{5},\ e_{3}e_{2}=e_{5},\ e_{4}e_{1}=e_{5}$ \\ & \quad {\small $\theta=\langle \alpha \nabla_{4}+\nabla_{5}\rangle$;\quad $\alpha\in\mathbb{C}$} \\[4pt]
$\mathcal{A}_{46}$ & $e_{1}e_{2}=e_{3},\ e_{1}e_{3}=e_{4}+3 e_{5},\ e_{1}e_{4}=3 e_{5},\ e_{2}e_{1}=-e_{3}$ \\ & $e_{2}e_{2}=e_{4},\ e_{2}e_{3}=-e_{5},\ e_{3}e_{2}=3 e_{5},\ e_{4}e_{1}=3 e_{5}$ \\ & \quad {\small $\theta=\langle 3 \nabla_{3}-\nabla_{4}+3 \nabla_{5}\rangle$} \\[4pt]
\end{longtable}

\textbf{Central extensions of $\RR49$.} Here $\HH(\RR49)=\sspan{\N1}$ with $\N1=[\D{14}+\D{23}+\D{32}+\D{41}]$, and $\N1$ is admissible because $\N1(e_1,e_4)=1$.
Hence $\RR49$ has, up to isomorphism, exactly one non-split one-dimensional central extension:

\begin{longtable}{@{}l p{0.80\textwidth}@{}}
$\mathcal{A}_{47}$ & $e_{1}e_{1}=e_{2},\ e_{1}e_{2}=e_{3},\ e_{1}e_{3}=e_{4},\ e_{1}e_{4}=e_{5}, \ e_{2}e_{1}=e_{3}$ \\ & $e_{2}e_{2}=e_{4},\ e_{2}e_{3}=e_{5},\ e_{3}e_{1}=e_{4}, \ e_{3}e_{2}=e_{5},\ e_{4}e_{1}=e_{5}$ \\ & \quad {\small $\theta=\langle \nabla_{1}\rangle$} \\[4pt]
\end{longtable}

\begin{remark}
The algebra $\A{47}$ is generated by $e_1$: we have $e_2=e_1e_1$, $e_3=e_1e_2$, $e_4=e_1e_3$ and $e_5=e_1e_4$.
Since $e_3\in\A{47}^{3}$ and $e_4=e_1e_3\in\A{47}^{4}$, we get $0\neq e_5=e_4e_1\in\A{47}^{5}$, so $\A{47}$ has nilindex $6$, the maximal possible value for a five-dimensional nilpotent algebra.
\end{remark}
\section{One-dimensional extensions of the $2$-step nilpotent four-dimensional algebras}\label{sec:twostep}

By Corollary~\ref{cor:empty4}, the $2$-step nilpotent four-dimensional algebras with $\T_1(\mathcal{A})\neq\emptyset$ are
$\M1$, $\M2$, $\M3$, $\M4(0)$, $\M6$, $\M7(\gamma)$, $\M{11}$, $\M{12}$, $\M{13}$ and $\M{14}(\lambda)$.
Since these algebras are $2$-step nilpotent, the cocycle condition \eqref{eq:cocycle} only involves the values of $\theta$ on pairs $(u,w)$ with $u\in\mathcal{A}^2$ or $w\in\mathcal{A}^2$; the resulting bases of $\HH(\mathcal{A})$ are those of Proposition~\ref{prop:data4twostep}, which we use throughout together with the automorphisms of Proposition~\ref{prop:aut}.
As in Section~\ref{sec:pure}, a class $\theta$ is \emph{admissible} if $\sspan\theta\in\T_1(\mathcal{A})$.
In all cases $\Ann(\mathcal{A})\supseteq\mathcal{A}^2$, and admissibility is decided by writing down the values $\theta(e_i,m)$, $\theta(m,e_i)$ for a general $m\in\Ann(\mathcal{A})$.
We repeatedly use the following observation.

\begin{lemma}\label{lem:rank}
Let $\Ann(\mathcal{A})=\sspan{e_3,e_4}$ and let $m=ae_3+be_4$.
Then $\theta$ is admissible if and only if the vectors $\big(\theta(e_i,e_3),\theta(e_i,e_4)\big)$ and $\big(\theta(e_3,e_i),\theta(e_4,e_i)\big)$, $i=1,2,3,4$, span $\C^2$.
\end{lemma}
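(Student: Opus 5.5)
The plan is to unwind the definition of admissibility for $s=1$: $\sspan\theta\in\T_1(\mathcal{A})$ means $\Ann(\theta)\cap\Ann(\mathcal{A})=0$. By the discussion preceding Lemma~\ref{lem:orbits}, this condition depends only on the class of $\theta$: coboundaries vanish on $\Ann(\mathcal{A})\times\mathcal{A}$ and on $\mathcal{A}\times\Ann(\mathcal{A})$. So we may fix any representative $\theta$. Since $\Ann(\mathcal{A})=\sspan{e_3,e_4}$, an element $m=ae_3+be_4$ lies in $\Ann(\theta)$ if and only if $\theta(e_i,m)=0$ and $\theta(m,e_i)=0$ for $i=1,\dots,4$. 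This uses bilinearity: vanishing on the basis $e_1,\dots,e_4$ is equivalent to vanishing on all of $\mathcal{A}$.

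Next, expand by bilinearity:
\[
\theta(e_i,m)=a\,\theta(e_i,e_3)+b\,\theta(e_i,e_4),\qquad \theta(m,e_i)=a\,\theta(e_3,e_i)+b\,\theta(e_4,e_i).
\]
Thus $m\in\Ann(\theta)$ exactly when $(a,b)$ is orthogonal, under the standard bilinear pairing of $\C^2$, to each of the eight vectors $\big(\theta(e_i,e_3),\theta(e_i,e_4)\big)$ and $\big(\theta(e_3,e_i),\theta(e_4,e_i)\big)$.

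Equivalently, $(a,b)$ lies in the kernel of the $8\times2$ matrix whose rows are these vectors. This kernel is zero if and only if the matrix has rank $2$, that is, if and only if the rows span $\C^2$. Admissibility means that the only $m\in\Ann(\mathcal{A})\cap\Ann(\theta)$ is $m=0$, i.e.\ $(a,b)=(0,0)$, which gives the stated equivalence.

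There is no real obstacle; the only point needing care is the independence of the chosen cocycle representative, which is handled by the remark on coboundaries above.
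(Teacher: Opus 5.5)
Your proof is correct and takes the paper's route. You write $m\in\Ann(\theta)$ as the homogeneous linear system in $(a,b)$ obtained by expanding $\theta(e_i,m)$ and $\theta(m,e_i)$ bilinearly, and note that it has only the trivial solution exactly when the eight coefficient vectors span $\C^2$; your extra remark on independence of the cocycle representative is harmless, since the paper builds this into the definition of $\T_1(\mathcal{A})$.
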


\begin{proof}
$m\in\Ann(\theta)$ if and only if $a\theta(e_i,e_3)+b\theta(e_i,e_4)=0$ and $a\theta(e_3,e_i)+b\theta(e_4,e_i)=0$ for all $i$.
This system has only the trivial solution if and only if its coefficient vectors span $\C^2$.
\end{proof}

\textbf{Central extensions of $\M1$.} Here $\N1,\dots,\N9$ are the classes of $\D{13}$, $\D{14}$, $\D{12}+\D{21}$, $\D{31}$, $\D{33}$, $\D{34}$, $\D{41}$, $\D{43}$, $\D{44}$, and $\Ann(\M1)=\sspan{e_2,e_3,e_4}$.
Put $U=\sspan{e_3,e_4}$.
We encode $\theta=\sum\alpha_i\N{i}$ by the number $\alpha_3$ and by
\[
\lambda=(\alpha_1,\alpha_2),\qquad \rho=(\alpha_4,\alpha_7),\qquad B=\begin{pmatrix}\alpha_5&\alpha_6\\ \alpha_8&\alpha_9\end{pmatrix},
\]
i.e.\ $\lambda=\theta(e_1,\cdot)|_U$, $\rho=\theta(\cdot,e_1)|_U$ and $B$ is the Gram matrix of $\theta|_{U\times U}$.
For $\phi\in\Aut(\M1)$ as in Proposition~\ref{prop:aut} put
$S=\begin{pmatrix}y&j\\ h&z\end{pmatrix}$, $m=(u,v)^{T}$ and $n=(w,g)$.

\begin{lemma}\label{lem:M1action}
We have $\alpha_3^*=\alpha_3x^3$ and
\[
\lambda^*=x\lambda S+\alpha_3x\,n+m^{T}BS,\qquad \rho^*=x\rho S+\alpha_3x\,n+m^{T}B^{T}S,\qquad B^*=S^{T}BS .
\]
In particular, if $\alpha_3\neq0$ we may assume $\lambda=0$, and then $\delta=\rho-\lambda$ transforms by $\delta^*=x\delta S+m^{T}(B^{T}-B)S$.
Moreover $\theta$ is admissible if and only if $\alpha_3\neq0$ and, after normalizing $\lambda=0$, $\ker B\cap\ker B^{T}\cap\ker\delta=0$ \textup{(}kernels of $B$, $B^T$ acting on column vectors and of the row vector $\delta$\textup{)}.
\end{lemma}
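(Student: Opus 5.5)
The plan is to compute $\phi\theta$ directly from the automorphism matrix of $\M1$ in Proposition~\ref{prop:aut}, and then to read off admissibility from Lemma~\ref{lem:annext}. The matrix gives
\[
\phi(e_1)=xe_1+te_2+ue_3+ve_4,\qquad \phi(e_2)=x^2e_2,
\]
\[
\phi(e_3)=we_2+ye_3+he_4,\qquad \phi(e_4)=ge_2+je_3+ze_4.
\]
Write $\phi(e_1)=xe_1+te_2+\bar m$ with $\bar m=ue_3+ve_4\in U$, so that $m=(u,v)^T$ is its coordinate column. For $k=3,4$, $\phi(e_k)$ is $n_ke_2$ plus the vector of $U$ whose coordinate column is the $k$-th column of $S$.

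As the representative of $\theta$ I take the cocycle
\[
\alpha_1\D{13}+\alpha_2\D{14}+\alpha_3(\D{12}+\D{21})+\alpha_4\D{31}+\alpha_5\D{33}+\alpha_6\D{34}+\alpha_7\D{41}+\alpha_8\D{43}+\alpha_9\D{44}.
\]
Since $\B(\M1)=\sspan{\D{11}}$, every $\D{11}$-component of $\theta(\phi e_i,\phi e_j)$ may be discarded. The only way $e_2$ enters $\theta$ is through the pairs $(e_1,e_2)$ and $(e_2,e_1)$, with value $\alpha_3$.

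\emph{Action formulas.} The $(1,2)$ and $(2,1)$ values are
\[
\theta(\phi e_1,\phi e_2)=\theta(\phi e_2,\phi e_1)=\alpha_3x\cdot x^2,
\]
because $\phi(e_2)=x^2e_2$ pairs only with the $e_1$-component $x$ of $\phi(e_1)$. This gives $\alpha_3^*=\alpha_3x^3$. No $\D{22}$ or $\D{2k}$ ($k\ge3$) terms arise, since $\theta(e_2,U)=\theta(U,e_2)=\theta(e_2,e_2)=0$.

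Next, $\lambda^*_k=\theta(\phi e_1,\phi e_k)$ splits into three pieces:
\begin{itemize}
\item the $e_1$-component $x$ of $\phi(e_1)$ paired with the $e_2$-part $n_ke_2$ of $\phi(e_k)$, giving $\alpha_3xn_k$;
\item the same $x$ paired with the $U$-part of $\phi(e_k)$, giving $x\lambda S$;
\item the $U$-part $\bar m$ of $\phi(e_1)$ paired with the $U$-part of $\phi(e_k)$, giving $m^TBS$.
\end{itemize}
The term $te_2$ contributes nothing here. Symmetrically, $\rho^*$ is obtained with $B^T$ in place of $B$. Finally $\theta(\phi e_k,\phi e_l)$ for $k,l\in\{3,4\}$ only sees the $U$-parts, which gives $B^*=S^TBS$.

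\emph{Normalization.} If $\alpha_3\neq0$, I choose the row $n=(w,g)$ with
\[
\alpha_3xn=-x\lambda S-m^TBS,
\]
which yields $\lambda^*=0$. Subtracting the two formulas, $\delta^*=\rho^*-\lambda^*$ no longer involves $n$. With $\lambda=0$ it reads
\[
\delta^*=x\delta S+m^T(B^T-B)S,
\]
as claimed.

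\emph{Admissibility.} A general element of $\Ann(\M1)$ is $c e_2+\bar a$ with $\bar a\in U$ having coordinate column $a$. It lies in $\Ann(\theta)$ if and only if all of the following hold:
\begin{itemize}
\item $\alpha_3c+\lambda a=0$ and $\alpha_3c+\rho a=0$, from pairing with $e_1$ on either side;
\item $Ba=0$ and $B^Ta=0$, from pairing with $e_3,e_4$.
\end{itemize}
If $\alpha_3=0$, then $e_2$ lies in the annihilator, so $\theta$ is not admissible. If $\alpha_3\neq0$ and $\lambda=0$, the first condition gives $c=0$, and the remaining conditions reduce to $a\in\ker B\cap\ker B^T\cap\ker\delta$. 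Hence the criterion of Lemma~\ref{lem:annext} is exactly the stated one.

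The normalization $\lambda=0$ is legitimate for deciding admissibility, because $\T_1$ is $\Aut(\M1)$-invariant. The only delicate point I anticipate is bookkeeping: the row/column conventions and the transposes in $\rho^*$. I would double-check these on the single entry $\alpha_4^*$ computed by hand.
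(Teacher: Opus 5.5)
Your proposal is correct and follows the paper's proof. You expand $\theta(\phi(e_i),\phi(e_j))$ modulo the coboundary $\D{11}$, using that $e_2$ enters $\theta$ only through $\theta(e_1,e_2)=\theta(e_2,e_1)=\alpha_3$, and you decide admissibility by writing out the values of $\theta$ on a general element $ce_2+\bar a$ of $\Ann(\M1)$, exactly as the paper does; your bookkeeping of $S$, $m$, $n$ and of the transpose in $\rho^*$ is consistent with the automorphism matrix of Proposition~\ref{prop:aut}.
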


\begin{proof}
The formulas follow by expanding $\theta(\phi(e_i),\phi(e_j))$, using that the only nonzero values of $\theta$ involving $e_1$ or $e_2$ are $\theta(e_1,e_2)=\theta(e_2,e_1)=\alpha_3$, $\lambda$ and $\rho$ (the class $\D{11}$ is a coboundary).
For admissibility, let $ae_2+m'$ with $m'\in U$.
Then $\theta(e_1,ae_2+m')=a\alpha_3+\lambda m'$, $\theta(ae_2+m',e_1)=a\alpha_3+\rho m'$, $\theta(e_2,ae_2+m')=\theta(ae_2+m',e_2)=0$, and $\theta(U,m')$, $\theta(m',U)$ are given by $Bm'$ and $B^{T}m'$.
If $\alpha_3=0$ then $e_2\in\Ann(\theta)$.
If $\alpha_3\neq0$ and $\lambda=0$, the equations give $a=0$ and $\delta m'=Bm'=B^{T}m'=0$.
\end{proof}

\begin{lemma}\label{lem:congruence}
Let $B$ be a nonsymmetric complex $2\times2$ matrix and write $B=G+sK$ with $G$ symmetric, $s\in\C^{*}$ and $K=\begin{pmatrix}0&1\\-1&0\end{pmatrix}$.
Under the action $B\mapsto cS^{T}BS$ \textup{(}$c\in\C^*$, $S\in GL_2(\C)$\textup{)}:
\begin{enumerate}[label=\textup{(\alph*)}]
\item if $G=0$, then $B$ lies in the orbit of $K$;
\item if $G\neq0$, then $\kappa(B)=\det G/s^{2}$ is an invariant, and $B$ lies in the orbit of $B_\kappa=\begin{pmatrix}1&1\\-1&\kappa\end{pmatrix}$ with $\kappa=\kappa(B)$.
\end{enumerate}
Moreover $\det B=\det G+s^2$, so $B_\kappa$ is singular exactly for $\kappa=-1$.
\end{lemma}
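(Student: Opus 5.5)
The plan is to split $B$ into its symmetric and skew parts, since congruence acts separately on each. For $S\in GL_2(\C)$ we have $S^{T}KS=(\det S)K$, so $cS^{T}BS=cS^{T}GS+c\det S\cdot sK$. Hence $G\mapsto cS^{T}GS$ and $s\mapsto c\det S\cdot s$. In particular $G=0$ is an invariant condition. The identity $\det B=\det G+s^2$ follows by direct expansion: with $G=\begin{pmatrix}g_1&g_2\\g_2&g_3\end{pmatrix}$ we get $\det B=g_1g_3-(g_2+s)(g_2-s)=\det G+s^2$. Consequently $B_\kappa=G_\kappa+K$ with $G_\kappa=\operatorname{diag}(1,\kappa)$ has $\det B_\kappa=\kappa+1$, which vanishes exactly for $\kappa=-1$.

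\emph{Case (a).} If $G=0$, choose $S=I$ and $c=s^{-1}$; this sends $B=sK$ to $K$.

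\emph{Case (b), invariance.} Under the action, $\det G\mapsto c^2(\det S)^2\det G$ and $s^2\mapsto c^2(\det S)^2s^2$. Therefore $\kappa=\det G/s^2$ is unchanged.

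\emph{Case (b), normal form.} Let $G\neq0$. Since $G$ is a nonzero symmetric complex form, it is either of rank $2$, congruent to $\operatorname{diag}(1,\mu)$ with $\mu\neq0$, or of rank $1$, congruent to $\operatorname{diag}(1,0)$.
\begin{itemize}
\item First apply such an $S_0$ and absorb its effect on $s$, so that $B=\operatorname{diag}(1,\mu)+s'K$ with $s'\neq0$.
\item Then use $S=\operatorname{diag}(1,r)$, $c=1$. This keeps the $(1,1)$-entry equal to $1$, multiplies the $(2,2)$-entry by $r^2$, and multiplies $s'$ by $r$. Choosing $r=1/s'$ makes the skew coefficient $1$.
\item The resulting matrix is $\begin{pmatrix}1&1\\-1&\mu/s'^2\end{pmatrix}$. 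Its $(2,2)$-entry $\mu/s'^2$ equals the invariant $\kappa$, by the invariance just shown.
\end{itemize}
This gives $B_\kappa$. Since $\kappa$ is a complete invariant on this stratum, distinct $\kappa$ give distinct orbits, and case (a) is separated from case (b) by the invariant condition $G=0$.

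The only step needing care is the normal-form step: one must check that a single diagonal scaling can fix the $(1,1)$-entry and the skew part at the same time. That works because the $(1,1)$-entry scales by $1$ while $s$ scales by $r$. Beyond that the argument is routine.
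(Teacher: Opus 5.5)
Your proof is correct and follows essentially the same route as the paper: split $B=G+sK$ using $S^{T}KS=(\det S)K$, deduce the invariance of $\det G/s^{2}$, then diagonalize $G$ by congruence and rescale the second basis vector so that the skew coefficient becomes $1$. The only difference is cosmetic: the paper normalizes in one step, taking $c=g_1^{-1}$ and $\det(v,w)=g_1/s$, whereas you keep $c=1$ and first diagonalize to $\operatorname{diag}(1,\mu)$, then apply $S=\operatorname{diag}(1,1/s')$.
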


\begin{proof}
Since $S^{T}KS=\det(S)K$, the symmetric and antisymmetric parts transform separately: $G\mapsto cS^{T}GS$, $s\mapsto c\det(S)s$.
Hence $\det G/s^2$ is invariant, and (a) is clear.
In (b), choose $v$ with $G(v,v)=g_1\neq0$ (possible because $G\neq0$ and $\operatorname{char}\C\neq2$) and $w$ with $G(v,w)=0$, $w\notin\C v$.
In the basis $(v,w)$, $G=\operatorname{diag}(g_1,g_2)$.
Replacing $w$ by a multiple we can assume $\det(v,w)=g_1/s$; then $c=g_1^{-1}$ and $S=(v\ w)$ give $cS^TBS=\operatorname{diag}(1,g_2/g_1)+K=B_{\kappa'}$ with $\kappa'=\kappa(B)$ by invariance.
The formula for $\det B$ is immediate.
\end{proof}

\begin{proposition}\label{prop:M1}
Every admissible class for $\M1$ lies in the orbit of exactly one of
\[
\begin{array}{lll}
\N3+\N5+\N6-\N8+\alpha\N9\ (\alpha\in\C), & \N3+\N6-\N8, & \N3+\N6+\N8,\\
\N3+\N4+\N6+\N8, & \N3+\N4+\N6+\N7+\N8, & \N3+\N5+\N7,
\end{array}
\]
and different values of $\alpha$ give different orbits.
\end{proposition}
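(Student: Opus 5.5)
By Lemma~\ref{lem:M1action}, every admissible class has $\alpha_3\neq0$. Since we work in $G_1(\HH(\M1))$, I scale to $\alpha_3=1$. Choosing $n$, I then make $\lambda=0$. The automorphisms preserving $\lambda=0$ are those with $x\,n=-m^{T}BS$; the parameters $x$, $S\in GL_2(\C)$ and $m\in\C^2$ stay free. After rescaling by $x^{-3}$ to keep $\alpha_3=1$, the residual action on the pair $(B,\delta)$, where $\delta=\rho$ once $\lambda=0$, is
\[
B\mapsto x^{-3}S^{T}BS,\qquad \delta\mapsto x^{-2}\big(\delta S\big)+x^{-3}m^{T}(B^{T}-B)S .
\]
By the admissibility criterion of Lemma~\ref{lem:M1action}, we must classify such pairs with $\ker B\cap\ker B^{T}\cap\ker\delta=0$. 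The plan is to split according to whether $B$ is symmetric, since symmetry is preserved by congruence.

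\emph{Case 1: $B$ nonsymmetric.} Write $B=G+sK$ with $s\neq0$. Then $B^{T}-B=-2sK$ is invertible, so a suitable choice of $m$ kills $\delta$. The admissibility condition becomes $\ker B\cap\ker B^{T}=0$. This holds automatically: $K$ and every $B_\kappa$ with $\kappa\neq-1$ are invertible, and for the singular $B_{-1}$ the kernels of $B$ and $B^{T}$ are the distinct lines spanned by $(1,-1)^T$ and $(1,1)^T$. Since $x$ and $S$ are free, the action on $B$ is exactly $B\mapsto cS^{T}BS$, so Lemma~\ref{lem:congruence} applies. It gives the class with $B=K$, namely $\N3+\N6-\N8$, and the classes with $B=B_\alpha$, namely $\N3+\N5+\N6-\N8+\alpha\N9$. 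The invariant $\kappa$ separates different values of $\alpha$.

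\emph{Case 2: $B$ symmetric.} Now $\delta\mapsto x^{-2}\delta S$, and the pair $(B,\delta)$ is a symmetric bilinear form together with a linear form, under $GL_2$ and independent scalings of each. Admissibility reads $\ker B\cap\ker\delta=0$. This rules out $B=0$, since a single linear form cannot have trivial kernel on $\C^2$.

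If $\rank B=2$, I take $B$ to be the hyperbolic form $\begin{pmatrix}0&1\\1&0\end{pmatrix}$, i.e.\ the summand $\N6+\N8$. If $\delta=0$ this gives $\N3+\N6+\N8$. If $\delta\neq0$, the stabilizer of $B$ up to scalars is the conformal orthogonal group. Its orbits on nonzero covectors are the isotropic ones, represented by $\delta=(1,0)$ and giving $\N3+\N4+\N6+\N8$, and the non-isotropic ones, represented by $(1,1)$ and giving $\N3+\N4+\N6+\N7+\N8$. The two scalings, with $x$ used to match the scale of $\delta$ against that of $B$, let me normalize the remaining constant.

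If $\rank B=1$, I take $B=\operatorname{diag}(1,0)$, i.e.\ the summand $\N5$. Admissibility forces $\delta(e_4)\neq0$. The stabilizer of $B$ contains the maps $e_4\mapsto e_4+ce_3$ and the diagonal scalings, which reduce $\delta$ to $(0,1)$. This gives $\N3+\N5+\N7$.

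Distinctness of the six families follows from the invariants: symmetry of $B$, $\rank B$, $\kappa$, and, in the symmetric rank-$2$ case, whether $\delta=0$ and whether $\delta$ is isotropic.

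The main obstacle is justifying that the residual group acts on $(B,\delta)$ exactly as described, with independent scalings. This needs a careful derivation from the formulas of Lemma~\ref{lem:M1action} together with the constraint $xn=-m^{T}BS$. Once that is settled, the remaining steps are linear algebra of binary forms.
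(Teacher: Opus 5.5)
Your proposal is essentially the paper's proof. It uses the same normalization $\alpha_3=1$, $\lambda=0$, the same residual action on $(B,\delta)$, and the same case split: nonsymmetric $B$ via Lemma~\ref{lem:congruence}, symmetric of rank $2$ via the conformal orthogonal group and isotropy of $\delta$, symmetric of rank $1$, and $B=0$.

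The step you flag as the main obstacle is immediate. By Lemma~\ref{lem:M1action}, $\delta^*=\rho^*-\lambda^*=x\delta S+m^{T}(B^{T}-B)S$, because the $\alpha_3xn$ terms cancel. Taking $S=\mu I$ shows that the scalars $x^{-3}\mu^2$ on $B$ and $x^{-2}\mu$ on $\delta$ can be prescribed independently.

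There is, however, one step that is wrong as written, in the rank-one case.
\begin{itemize}
\item For $B=\operatorname{diag}(1,0)$ the radical is $\sspan{e_4}$. So any $S$ with $S^{T}BS\in\C^*B$ must map $e_4$ into $\sspan{e_4}$; these are exactly the lower triangular $S$ ($j=0$).
\item The map $e_4\mapsto e_4+ce_3$ is not in this stabilizer: it produces $\theta(e_4,e_4)=c^2\neq0$.
\item Even setting that aside, this map together with the diagonal scalings only sends $(\delta_1,\delta_2)$ to $(a\delta_1,\,b(\delta_2+c\delta_1))$. It can never remove $\delta_1$.
\end{itemize}
The transvection you need is $e_3\mapsto e_3+he_4$. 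It fixes $B$ because $e_4$ is radical, and it gives $\delta S=(\delta_1y+\delta_2h,\ \delta_2z)$. Admissibility gives $\delta_2=\alpha_7\neq0$, so choosing $h=-\delta_1y/\delta_2$ and then scaling yields $\delta=(0,1)$, i.e.\ the class $\N3+\N5+\N7$.

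If you meant the induced map on covectors, $e_4^*\mapsto e_4^*+ce_3^*$, then the claim is correct but should be stated that way. With this correction your argument is complete and coincides with the paper's.
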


\begin{proof}
Let $\theta$ be admissible.
By Lemma~\ref{lem:M1action} we may assume $\alpha_3=1$ and $\lambda=0$.
An automorphism preserves the condition $\lambda=0$ if and only if $n=-x^{-1}m^{T}BS$; hence, after dividing by $\alpha_3^*=x^3$, the group acts on the pairs $(B,\delta)$ by
$(B,\delta)\mapsto\big(x^{-3}S^{T}BS,\ x^{-3}(x\delta S+m^T(B^T-B)S)\big)$, with $x,S,m$ arbitrary.
The congruence class of $B$ up to scalars is an invariant.

\emph{(1) $B$ is not symmetric.}
Then $B^{T}-B$ is invertible and we may take $\delta=0$; then $\theta$ is admissible if and only if $\ker B\cap\ker B^T=0$, which always holds for a nonsymmetric $2\times 2$ matrix (if $B$ has rank one, $B=ab^{T}$ with $a\not\parallel b$ and $\ker B\cap\ker B^T=b^\perp\cap a^\perp=0$).
By Lemma~\ref{lem:congruence}, we get $B=K$ (the class $\N3+\N6-\N8$) or $B=B_\alpha$ (the class $\N3+\N5+\N6-\N8+\alpha\N9$), where $\alpha=\kappa(B)$ is an invariant.

\emph{(2) $B$ is symmetric of rank $2$.}
Then $B$ is congruent to $\begin{pmatrix}0&1\\1&0\end{pmatrix}$, admissibility holds automatically, and $\delta$ transforms by $\delta\mapsto x^{-2}\delta S$ where $S^{T}BS=x^{3}B$ (we may take $B$ fixed).
The quadratic function $\delta\mapsto\delta B^{-1}\delta^{T}$ is multiplied by $x^{-1}$, and the group $\{S : S^{T}BS\in\C^{*}B\}$ acts transitively on the nonzero isotropic vectors and on the non-isotropic vectors up to scalars.
Hence $\delta=0$, $\delta=(1,0)$ or $\delta=(1,1)$, which give $\N3+\N6+\N8$, $\N3+\N4+\N6+\N8$ and $\N3+\N4+\N6+\N7+\N8$.

\emph{(3) $B$ is symmetric of rank $1$.}
Then $B$ is congruent to $\operatorname{diag}(1,0)$, and $\ker B=\ker B^{T}=\sspan{e_4}$, so admissibility means $\delta(e_4)\neq0$, i.e.\ $\alpha_7\neq0$.
The matrices $S$ with $S^{T}BS\in\C^*B$ are the lower triangular ones ($j=0$), and $\delta S=(\delta_1y+\delta_2h,\delta_2z)$; choosing $h$ we get $\delta=(0,1)$, i.e.\ the class $\N3+\N5+\N7$.

\emph{(4) $B=0$.} Then $\ker\delta\neq0$, so $\theta$ is not admissible.
\end{proof}

\begin{longtable}{@{}l p{0.90\textwidth}@{}}
$\mathcal{A}_{48}(\alpha)$ & $e_{1}e_{1}=e_{2},\ e_{1}e_{2}=e_{5},\ e_{2}e_{1}=e_{5},\ e_{3}e_{3}=e_{5}, \ e_{3}e_{4}=e_{5},\ e_{4}e_{3}=-e_{5},\ e_{4}e_{4}=\alpha e_{5}$ \\ & \quad {\small $\theta=\langle \nabla_{3}+\nabla_{5}+\nabla_{6}-\nabla_{8}+\alpha \nabla_{9}\rangle$;\quad $\alpha\in\mathbb{C}$} \\[4pt]
$\mathcal{A}_{49}$ & $e_{1}e_{1}=e_{2},\ e_{1}e_{2}=e_{5},\ e_{2}e_{1}=e_{5},\ e_{3}e_{4}=e_{5}, \ e_{4}e_{3}=-e_{5}$ \\ & \quad {\small $\theta=\langle \nabla_{3}+\nabla_{6}-\nabla_{8}\rangle$} \\[4pt]
$\mathcal{A}_{50}$ & $e_{1}e_{1}=e_{2},\ e_{1}e_{2}=e_{5},\ e_{2}e_{1}=e_{5},\ e_{3}e_{4}=e_{5}, \ e_{4}e_{3}=e_{5}$ \\ & \quad {\small $\theta=\langle \nabla_{3}+\nabla_{6}+\nabla_{8}\rangle$} \\[4pt]
$\mathcal{A}_{51}$ & $e_{1}e_{1}=e_{2},\ e_{1}e_{2}=e_{5},\ e_{2}e_{1}=e_{5},\ e_{3}e_{1}=e_{5}, \ e_{3}e_{4}=e_{5},\ e_{4}e_{3}=e_{5}$ \\ & \quad {\small $\theta=\langle \nabla_{3}+\nabla_{4}+\nabla_{6}+\nabla_{8}\rangle$} \\[4pt]
$\mathcal{A}_{52}$ & $e_{1}e_{1}=e_{2},\ e_{1}e_{2}=e_{5},\ e_{2}e_{1}=e_{5},\ e_{3}e_{1}=e_{5}, \ e_{3}e_{4}=e_{5},\ e_{4}e_{1}=e_{5},\ e_{4}e_{3}=e_{5}$ \\ & \quad {\small $\theta=\langle \nabla_{3}+\nabla_{4}+\nabla_{6}+\nabla_{7}+\nabla_{8}\rangle$} \\[4pt]
$\mathcal{A}_{53}$ & $e_{1}e_{1}=e_{2},\ e_{1}e_{2}=e_{5},\ e_{2}e_{1}=e_{5},\ e_{3}e_{3}=e_{5}, \ e_{4}e_{1}=e_{5}$ \\ & \quad {\small $\theta=\langle \nabla_{3}+\nabla_{5}+\nabla_{7}\rangle$} \\[4pt]
\end{longtable}

\textbf{Central extensions of $\M2$.} Let us use the following notations: 
$$\N1=[\D{13}], \ \N2=[\D{12}+\D{21}], \ \N3=[\D{31}], \ \N4=[\D{34}+\D{43}], \ \Ann(\M2)=\sspan{e_2,e_4}$$ and for the connected automorphisms of Proposition~\ref{prop:aut}
\[
\alpha_1^*=\alpha_1xy+\alpha_2xz+\alpha_4uy,\quad \alpha_2^*=\alpha_2x^3,\quad \alpha_3^*=\alpha_2xz+\alpha_3xy+\alpha_4uy,\quad \alpha_4^*=\alpha_4y^3,
\]
while the involution $\sigma$ maps $(\alpha_1,\alpha_2,\alpha_3,\alpha_4)$ to $(\alpha_3,\alpha_4,\alpha_1,\alpha_2)$.

\begin{lemma}\label{lem:M2}
A class $\theta$ is admissible if and only if $\alpha_2\alpha_4\neq0$.
Every admissible class lies in the orbit of exactly one of $\N2+\N4$ and $\N2+\N3+\N4$.
In particular, the classes $\N2+\alpha\N3+\N4$ with $\alpha\neq0$ all lie in one orbit.
\end{lemma}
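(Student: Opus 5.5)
First settle admissibility by listing the values of a general cocycle $\theta=\alpha_1(\D{13})+\alpha_2(\D{12}+\D{21})+\alpha_3\D{31}+\alpha_4(\D{34}+\D{43})$ on the annihilator $\Ann(\M2)=\sspan{e_2,e_4}$, exactly as in Lemma~\ref{lem:R31action}. For $m=ae_2+be_4$ the only nonzero values are $\theta(e_1,m)=\theta(m,e_1)=a\alpha_2$ and $\theta(e_3,m)=\theta(m,e_3)=b\alpha_4$; $e_1$ and $e_3$ are not involved, since the $\N1,\N3$ terms pair $e_1$ with $e_3$. So $m\in\Ann(\theta)$ iff $a\alpha_2=b\alpha_4=0$. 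Consequently $\Ann(\theta)\cap\Ann(\M2)=0$ iff $\alpha_2\alpha_4\neq0$: if $\alpha_2=0$ then $e_2$ lies in it, and if $\alpha_4=0$ then $e_4$ does.

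Next normalize using the connected automorphisms and the displayed action formulas. Since $\alpha_2\neq0$, choose $z$ with $\alpha_2xz+\alpha_1xy+\alpha_4uy=0$, that is $\alpha_1^*=0$; this uses $z$ alone, with $u=0$. Then $\alpha_3^*=xy\,(\alpha_3-\alpha_1)$, because $\alpha_3^*-\alpha_1^*=xy(\alpha_3-\alpha_1)$ identically. So $\alpha_3-\alpha_1$ is multiplied by $xy$ while $u,z$ only add equal amounts to both coordinates. Thus we may take $\alpha_1=0$ and $\alpha_3=c$. Scaling with $\alpha_2^*=\alpha_2x^3$ and $\alpha_4^*=\alpha_4y^3$, pick $x,y$ with $\alpha_2x^3=\alpha_4y^3=1$ after an overall scalar; then $c\mapsto cxy$. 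If $c\neq0$, we still have a free overall scalar $\mu$ and can take $\alpha_2x^3=\alpha_4y^3=\mu$, $cxy=\mu$. This requires $x^3,y^3\propto\mu$ and $xy\propto\mu$, so $\mu^{2/3}\propto\mu$, which is solvable for a suitable $\mu$. Hence $c$ normalizes to $1$, giving $\N2+\N3+\N4$; if $c=0$ we get $\N2+\N4$. This also shows that all $\N2+\alpha\N3+\N4$ with $\alpha\neq0$ are in one orbit.

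Finally separate the two orbits with the invariant ``$\alpha_1=\alpha_3$''. Under the connected component $\alpha_3^*-\alpha_1^*=xy(\alpha_3-\alpha_1)$, and $\sigma$ swaps $\alpha_1$ and $\alpha_3$, so the condition is preserved by both. It holds for $\N2+\N4$ and fails for $\N2+\N3+\N4$. Since every automorphism is $\phi$ or $\phi\sigma$ by Proposition~\ref{prop:aut}, this proves there are exactly two orbits. The step I expect to be the main obstacle is the simultaneous normalization of three scalings in the case $c\neq0$. I would check it by explicitly solving $x=\mu^{1/3}\alpha_2^{-1/3}$, $y=\mu^{1/3}\alpha_4^{-1/3}$, $c\,\mu^{2/3}(\alpha_2\alpha_4)^{-1/3}=\mu$, i.e.\ $\mu^{1/3}=c(\alpha_2\alpha_4)^{-1/3}$.
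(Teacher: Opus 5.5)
Your proof is correct and follows essentially the same route as the paper. Both arguments read admissibility off the values of $\theta$ on $\Ann(\M{2})$ and kill $\alpha_1$ with the translation parameters. Both then use that $\alpha_3-\alpha_1$ is only rescaled by $xy$ as the separating invariant, and normalize the rest with the diagonal scalings. Your explicit check that $\sigma$ also preserves the condition $\alpha_1=\alpha_3$ is slightly more complete than the paper, which does not mention $\sigma$ here. Your simultaneous solution of the three scaling equations replaces the paper's two-step normalization (first $\alpha_2=\alpha_4$, then adjusting $x$ up to a cube root of unity).
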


\begin{proof}
We have $\theta(e_1,ae_2+be_4)=a\alpha_2$, $\theta(e_3,ae_2+be_4)=b\alpha_4$, $\theta(ae_2+be_4,e_1)=a\alpha_2$, $\theta(ae_2+be_4,e_3)=b\alpha_4$, and the other values vanish; this gives the first claim.
The parameters $z,u$ add the same quantity to $\alpha_1^*$ and $\alpha_3^*$, and $\alpha_3^*-\alpha_1^*=(\alpha_3-\alpha_1)xy$; so we may assume $\alpha_1=0$ and the condition $\alpha_1=\alpha_3$ is invariant.
Taking $y^3=x^3\alpha_2/\alpha_4$ we get $\alpha_2=\alpha_4$, and then $\alpha_3/\alpha_2\mapsto(\alpha_3/\alpha_2)\,xy/x^3=\omega x^{-1}(\alpha_3/\alpha_2)$ with $\omega^3=1$, which can be made equal to $1$ if $\alpha_3\neq0$.
\end{proof}

\begin{longtable}{@{}l p{0.80\textwidth}@{}}
$\mathcal{A}_{54}$ & $e_{1}e_{1}=e_{2},\ e_{1}e_{2}=e_{5},\ e_{2}e_{1}=e_{5},\ e_{3}e_{3}=e_{4}, \ e_{3}e_{4}=e_{5},\ e_{4}e_{3}=e_{5}$ \\ & \quad {\small $\theta=\langle \nabla_{2}+\nabla_{4}\rangle$} \\[4pt]
$\mathcal{A}_{55}$ & $e_{1}e_{1}=e_{2},\ e_{1}e_{2}=e_{5},\ e_{2}e_{1}=e_{5},\ e_{3}e_{1}=e_{5}, \ e_{3}e_{3}=e_{4},\ e_{3}e_{4}=e_{5},\ e_{4}e_{3}=e_{5}$ \\ & \quad {\small $\theta=\langle \nabla_{2}+\nabla_{3}+\nabla_{4}\rangle$} \\[4pt]
\end{longtable}

\textbf{Central extensions of $\M3$.} Here $\N1,\dots,\N{11}$ are the classes of $\D{11}$, $\D{12}$, $\D{13}$, $\D{14}$, $\D{22}$, $\D{23}$, $\D{24}$, $\D{41}$, $\D{42}$, $\D{43}$, $\D{44}$, $\Ann(\M3)=\sspan{e_3,e_4}$ and $\B(\M3)=\sspan{\D{12}-\D{21}}$.
Write $V=\sspan{e_1,e_2}$ and encode $\theta$ by
\[
q(v)=\alpha_1v_1^2+\alpha_2v_1v_2+\alpha_5v_2^2,\quad \lambda=(\alpha_3,\alpha_6),\quad \mu=(\alpha_4,\alpha_7),\quad \nu=(\alpha_8,\alpha_9),\quad \alpha_{10},\quad\alpha_{11};
\]
thus $\lambda=\theta(V,e_3)$, $\mu=\theta(V,e_4)$, $\nu=\theta(e_4,V)$, $\alpha_{10}=\theta(e_4,e_3)$, $\alpha_{11}=\theta(e_4,e_4)$, and $\theta(e_3,\cdot)=0$.
For $\phi$ as in Proposition~\ref{prop:aut} put $P=\begin{pmatrix}x&y\\ t&z\end{pmatrix}$ and $d=\det P$.

\begin{lemma}\label{lem:M3action}
We have
\[
\begin{array}{ll}
\alpha_{10}^*=\alpha_{10}jd, & \lambda^*=d\big(\lambda P+\alpha_{10}(v,g)\big),\\
\alpha_{11}^*=j(\alpha_{10}h+\alpha_{11}j), & \nu^*=j\big(\nu P+\alpha_{10}(u,w)+\alpha_{11}(v,g)\big),\\
\mu^*=h\big(\lambda P+\alpha_{10}(v,g)\big)+j\big(\mu P+\alpha_{11}(v,g)\big), &
\end{array}
\]
and $q^*(v)=q(Pv)+(\text{terms of degree one in } u,v,w,g)$; more precisely, if $\alpha_{10}=0$, the parameters $u,w$ change $(\alpha_1,\alpha_2,\alpha_5)$ by $\big(u\ell_1,\ u\ell_2+w\ell_1,\ w\ell_2\big)$, where $(\ell_1,\ell_2)=\lambda P$, and all other coefficients do not depend on $u,w$.
A class $\theta$ is admissible if and only if either $\alpha_{10}\neq0$, or $\alpha_{10}=0$, $\lambda\neq0$, and $\big(\nu\neq0$ or $\alpha_{11}\neq0$ or $\mu\notin\C\lambda\big)$.
\end{lemma}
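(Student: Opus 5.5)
The plan is a direct computation of $\theta(\phi(e_i),\phi(e_j))$, where $\phi$ is the general automorphism of $\M3$ from Proposition~\ref{prop:aut}, followed by reduction modulo $\B(\M3)=\sspan{\D{12}-\D{21}}$. The same bookkeeping then gives the admissibility criterion, via Lemma~\ref{lem:rank}.

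\emph{Setting up the images.} We have $\phi(e_1)=xe_1+te_2+ue_3+ve_4$ and $\phi(e_2)=ye_1+ze_2+we_3+ge_4$. Since $\phi(e_3)=de_3$ with $d=xz-yt$, we have $\phi(e_3)\in\C e_3$, and $\phi(e_4)=he_3+je_4$.

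\emph{Rows and columns involving $e_3,e_4$.} Because $\theta(e_3,\cdot)=0$, the values $\theta(\phi e_i,\phi e_3)$ and $\theta(\phi e_i,\phi e_4)$ for $i=1,2$ are easy to expand. The $V$-part of $\phi(e_i)$ contributes $\lambda P$, resp.\ $h\lambda P+j\mu P$. The $e_4$-component $(v,g)$ of $\phi(e_i)$ contributes $\alpha_{10}$, resp.\ $h\alpha_{10}+j\alpha_{11}$. This gives the stated formulas for $\lambda^*$ and $\mu^*$; the formula for $\mu^*$ is just the two contributions grouped.

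Similarly, $\theta(\phi e_4,\phi e_i)=j\,\theta(e_4,\phi e_i)$. This yields $\nu^*$, where $(u,w)$ enters through $\alpha_{10}=\theta(e_4,e_3)$ and $(v,g)$ through $\alpha_{11}$. The last two values are
\[
\theta(\phi e_4,\phi e_3)=jd\,\alpha_{10},\qquad \theta(\phi e_4,\phi e_4)=j(h\alpha_{10}+j\alpha_{11}),
\]
which give $\alpha_{10}^*$ and $\alpha_{11}^*$.

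\emph{The quadratic part.} For $q$, we use that modulo $\B(\M3)$ only the symmetric part of $\theta|_{V\times V}$ on $\sspan{e_1,e_2}$ matters, since $\D{21}\equiv\D{12}$. Then $q^*(v)=q(Pv)$ plus the cross terms between the $V$-part and the $(e_3,e_4)$-part of $\phi(e_i)$. When $\alpha_{10}=0$, the $u,w$-dependence comes only from $\theta(V\text{-part},ue_3\text{ or }we_3)=\lambda P$. Thus $\alpha_1$ gains $u\ell_1$, $\alpha_5$ gains $w\ell_2$, and the mixed coefficient $\alpha_2$ gains $u\ell_2+w\ell_1$, where $\ell=\lambda P$. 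The other coefficients contain $u,w$ only through $\alpha_{10}$, so they are independent of $u,w$ in this case. The only delicate point I expect is this reduction of the $(1,2)$ and $(2,1)$ entries into the single coefficient $\alpha_2$; I would double-check it on an example such as $\theta=\D{13}$.

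\emph{Admissibility.} Take $m=ae_3+be_4$. By Lemma~\ref{lem:rank}, $\theta$ is admissible if and only if the vectors $(\theta(e_i,e_3),\theta(e_i,e_4))$ and $(\theta(e_3,e_i),\theta(e_4,e_i))$ span $\C^2$. The available vectors are
\[
(\lambda_k,\mu_k)\ (k=1,2),\quad (0,\alpha_{10}),\quad (0,\alpha_{11}),\quad (0,\nu_k),\quad (0,0).
\]
Here $(0,\alpha_{10})$ comes from $\theta(e_3,e_4)=0$ and $\theta(e_4,e_3)=\alpha_{10}$ in the second family, and $(0,\alpha_{11})$ is the row of $e_4$.

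The span is all of $\C^2$ exactly when some vector has nonzero first coordinate, i.e.\ $\lambda\neq0$, and either there is a nonzero vector of the form $(0,\ast)$ or the two vectors $(\lambda_k,\mu_k)$ are independent. Since $\lambda\neq0$, independence of $(\lambda_1,\mu_1),(\lambda_2,\mu_2)$ is equivalent to $\mu\notin\C\lambda$.

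If $\alpha_{10}\neq0$, the criterion needs $\lambda\neq0$. This can be arranged: $\lambda^*=d(\lambda P+\alpha_{10}(v,g))$ can be made nonzero, so admissibility, being an orbit property, holds. I would phrase it directly, though: if $\alpha_{10}\neq0$ and $\lambda=0$, then $m=e_3$ gives $\theta(e_4,e_3)=\alpha_{10}\neq0$, so $e_3\notin\Ann(\theta)$, and one checks the full system has only the trivial solution. Combining these cases gives exactly the stated criterion.
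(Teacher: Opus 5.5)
Your derivation of the action formulas is correct and is the same direct expansion the paper uses: expand $\theta(\phi(e_i),\phi(e_j))$, use $\theta(e_3,\cdot)=0$, and fold $\D{21}$ into $\D{12}$ modulo $\B(\M3)$. The gap is in the admissibility step. When you apply Lemma~\ref{lem:rank} you drop a vector. The first family at $i=4$ is $\big(\theta(e_4,e_3),\theta(e_4,e_4)\big)=(\alpha_{10},\alpha_{11})$, not $(0,\alpha_{11})$; the latter is the second-family vector $\big(\theta(e_3,e_4),\theta(e_4,e_4)\big)$. The correct list is
$(\lambda_k,\mu_k)$, $(\alpha_{10},\alpha_{11})$, $(0,\alpha_{10})$, $(0,\alpha_{11})$, $(0,\nu_k)$, $(0,0)$.
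So your claim that the span is $\mathbb{C}^2$ ``exactly when \dots\ $\lambda\neq0$ and \dots'' is false as soon as $\alpha_{10}\neq0$.

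The orbit patch does not repair this as written. You derived your criterion as an ``if and only if'' for every $\theta$. It therefore rejects $\theta$ (with $\lambda=0$) and accepts $\phi\theta$ (with $\lambda^*\neq0$), even though admissibility is $\Aut(\M3)$-invariant. That inconsistency exposes the missing vector; it does not settle the case. Transferring the verdict from $\phi\theta$ would only be legitimate after showing your criterion is correct on the locus $\lambda\neq0$, and that needs the missing vector anyway.

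With the vector restored, the case $\alpha_{10}\neq0$ is immediate for every $\lambda$: $(\alpha_{10},\alpha_{11})$ and $(0,\alpha_{10})$ have determinant $\alpha_{10}^2\neq0$. Equivalently, as in the paper, for $m=ae_3+be_4$ one has $\theta(m,e_3)=b\alpha_{10}$ and $\theta(e_4,m)=a\alpha_{10}+b\alpha_{11}$, which force $b=0$ and then $a=0$. Your closing ``direct'' sentence points at exactly the entry $\theta(e_4,e_3)$, but it should replace the false claim, not sit beside it. For $\alpha_{10}=0$ the missing vector coincides with $(0,\alpha_{11})$, so your analysis of that case is correct as it stands.

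A minor point: $q^*$ also contains products of the $e_3,e_4$-components of $\phi(e_1),\phi(e_2)$ with each other; for example, $\alpha_1^*$ contains $uv\alpha_{10}+v^2\alpha_{11}$. So ``$q(Pv)$ plus the cross terms'' is not literally complete, just as ``terms of degree one'' in the statement is not. This does not affect the precise claim about $u,w$ when $\alpha_{10}=0$.
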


\begin{proof}
The formulas are obtained by expanding $\theta(\phi(e_i),\phi(e_j))$, using $\D{21}\equiv\D{12}$ modulo $\B(\M3)$.
For $m=ae_3+be_4$: $\theta(V,m)=a\lambda+b\mu$, $\theta(m,V)=b\nu$, $\theta(m,e_3)=b\alpha_{10}$, $\theta(m,e_4)=b\alpha_{11}$, $\theta(e_4,m)=a\alpha_{10}+b\alpha_{11}$, $\theta(e_3,m)=0$.
If $\alpha_{10}\neq0$ these vanish only for $a=b=0$.
If $\alpha_{10}=0$, the vector $e_3$ is annihilated when $\lambda=0$, and a vector $ae_3+e_4$ is annihilated if and only if $\nu=0$, $\alpha_{11}=0$ and $\mu=-a\lambda$.
\end{proof}

\begin{proposition}\label{prop:M3}
Every admissible class for $\M3$ lies in the orbit of exactly one of the following classes \textup{(}$\alpha\in\C$, different values of $\alpha$ giving different orbits\textup{)}:
\begin{align*}
&\alpha_{10}\neq0:&& \N{10},\ \N1+\N{10},\ \N2+\N{10},\ \N4+\N{10},\ \N4+\N5+\N{10},\\
&&& \N1+\N4+\N5+\N{10},\ \N2+\N4+\N{10},\ \N1+\N4+\N{10};\\
&\alpha_{10}=0:&& \N3+\alpha\N5+\N7+\N{11},\ \N3+\N5+\N{11},\ \N3+\N{11},\ \N3+\alpha\N7+\N9,\ \N3+\N5-\N7+\N9,\\
&&& \N3+\N7,\ \N3+\N7+\N8,\ \N3+\N8,\ \N3+\N5+\N8 .
\end{align*}
\end{proposition}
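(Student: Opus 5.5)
The plan is to split along the invariant condition $\alpha_{10}=0$. By Lemma~\ref{lem:M3action}, $\alpha_{10}^*=\alpha_{10}jd$, so this condition is preserved by the group. In each case we first use the translation parameters $u,w,v,g,h$ to kill as many coefficients as possible. We then read off the residual action of the stabilizer on the remaining data, which reduces to a $GL_2$ action through $P$ together with the scalars $j$ and $d$, and list its orbits. Throughout we work projectively, i.e.\ modulo a common nonzero scalar.

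\emph{Case $\alpha_{10}\neq0$.} Admissibility is automatic here. From Lemma~\ref{lem:M3action}:
\begin{itemize}
\item $(v,g)$ can be chosen to make $\lambda^*=0$;
\item $(u,w)$ can then make $\nu^*=0$;
\item $h$ can make $\alpha_{11}^*=0$.
\end{itemize}
The remaining data are the row vector $\mu$ and the quadratic form $q$, up to the terms linear in the translation parameters. Once $\lambda=\nu=\alpha_{11}=0$ is fixed, the stabilizer has $v=g=u=w=h=0$. On it, $\mu\mapsto j\mu P$ and $q\mapsto q\circ P$ (one still has to check that $q$ picks up no further cross terms), with overall scale $\alpha_{10}\mapsto \alpha_{10}jd$. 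So we classify pairs $(\mu,q)$ under $GL_2\times\C^*$, as follows.
\begin{itemize}
\item If $\mu=0$, the orbits of binary quadratic forms up to scalar give $q\in\{0,v_1^2,v_1v_2\}$. These are the classes $\N{10}$, $\N1+\N{10}$, $\N2+\N{10}$.
\item If $\mu\neq0$, normalize $\mu=(1,0)$, i.e.\ $\N4$. The stabilizer is the lower triangular $P$ with $y=0$, acting on $q$ with the scale freedom coming from $j$. The orbits of $q$ are then distinguished by whether the $v_2^2$-coefficient vanishes and, if not, by the discriminant and rank of $q$. This should give the five listed classes containing $\N4$, namely $0$, $v_2^2$, $v_1^2+v_2^2$, $v_1v_2$ and $v_1^2$.
\end{itemize}

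\emph{Case $\alpha_{10}=0$.} Admissibility forces $\lambda\neq0$. Using $P$ we normalize $\lambda=(1,0)$, which is $\N3$, and the stabilizer is $y=0$. Now $\alpha_{11}^*=\alpha_{11}j^2$, $\nu^*=j(\nu P+\alpha_{11}(v,g))$ and $\mu^*=h\lambda P+j(\mu P+\alpha_{11}(v,g))$. The terms $h$ and $u,w$ let us remove the $\alpha_4$-component of $\mu$ together with the $\alpha_1,\alpha_2$-parts of $q$, leaving the data $\alpha_5,\alpha_7,\nu,\alpha_{11}$.
\begin{itemize}
\item \emph{Subcase $\alpha_{11}\neq0$.} Use $(v,g)$ to make $\nu=0$. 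The residual action on $(\alpha_5,\alpha_7)$ is a diagonal torus with one extra shear, giving the family $\N3+\alpha\N5+\N7+\N{11}$ and the classes $\N3+\N5+\N{11}$ and $\N3+\N{11}$. Here $\alpha$ should arise as a torus-invariant ratio such as $\alpha_5\alpha_{11}/\alpha_7^2$.
\item \emph{Subcase $\alpha_{11}=0$.} Admissibility requires $\nu\neq0$ or $\alpha_7\neq0$, and the triple $(\nu,\alpha_7,\alpha_5)$ transforms under the triangular torus. Split further according to whether $\nu_2=\alpha_9$ vanishes:
\begin{itemize}
\item If $\alpha_9\neq0$, we obtain the family $\N3+\alpha\N7+\N9$ and the exceptional class $\N3+\N5-\N7+\N9$. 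The exceptional value is where a completing-the-square step through $t$ degenerates.
\item If $\alpha_9=0$, we obtain $\N3+\N7$, $\N3+\N7+\N8$, $\N3+\N8$ and $\N3+\N5+\N8$.
\end{itemize}
\end{itemize}

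The main obstacle is the $\alpha_{10}=0$ case. The translation parameters there interact with $P$, and the precise invariant defining the continuous parameter $\alpha$ in the two families, as well as the exceptional values $\alpha=\pm$ particular constants where the normal form changes, must be computed carefully from the full action formulas on $q$. We will handle this by writing the stabilizer of $\lambda=(1,0)$ explicitly, with $P$ lower triangular, and computing the action on the coefficients $(\alpha_5,\alpha_7,\alpha_8,\alpha_9,\alpha_{11})$. We then locate the weight-zero rational invariants, which prove that distinct $\alpha$ give distinct orbits. Distinctness between the listed classes otherwise follows from invariant vanishing conditions: $\alpha_{10}$, $\mu\in\C\lambda$, $\alpha_{11}$, $\nu$, the rank of $q$ on the relevant line, and so on.
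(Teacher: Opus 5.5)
Your strategy is the paper's, and your treatment of $\alpha_{10}\neq0$ matches the paper's argument and is correct. The gap is the case $\alpha_{10}=0$. There you copy the target list from the statement and postpone the computation, and the one concrete mechanism you offer is wrong.

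With $\alpha_{10}=0$, $\alpha_3=1$, $\alpha_6=0$ and $y=0$, expanding $\theta(\phi e_2,\phi e_2)$ gives
\[
\alpha_5^*=z^2\alpha_5+zg(\alpha_7+\alpha_9)+\alpha_{11}g^2 .
\]
So $t$ never acts on $\alpha_5$: the substitution $v_1\mapsto xv_1$, $v_2\mapsto tv_1+zv_2$ changes the $v_2^2$-coefficient only by the factor $z^2$. There is therefore no completing-the-square step through $t$ that could degenerate. The only role of $t$ is to kill $\alpha_8$ via $\alpha_8^*=j(\alpha_8x+\alpha_9t+\alpha_{11}v)$ when $\alpha_9\neq0$. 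The parameter that controls $\alpha_5$ is $g$.

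Your descriptions of the residual groups are also inaccurate. For $\alpha_{11}\neq0$, once $v,g$ are used to make $\nu=0$ (which replaces $\alpha_7$ by $jz(\alpha_7-\alpha_9)$), the stabilizer has $v=g=0$. It then acts purely diagonally: $\alpha_3\mapsto x^2z\alpha_3$, $\alpha_5\mapsto z^2\alpha_5$, $\alpha_7\mapsto jz\alpha_7$, $\alpha_{11}\mapsto j^2\alpha_{11}$. There is no shear there, and your ratio $\alpha_5\alpha_{11}/\alpha_7^2$ is then the right invariant. For $\alpha_{11}=0$ the action is not a torus: its two shears, by $t$ on $\alpha_8$ and by $g$ on $\alpha_5$, are exactly what produce the list.

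The missing idea is what the displayed formula gives when $\alpha_{11}=0$, namely $\alpha_5^*=z\big(\alpha_5z+g(\alpha_7+\alpha_9)\big)$. Hence $\alpha_5$ can be killed if and only if $\alpha_7+\alpha_9\neq0$. This condition is invariant because $\alpha_7$ and $\alpha_9$ are both multiplied by $jz$. It yields:
\begin{itemize}
\item the family $\N{3}+\alpha\N{7}+\N{9}$, with invariant $\alpha=\alpha_7/\alpha_9$ after killing $\alpha_8$ by $t$;
\item the extra class $\N{3}+\N{5}-\N{7}+\N{9}$ for $\alpha_7=-\alpha_9$, where $\alpha_5/\alpha_3$ is merely rescaled by $z/x^2$ and the other orbit is the member $\alpha=-1$ of the family;
\item the reason $\alpha_5$ survives in $\N{3}+\N{5}+\N{8}$ (where $\alpha_7=\alpha_9=0$) but not in $\N{3}+\N{7}+\N{8}$ or $\N{3}+\N{7}$.
\end{itemize}
Without this computation, neither completeness of the $\alpha_{10}=0$ part of the list nor the separation of its orbits is established. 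Separation needs the invariance of $\alpha_9=0$, $\alpha_7=0$, $\alpha_7+\alpha_9=0$, of $\alpha_7/\alpha_9$, and of $\alpha_5=0$ when $\alpha_7+\alpha_9=0$. Following your $t$-based heuristic, you would not find the exceptional class.
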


\begin{proof}
The condition $\alpha_{10}=0$ is invariant.

\emph{Case $\alpha_{10}\neq0$.}
By Lemma~\ref{lem:M3action} we can successively achieve $\lambda=0$ (by $v,g$), $\alpha_{11}=0$ (by $h$) and $\nu=0$ (by $u,w$).
The automorphisms preserving these normalizations have $u=v=w=g=h=0$; they act by $q\mapsto q\circ P$, $\mu\mapsto j\mu P$, $\alpha_{10}\mapsto jd\alpha_{10}$.
Normalizing $\alpha_{10}=1$ (i.e.\ $j=c^{-1}d^{-1}$ after multiplication of $\theta$ by $c$), the pair $(q,\mu)$ transforms by $(q,\mu)\mapsto(c\,q\circ P,\ d^{-1}\mu P)$.
If $\mu=0$, then $q$ is classified by its rank: $\N{10}$, $\N1+\N{10}$, $\N2+\N{10}$.
If $\mu\neq0$, we may take $\mu=(1,0)$, and the stabilizer consists of the matrices $P=\begin{pmatrix}x&0\\t&1\end{pmatrix}$ (and arbitrary $c$).
Under $v_1\mapsto xv_1$, $v_2\mapsto tv_1+v_2$, the $v_2^2$-coefficient $\alpha_5$ of $q$ is preserved (up to $c$).
If $\alpha_5\neq0$ we complete the square and obtain $q=v_2^2$ or $q=v_1^2+v_2^2$, i.e.\ $\N4+\N5+\N{10}$ or $\N1+\N4+\N5+\N{10}$.
If $\alpha_5=0$, then $q=v_1(\alpha_1v_1+\alpha_2v_2)$; if $\alpha_2\neq0$ we kill $\alpha_1$ by means of $t$ and get $\N2+\N4+\N{10}$, otherwise $\N4+\N{10}$ or $\N1+\N4+\N{10}$.
These classes are separated by the rank of $q$, by whether $\mu=0$, and, for $\mu\neq0$, by whether the linear form $\mu$ divides $q$ and whether $q\in\C\mu^2$.

\emph{Case $\alpha_{10}=0$.}
Then $\lambda\neq0$ and $\lambda^*=d\lambda P$; we take $\lambda=(1,0)$, so that the remaining automorphisms have $y=0$ and $(\ell_1,\ell_2)=(dx,0)$.
By Lemma~\ref{lem:M3action} we can kill $\alpha_1$ and $\alpha_2$ with $u,w$, and $\alpha_4$ with $h$ (the coefficient of $h$ in $\alpha_4^*$ is $dx\neq0$).
The condition $\alpha_{11}=0$ is invariant.

(a) $\alpha_{11}\neq0$.
Then $\theta$ is admissible, and using $v,g$ we obtain $\nu=0$; since $g$ enters $\alpha_7^*$ and $\alpha_9^*$ in the same way, the quantity $\alpha_7-\alpha_9$ is transformed into $jz(\alpha_7-\alpha_9)$, and after the normalization $\alpha_9=0$ we have $\mu=(0,\alpha_7)$.
The automorphisms preserving all normalizations have $v=g=0$ and act by
\[
\alpha_3\mapsto x^2z\alpha_3,\quad \alpha_5\mapsto z^2\alpha_5,\quad \alpha_7\mapsto jz\alpha_7,\quad \alpha_{11}\mapsto j^2\alpha_{11}.
\]
Normalizing $\alpha_3=\alpha_{11}=1$ (scaling $\theta$ and choosing $x$), we have $\alpha_7\mapsto(z/j)\alpha_7$ and $\alpha_5\mapsto(z/j)^2\alpha_5$.
If $\alpha_7\neq0$ we take $\alpha_7=1$, and $\alpha=\alpha_5$ becomes an invariant: $\N3+\alpha\N5+\N7+\N{11}$.
If $\alpha_7=0$ we get $\N3+\N{11}$ or $\N3+\N5+\N{11}$.

(b) $\alpha_{11}=0$.
Now $\nu^*=j\nu P$, $\alpha_7\mapsto jz\alpha_7$, and admissibility means $\nu\neq0$ or $\alpha_7\neq0$.
The parameter $g$ enters $\alpha_5^*=z\big(\alpha_5z+g(\alpha_7+\alpha_9)\big)$ (the other coefficients affected by $g$ can be restored by $u,w$), so we can kill $\alpha_5$ whenever $\alpha_7+\alpha_9\neq0$.
If $\alpha_9\neq0$, we kill $\alpha_8$ by $t$; for $\alpha_7\neq-\alpha_9$ we kill $\alpha_5$ and obtain $\N3+\alpha\N7+\N9$ with the invariant $\alpha=\alpha_7/\alpha_9\neq-1$, while for $\alpha_7=-\alpha_9$ the coefficient $\alpha_5$ is multiplied by $z/x^2$ relative to $\alpha_3$, and we obtain $\N3-\N7+\N9$ (the member $\alpha=-1$ of the family) or $\N3+\N5-\N7+\N9$.
If $\alpha_9=0\neq\alpha_8$: for $\alpha_7\neq0$ we kill $\alpha_5$ and normalize $\alpha_7/\alpha_8\mapsto(z/x)\alpha_7/\alpha_8$ to $1$, obtaining $\N3+\N7+\N8$; for $\alpha_7=0$ we get $\N3+\N8$ or $\N3+\N5+\N8$.
If $\nu=0$, then $\alpha_7\neq0$, we kill $\alpha_5$ and get $\N3+\N7$.
All conditions used ($\alpha_{11}=0$, $\nu=0$, $\alpha_9=0$, $\alpha_7=0$, $\alpha_7+\alpha_9=0$, $\alpha_5=0$ in the non-killable cases) are invariant under the stabilizers, which proves that the listed orbits are distinct.
\end{proof}

\begin{longtable}{@{}l p{0.90\textwidth}@{}}
$\mathcal{A}_{56}$ & $e_{1}e_{2}=e_{3},\ e_{2}e_{1}=-e_{3},\ e_{4}e_{3}=e_{5}$ \\ & \quad {\small $\theta=\langle \nabla_{10}\rangle$} \\[4pt]
$\mathcal{A}_{57}$ & $e_{1}e_{1}=e_{5},\ e_{1}e_{2}=e_{3},\ e_{2}e_{1}=-e_{3},\ e_{4}e_{3}=e_{5}$ \\ & \quad {\small $\theta=\langle \nabla_{1}+\nabla_{10}\rangle$} \\[4pt]
$\mathcal{A}_{58}$ & $e_{1}e_{2}=e_{3}+e_{5},\ e_{2}e_{1}=-e_{3},\ e_{4}e_{3}=e_{5}$ \\ & \quad {\small $\theta=\langle \nabla_{2}+\nabla_{10}\rangle$} \\[4pt]
$\mathcal{A}_{59}$ & $e_{1}e_{2}=e_{3},\ e_{1}e_{4}=e_{5},\ e_{2}e_{1}=-e_{3},\ e_{4}e_{3}=e_{5}$ \\ & \quad {\small $\theta=\langle \nabla_{4}+\nabla_{10}\rangle$} \\[4pt]
$\mathcal{A}_{60}$ & $e_{1}e_{2}=e_{3},\ e_{1}e_{4}=e_{5},\ e_{2}e_{1}=-e_{3},\ e_{2}e_{2}=e_{5}, \ e_{4}e_{3}=e_{5}$ \\ & \quad {\small $\theta=\langle \nabla_{4}+\nabla_{5}+\nabla_{10}\rangle$} \\[4pt]
$\mathcal{A}_{61}$ & $e_{1}e_{1}=e_{5},\ e_{1}e_{2}=e_{3},\ e_{1}e_{4}=e_{5},\ e_{2}e_{1}=-e_{3}, \ e_{2}e_{2}=e_{5},\ e_{4}e_{3}=e_{5}$ \\ & \quad {\small $\theta=\langle \nabla_{1}+\nabla_{4}+\nabla_{5}+\nabla_{10}\rangle$} \\[4pt]
$\mathcal{A}_{62}$ & $e_{1}e_{2}=e_{3}+e_{5},\ e_{1}e_{4}=e_{5},\ e_{2}e_{1}=-e_{3},\ e_{4}e_{3}=e_{5}$ \\ & \quad {\small $\theta=\langle \nabla_{2}+\nabla_{4}+\nabla_{10}\rangle$} \\[4pt]
$\mathcal{A}_{63}$ & $e_{1}e_{1}=e_{5},\ e_{1}e_{2}=e_{3},\ e_{1}e_{4}=e_{5},\ e_{2}e_{1}=-e_{3}, \ e_{4}e_{3}=e_{5}$ \\ & \quad {\small $\theta=\langle \nabla_{1}+\nabla_{4}+\nabla_{10}\rangle$} \\[4pt]
$\mathcal{A}_{64}(\alpha)$ & $e_{1}e_{2}=e_{3},\ e_{1}e_{3}=e_{5},\ e_{2}e_{1}=-e_{3},\ e_{2}e_{2}=\alpha e_{5}, \ e_{2}e_{4}=e_{5},\ e_{4}e_{4}=e_{5}$ \\ & \quad {\small $\theta=\langle \nabla_{3}+\alpha \nabla_{5}+\nabla_{7}+\nabla_{11}\rangle$;\quad $\alpha\in\mathbb{C}$} \\[4pt]
$\mathcal{A}_{65}$ & $e_{1}e_{2}=e_{3},\ e_{1}e_{3}=e_{5},\ e_{2}e_{1}=-e_{3},\ e_{2}e_{2}=e_{5}$ \\ & $e_{4}e_{4}=e_{5}$ \\ & \quad {\small $\theta=\langle \nabla_{3}+\nabla_{5}+\nabla_{11}\rangle$} \\[4pt]
$\mathcal{A}_{66}$ & $e_{1}e_{2}=e_{3},\ e_{1}e_{3}=e_{5},\ e_{2}e_{1}=-e_{3},\ e_{4}e_{4}=e_{5}$ \\ & \quad {\small $\theta=\langle \nabla_{3}+\nabla_{11}\rangle$} \\[4pt]
$\mathcal{A}_{67}(\alpha)$ & $e_{1}e_{2}=e_{3},\ e_{1}e_{3}=e_{5},\ e_{2}e_{1}=-e_{3},\ e_{2}e_{4}=\alpha e_{5}, \ e_{4}e_{2}=e_{5}$ \\ & \quad {\small $\theta=\langle \nabla_{3}+\alpha \nabla_{7}+\nabla_{9}\rangle$;\quad $\alpha\in\mathbb{C}$} \\[4pt]
$\mathcal{A}_{68}$ & $e_{1}e_{2}=e_{3},\ e_{1}e_{3}=e_{5},\ e_{2}e_{1}=-e_{3},\ e_{2}e_{2}=e_{5}, \ e_{2}e_{4}=-e_{5},\ e_{4}e_{2}=e_{5}$ \\ & \quad {\small $\theta=\langle \nabla_{3}+\nabla_{5}-\nabla_{7}+\nabla_{9}\rangle$} \\[4pt]
$\mathcal{A}_{69}$ & $e_{1}e_{2}=e_{3},\ e_{1}e_{3}=e_{5},\ e_{2}e_{1}=-e_{3},\ e_{2}e_{4}=e_{5}$ \\ & \quad {\small $\theta=\langle \nabla_{3}+\nabla_{7}\rangle$} \\[4pt]
$\mathcal{A}_{70}$ & $e_{1}e_{2}=e_{3},\ e_{1}e_{3}=e_{5},\ e_{2}e_{1}=-e_{3},\ e_{2}e_{4}=e_{5}, \ e_{4}e_{1}=e_{5}$ \\ & \quad {\small $\theta=\langle \nabla_{3}+\nabla_{7}+\nabla_{8}\rangle$} \\[4pt]
$\mathcal{A}_{71}$ & $e_{1}e_{2}=e_{3},\ e_{1}e_{3}=e_{5},\ e_{2}e_{1}=-e_{3},\ e_{4}e_{1}=e_{5}$ \\ & \quad {\small $\theta=\langle \nabla_{3}+\nabla_{8}\rangle$} \\[4pt]
$\mathcal{A}_{72}$ & $e_{1}e_{2}=e_{3},\ e_{1}e_{3}=e_{5},\ e_{2}e_{1}=-e_{3},\ e_{2}e_{2}=e_{5}, \ e_{4}e_{1}=e_{5}$ \\ & \quad {\small $\theta=\langle \nabla_{3}+\nabla_{5}+\nabla_{8}\rangle$} \\[4pt]
\end{longtable}

\textbf{Central extensions of $\M4(0)$.} Here $\N1,\dots,\N9$ are the classes of $\D{11}$, $\D{14}$, $\D{21}$, $\D{22}$, $\D{24}$, $\D{13}+\D{31}$, $\D{41}$, $\D{42}$, $\D{44}$ and $\Ann(\M4(0))=\sspan{e_3,e_4}$.
For $\phi$ as in Proposition~\ref{prop:aut} we have $\alpha_6^*=\alpha_6x^2y$, $\alpha_9^*=\alpha_9z^2$, and
\[
\begin{array}{ll}
\alpha_2^*=\alpha_6jx+z\big(\alpha_2x+\alpha_5(y-x)+\alpha_9t\big), & \alpha_7^*=\alpha_6jx+z\big(\alpha_7x+\alpha_8(y-x)+\alpha_9t\big),\\
\alpha_4^*=\alpha_4y^2+(\alpha_5+\alpha_8)wy+\alpha_9w^2, & \alpha_5^*=z(\alpha_5y+\alpha_9w),\qquad \alpha_8^*=z(\alpha_8y+\alpha_9w),
\end{array}
\]
while $\alpha_1^*=2\alpha_6ux+\dots$ and $\alpha_3^*=\alpha_6vx+\dots$, where the dots denote terms not involving $u$ \textup{(}resp.\ $v$\textup{)}.

\begin{proposition}\label{prop:M40}
A class $\theta$ is admissible if and only if $\alpha_6\neq0$ and $(\alpha_2-\alpha_7,\alpha_5,\alpha_8,\alpha_9)\neq0$.
Every admissible class lies in the orbit of exactly one of the following classes \textup{(}$\alpha\in\C$\textup{)}:
\[
\begin{array}{lll}
\alpha\N4+\N5+\N6+\N9, & \alpha\N4+\N5+\N6-\N7+\N9, & \N6+\N9,\\
\N6+\N7+\N9, & \N4+\N6+\N9, & \N4+\N6+\N7+\N9,\\
\N6+\N7, & \N4+\N6+\N7, & \N5+\N6+\alpha\N8,\\
\N5+\N6+(\alpha-1)\N7+\alpha\N8\ (\alpha\neq1), & \N5+\N6+\N7+\N8, & \N4+\N5+\N6-\N8,\\
\N4+\N5+\N6-2\N7-\N8, & \N6+\N8, & \N6+\N7+\N8,
\end{array}
\]
and, in each family, different values of $\alpha$ give different orbits.
\end{proposition}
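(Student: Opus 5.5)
First I would settle admissibility, in the same way as in Lemma~\ref{lem:rank}. Take $m=ae_3+be_4\in\Ann(\M4(0))$ and list the values of $\theta$ that involve $e_3$ or $e_4$. The vector $e_3$ occurs only in $\N6$, through $\theta(e_1,e_3)=\theta(e_3,e_1)=\alpha_6$. The vector $e_4$ occurs in $\N2,\N5,\N7,\N8,\N9$, through $\theta(e_1,e_4)=\alpha_2$, $\theta(e_2,e_4)=\alpha_5$, $\theta(e_4,e_1)=\alpha_7$, $\theta(e_4,e_2)=\alpha_8$ and $\theta(e_4,e_4)=\alpha_9$. If $\alpha_6=0$, then $e_3\in\Ann(\theta)$. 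If $\alpha_6\neq0$, the equations give $a\alpha_6+b\alpha_2=0$ and $a\alpha_6+b\alpha_7=0$, together with $b\alpha_5=b\alpha_8=b\alpha_9=0$. These force $a=b=0$ exactly when $(\alpha_2-\alpha_7,\alpha_5,\alpha_8,\alpha_9)\neq0$.

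Next, for an admissible class I would use the parameters $u$ and $v$ to kill $\alpha_1$ and $\alpha_3$. Their coefficients are $2\alpha_6x$ and $\alpha_6x$, both nonzero. The parameter $j$ adds the same amount $\alpha_6jx$ to $\alpha_2$ and $\alpha_7$, so I would use it to make $\alpha_2=0$. From then on the remaining invariant data are $\alpha_7$ (which stands for $\alpha_7-\alpha_2$), $\alpha_4,\alpha_5,\alpha_8,\alpha_9$, and the scale $\alpha_6\mapsto\alpha_6x^2y$. I also need to check that restoring $\alpha_1,\alpha_3$ after changing $w,t$ does not disturb the other coordinates. This holds because $u,v$ appear only in $\alpha_1^*,\alpha_3^*$.

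The main split is on $\alpha_9$, since $\alpha_9\mapsto\alpha_9z^2$ and so the condition $\alpha_9=0$ is invariant.

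\emph{Case $\alpha_9\neq0$.} Here $w$ shifts $\alpha_5$ and $\alpha_8$ by the same amount $z\alpha_9w$, and $t$ shifts $\alpha_2$ and $\alpha_7$ by the same amount $z\alpha_9t$. So the differences $\alpha_5-\alpha_8$ and $\alpha_7-\alpha_2$ are semi-invariants. I would use $w$ to make $\alpha_8=0$ and $t$ (readjusting with $j$) to keep $\alpha_2=0$. This leaves $(\alpha_4,\alpha_5,\alpha_7)$, acted on by the diagonal torus $x,y,z$ together with scaling. When $\alpha_5\neq0$, normalizing $\alpha_5=\alpha_6=\alpha_9=1$ uses up the torus. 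Then $\alpha_7$ can only be $0$ or be scaled (which gives the two rows with $\pm\N7$, after I check which sign arises), and $\alpha_4$ survives as the parameter $\alpha$. When $\alpha_5=0$, the coordinates $\alpha_4$ and $\alpha_7$ can each be normalized to $0$ or $1$ independently, giving $\N6+\N9$, $\N6+\N7+\N9$, $\N4+\N6+\N9$ and $\N4+\N6+\N7+\N9$.

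\emph{Case $\alpha_9=0$.} Now $\alpha_5\mapsto z y\alpha_5$ and $\alpha_8\mapsto zy\alpha_8$, so the point $[\alpha_5:\alpha_8]$ is an invariant. The coordinate $\alpha_4$ is shifted by $(\alpha_5+\alpha_8)wy$, and $\alpha_7$ is shifted by $z\alpha_8(y-x)$ minus the corresponding $\alpha_5$-contribution to $\alpha_2$. So the invariant combination is $\alpha_7-\alpha_2$, which shifts by $z(\alpha_8-\alpha_5)(y-x)$.

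The subcases are then:
\begin{itemize}
\item $\alpha_5\neq0$ and $\alpha_5+\alpha_8\neq0$: kill $\alpha_4$. If $\alpha_8\neq\alpha_5$, also kill $\alpha_7$, giving $\N5+\N6+\alpha\N8$. The family with the extra $(\alpha-1)\N7$ and $\N5+\N6+\N7+\N8$ arise from how the choice of $y-x$ interacts with the torus. The degenerate value $\alpha=1$ is where $y-x$ no longer acts on $\alpha_7$, so $\alpha_7$ is only rescaled.
\item $\alpha_8=-\alpha_5$: $\alpha_4$ cannot be killed, which gives $\N4+\N5+\N6-\N8$ and $\N4+\N5+\N6-2\N7-\N8$.
\item $\alpha_5=0\neq\alpha_8$: we get $\N6+\N8$ and $\N6+\N7+\N8$.
\item $\alpha_5=\alpha_8=0$: we need $\alpha_7\neq0$, and $\alpha_4$ is either $0$ or $1$, giving $\N6+\N7$ and $\N4+\N6+\N7$.
\end{itemize}

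The main obstacle is this $\alpha_9=0$, $\alpha_5\neq0$ subcase. There the shifts from $y-x$ are coupled with the torus scaling of $\alpha_6=x^2y$. I would parametrize $y=sx$ and track the ratios $\alpha_7/\alpha_5$ and $\alpha_4/\alpha_5$ explicitly, exactly as in the $s=y/x$ argument of Proposition~\ref{prop:R340}. To separate the orbits I would use the invariants $\alpha_9=0$, $[\alpha_5:\alpha_8]$, $\alpha_5+\alpha_8=0$, and the stabilizer-invariant ratios that define the parameter $\alpha$.
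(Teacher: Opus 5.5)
Your admissibility criterion, the normalization $\alpha_1=\alpha_3=\alpha_2=0$ by $u,v,j$, the split on $\alpha_9$, and the subcases $\alpha_9\neq0,\ \alpha_5=0$ and $\alpha_9=0,\ \alpha_5=\alpha_8=0$ all agree with the paper. The gap is in how $\alpha_7$ transforms. After $\alpha_2=0$ is restored by $j$, one has $\alpha_7\mapsto z\big(\alpha_7x+(\alpha_8-\alpha_5)(y-x)\big)$. This is affine in $\alpha_7$, not diagonal.

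The missing observation is that $\beta=\alpha_7-\alpha_8+\alpha_5$ is a semi-invariant. Since the $w$-terms of $\alpha_5^*$ and $\alpha_8^*$ cancel,
\[
\beta\mapsto xz\,\beta .
\]
Hence $\beta=0$ is an invariant condition. Moreover, $\alpha_7$ can be killed, by $y=-x\beta/(\alpha_8-\alpha_5)$, exactly when $\alpha_8\neq\alpha_5$ and $\beta\neq0$. For $\alpha_9\neq0$ (after $\alpha_8=0$), $\beta=0$ is the paper's invariant condition $\alpha_7=-\alpha_5$. This is what separates $\alpha\N4+\N5+\N6+\N9$ from $\alpha\N4+\N5+\N6-\N7+\N9$.

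Your description of that case as a diagonal torus action is wrong. Normalizing $\alpha_5=\alpha_6=\alpha_9=1$ does not use up the group: it leaves $y=z=x^2$, which acts by $\alpha_7+1\mapsto(\alpha_7+1)/x$ and fixes $\alpha_4$. Treated diagonally, your sketch would output the orbits $\alpha_7=0$ and $\alpha_7\neq0$ (with $\pm1$ equivalent). But $\alpha\N4+\N5+\N6+\N7+\N9$ lies in the orbit of $\alpha\N4+\N5+\N6+\N9$ (take $x=2$, $y=z=4$). The second orbit is the fixed point $\alpha_7=-\alpha_5$, so the sign is forced, not something to ``check''.

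For $\alpha_9=0$, your claim ``if $\alpha_8\neq\alpha_5$, also kill $\alpha_7$'' fails on the invariant locus $\beta=0$, i.e.\ $\alpha_7=\alpha_8-\alpha_5$. That locus is exactly where the family $\N5+\N6+(\alpha-1)\N7+\alpha\N8$ lives, together with the classes $\N4+\N5+\N6-2\N7-\N8$ and $\N6+\N7+\N8$. Saying these ``arise from how the choice of $y-x$ interacts with the torus'' and postponing the subcase is not a derivation.

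The same omission breaks the uniqueness part. The invariants you list ($\alpha_9=0$, $[\alpha_5:\alpha_8]$, $\alpha_5+\alpha_8=0$) do not separate $\N5+\N6+\alpha\N8$ from $\N5+\N6+(\alpha-1)\N7+\alpha\N8$. Nor do they separate the two $\alpha_9\neq0$ families; for that you need $\beta=0$ versus $\beta\neq0$.

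Once $\beta$ is introduced, the reduced action is diagonal apart from the $w$-shifts. The characters are $yz$ on $\alpha_5,\alpha_8$ when $\alpha_9=0$, $xz$ on $\beta$, $x^2y$ on $\alpha_6$ and $y^2$ on $\alpha_4$. The ratios $z/x^2$, $z/(xy)$, $y/x^2$ are independent, so the remaining normalizations and the invariance of the parameter $\alpha$ become routine.
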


\begin{proof}
For $m=ae_3+be_4$ we have $\theta(e_1,m)=a\alpha_6+b\alpha_2$, $\theta(e_2,m)=b\alpha_5$, $\theta(m,e_1)=a\alpha_6+b\alpha_7$, $\theta(m,e_2)=b\alpha_8$, $\theta(e_4,m)=\theta(m,e_4)=b\alpha_9$; this gives the admissibility criterion.
Let $\theta$ be admissible; we normalize $\alpha_6=1$.
Using $u$, $v$ and $j$ we obtain $\alpha_1=\alpha_3=\alpha_2=0$.
Since $\alpha_7^*-\alpha_2^*=z\big((\alpha_7-\alpha_2)x+(\alpha_8-\alpha_5)(y-x)\big)$, after this normalization the remaining coefficients transform by
\begin{equation}\label{eq:M40red}
\begin{array}{lll}
\alpha_4\mapsto\alpha_4y^2+(\alpha_5+\alpha_8)wy+\alpha_9w^2, & \alpha_5\mapsto z(\alpha_5y+\alpha_9w), & \alpha_6\mapsto x^2y,\\
\alpha_7\mapsto z\big(\alpha_7x+(\alpha_8-\alpha_5)(y-x)\big), & \alpha_8\mapsto z(\alpha_8y+\alpha_9w), & \alpha_9\mapsto \alpha_9z^2,
\end{array}
\end{equation}
where $x,y,z,w$ are arbitrary with $xyz\neq0$.
The condition $\alpha_9=0$ is invariant.

\emph{Case $\alpha_9\neq0$.}
Choosing $w$ we get $\alpha_8=0$; the stabilizer of this normalization has $w=0$ and acts by $\alpha_4\mapsto\alpha_4y^2$, $\alpha_5\mapsto yz\alpha_5$, $\alpha_7\mapsto z(\alpha_7x-\alpha_5(y-x))$, $\alpha_9\mapsto z^2\alpha_9$, $\alpha_6\mapsto x^2y$.
If $\alpha_5\neq0$ and $\alpha_7\neq-\alpha_5$, the choice $y=x(1+\alpha_7/\alpha_5)$ gives $\alpha_7=0$; then $y=x$ is forced, and the normalization $\alpha_5=\alpha_6=\alpha_9=1$ determines $z=x^2$ and $x=1$, so that $\alpha=\alpha_4$ is an invariant: $\alpha\N4+\N5+\N6+\N9$.
If $\alpha_5\neq0$ and $\alpha_7=-\alpha_5$ (an invariant condition), the normalization $\alpha_5=\alpha_6=\alpha_9=1$ gives $z=x^2$, $y=x^2$ and $\alpha=\alpha_4$ is again invariant: $\alpha\N4+\N5+\N6-\N7+\N9$.
If $\alpha_5=0$, then $\alpha_4/\alpha_6$ and $\alpha_7/\alpha_6$ are multiplied by $y/x^2$ and $z/(xy)$, and $\alpha_9/\alpha_6$ by $z^2/(x^2y)$; after normalizing $\alpha_9=\alpha_6$ the first two factors become $z^2/x^4$ and $x/z$, which are independent. We get $\N6+\N9$, $\N6+\N7+\N9$, $\N4+\N6+\N9$, $\N4+\N6+\N7+\N9$.

\emph{Case $\alpha_9=0$.}
Now $\alpha_5$ and $\alpha_8$ are both multiplied by $yz$, so $[\alpha_5:\alpha_8]$ (if defined) is an invariant, and by \eqref{eq:M40red} the relation $\alpha_7=\alpha_8-\alpha_5$ is invariant.
If $\alpha_5=\alpha_8=0$, then $\alpha_7\neq0$ and $\alpha_4\mapsto\alpha_4y^2$; we get $\N6+\N7$ and $\N4+\N6+\N7$.
Suppose $(\alpha_5,\alpha_8)\neq0$.
If $\alpha_5+\alpha_8\neq0$ we kill $\alpha_4$ by $w$; if $\alpha_5+\alpha_8=0$ then $\alpha_4\mapsto y^2\alpha_4$.
If $\alpha_8\neq\alpha_5$ and $\alpha_7\neq\alpha_8-\alpha_5$, we choose $y=x(\alpha_8-\alpha_5-\alpha_7)/(\alpha_8-\alpha_5)$ and obtain $\alpha_7=0$; if $\alpha_8=\alpha_5$, then $\alpha_7\mapsto xz\alpha_7$.
Writing $\alpha_5=1$, $\alpha_8=\alpha$ when $\alpha_5\neq0$, we obtain:
$\N5+\N6+\alpha\N8$ (here $\alpha_7=0$ and $\alpha_4=0$, including $\alpha=-1$ with $\alpha_4=0$);
$\N5+\N6+(\alpha-1)\N7+\alpha\N8$ with $\alpha\neq1$ (here $\alpha_7=\alpha_8-\alpha_5\neq0$);
$\N5+\N6+\N7+\N8$ ($\alpha=1$, $\alpha_7\neq0$);
and, for $\alpha=-1$ and $\alpha_4\neq0$, the classes $\N4+\N5+\N6-\N8$ and $\N4+\N5+\N6-2\N7-\N8$ (in both cases $\alpha_4/\alpha_6$ can be normalized to $1$, because either $y=x$ and $\alpha_4/\alpha_6\mapsto x^{-1}\alpha_4/\alpha_6$, or $y$ is free).
If $\alpha_5=0\neq\alpha_8$ we get $\N6+\N8$ ($\alpha_7=0$) and $\N6+\N7+\N8$ ($\alpha_7=\alpha_8$).
All normalizations of ratios use the factors $z/x^2$ (for $\alpha_5/\alpha_6$ or $\alpha_8/\alpha_6$), which are free.
The invariants $[\alpha_5:\alpha_8]$, $\alpha_9=0$, $\alpha_7=\alpha_8-\alpha_5$, $\alpha_7=-\alpha_5$ and the vanishing of the non-killable coefficients separate all orbits.
\end{proof}

\begin{longtable}{@{}l p{0.90\textwidth}@{}}
$\mathcal{A}_{73}(\alpha)$ & $e_{1}e_{1}=e_{3},\ e_{1}e_{2}=e_{3},\ e_{1}e_{3}=e_{5},\ e_{2}e_{2}=\alpha e_{5}, \ e_{2}e_{4}=e_{5},\ e_{3}e_{1}=e_{5},\ e_{4}e_{4}=e_{5}$ \\ & \quad {\small $\theta=\langle \alpha \nabla_{4}+\nabla_{5}+\nabla_{6}+\nabla_{9}\rangle$;\quad $\alpha\in\mathbb{C}$} \\[4pt]
$\mathcal{A}_{74}(\alpha)$ & $e_{1}e_{1}=e_{3},\ e_{1}e_{2}=e_{3},\ e_{1}e_{3}=e_{5},\ e_{2}e_{2}=\alpha e_{5}, \ e_{2}e_{4}=e_{5},\ e_{3}e_{1}=e_{5},\ e_{4}e_{1}=-e_{5},\ e_{4}e_{4}=e_{5}$ \\ & \quad {\small $\theta=\langle \alpha \nabla_{4}+\nabla_{5}+\nabla_{6}-\nabla_{7}+\nabla_{9}\rangle$;\quad $\alpha\in\mathbb{C}$} \\[4pt]
$\mathcal{A}_{75}$ & $e_{1}e_{1}=e_{3},\ e_{1}e_{2}=e_{3},\ e_{1}e_{3}=e_{5},\ e_{3}e_{1}=e_{5}, \ e_{4}e_{4}=e_{5}$ \\ & \quad {\small $\theta=\langle \nabla_{6}+\nabla_{9}\rangle$} \\[4pt]
$\mathcal{A}_{76}$ & $e_{1}e_{1}=e_{3},\ e_{1}e_{2}=e_{3},\ e_{1}e_{3}=e_{5},\ e_{3}e_{1}=e_{5}, \ e_{4}e_{1}=e_{5},\ e_{4}e_{4}=e_{5}$ \\ & \quad {\small $\theta=\langle \nabla_{6}+\nabla_{7}+\nabla_{9}\rangle$} \\[4pt]
$\mathcal{A}_{77}$ & $e_{1}e_{1}=e_{3},\ e_{1}e_{2}=e_{3},\ e_{1}e_{3}=e_{5},\ e_{2}e_{2}=e_{5}, \ e_{3}e_{1}=e_{5},\ e_{4}e_{4}=e_{5}$ \\ & \quad {\small $\theta=\langle \nabla_{4}+\nabla_{6}+\nabla_{9}\rangle$} \\[4pt]
$\mathcal{A}_{78}$ & $e_{1}e_{1}=e_{3},\ e_{1}e_{2}=e_{3},\ e_{1}e_{3}=e_{5},\ e_{2}e_{2}=e_{5}, \ e_{3}e_{1}=e_{5},\ e_{4}e_{1}=e_{5},\ e_{4}e_{4}=e_{5}$ \\ & \quad {\small $\theta=\langle \nabla_{4}+\nabla_{6}+\nabla_{7}+\nabla_{9}\rangle$} \\[4pt]
$\mathcal{A}_{79}$ & $e_{1}e_{1}=e_{3},\ e_{1}e_{2}=e_{3},\ e_{1}e_{3}=e_{5},\ e_{3}e_{1}=e_{5}, \ e_{4}e_{1}=e_{5}$ \\ & \quad {\small $\theta=\langle \nabla_{6}+\nabla_{7}\rangle$} \\[4pt]
$\mathcal{A}_{80}$ & $e_{1}e_{1}=e_{3},\ e_{1}e_{2}=e_{3},\ e_{1}e_{3}=e_{5},\ e_{2}e_{2}=e_{5}, \ e_{3}e_{1}=e_{5},\ e_{4}e_{1}=e_{5}$ \\ & \quad {\small $\theta=\langle \nabla_{4}+\nabla_{6}+\nabla_{7}\rangle$} \\[4pt]
$\mathcal{A}_{81}(\alpha)$ & $e_{1}e_{1}=e_{3},\ e_{1}e_{2}=e_{3},\ e_{1}e_{3}=e_{5},\ e_{2}e_{4}=e_{5}, \ e_{3}e_{1}=e_{5},\ e_{4}e_{2}=\alpha e_{5}$ \\ & \quad {\small $\theta=\langle \nabla_{5}+\nabla_{6}+\alpha \nabla_{8}\rangle$;\quad $\alpha\in\mathbb{C}$} \\[4pt]
$\mathcal{A}_{82}(\alpha)$ & $e_{1}e_{1}=e_{3},\ e_{1}e_{2}=e_{3},\ e_{1}e_{3}=e_{5},\ e_{2}e_{4}=e_{5}, \ e_{3}e_{1}=e_{5},\ e_{4}e_{1}=(\alpha - 1) e_{5},\ e_{4}e_{2}=\alpha e_{5}$ \\ & \quad {\small $\theta=\langle \nabla_{5}+\nabla_{6}+(\alpha - 1) \nabla_{7}+\alpha \nabla_{8}\rangle$;\quad $\alpha\neq 1$} \\[4pt]
$\mathcal{A}_{83}$ & $e_{1}e_{1}=e_{3},\ e_{1}e_{2}=e_{3},\ e_{1}e_{3}=e_{5},\ e_{2}e_{4}=e_{5}, \ e_{3}e_{1}=e_{5},\ e_{4}e_{1}=e_{5},\ e_{4}e_{2}=e_{5}$ \\ & \quad {\small $\theta=\langle \nabla_{5}+\nabla_{6}+\nabla_{7}+\nabla_{8}\rangle$} \\[4pt]
$\mathcal{A}_{84}$ & $e_{1}e_{1}=e_{3},\ e_{1}e_{2}=e_{3},\ e_{1}e_{3}=e_{5},\ e_{2}e_{2}=e_{5}, \ e_{2}e_{4}=e_{5},\ e_{3}e_{1}=e_{5},\ e_{4}e_{2}=-e_{5}$ \\ & \quad {\small $\theta=\langle \nabla_{4}+\nabla_{5}+\nabla_{6}-\nabla_{8}\rangle$} \\[4pt]
$\mathcal{A}_{85}$ & $e_{1}e_{1}=e_{3},\ e_{1}e_{2}=e_{3},\ e_{1}e_{3}=e_{5},\ e_{2}e_{2}=e_{5}, \ e_{2}e_{4}=e_{5},\ e_{3}e_{1}=e_{5},\ e_{4}e_{1}=-2 e_{5},\ e_{4}e_{2}=-e_{5}$ \\ & \quad {\small $\theta=\langle \nabla_{4}+\nabla_{5}+\nabla_{6}-2 \nabla_{7}-\nabla_{8}\rangle$} \\[4pt]
$\mathcal{A}_{86}$ & $e_{1}e_{1}=e_{3},\ e_{1}e_{2}=e_{3},\ e_{1}e_{3}=e_{5},\ e_{3}e_{1}=e_{5}, \ e_{4}e_{2}=e_{5}$ \\ & \quad {\small $\theta=\langle \nabla_{6}+\nabla_{8}\rangle$} \\[4pt]
$\mathcal{A}_{87}$ & $e_{1}e_{1}=e_{3},\ e_{1}e_{2}=e_{3},\ e_{1}e_{3}=e_{5},\ e_{3}e_{1}=e_{5}, \ e_{4}e_{1}=e_{5},\ e_{4}e_{2}=e_{5}$ \\ & \quad {\small $\theta=\langle \nabla_{6}+\nabla_{7}+\nabla_{8}\rangle$} \\[4pt]
\end{longtable}

\textbf{Central extensions of $\M6$.} Let us use the following notations:
$$\N1=[\D{11}], \ \N2=[\D{12}], \ \N3=[\D{13}-\D{23}+\D{31}-\D{32}], \ \N4=[\D{24}+\D{42}], \ \N5=[\D{14}+\D{31}+\D{42}]$$ and
\[
\begin{array}{l}
\alpha_1^*=x\big(\alpha_1x+(2\alpha_3+\alpha_5)y+\alpha_5t\big),\qquad \alpha_2^*=x\big(\alpha_2x-\alpha_3(y+z)+(\alpha_4+\alpha_5)(t+2u)\big),\\
\alpha_3^*=\alpha_3x^3,\qquad\alpha_4^*=\alpha_4x^3,\qquad\alpha_5^*=\alpha_5x^3 .
\end{array}
\]

\begin{proposition}\label{prop:M6}
Every admissible class for $\M6$ lies in the orbit of exactly one of
\[
\alpha\N3+\beta\N4+\N5\ \big((\alpha,\beta)\neq(0,-1)\big),\ \N3+\alpha\N4\ (\alpha\neq0),\ -\N4+\N5,\ \N2-\N4+\N5,
\]
and different parameters give different orbits.
\end{proposition}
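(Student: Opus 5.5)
First I would settle admissibility, following Lemma~\ref{lem:rank}. Here $\Ann(\M6)=\sspan{e_3,e_4}$. Reading off the basis classes, the values involving $e_3,e_4$ are
\[
\theta(e_1,e_3)=\alpha_3,\quad \theta(e_2,e_3)=-\alpha_3,\quad \theta(e_3,e_1)=\alpha_3+\alpha_5,\quad \theta(e_3,e_2)=-\alpha_3,
\]
\[
\theta(e_1,e_4)=\alpha_5,\quad \theta(e_2,e_4)=\alpha_4,\quad \theta(e_4,e_2)=\alpha_4+\alpha_5 .
\]
These give the row vectors $(\alpha_3,\alpha_5)$, $(-\alpha_3,\alpha_4)$, $(\alpha_3+\alpha_5,0)$ and $(-\alpha_3,\alpha_4+\alpha_5)$ of Lemma~\ref{lem:rank}. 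They span $\C^2$ unless they are all proportional. A short case check shows that admissibility fails exactly when $\alpha_5=0$ and $\alpha_3\alpha_4=0$. Equivalently, $\theta$ is admissible if and only if $\alpha_5\neq0$, or $\alpha_5=0$ and $\alpha_3\alpha_4\neq0$. This matches the two families in the list.

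Next I use the action formulas. The triple $(\alpha_3,\alpha_4,\alpha_5)$ is multiplied by the common factor $x^3$, so the point $[\alpha_3:\alpha_4:\alpha_5]\in\mathbb{P}^2$ is an invariant. This gives the parameters $(\alpha,\beta)$ after normalizing $\alpha_5=1$, and it gives $\alpha$ after normalizing $\alpha_3=1$ when $\alpha_5=0$. The rest of the proof is killing $\alpha_1$ and $\alpha_2$:
\begin{itemize}
\item $\alpha_1^*$ contains $x\big((2\alpha_3+\alpha_5)y+\alpha_5t\big)$;
\item $\alpha_2^*$ contains $x\big(-\alpha_3(y+z)+(\alpha_4+\alpha_5)(t+2u)\big)$.
\end{itemize}
The parameters $y,z,t,u$ are free, subject only to the determinant being nonzero. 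By Proposition~\ref{prop:aut} the matrix is block triangular, so $\det\phi=x^6\neq0$.

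\emph{Case $\alpha_5\neq0$.} Normalize $\alpha_5=1$. Choosing $t$ kills $\alpha_1$. Then $u$ kills $\alpha_2$ provided $\alpha_4+\alpha_5\neq0$; since $u$ does not enter $\alpha_1^*$, the two choices do not interfere. If $\alpha_4=-1$ but $\alpha_3\neq0$, I use $z$ instead to kill $\alpha_2$; $z$ does not appear in $\alpha_1^*$, and the determinant condition is untouched. Both subcases give $\alpha\N3+\beta\N4+\N5$ with $(\alpha,\beta)\neq(0,-1)$.

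\emph{Exceptional subcase $\alpha_3=0$, $\alpha_4=-1$.} Now $\alpha_2^*=\alpha_2x^2$, while $\alpha_5^*=x^3$. So $\alpha_2/\alpha_5$ is multiplied by $x^{-1}$, and its vanishing is the only invariant. This yields $-\N4+\N5$ and $\N2-\N4+\N5$.

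\emph{Case $\alpha_5=0$, $\alpha_3\alpha_4\neq0$.} Choosing $y$ kills $\alpha_1$, since its coefficient is $2\alpha_3x\neq0$. Then $z$ or $u$ kills $\alpha_2$. Normalizing $\alpha_3=1$ gives $\N3+\alpha\N4$ with $\alpha\neq0$.

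Distinctness follows from the invariance of $[\alpha_3:\alpha_4:\alpha_5]$, together with the invariance of $\alpha_2=0$ in the exceptional subcase. That is the one place needing care: I must verify that, when $\alpha_3=0$ and $\alpha_4+\alpha_5=0$, no parameter other than $x$ affects $\alpha_2^*$, which is immediate from the formula. I expect the main obstacle to be the admissibility verification, i.e.\ reading off the cocycle values correctly. In particular the $\D{31}$ term inside $\N5$ contributes to $\theta(e_3,e_1)$, and the boundary condition $(\alpha,\beta)\neq(0,-1)$ must be matched with the exceptional subcase rather than with non-admissibility.
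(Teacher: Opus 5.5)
Your proposal is correct and follows essentially the same route as the paper: the same four vectors from Lemma~\ref{lem:rank}, the invariant point $[\alpha_3:\alpha_4:\alpha_5]$, elimination of $\alpha_1,\alpha_2$ using $y,z,t,u$, and the exceptional point $[0:-1:1]$, where $\alpha_2\mapsto x^2\alpha_2$ while $\alpha_5\mapsto x^3\alpha_5$. The differences are cosmetic: you split cases by $\alpha_5\neq0$ versus $\alpha_5=0$ instead of by $\alpha_3$, and where you call the admissibility criterion a ``short case check'', the paper writes out the relevant $2\times2$ minors $-\alpha_5(\alpha_3+\alpha_5)$, $\alpha_3(\alpha_4+2\alpha_5)$ and $\alpha_3(\alpha_4+\alpha_5)$.
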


\begin{proof}
By Lemma~\ref{lem:rank}, $\theta$ is admissible if and only if the vectors $(\alpha_3,\alpha_5)$, $(-\alpha_3,\alpha_4)$, $(\alpha_3+\alpha_5,0)$, $(-\alpha_3,\alpha_4+\alpha_5)$ span $\C^2$.
If $\alpha_5\neq0$ this always holds (the minor of the first and third vectors is $-\alpha_5(\alpha_3+\alpha_5)$ and that of the first and fourth is $\alpha_3(\alpha_4+2\alpha_5)$; if both vanish then $\alpha_3=-\alpha_5$ and $\alpha_4=-2\alpha_5$, and the minor of the first two vectors is $\alpha_3(\alpha_4+\alpha_5)=\alpha_5^2\neq0$); if $\alpha_5=0$ it holds if and only if $\alpha_3\alpha_4\neq0$.
The point $(\alpha_3:\alpha_4:\alpha_5)$ is an invariant.
If $\alpha_3\neq0$, we kill $\alpha_2$ by $z$ and then $\alpha_1$ by $y$ or $t$; if $\alpha_3=0$ (so $\alpha_5\ne0$), we kill $\alpha_1$ by $t$ and, provided $\alpha_4+\alpha_5\neq0$, $\alpha_2$ by $u$.
This gives $\alpha\N3+\beta\N4+\N5$ with $(\alpha,\beta)\neq(0,-1)$ and $\N3+\alpha\N4$, $\alpha\neq0$.
If $(\alpha_3,\alpha_4,\alpha_5)\sim(0,-1,1)$, then $\alpha_2\mapsto x^2\alpha_2$ while $\alpha_5\mapsto x^3\alpha_5$, and we obtain $-\N4+\N5$ or $\N2-\N4+\N5$.
\end{proof}

\begin{longtable}{@{}l p{0.80\textwidth}@{}}
$\mathcal{A}_{88}(\alpha,\beta)$ & $e_{1}e_{2}=e_{3},\ e_{1}e_{3}=\alpha e_{5},\ e_{1}e_{4}=e_{5},\ e_{2}e_{1}=e_{4}, \ e_{2}e_{2}=-e_{3}$ \\ & $e_{2}e_{3}=- \alpha e_{5},\ e_{2}e_{4}=\beta e_{5},\ e_{3}e_{1}=(\alpha + 1) e_{5}, \ e_{3}e_{2}=- \alpha e_{5},\ e_{4}e_{2}=(\beta + 1) e_{5}$ \\ & \quad {\small $\theta=\langle \alpha \nabla_{3}+\beta \nabla_{4}+\nabla_{5}\rangle$;\quad $(\alpha,\beta)\neq(0,-1)$} \\[4pt]
$\mathcal{A}_{89}(\alpha)$ & $e_{1}e_{2}=e_{3},\ e_{1}e_{3}=e_{5},\ e_{2}e_{1}=e_{4},\ e_{2}e_{2}=-e_{3}, \ e_{2}e_{3}=-e_{5}$ \\ & $e_{2}e_{4}=\alpha e_{5},\ e_{3}e_{1}=e_{5},\ e_{3}e_{2}=-e_{5}, \ e_{4}e_{2}=\alpha e_{5}$ \\ & \quad {\small $\theta=\langle \nabla_{3}+\alpha \nabla_{4}\rangle$;\quad $\alpha\neq 0$} \\[4pt]
$\mathcal{A}_{90}$ & $e_{1}e_{2}=e_{3},\ e_{1}e_{4}=e_{5},\ e_{2}e_{1}=e_{4},\ e_{2}e_{2}=-e_{3}, \ e_{2}e_{4}=-e_{5},\ e_{3}e_{1}=e_{5}$ \\ & \quad {\small $\theta=\langle -\nabla_{4}+\nabla_{5}\rangle$} \\[4pt]
$\mathcal{A}_{91}$ & $e_{1}e_{2}=e_{3}+e_{5},\ e_{1}e_{4}=e_{5},\ e_{2}e_{1}=e_{4},\ e_{2}e_{2}=-e_{3}, \ e_{2}e_{4}=-e_{5},\ e_{3}e_{1}=e_{5}$ \\ & \quad {\small $\theta=\langle \nabla_{2}-\nabla_{4}+\nabla_{5}\rangle$} \\[4pt]
\end{longtable}

\textbf{Central extensions of $\M7(\gamma)$, $\gamma\neq1$.} Let us use the following notations: 
$$\N1=[\D{21}], \ \N2=[\D{12}], \ \N3=[\D{13}-\gamma\D{23}+\D{31}-\gamma\D{32}], \ \N4=[\D{14}-\D{24}+\D{41}-\D{42}].$$
For the automorphisms of the first kind in Proposition~\ref{prop:aut},
\[
\begin{array}{l}
\alpha_1^*=x\big(\alpha_1x+\alpha_3(\gamma y+z)+\alpha_4((2\gamma-1)t+u)\big),\ \alpha_2^*=x\big(\alpha_2x-\alpha_3(\gamma y+(2\gamma-1)z)-\alpha_4(t+u)\big),\\
\alpha_3^*=\alpha_3x^3,\ \alpha_4^*=\alpha_4x^3,
\end{array}
\]
and, if $\gamma\neq0$, $\sigma_p$ maps $(\alpha_1,\alpha_2,\alpha_3,\alpha_4)$ to $(\gamma p^2\alpha_2,\gamma p^2\alpha_1,p^3\alpha_4,\gamma^3p^3\alpha_3)$.

\begin{lemma}\label{lem:M7}
Let $\gamma\neq1$.
The matrix $\sigma_p$ of Proposition~\ref{prop:aut} is an automorphism of $\M7(\gamma)$ for every $\gamma\neq0$ and $p\neq0$.
A class $\theta$ is admissible if and only if $\alpha_3\alpha_4\neq0$, and every admissible class lies in the orbit of a class $\N3+\alpha\N4$, $\alpha\neq0$.
The classes $\N3+\alpha\N4$ and $\N3+\beta\N4$ lie in the same orbit if and only if $\beta=\alpha$, or $\gamma\neq0$ and $\beta=\gamma^3\alpha^{-1}$.
\end{lemma}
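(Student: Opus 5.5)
The proof has four parts, taken in the order of the statement: the automorphism $\sigma_p$, the admissibility criterion, the normal form, and the orbit separation.

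\emph{The automorphism $\sigma_p$.} I will check that $\sigma_p$ preserves the four nonzero products of $\M7(\gamma)$. Here $\sigma_p(e_1)=pe_2$, $\sigma_p(e_2)=\gamma pe_1$, $\sigma_p(e_3)=-p^2e_4$ and $\sigma_p(e_4)=-\gamma^2p^2e_3$. For example:
\begin{itemize}
\item $\sigma_p(e_1)\sigma_p(e_1)=p^2e_2e_2=-p^2e_4=\sigma_p(e_1e_1)$;
\item $\sigma_p(e_1)\sigma_p(e_2)=\gamma p^2e_2e_1=-\gamma^2p^2e_3=\sigma_p(e_4)=\sigma_p(e_1e_2)$;
\item $\sigma_p(e_2)\sigma_p(e_1)=\gamma p^2e_1e_2=\gamma p^2e_4=\sigma_p(-\gamma e_3)$;
\item $\sigma_p(e_2)\sigma_p(e_2)=\gamma^2p^2e_1e_1=\gamma^2p^2e_3=\sigma_p(-e_4)$.
\end{itemize}
Invertibility requires $\gamma p\neq0$. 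The stated action of $\sigma_p$ on $(\alpha_1,\dots,\alpha_4)$ then follows by evaluating each basis cocycle on $\sigma_p(e_i)\times\sigma_p(e_j)$. Note that $\N3(e_2,e_3)=-\gamma$ and $\N4(e_1,e_4)=1$, which produces the powers $p^3$ and $\gamma^3p^3$. I will reduce modulo $\B(\M7(\gamma))=\sspan{\D{11}-\gamma\D{21},\ \D{12}-\D{22}}$.

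\emph{Admissibility.} I will apply Lemma~\ref{lem:rank} with $\Ann=\sspan{e_3,e_4}$. The relevant vectors are
\[
(\alpha_3,\alpha_4),\quad(-\gamma\alpha_3,-\alpha_4)\quad\text{(rows }e_1,e_2),
\]
together with the symmetric column vectors, which are the same, and zero vectors for $e_3,e_4$. Their determinant is $\alpha_3\alpha_4(\gamma-1)$. Since $\gamma\neq1$, they span $\C^2$ if and only if $\alpha_3\alpha_4\neq0$.

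\emph{Normal form.} Assume $\alpha_3\alpha_4\neq0$. In the action formulas, the pair $(\alpha_1^*,\alpha_2^*)$ depends on $(t,u)$ through the matrix
\[
\begin{pmatrix}2\gamma-1&1\\-1&-1\end{pmatrix}\alpha_4 x,
\]
whose determinant is $-2\gamma\alpha_4x$. For $\gamma\neq0$ this is invertible, so $t,u$ kill $\alpha_1,\alpha_2$. For $\gamma=0$ the $(t,u)$-contributions only span the multiples of $(-1,-1)$, so I will use $z$ as well: $z$ contributes $\alpha_3x(1,1)$ from $\alpha_3(\gamma y+z)$ and $\alpha_3(\gamma y+(2\gamma-1)z)$ with $\gamma=0$. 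More generally, $y,z$ contribute $\alpha_3x\bigl(\gamma y+z,\,-\gamma y-(2\gamma-1)z\bigr)$, with determinant $2\gamma(\gamma-1)$. The spans of $(1,-1)$ from $u$ and of $(-1,1)$-type from $z$ are to be combined; I will verify that the combined span of the $t,u,z$ directions is all of $\C^2$ for every $\gamma$. This is routine linear algebra.

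After this, $\theta=\alpha_3\N3+\alpha_4\N4$. Since $\alpha_3,\alpha_4$ are both multiplied by $x^3$, the ratio $\alpha=\alpha_4/\alpha_3$ is invariant under the connected group, and after scaling $\theta=\N3+\alpha\N4$.

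\emph{Orbit separation.} For the converse, I will argue that every automorphism is either of the first kind or of the form $\phi\sigma_p$. The first kind preserves the ratio $\alpha_4/\alpha_3$, since it never mixes $\alpha_3$ and $\alpha_4$. The map $\sigma_p$ sends the ratio $\alpha$ to $\gamma^3\alpha_3/\alpha_4=\gamma^3/\alpha$. Hence the classes $\N3+\alpha\N4$ and $\N3+\beta\N4$ lie in the same orbit exactly when $\beta=\alpha$, or $\gamma\neq0$ and $\beta=\gamma^3/\alpha$.

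\emph{Main obstacle.} The delicate point is the completeness of the automorphism description for $\gamma\neq0$, that is, that $\Aut(\M7(\gamma))$ consists only of these two components. I will take this from Proposition~\ref{prop:aut}. As a sanity check, I will confirm that an automorphism must permute the set of $v\in\sspan{e_1,e_2}$ (mod $\Ann$) with $v^2\in\C(\text{fixed line})$. Such $v$ exist only along the $e_1$ and $e_2$ directions, which forces the matrix to be diagonal or anti-diagonal on $V$.
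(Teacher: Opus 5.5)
Your proposal follows the paper's proof essentially step for step. You verify $\sigma_p$ on the four products, use Lemma~\ref{lem:rank} to get the admissibility determinant $(\gamma-1)\alpha_3\alpha_4$, kill $\alpha_1,\alpha_2$ with the free parameters, and separate orbits by the ratio $\alpha_4/\alpha_3$. That ratio is fixed by the connected automorphisms and sent to $\gamma^3\alpha_3/\alpha_4$ by $\sigma_p$, and, as in the paper, you rely on Proposition~\ref{prop:aut} for the completeness of $\Aut(\M7(\gamma))$.

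One arithmetic correction is needed. The determinant of $x\alpha_4\begin{pmatrix}2\gamma-1&1\\-1&-1\end{pmatrix}$ is $2(1-\gamma)x^2\alpha_4^2$, not $-2\gamma\alpha_4x$. So $t,u$ alone kill $\alpha_1,\alpha_2$ for every $\gamma\neq1$; at $\gamma=0$ they contribute the independent vectors $(-1,-1)$ and $(1,-1)$. Your detour through $z$ is therefore unnecessary, and it is exactly the hypothesis $\gamma\neq1$ that makes this step work.

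Your closing ``sanity check'' is also inaccurate as stated. The vectors whose square lies on a coordinate line include $e_1+e_2$ and $\gamma e_1+e_2$, since $(e_1+e_2)^2=(1-\gamma)e_3$ and $(\gamma e_1+e_2)^2=(\gamma-1)e_4$. The correct invariant is that $\C e_1$ and $\C e_2$ are exactly the lines $\C w\subset\sspan{e_1,e_2}$ for which $v\mapsto vw$ has rank one. This does force an automorphism to be diagonal or anti-diagonal on $\sspan{e_1,e_2}$ modulo the annihilator.
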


\begin{proof}
By its columns, the matrix $\sigma_p$ maps $e_1\mapsto pe_2$, $e_2\mapsto\gamma pe_1$, $e_3\mapsto-p^2e_4$, $e_4\mapsto-\gamma^2p^2e_3$. We check $\sigma_p(e_1)\sigma_p(e_1)=p^2e_2e_2=-p^2e_4=\sigma_p(e_3)$, $\sigma_p(e_1)\sigma_p(e_2)=\gamma p^2e_2e_1=-\gamma^2p^2e_3=\sigma_p(e_4)$, $\sigma_p(e_2)\sigma_p(e_1)=\gamma p^2e_1e_2=\gamma p^2e_4=\sigma_p(-\gamma e_3)$, $\sigma_p(e_2)\sigma_p(e_2)=\gamma^2p^2e_1e_1=\gamma^2p^2e_3=\sigma_p(-e_4)$, and all other products of the $\sigma_p(e_i)$ vanish.
By Lemma~\ref{lem:rank}, admissibility means that $(\alpha_3,\alpha_4)$ and $(-\gamma\alpha_3,-\alpha_4)$ span $\C^2$, i.e.\ $(\gamma-1)\alpha_3\alpha_4\neq0$.
The coefficients of $t,u$ in $(\alpha_1^*,\alpha_2^*)$ form the matrix $x\alpha_4\begin{pmatrix}2\gamma-1&1\\-1&-1\end{pmatrix}$ with determinant $2(1-\gamma)x^2\alpha_4^2\neq0$, so we can kill $\alpha_1$ and $\alpha_2$.
The connected automorphisms preserve the ratio $\alpha_4/\alpha_3$, and $\sigma_p$ maps $\N3+\alpha\N4$ to a multiple of $\N3+\gamma^3\alpha^{-1}\N4$ (modulo $\N1,\N2$, which are then killed again).
Since every automorphism is either connected or a composition of a connected one with some $\sigma_p$, the claim follows.
\end{proof}

\begin{longtable}{@{}l p{0.90\textwidth}@{}}
$\mathcal{A}_{92}(\alpha,\gamma)$ & $e_{1}e_{1}=e_{3},\ e_{1}e_{2}=e_{4},\ e_{1}e_{3}=e_{5},\ e_{1}e_{4}=\alpha e_{5},\ e_{2}e_{1}=- \gamma e_{3},\ e_{2}e_{2}=-e_{4}$ \\ & $e_{2}e_{3}=- \gamma e_{5},\ e_{2}e_{4}=- \alpha e_{5},\ e_{3}e_{1}=e_{5},\ e_{3}e_{2}=- \gamma e_{5},\ e_{4}e_{1}=\alpha e_{5},\ e_{4}e_{2}=- \alpha e_{5}$ \\ & \quad {\small $\theta=\langle \nabla_{3}+\alpha \nabla_{4}\rangle$;\quad $\alpha\neq0,\ \gamma\neq1;\ \mathcal{A}_{92}(\alpha,\gamma)\cong\mathcal{A}_{92}(\gamma^{3}\alpha^{-1},\gamma)$} \\[4pt]
\end{longtable}

\textbf{Central extensions of $\M7(1)$.} Let us use the following notations: 
$$\N1=[\D{11}],\ \N2=[\D{12}],\ \N3=[\D{13}-\D{23}+\D{31}-\D{41}],\ \N4=[\D{32}-\D{41}],\ \N5=[\D{14}-\D{24}+\D{41}-\D{42}].$$
For $\phi$ as in Proposition~\ref{prop:aut} put $d=y-z$; note that $\det\phi|_{\sspan{e_1,e_2}}=d(x+z)\neq0$.
Then
\[
\alpha_3^*=d^2\big((\alpha_3+\alpha_5)x-\alpha_5d\big),\
\alpha_4^*=d^2\big(\alpha_3z+\alpha_4(x+z)+\alpha_5(d-x)\big),\
\alpha_5^*=d^2(\alpha_3z+\alpha_5y),
\]
and the automorphisms with $x=y=1$, $z=0$ act by $(\alpha_1,\alpha_2)\mapsto(\alpha_1,\alpha_2)+(t+w)U_1+(u+v)U_2$ with
\[
U_1=(\alpha_5-s,\ -\alpha_5),\quad U_2=(\alpha_3,\ \alpha_4),\quad s=\alpha_3+\alpha_4 .
\]

\begin{proposition}\label{prop:M71}
A class $\theta$ is admissible if and only if $\alpha_3+\alpha_4\neq0$.
Every admissible class lies in the orbit of exactly one of
\[
\N4+\alpha\N5\ (\alpha\in\C),\quad \N1+\N4+\N5,\quad \N3-\N5,
\]
and different values of $\alpha$ give different orbits.
\end{proposition}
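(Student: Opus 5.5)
First I will settle admissibility with Lemma~\ref{lem:rank}, since $\Ann(\M7(1))=\sspan{e_3,e_4}$. The values of a class $\theta=\sum\alpha_i\N{i}$ that involve the annihilator are read off from the cocycle representatives. This gives the rows
\[
\theta(e_1,\cdot)|_{U}=(\alpha_3,\alpha_5),\quad \theta(e_2,\cdot)|_{U}=(-\alpha_3,-\alpha_5),\quad \theta(\cdot,e_1)|_{U}=(\alpha_3,-\alpha_3-\alpha_4+\alpha_5),\quad \theta(\cdot,e_2)|_{U}=(\alpha_4,-\alpha_5).
\]
Here $U=\sspan{e_3,e_4}$, and $\theta(e_3,\cdot)=\theta(e_4,\cdot)=0$ on $U$. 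I will check that the $2\times2$ minors of these rows all vanish exactly when $\alpha_3+\alpha_4=0$. For instance, the minor of the first and fourth rows is $-\alpha_5(\alpha_3+\alpha_4)$, and the minor of the first and third rows is $-\alpha_3(\alpha_3+\alpha_4)$. If $\alpha_3=\alpha_5=0$, the third row gives $-\alpha_4$ and the fourth gives $\alpha_4$, and these again span only when $\alpha_3+\alpha_4\neq0$. This proves the admissibility criterion.

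Next I will normalize the triple $(\alpha_3,\alpha_4,\alpha_5)$ using the displayed action. Adding the formulas gives $s^*=\alpha_3^*+\alpha_4^*=d^2(x+z)s$, so $s\neq0$ is preserved. The triple is acted on linearly through the $2\times2$ block, so it behaves like a pair of linear forms. I will look for the invariant quantity by choosing $x,z,y$ to kill $\alpha_3$. Since $\alpha_3^*=d^2((\alpha_3+\alpha_5)x-\alpha_5 d)$, if $\alpha_5\neq0$ one can solve for $d$ in terms of $x$ and get $\alpha_3=0$. With $\alpha_3=0$, the remaining stabilizer should act by $\alpha_5/\alpha_4\mapsto$ a fixed ratio, giving the family $\N4+\alpha\N5$ with $\alpha$ invariant. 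If $\alpha_5=0$ but $\alpha_3\neq0$, I first try to make $\alpha_5\neq0$ using $z$, since $\alpha_5^*=d^2\alpha_3 z$. The exceptional orbit $\N3-\N5$ should correspond to the case where the linear constraint $\alpha_3+\alpha_5=0$ ties $\alpha_3^*$ to $d$ in a way that makes killing $\alpha_3$ impossible. I will identify this invariantly as the locus where the form $(\alpha_3+\alpha_5)x-\alpha_5d$ is proportional to $d$, that is, $\alpha_3+\alpha_5=0$ in every representative. I will also check that this locus is preserved by the action.

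Then I will kill $(\alpha_1,\alpha_2)$ using the translations $(t+w)U_1+(u+v)U_2$. This is possible whenever $\det(U_1,U_2)=-\alpha_5\alpha_4-\alpha_4(\alpha_5-s)+\dots$ is nonzero; computing it gives $(\alpha_5-s)\alpha_4+\alpha_5\alpha_3$. When that determinant vanishes but the classes are admissible, one direction in $(\alpha_1,\alpha_2)$ survives. I expect this to produce the extra class $\N1+\N4+\N5$, presumably when $\alpha$ takes the special value $1$ in $\N4+\alpha\N5$: with $\alpha_3=0,\alpha_4=1,\alpha_5=1$ we get $U_1=(0,-1)$ and $U_2=(0,1)$, so only $\alpha_2$ can be killed. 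The surviving coefficient $\alpha_1$ is then scaled by the diagonal stabilizer ($\alpha_1\mapsto x^2\alpha_1$ against $\alpha_4\mapsto d^2\cdot(\dots)$), which gives the two orbits $\N4+\N5$ and $\N1+\N4+\N5$.

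The main obstacle is the separation of orbits. I must show that $\alpha$ in $\N4+\alpha\N5$ is a genuine invariant and that $\N3-\N5$ is not in that family. I plan to do this by writing the full action of the $2\times2$ block $\begin{pmatrix}x&z\\x-y+z&y\end{pmatrix}$ on the pair of row vectors $(\alpha_3,\alpha_5)$ and $(\alpha_4,\alpha_5)$ restricted to $U$. From this I will extract a cross-ratio-type invariant $\alpha_5/(\alpha_3+\alpha_4)$ computed after normalization. I will verify it both with the stabilizer of $\alpha_3=0$ and by a direct check that the $\N3-\N5$ locus is $\Aut$-stable.
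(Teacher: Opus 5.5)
Your outline follows the paper's route: Lemma~\ref{lem:rank} for admissibility, then normalizing $(\alpha_3,\alpha_4,\alpha_5)$ with $\alpha_3+\alpha_5=0$ as the exceptional locus, then killing $(\alpha_1,\alpha_2)$ by $U_1,U_2$. However, two steps are wrong as stated and one is incomplete.

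First, ``if $\alpha_5\neq0$ solve for $d$ in terms of $x$'' fails precisely when $\alpha_3+\alpha_5=0$, because there it forces $d=0$. Put $p=\alpha_3+\alpha_5$ and $q=\alpha_5$. Then $s^*=d^2(x+z)s$, $p^*=d^2(x+z)p$ and $q^*=d^2(pz+qd)$. For $p\neq0$, take $d$ arbitrary and $x=qd/p$ instead; only $d(x+z)\neq0$ is required, so $x=0$ is allowed. This gives $\alpha_3^*=p^*-q^*=0$, and it also covers $\alpha_5=0$, so your preliminary $z$-step is unnecessary.

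Second, $\alpha_5/(\alpha_3+\alpha_4)$ is not an invariant. For $\alpha\neq0$, applying $\phi$ to $\N4+\alpha\N5$ gives $\alpha_5^*/s^*=\alpha y/(x+z)$, which differs from $\alpha$ as soon as $x\neq d$. The genuine invariant is $p/s=(\alpha_3+\alpha_5)/(\alpha_3+\alpha_4)$. It equals $\alpha$ on $\N4+\alpha\N5$ and $0$ on $\N3-\N5$. On the invariant locus $p=0$ one has $q^*=d^3q$, so the condition $q\neq0$ separates $\N3-\N5$ from $\N4$. These facts give both the normal forms and their uniqueness for the triple at once. Your stabilizer check at $\alpha_3=0$ can only be made to work with this ratio, not the one you named.

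Third, consider $\N4+\N5$ versus $\N1+\N4+\N5$. The diagonal scaling $\alpha_1\mapsto x^2\alpha_1$ only shows that a nonzero $\alpha_1$ can be normalized to $1$. To show the two classes lie in different orbits, you need the full group preserving $\alpha_3=0$ and $\alpha_4=\alpha_5$. This is the set of all automorphisms with $d=x$ and arbitrary $z,t,u,v,w$. For these, $\alpha_1^*=\alpha_1x(x+z)$, because the translation contribution $(t+w)(d-x)+(u+v)(x-d)$ vanishes when $d=x$. Meanwhile $s^*=x^2(x+z)s$, so $\alpha_1/s\mapsto x^{-1}\alpha_1/s$ and $\alpha_1=0$ is invariant.

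Finally, simplify your determinant: $(\alpha_5-s)\alpha_4+\alpha_5\alpha_3=s(\alpha_5-\alpha_4)$. This equals $-1$ on $\N3-\N5$ and $\alpha-1$ on $\N4+\alpha\N5$. That computation is what shows $\alpha=1$ is the only normal form in which $(\alpha_1,\alpha_2)$ cannot both be killed.
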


\begin{proof}
By Lemma~\ref{lem:rank}, admissibility means that $(\alpha_3,\alpha_5)$, $(-\alpha_3,-\alpha_5)$, $(\alpha_3,\alpha_5-\alpha_3-\alpha_4)$, $(\alpha_4,-\alpha_5)$ span $\C^2$.
The $2\times2$ minors are $-\alpha_3s$, $-\alpha_5s$ and $s(\alpha_4-\alpha_5)$; they all vanish if and only if $s=0$ (if $s\neq0$ they cannot all vanish, since $\alpha_3=\alpha_5=0$ and $\alpha_4=\alpha_5$ would give $s=0$).

Put $p=\alpha_3+\alpha_5$ and $q=\alpha_5$.
From the action formulas,
\[
s^*=d^2(x+z)s,\quad p^*=d^2(x+z)p,\quad q^*=d^2(pz+qd).
\]
Hence $p/s$ is an invariant.
If $p\neq0$, we choose $d\neq0$ arbitrary, $x=qd/p$ and $z\neq-x$; then $q^*=p^*$, i.e.\ $\alpha_3^*=0$, and $\theta\sim\N4+\alpha\N5$ modulo $\N1,\N2$, with $\alpha=p/s\neq0$.
If $p=0$, then $q^*=d^3q$; for $q\neq0$ the ratio $q/s$ is multiplied by $d/(x+z)$ and can be made equal to $-1$, which gives $\N3-\N5$ modulo $\N1,\N2$; for $q=0$ we get $\N4$ modulo $\N1,\N2$.

It remains to deal with $\alpha_1,\alpha_2$.
The determinant of $(U_1,U_2)$ equals $s(\alpha_5-\alpha_4)$.
For $\N3-\N5$ it is $-1$ and for $\N4+\alpha\N5$ it is $\alpha-1$, so in these cases $\alpha_1,\alpha_2$ can be killed unless $\alpha=1$.
For $\theta=\alpha_1\N1+\alpha_2\N2+\N4+\N5$ we have $U_1=(0,-1)$, $U_2=(0,1)$, so we may assume $\alpha_2=0$.
The automorphisms preserving the normal form $\alpha_3=0$, $\alpha_4=\alpha_5$ satisfy $d=x$, and for them $\alpha_1^*=\alpha_1x(x+z)$ (the contribution of the parameters $t,u,v,w$ to $\alpha_1^*$ is $(t+w)(d-x)+(u+v)(x-d)=0$) while $s^*=x^2(x+z)s$.
Thus $\alpha_1/s\mapsto x^{-1}\alpha_1/s$, and we obtain the two orbits $\N4+\N5$ and $\N1+\N4+\N5$.
\end{proof}

\begin{longtable}{@{}l p{0.90\textwidth}@{}}
$\mathcal{A}_{93}(\alpha)$ & $e_{1}e_{1}=e_{3},\ e_{1}e_{2}=e_{4},\ e_{1}e_{4}=\alpha e_{5},\ e_{2}e_{1}=-e_{3},\ e_{2}e_{2}=-e_{4}$ \\ & $e_{2}e_{4}=- \alpha e_{5},\ e_{3}e_{2}=e_{5},\ e_{4}e_{1}=(\alpha - 1) e_{5},\ e_{4}e_{2}=- \alpha e_{5}$ \\ & \quad {\small $\theta=\langle \nabla_{4}+\alpha \nabla_{5}\rangle$;\quad $\alpha\in\mathbb{C}$} \\[4pt]
$\mathcal{A}_{94}$ & $e_{1}e_{1}=e_{3}+e_{5},\ e_{1}e_{2}=e_{4},\ e_{1}e_{4}=e_{5},\ e_{2}e_{1}=-e_{3},\ e_{2}e_{2}=-e_{4},\ e_{2}e_{4}=-e_{5},\ e_{3}e_{2}=e_{5},\ e_{4}e_{2}=-e_{5}$ \\ & \quad {\small $\theta=\langle \nabla_{1}+\nabla_{4}+\nabla_{5}\rangle$} \\[4pt]
$\mathcal{A}_{95}$ & $e_{1}e_{1}=e_{3},\ e_{1}e_{2}=e_{4},\ e_{1}e_{3}=e_{5},\ e_{1}e_{4}=-e_{5},\ e_{2}e_{1}=-e_{3},\ e_{2}e_{2}=-e_{4}$ \\ & $e_{2}e_{3}=-e_{5},\ e_{2}e_{4}=e_{5},\ e_{3}e_{1}=e_{5},\ e_{4}e_{1}=-2 e_{5},\ e_{4}e_{2}=e_{5}$ \\ & \quad {\small $\theta=\langle \nabla_{3}-\nabla_{5}\rangle$} \\[4pt]
\end{longtable}

\begin{remark}
By Proposition~\ref{prop:M71}, every class with $\alpha_3+\alpha_4=0$ (for instance $\N3-\N4$ or $\N3-\N4+\N5$) has $\Ann(\theta)\cap\Ann(\M7(1))\neq0$, so the corresponding extensions are split and do not appear in the classification.
\end{remark}

\textbf{Central extensions of $\M{11}$.} Here $\N1=[\D{11}]$, $\N2=[\D{22}]$, $\N3=[\D{13}+\D{31}]$, $\N4=[\D{14}+\D{31}]$, $\N5=[\D{23}+\D{42}]$, $\N6=[\D{24}+\D{42}]$.
For the automorphisms of the first kind in Proposition~\ref{prop:aut},
\[
\begin{array}{lll}
\alpha_1^*=x\big(\alpha_1x+(2\alpha_3+\alpha_4)z+\alpha_4t\big), & \alpha_3^*=\alpha_3x^2y, & \alpha_4^*=\alpha_4x^2y,\\
\alpha_2^*=y\big(\alpha_2y+\alpha_5u+(\alpha_5+2\alpha_6)v\big), & \alpha_5^*=\alpha_5xy^2, & \alpha_6^*=\alpha_6xy^2,
\end{array}
\]
and $\sigma$ maps $(\alpha_1,\dots,\alpha_6)$ to $(\alpha_2,\alpha_1,\alpha_6,\alpha_5,\alpha_4,\alpha_3)$.

\begin{lemma}\label{lem:M11}
A class $\theta$ is admissible if and only if the vectors $(\alpha_3,\alpha_4)$, $(\alpha_5,\alpha_6)$, $(\alpha_3+\alpha_4,0)$, $(0,\alpha_5+\alpha_6)$ span $\C^2$.
Every admissible class lies in the orbit of one of
\[
\begin{array}{lll}
\alpha\N3+\N4+\N5+\beta\N6\ \big((\alpha,\beta)\neq(-1,-1)\big), & \alpha\N3+\N4+\N6, & \alpha\N3+\N4\ (\alpha\neq-1),\\
\N2+\alpha\N3+\N4\ (\alpha\neq-1), & \N3+\N6, &
\end{array}
\]
and two of these classes lie in the same orbit if and only if they coincide or they are $\alpha\N3+\N4+\N5+\beta\N6$ and $\beta\N3+\N4+\N5+\alpha\N6$.
\end{lemma}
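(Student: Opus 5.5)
We will follow the same scheme as for the previous $2$-step nilpotent algebras. First comes admissibility via Lemma~\ref{lem:rank}, then normalization with the connected automorphisms, and finally the involution $\sigma$ to identify the remaining representatives.

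\emph{Admissibility.} For admissibility, we read off the values of a cocycle $\theta=\sum\alpha_i\N{i}$ on $\Ann(\M{11})=\sspan{e_3,e_4}$:
\[
\theta(e_1,e_3)=\alpha_3,\quad \theta(e_1,e_4)=\alpha_4,\quad \theta(e_2,e_3)=\alpha_5,\quad \theta(e_2,e_4)=\alpha_6,
\]
\[
\theta(e_3,e_1)=\alpha_3+\alpha_4,\quad \theta(e_4,e_2)=\alpha_5+\alpha_6 ,
\]
and all other values involving $e_3,e_4$ vanish. Lemma~\ref{lem:rank} then gives exactly the four vectors in the statement. Concretely, this spanning condition fails precisely when both pairs are proportional in a degenerate way; for instance, it fails for $\alpha_3=\alpha_6=-1$, $\alpha_4=\alpha_5=1$. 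We will also record that when $(\alpha_5,\alpha_6)=0$ it is equivalent to $\alpha_4(\alpha_3+\alpha_4)\neq0$.

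\emph{Normalization by the connected part.} The action formulas show that $(\alpha_3,\alpha_4)$ is multiplied by $x^2y$ and $(\alpha_5,\alpha_6)$ by $xy^2$. Hence the points $[\alpha_3:\alpha_4]$ and $[\alpha_5:\alpha_6]$ (or their vanishing) are invariants of the connected component, and $\sigma$ interchanges them, with the roles of $\alpha_3,\alpha_4$ and $\alpha_6,\alpha_5$ swapped. Since $x^2y$ and $xy^2$ can be prescribed independently, the two pairs can be scaled independently. The parameters $z,t$ add $x\big((2\alpha_3+\alpha_4)z+\alpha_4t\big)$ to $\alpha_1$, and this can be made arbitrary as soon as $(\alpha_3,\alpha_4)\neq0$. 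Symmetrically, $u,v$ kill $\alpha_2$ whenever $(\alpha_5,\alpha_6)\neq0$.

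\emph{Case analysis.} We then split into cases. If both pairs are nonzero, we kill $\alpha_1,\alpha_2$. When $\alpha_4\alpha_5\neq0$, we scale to $\alpha_4=\alpha_5=1$ and obtain $\alpha\N3+\N4+\N5+\beta\N6$, where admissibility excludes exactly $(\alpha,\beta)=(-1,-1)$. When $\alpha_4\neq0=\alpha_5$, we obtain $\alpha\N3+\N4+\N6$. When $\alpha_4=0\neq\alpha_5$, we apply $\sigma$ and return to the previous subcase. When $\alpha_4=\alpha_5=0$, admissibility forces $\alpha_3\alpha_6\neq0$, and we get $\N3+\N6$. If exactly one pair vanishes, we use $\sigma$ to assume $(\alpha_5,\alpha_6)=0$. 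Then $\alpha_4\neq0$ and $\alpha_3\neq-\alpha_4$; we kill $\alpha_1$, while $\alpha_2$ cannot be changed except by scaling. The ratio $\alpha_2/\alpha_4$ is multiplied by $y/x^2$, which is free, so this gives $\alpha\N3+\N4$ and $\N2+\alpha\N3+\N4$ with $\alpha\neq-1$. Both pairs vanishing is not admissible.

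\emph{Distinctness and the main obstacle.} For distinctness, we use that every automorphism is connected or connected times $\sigma$, by Proposition~\ref{prop:aut}. The unordered pair of invariant points $\{[\alpha_3:\alpha_4],[\alpha_6:\alpha_5]\}$ and the pattern of vanishing separate the families. Only $\sigma$ can swap the two ratios, which gives the identification of $\alpha\N3+\N4+\N5+\beta\N6$ with $\beta\N3+\N4+\N5+\alpha\N6$. In the degenerate case, the stabilizer of $\alpha_1=0$ with $(\alpha_5,\alpha_6)=0$ keeps $\alpha_2$ zero or nonzero, so that condition is invariant. The main obstacle is checking the $\sigma$-action formula carefully, including how $\sigma$ acts modulo coboundaries on $\N4$ and $\N5$, which contain $\D{31}$ and $\D{42}$. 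This is needed to confirm that $\sigma$ really induces the ratio swap $[\alpha_3:\alpha_4]\leftrightarrow[\alpha_6:\alpha_5]$ and not some twisted version.
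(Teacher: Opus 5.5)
Your proposal is correct and follows essentially the same route as the paper's proof. You establish admissibility via Lemma~\ref{lem:rank}, kill $\alpha_1,\alpha_2$ with $z,t$ and $u,v$, and scale $(\alpha_3,\alpha_4)$ and $(\alpha_5,\alpha_6)$ independently by $x^2y$ and $xy^2$. You then use $\sigma$ both to reduce the case $(\alpha_3,\alpha_4)=0$ to $(\alpha_5,\alpha_6)=0$ and to produce the only identification $\alpha\N3+\N4+\N5+\beta\N6\sim\beta\N3+\N4+\N5+\alpha\N6$; your unordered-pair invariant $\{[\alpha_3:\alpha_4],[\alpha_6:\alpha_5]\}$ is a tidy repackaging of the paper's ``pair of points modulo the involution''. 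The step you flag as the main obstacle is immediate. Under pullback, $\sigma$ sends $\D{13}+\D{31}$ to $\D{24}+\D{42}$ and $\D{14}+\D{31}$ to $\D{23}+\D{42}$ exactly as bilinear forms, so no coboundary correction arises and $\sigma$ acts by $(\alpha_1,\dots,\alpha_6)\mapsto(\alpha_2,\alpha_1,\alpha_6,\alpha_5,\alpha_4,\alpha_3)$, as the paper states.
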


\begin{proof}
The first claim is Lemma~\ref{lem:rank}.
Put $\pi_1=(\alpha_3,\alpha_4)$ and $\pi_2=(\alpha_5,\alpha_6)$.
The connected automorphisms multiply $\pi_1$ by $x^2y$ and $\pi_2$ by $xy^2$, and $\sigma$ interchanges $\pi_1$ and the reversed vector $(\alpha_6,\alpha_5)$ of $\pi_2$.
We can kill $\alpha_1$ if $\pi_1\neq0$ and $\alpha_2$ if $\pi_2\neq0$.
If $\pi_1=\pi_2=0$ the class is not admissible.

If $\pi_1\neq0\neq\pi_2$, then $\alpha_1=\alpha_2=0$, and the orbit is determined by the pair of points $([\alpha_3:\alpha_4],[\alpha_5:\alpha_6])\in\mathbb{P}^1\times\mathbb{P}^1$ modulo the involution $([a:b],[c:d])\mapsto([d:c],[b:a])$, because the ratio of the scalings, $x/y$, is arbitrary.
Every pair is equivalent to $([\alpha:1],[1:\beta])$, $([\alpha:1],[0:1])$ or $([1:0],[0:1])$ (the pair $([1:0],[1:\beta])$ is mapped by $\sigma$ to $([\beta:1],[0:1])$), and the involution identifies only $([\alpha:1],[1:\beta])$ with $([\beta:1],[1:\alpha])$.
The admissibility condition for $([\alpha:1],[1:\beta])$ fails exactly for $\alpha=\beta=-1$ (the $2\times2$ minors are $\alpha\beta-1$, $-(\alpha+1)$, $\alpha(1+\beta)$, $-\beta(\alpha+1)$, $1+\beta$ and $(\alpha+1)(1+\beta)$), while the classes $\alpha\N3+\N4+\N6$ and $\N3+\N6$ are always admissible.

If $\pi_2=0$ (the case $\pi_1=0$ being equivalent to it by $\sigma$), admissibility means $\alpha_4(\alpha_3+\alpha_4)\neq0$, so $\pi_1=[\alpha:1]$ with $\alpha\neq-1$; the coefficient $\alpha_2$ is multiplied by $y^2$ against $x^2y$ for $\alpha_4$, so it can be normalized to $0$ or $1$.
We get $\alpha\N3+\N4$ and $\N2+\alpha\N3+\N4$.
\end{proof}

\begin{longtable}{@{}l p{0.90\textwidth}@{}}
$\mathcal{A}_{96}(\alpha,\beta)$ & $e_{1}e_{2}=e_{3},\ e_{1}e_{3}=\alpha e_{5},\ e_{1}e_{4}=e_{5},\ e_{2}e_{1}=e_{4}$ \\ & $e_{2}e_{3}=e_{5},\ e_{2}e_{4}=\beta e_{5},\ e_{3}e_{1}=(\alpha + 1) e_{5},\ e_{4}e_{2}=(\beta + 1) e_{5}$ \\ & \quad {\small $\theta=\langle \alpha \nabla_{3}+\nabla_{4}+\nabla_{5}+\beta \nabla_{6}\rangle$;\quad $(\alpha,\beta)\neq(-1,-1);\ \mathcal{A}_{96}(\alpha,\beta)\cong\mathcal{A}_{96}(\beta,\alpha)$} \\[4pt]
$\mathcal{A}_{97}(\alpha)$ & $e_{1}e_{2}=e_{3},\ e_{1}e_{3}=\alpha e_{5},\ e_{1}e_{4}=e_{5},\ e_{2}e_{1}=e_{4},\ e_{2}e_{4}=e_{5},\ e_{3}e_{1}=(\alpha + 1) e_{5},\ e_{4}e_{2}=e_{5}$ \\ & \quad {\small $\theta=\langle \alpha \nabla_{3}+\nabla_{4}+\nabla_{6}\rangle$;\quad $\alpha\in\mathbb{C}$} \\[4pt]
$\mathcal{A}_{98}(\alpha)$ & $e_{1}e_{2}=e_{3},\ e_{1}e_{3}=\alpha e_{5},\ e_{1}e_{4}=e_{5},\ e_{2}e_{1}=e_{4},\ e_{3}e_{1}=(\alpha + 1) e_{5}$ \\ & \quad {\small $\theta=\langle \alpha \nabla_{3}+\nabla_{4}\rangle$;\quad $\alpha\neq-1$} \\[4pt]
$\mathcal{A}_{99}(\alpha)$ & $e_{1}e_{2}=e_{3},\ e_{1}e_{3}=\alpha e_{5},\ e_{1}e_{4}=e_{5},\ e_{2}e_{1}=e_{4},\ e_{2}e_{2}=e_{5},\ e_{3}e_{1}=(\alpha + 1) e_{5}$ \\ & \quad {\small $\theta=\langle \nabla_{2}+\alpha \nabla_{3}+\nabla_{4}\rangle$;\quad $\alpha\neq-1$} \\[4pt]
$\mathcal{A}_{100}$ & $e_{1}e_{2}=e_{3},\ e_{1}e_{3}=e_{5},\ e_{2}e_{1}=e_{4},\ e_{2}e_{4}=e_{5},\ e_{3}e_{1}=e_{5},\ e_{4}e_{2}=e_{5}$ \\ & \quad {\small $\theta=\langle \nabla_{3}+\nabla_{6}\rangle$} \\[4pt]
\end{longtable}

\textbf{Central extensions of $\M{12}$.} Here $\N1=[\D{11}]$, $\N2=[\D{12}]$, $\N3=[\D{14}+\D{23}+\D{32}+\D{41}]$, $\N4=[\D{13}+2\D{23}-\D{24}+\D{31}+2\D{32}-\D{42}]$.
For the automorphisms of the first kind in Proposition~\ref{prop:aut} we have $\alpha_3^*=\alpha_3x^3$, $\alpha_4^*=\alpha_4x^3$ and
\[
\begin{array}{l}
\alpha_1^*=x\big(\alpha_1x+2\alpha_3(u-v-y-z)+2\alpha_4(u+v-y-3z)\big),\\
\alpha_2^*=x\big(\alpha_2x+2\alpha_3(v+y)+2\alpha_4(-u+2y+z)\big),
\end{array}
\]
while $\sigma_p$ maps $(\alpha_1,\alpha_2,\alpha_3,\alpha_4)$ to $\big(-p^2(\alpha_1+2\alpha_2),\ p^2\alpha_2,\ p^3(2\alpha_3+3\alpha_4),\ -p^3(\alpha_3+2\alpha_4)\big)$.

\begin{lemma}\label{lem:M12}
The matrices $\sigma_p$ of Proposition~\ref{prop:aut} are automorphisms of $\M{12}$.
A class $\theta$ is admissible if and only if $\alpha_3+\alpha_4\neq0$.
Every admissible class lies in the orbit of a class $\alpha\N3+\N4$ with $\alpha\neq-1$, and $\alpha\N3+\N4$, $\beta\N3+\N4$ lie in the same orbit if and only if $\beta=\alpha$ or $\beta=-\frac{2\alpha+3}{\alpha+2}$ \textup{(}for $\alpha=-2$ the second possibility is void\textup{)}.
\end{lemma}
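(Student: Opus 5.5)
First we check directly that $\sigma_p$ is an automorphism of $\M{12}$, as for $\M7(\gamma)$ in Lemma~\ref{lem:M7}. Reading the columns, $\sigma_p(e_1)=pe_2$, $\sigma_p(e_2)=pe_1$, $\sigma_p(e_3)=-p^2e_3$ and $\sigma_p(e_4)=2p^2e_3+p^2e_4$. We then verify the four relations: $\sigma_p(e_1)\sigma_p(e_1)=p^2e_2e_2=p^2(2e_3+e_4)=\sigma_p(e_4)$, $\sigma_p(e_1)\sigma_p(e_2)=p^2e_2e_1=-p^2e_3=\sigma_p(e_3)$, $\sigma_p(e_2)\sigma_p(e_1)=p^2e_3=\sigma_p(-e_3)$, and $\sigma_p(e_2)\sigma_p(e_2)=p^2e_4=\sigma_p(2e_3+e_4)$. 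All products involving $e_3,e_4$ vanish on both sides.

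For admissibility we use Lemma~\ref{lem:rank}. The values of $\theta$ with one argument in $\sspan{e_3,e_4}$ give the vectors $(\theta(e_1,e_3),\theta(e_1,e_4))=(\alpha_4,\alpha_3)$, $(\theta(e_2,e_3),\theta(e_2,e_4))=(\alpha_3+2\alpha_4,-\alpha_4)$, $(\theta(e_3,e_1),\theta(e_4,e_1))=(\alpha_4,\alpha_3)$ and $(\theta(e_3,e_2),\theta(e_4,e_2))=(\alpha_3+2\alpha_4,-\alpha_4)$. The determinant of the pair is $-\alpha_4^2-\alpha_3^2-2\alpha_3\alpha_4=-(\alpha_3+\alpha_4)^2$, and the vectors span $\C^2$ exactly when it is nonzero; so $\theta$ is admissible if and only if $\alpha_3+\alpha_4\neq0$.

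Next we normalize. Since $\alpha_3+\alpha_4\neq0$, the $(u,v,y,z)$-linear parts of $(\alpha_1^*,\alpha_2^*)$ have rank $2$. For instance, the coefficients of $(u,v)$ form $2x\begin{pmatrix}\alpha_3+\alpha_4&\alpha_4-\alpha_3\\-\alpha_4&\alpha_3\end{pmatrix}$ with determinant $4x^2(\alpha_3+\alpha_4)\alpha_3$. If $\alpha_3=0$, we use $y$ together with $u$ instead; the resulting minor is nonzero because $\alpha_4\neq0$. Hence we can kill $\alpha_1$ and $\alpha_2$. The connected part multiplies $(\alpha_3,\alpha_4)$ by $x^3$, so the point $[\alpha_3:\alpha_4]\in\mathbb{P}^1$ is invariant under it. If $\alpha_4\neq0$ we get $\alpha\N3+\N4$ with $\alpha\neq-1$.

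The remaining point is $[1:0]$, i.e.\ the class $\N3$. Here $\sigma_p$ sends $(\alpha_3,\alpha_4)=(1,0)$ to $p^3(2,-1)$, i.e.\ to the class $-2\N3+\N4$. So every admissible class lies in the orbit of some $\alpha\N3+\N4$.

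For the isomorphism question: every automorphism is connected or a connected one composed with some $\sigma_p$. The connected ones fix $\alpha=\alpha_3/\alpha_4$. The map $\sigma_p$ sends $(\alpha,1)$ to $p^3(2\alpha+3,-(\alpha+2))$, i.e.\ $\alpha\mapsto-\frac{2\alpha+3}{\alpha+2}$, which is well defined when $\alpha\neq-2$. For $\alpha=-2$ the image is the excluded point $\N3$, so only $\beta=\alpha$ remains. This Möbius map is an involution (its matrix $\begin{pmatrix}-2&-3\\1&2\end{pmatrix}$ squares to the identity) and preserves $\alpha\neq-1$, since $-1$ is a fixed point. Hence the orbit of $\alpha$ is $\{\alpha,-\frac{2\alpha+3}{\alpha+2}\}$.

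The main obstacle is justifying that $\Aut(\M{12})$ consists only of the connected family and its products with $\sigma_p$. We take this from Proposition~\ref{prop:aut}. Beyond that, the only real work is checking the $\sigma_p$ action formulas stated before the lemma.
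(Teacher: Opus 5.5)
Your proof follows the paper's argument step by step: the direct check of $\sigma_p$, Lemma~\ref{lem:rank} giving the determinant $-(\alpha_3+\alpha_4)^2$, killing $\alpha_1,\alpha_2$, invariance of $[\alpha_3:\alpha_4]$ under the connected automorphisms, and the involution $\alpha\mapsto-\frac{2\alpha+3}{\alpha+2}$ induced by $\sigma_p$. However, the normalization step contains a computational error. The $(u,v)$-minor of $(\alpha_1^*,\alpha_2^*)$ is
\[
\det\, 2x\begin{pmatrix}\alpha_3+\alpha_4&\alpha_4-\alpha_3\\-\alpha_4&\alpha_3\end{pmatrix}=4x^2\big((\alpha_3+\alpha_4)\alpha_3+\alpha_4(\alpha_4-\alpha_3)\big)=4x^2(\alpha_3^2+\alpha_4^2),
\]
not $4x^2(\alpha_3+\alpha_4)\alpha_3$; the product of the off-diagonal entries is missing.

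This minor vanishes for the admissible classes with $\alpha_4=\pm i\alpha_3\neq0$. At those classes your case distinction breaks down: you use the $(u,v)$-minor whenever $\alpha_3\neq0$, and it is zero there. Conversely, the case $\alpha_3=0$ that you treat separately needs no special treatment.

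The repair is the fallback you already mention, used uniformly. The $(u,y)$-minor is
\[
4x^2\big((\alpha_3+\alpha_4)(\alpha_3+2\alpha_4)-(\alpha_3+\alpha_4)\alpha_4\big)=4x^2(\alpha_3+\alpha_4)^2,
\]
which is nonzero exactly on the admissible classes; this is the minor the paper uses. With this correction your proof is complete and coincides with the paper's.
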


\begin{proof}
By its columns, the matrix $\sigma_p$ maps $e_1\mapsto pe_2$, $e_2\mapsto pe_1$, $e_3\mapsto-p^2e_3$, $e_4\mapsto p^2(2e_3+e_4)$. We have $\sigma_p(e_1)^2=p^2e_2e_2=p^2(2e_3+e_4)=\sigma_p(e_4)$, $\sigma_p(e_1)\sigma_p(e_2)=p^2e_2e_1=-p^2e_3=\sigma_p(e_3)$, $\sigma_p(e_2)\sigma_p(e_1)=p^2e_3=\sigma_p(-e_3)$ and $\sigma_p(e_2)^2=p^2e_4=\sigma_p(2e_3+e_4)$, so $\sigma_p$ is an automorphism.
By Lemma~\ref{lem:rank}, admissibility means that $(\alpha_4,\alpha_3)$ and $(\alpha_3+2\alpha_4,-\alpha_4)$ span $\C^2$, i.e.\ $-(\alpha_3+\alpha_4)^2\neq0$.
The coefficients of $u$ and $y$ in $(\alpha_1^*,\alpha_2^*)/(2x)$ form the matrix $\begin{pmatrix}\alpha_3+\alpha_4&-(\alpha_3+\alpha_4)\\-\alpha_4&\alpha_3+2\alpha_4\end{pmatrix}$ with determinant $(\alpha_3+\alpha_4)^2\neq0$, so $\alpha_1,\alpha_2$ can be killed.
The connected automorphisms preserve $[\alpha_3:\alpha_4]$, and $\sigma_p$ maps it to $[2\alpha_3+3\alpha_4:-(\alpha_3+2\alpha_4)]$.
In particular $[1:0]$ (the class $\N3$) is mapped to $[-2:1]$, so every admissible class is equivalent to some $\alpha\N3+\N4$, $\alpha\neq-1$, and $\sigma_p$ maps $\alpha\N3+\N4$ to a multiple of $f(\alpha)\N3+\N4$ with $f(\alpha)=-\frac{2\alpha+3}{\alpha+2}$ when $\alpha\neq-2$.
One checks that $f(f(\alpha))=\alpha$, so the orbits are exactly the sets $\{\alpha,f(\alpha)\}$.
\end{proof}

\begin{longtable}{@{}l p{0.90\textwidth}@{}}
$\mathcal{A}_{101}(\alpha)$ & $e_{1}e_{1}=e_{4},\ e_{1}e_{2}=e_{3},\ e_{1}e_{3}=e_{5},\ e_{1}e_{4}=\alpha e_{5},\ e_{2}e_{1}=-e_{3},\ e_{2}e_{2}=2 e_{3}+e_{4}$ \\ & $e_{2}e_{3}=(\alpha + 2) e_{5},\ e_{2}e_{4}=-e_{5},\ e_{3}e_{1}=e_{5},\ e_{3}e_{2}=(\alpha + 2) e_{5},\ e_{4}e_{1}=\alpha e_{5},\ e_{4}e_{2}=-e_{5}$ \\ & \quad {\small $\theta=\langle \alpha \nabla_{3}+\nabla_{4}\rangle$;\quad $\alpha\neq-1;\ \mathcal{A}_{101}(\alpha)\cong\mathcal{A}_{101}\big(-\tfrac{2\alpha+3}{\alpha+2}\big)$} \\[4pt]
\end{longtable}

\begin{remark}
The fixed points of the involution $f$ are $\alpha=-1$, which is excluded, and $\alpha=-3$.
\end{remark}

Central extensions of $\M{13}$. Let us use the following notations:
$$\N1=[\D{11}],\ \N2=[\D{12}],\ \N3=[\D{23}+\D{32}],\ \N4=[\D{31}-\D{42}],\ \N5=[\D{24}+\D{42}]$$ and
\[
\begin{array}{l}
\alpha_1^*=x(\alpha_1x+\alpha_4t),\quad \alpha_2^*=\alpha_1xy+\alpha_2xz+\alpha_3tz+\alpha_4ty+(\alpha_5-\alpha_4)vz,\\
\alpha_3^*=z^2(\alpha_3z+\alpha_5y),\quad \alpha_4^*=\alpha_4xz^2,\quad \alpha_5^*=\alpha_5xz^2 .
\end{array}
\]

\begin{proposition}\label{prop:M13}
A class $\theta$ is admissible if and only if $\alpha_4\neq0$.
Every admissible class lies in the orbit of exactly one of $\N4+\alpha\N5$ $(\alpha\in\C)$, $\N3+\N4$, $\N2+\N4+\N5$, and different values of $\alpha$ give different orbits.
\end{proposition}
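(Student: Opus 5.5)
We argue exactly as for the other algebras $\M i$ with $\Ann=\sspan{e_3,e_4}$: admissibility via Lemma~\ref{lem:rank}, then a reduction of the coefficients using the action formulas stated before the proposition, and finally separation of the orbits by invariants.

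\emph{Admissibility.} By Lemma~\ref{lem:rank} we list the values of $\theta$ on pairs involving $e_3,e_4$. From the basis cocycles, $\theta(e_2,e_3)=\theta(e_3,e_2)=\alpha_3$, $\theta(e_3,e_1)=\alpha_4$, $\theta(e_2,e_4)=\alpha_5$ and $\theta(e_4,e_2)=\alpha_5-\alpha_4$; all other values on such pairs vanish. The relevant vectors are therefore $(\alpha_3,\alpha_5)$, $(\alpha_4,0)$ and $(\alpha_3,\alpha_5-\alpha_4)$.
\begin{itemize}[leftmargin=*]
\item If $\alpha_4=0$, the first and third vectors coincide and the second is zero, so they do not span $\C^2$.
\item If $\alpha_4\neq0$, then $(\alpha_4,0)$ together with one of the other two vectors spans $\C^2$, because $\alpha_5$ and $\alpha_5-\alpha_4$ cannot both vanish.
\end{itemize}
This proves the first claim.

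\emph{Reduction.} Let $\alpha_4\neq0$. Since $\alpha_4^*=\alpha_4xz^2$ and $\alpha_5^*=\alpha_5xz^2$, the ratio $\alpha=\alpha_5/\alpha_4$ is an invariant. We kill coefficients in a fixed order, so that later choices do not disturb earlier ones.
\begin{enumerate}[label=(\roman*)]
\item If $\alpha_5\neq0$, choose $y$ with $\alpha_3z+\alpha_5y=0$, which gives $\alpha_3^*=0$.
\item Choose $t$ with $\alpha_1x+\alpha_4t=0$, which gives $\alpha_1^*=0$; neither $\alpha_3^*$ nor $\alpha_4^*,\alpha_5^*$ depends on $t$.
\item If $\alpha_5\neq\alpha_4$, the coefficient of $v$ in $\alpha_2^*$ is $(\alpha_5-\alpha_4)z\neq0$. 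Since $v$ occurs nowhere else, we kill $\alpha_2$ with $v$.
\end{enumerate}

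This leads to three cases.
\begin{itemize}[leftmargin=*]
\item $\alpha\notin\{0,1\}$: steps (i)--(iii) all apply, and we obtain $\N4+\alpha\N5$.
\item $\alpha=0$: step (i) is unavailable, but (ii) and (iii) still apply. We have $\theta=\alpha_3\N3+\N4$, and the stabilizer of the normalizations has $t=v=0$. It acts by $\alpha_3\mapsto\alpha_3z^3$ and $\alpha_4\mapsto\alpha_4xz^2$, so $\alpha_3/\alpha_4$ is multiplied by $z/x$. We obtain $\N4$, the member $\alpha=0$ of the family, or $\N3+\N4$.
\item $\alpha=1$: steps (i) and (ii) apply, but $v$ has no effect. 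The automorphisms preserving $\alpha_1=\alpha_3=0$ are forced to have $y=t=0$, since $\alpha_5\neq0$ and $\alpha_4\neq0$. For them $\alpha_2^*=\alpha_2xz$ while $\alpha_4^*=\alpha_4xz^2$, so $\alpha_2/\alpha_4$ is multiplied by $z^{-1}$. This gives $\N4+\N5$, the member $\alpha=1$ of the family, or $\N2+\N4+\N5$.
\end{itemize}

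\emph{Distinctness.} The number $\alpha_5/\alpha_4$ is invariant, which separates the members of the family for different $\alpha$. For $\alpha=0$, the vanishing of $\alpha_3$ is invariant under the stabilizer of the normal form. For $\alpha=1$, the vanishing of $\alpha_2$ is invariant under the stabilizer of the normal form.

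The main obstacle is this last point: we must check that the stabilizers are as claimed. Concretely, for $\alpha=1$ one verifies that no choice of $y,t,v$ compatible with $\alpha_1=\alpha_3=0$ changes $\alpha_2$ other than by the scaling $xz$; this follows by substituting $y=t=0$ into the formula for $\alpha_2^*$. The analogous check for $\alpha=0$ uses that $v$ is still available but only affects $\alpha_2$.
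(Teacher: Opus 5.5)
Your proposal is correct and follows the paper's proof essentially step for step: the same three vectors from Lemma~\ref{lem:rank}, the invariant ratio $\alpha_5/\alpha_4$, killing $\alpha_1$ by $t$, $\alpha_3$ by $y$ and $\alpha_2$ by $v$, and the same case split $\alpha\notin\{0,1\}$, $\alpha=0$, $\alpha=1$ with the same stabilizer computations. For $\alpha=0$ the stabilizer is not even needed, since $\alpha_5=0$ is invariant and hence $\alpha_3^*=\alpha_3z^3$ for every automorphism.
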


\begin{proof}
By Lemma~\ref{lem:rank}, admissibility means that $(\alpha_3,\alpha_5)$, $(\alpha_4,0)$, $(\alpha_3,\alpha_5-\alpha_4)$ span $\C^2$; the minors are $-\alpha_4\alpha_5$, $\alpha_4(\alpha_5-\alpha_4)$ and $-\alpha_3\alpha_4$, and they all vanish if and only if $\alpha_4=0$.
Let $\alpha_4\neq0$; then $\alpha=\alpha_5/\alpha_4$ is an invariant and $\alpha_1$ is killed by $t$.
If $\alpha\neq0,1$, we kill $\alpha_3$ by $y$ and $\alpha_2$ by $v$.
If $\alpha=1$, we kill $\alpha_3$ by $y$; the stabilizer then has $t=y=0$ and $\alpha_2/\alpha_4\mapsto z^{-1}\alpha_2/\alpha_4$, giving $\N4+\N5$ and $\N2+\N4+\N5$.
If $\alpha=0$, we kill $\alpha_2$ by $v$, and $\alpha_3/\alpha_4\mapsto(z/x)\alpha_3/\alpha_4$, giving $\N4$ and $\N3+\N4$.
\end{proof}

\begin{longtable}{@{}l p{0.80\textwidth}@{}}
$\mathcal{A}_{102}(\alpha)$ & $e_{2}e_{1}=e_{4},\ e_{2}e_{2}=e_{3},\ e_{2}e_{4}=\alpha e_{5},\ e_{3}e_{1}=e_{5},\ e_{4}e_{2}=(\alpha - 1) e_{5}$ \\ & \quad {\small $\theta=\langle \nabla_{4}+\alpha \nabla_{5}\rangle$;\quad $\alpha\in\mathbb{C}$} \\[4pt]
$\mathcal{A}_{103}$ & $e_{2}e_{1}=e_{4},\ e_{2}e_{2}=e_{3},\ e_{2}e_{3}=e_{5},\ e_{3}e_{1}=e_{5},\ e_{3}e_{2}=e_{5},\ e_{4}e_{2}=-e_{5}$ \\ & \quad {\small $\theta=\langle \nabla_{3}+\nabla_{4}\rangle$} \\[4pt]
$\mathcal{A}_{104}$ & $e_{1}e_{2}=e_{5},\ e_{2}e_{1}=e_{4},\ e_{2}e_{2}=e_{3},\ e_{2}e_{4}=e_{5},\ e_{3}e_{1}=e_{5}$ \\ & \quad {\small $\theta=\langle \nabla_{2}+\nabla_{4}+\nabla_{5}\rangle$} \\[4pt]
\end{longtable}

\textbf{Central extensions of $\M{14}(\lambda)$, $\lambda\neq0,-1$.} Let us use the following notations:
$$\N1=[\D{11}],\ \N2=[\D{12}],\ \N3=[(\lambda+1)\D{31}+\D{24}],\ \N4=[\D{23}+\D{32}],\ \N5=[\D{13}-\lambda\D{31}+\D{42}],$$ and
\[
\begin{array}{l}
\alpha_1^*=x\big(\alpha_1x+c_1t\big),\quad c_1=(\lambda+1)\alpha_3+(1-\lambda)\alpha_5,\\
\lambda\alpha_2^*=(\lambda-1)\alpha_1xy+\lambda\alpha_2xz+(\lambda\alpha_5-\alpha_3)uz+\big(2\lambda\alpha_5-(\lambda+1)\alpha_3\big)vx+(\lambda-1)\alpha_4tz\\
\hphantom{\lambda\alpha_2^*=}+\big((\lambda^2+\lambda)\alpha_3-(\lambda^2+1)\alpha_5\big)ty,\\
\alpha_3^*=\alpha_3xz^2,\quad \alpha_4^*=z^2(c_2y+\alpha_4z),\quad c_2=(\lambda+1)\alpha_3+\alpha_5,\quad \alpha_5^*=\alpha_5xz^2 .
\end{array}
\]

\begin{proposition}\label{prop:M14}
Let $\lambda\neq0,-1$.
A class $\theta$ is admissible if and only if $(\alpha_3,\alpha_5)\neq(0,0)$.
Every admissible class lies in the orbit of exactly one of
\[
\N3+\alpha\N5\ (\alpha\in\C),\quad \N5,\quad (\lambda-1)(\N1+\N3)+(\lambda+1)\N5\ (\lambda\neq1),\quad \N3+\N4-(\lambda+1)\N5,
\]
and, if $\lambda=1$, of one of the two further classes $\N1+\N5$ and $\N2+\N3+\N5$.
Different values of $\alpha$ give different orbits.
\end{proposition}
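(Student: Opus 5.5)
Following the pattern of the preceding propositions, I would first settle admissibility by Lemma~\ref{lem:rank}, then use that $(\alpha_3:\alpha_5)$ is an invariant, and finally kill $\alpha_1,\alpha_2,\alpha_4$ using the parameters $t,y,u,v$ of the displayed action formulas, treating separately the few lines in the $(\alpha_3,\alpha_5)$-plane where this killing degenerates.

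\emph{Admissibility.} Here $\Ann(\M{14}(\lambda))=\sspan{e_3,e_4}$. Reading off the basis cocycles, the relevant vectors of Lemma~\ref{lem:rank} are
\begin{itemize}
\item $(\theta(e_1,e_3),\theta(e_1,e_4))=(\alpha_5,0)$;
\item $(\theta(e_2,e_3),\theta(e_2,e_4))=(\alpha_4,\alpha_3)$;
\item $(\theta(e_3,e_1),\theta(e_4,e_1))=((\lambda+1)\alpha_3-\lambda\alpha_5,0)$;
\item $(\theta(e_3,e_2),\theta(e_4,e_2))=(\alpha_4,\alpha_5)$.
\end{itemize}
All other vectors are zero. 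If $\alpha_3=\alpha_5=0$, every second coordinate vanishes, so $\theta$ is not admissible. If $\alpha_5\neq0$, the vectors $(\alpha_5,0)$ and $(\alpha_4,\alpha_5)$ span $\C^2$. If $\alpha_5=0\neq\alpha_3$, the vectors $((\lambda+1)\alpha_3,0)$ and $(\alpha_4,\alpha_3)$ span $\C^2$, because $\lambda\neq-1$. This proves the admissibility criterion.

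\emph{Invariant and normalization.} Since $\alpha_3^*=\alpha_3xz^2$ and $\alpha_5^*=\alpha_5xz^2$, the point $[\alpha_3:\alpha_5]\in\mathbb{P}^1$ is invariant. This gives the parameter $\alpha=\alpha_5/\alpha_3$ when $\alpha_3\neq0$, and the class $\N5$ when $\alpha_3=0$. Generically I would proceed in three steps.
\begin{enumerate}
\item Kill $\alpha_1$ with $t$; this needs $c_1=(\lambda+1)\alpha_3+(1-\lambda)\alpha_5\neq0$.
\item Kill $\alpha_4$ with $y$; this needs $c_2=(\lambda+1)\alpha_3+\alpha_5\neq0$.
\item Kill $\alpha_2$ with $u$ or $v$; this needs the coefficients $\lambda\alpha_5-\alpha_3$ and $2\lambda\alpha_5-(\lambda+1)\alpha_3$ not both zero.
\end{enumerate}
These coefficients vanish simultaneously only for $\alpha_3=\alpha_5=0$, since their determinant is $\lambda(1-\lambda)$ up to sign, and for $\lambda=1$ they are proportional. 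Off the degenerate lines we therefore obtain $\N3+\alpha\N5$ and $\N5$.

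\emph{Degenerate lines.} I would then examine each line separately, computing its stabilizer as in Propositions~\ref{prop:M13} and~\ref{prop:M71}.
\begin{itemize}
\item On $c_2=0$, that is $\alpha_5=-(\lambda+1)\alpha_3$, we have $\alpha_4^*=\alpha_4z^3$ against $\alpha_3^*=\alpha_3xz^2$. So $\alpha_4/\alpha_3$ scales by $z/x$ and normalizes to $0$ or $1$, giving the member $\alpha=-(\lambda+1)$ of the family or the new class $\N3+\N4-(\lambda+1)\N5$.
\item On $c_1=0$ with $\lambda\neq1$, the line is $[\alpha_3:\alpha_5]=[\lambda-1:\lambda+1]$. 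The coefficient $\alpha_1$ scales by $x^2$ against $xz^2$, so it normalizes to $0$ or $1$, producing $(\lambda-1)(\N1+\N3)+(\lambda+1)\N5$. I must also check that $\alpha_2$ and $\alpha_4$ can still be killed on this line.
\item For $\lambda=1$ we have $c_1=2\alpha_3$, so the bad line is $\alpha_3=0$, i.e.\ the class $\N5$. There $\alpha_1$ survives, giving $\N1+\N5$. Moreover both $\alpha_2$-killing coefficients reduce to multiples of $\alpha_5-\alpha_3$, which vanish on the line $\alpha_3=\alpha_5$, i.e.\ $\alpha=1$. On that line $\alpha_2$ may survive, which I expect yields $\N2+\N3+\N5$.
\end{itemize}

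\emph{Main obstacle.} The hardest step is computing the stabilizers on these degenerate lines, together with the possible coincidences between them, for instance when $c_1=0$ and $c_2=0$ hold together for special $\lambda$. On each line one must show that the residual coefficient transforms by a pure scaling, so that ``zero versus nonzero'' is invariant, and that the $\alpha_2$-terms coming from $ty$ and $tz$ do not reintroduce dependence once $\alpha_1$ or $\alpha_4$ is forced to stay nonzero. I would first redo the $\alpha_2^*$ formula restricted to $t=y=0$ (respectively $t=0$) to verify these scalings, and only then confirm that the listed representatives are pairwise distinct using the invariant $[\alpha_3:\alpha_5]$ and the vanishing of the surviving coefficient.
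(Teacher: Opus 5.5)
Your proposal is correct and follows the paper's proof essentially step for step. Both use admissibility via Lemma~\ref{lem:rank} and the invariant $[\alpha_3:\alpha_5]$. Both kill $\alpha_1$, $\alpha_4$, $\alpha_2$ with $t$, $y$, $u$ or $v$ whenever $c_1$, $c_2$ and the $(u,v)$-coefficients allow. Both then treat the exceptional lines $c_2=0$, $c_1=0$ and, for $\lambda=1$, $\alpha_3=\alpha_5$.

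The obstacles you anticipate are immediate. Since $c_1-c_2=-\lambda\alpha_5$, the conditions $c_1=c_2=0$ force $\alpha_3=\alpha_5=0$. On each exceptional line the surviving coefficient is multiplied by $z^3$, $x^2$ or $xz$ respectively under every automorphism, not only under a stabilizer. So its vanishing is invariant and no further stabilizer computation is needed.
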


\begin{proof}
By Lemma~\ref{lem:rank}, admissibility means that $(\alpha_5,0)$, $(\alpha_4,\alpha_3)$, $((\lambda+1)\alpha_3-\lambda\alpha_5,0)$ and $(\alpha_4,\alpha_5)$ span $\C^2$; if $\alpha_5\ne0$ the first and last vectors are independent, if $\alpha_5=0\neq\alpha_3$ the second and third have determinant $-(\lambda+1)\alpha_3^2\neq0$, and if $\alpha_3=\alpha_5=0$ all second coordinates vanish.
The point $[\alpha_3:\alpha_5]$ is an invariant, and so are the conditions $c_1=0$ and $c_2=0$.
We can kill $\alpha_1$ by $t$ if $c_1\neq0$, then $\alpha_4$ by $y$ if $c_2\neq0$, and finally $\alpha_2$ by $u$ or $v$, unless $\lambda\alpha_5=\alpha_3$ and $2\lambda\alpha_5=(\lambda+1)\alpha_3$, i.e.\ unless $\lambda=1$ and $\alpha_3=\alpha_5$.
Note that $c_1=c_2=0$ would imply $\lambda(\lambda+1)\alpha_3=0$, hence $\alpha_3=\alpha_5=0$.

If $c_1c_2\neq0$ and $(\lambda,\alpha_3)\neq(1,\alpha_5)$, we get $\N3+\alpha\N5$ or $\N5$.
If $c_2=0$, then $(\alpha_3,\alpha_5)\sim(1,-(\lambda+1))$, $c_1=\lambda(\lambda+1)\neq0$, and $\alpha_4/\alpha_3\mapsto(z/x)\alpha_4/\alpha_3$; we get $\N3-(\lambda+1)\N5$ (a member of the family) or $\N3+\N4-(\lambda+1)\N5$.
If $c_1=0$ and $\lambda\neq1$, then $(\alpha_3,\alpha_5)\sim(\lambda-1,\lambda+1)$, $c_2=\lambda(\lambda+1)\neq0$ and $\alpha_1/\alpha_3\mapsto(x/z^2)\alpha_1/\alpha_3$; we get the member $\alpha=\frac{\lambda+1}{\lambda-1}$ of the family, or $(\lambda-1)(\N1+\N3)+(\lambda+1)\N5$.
If $c_1=0$ and $\lambda=1$, then $\alpha_3=0$, and similarly we get $\N5$ or $\N1+\N5$.
Finally, if $\lambda=1$ and $\alpha_3=\alpha_5$, then $c_1=2\alpha_3$, $c_2=3\alpha_3$, we kill $\alpha_1,\alpha_4$, and the stabilizer acts by $\alpha_2\mapsto xz\alpha_2$, $\alpha_3\mapsto xz^2\alpha_3$ (all other contributions to $\alpha_2^*$ cancel for $\lambda=1$, $\alpha_3=\alpha_5$); we get $\N3+\N5$ or $\N2+\N3+\N5$.
\end{proof}

\begin{longtable}{@{}l p{0.90\textwidth}@{}}
$\mathcal{A}_{105}(\alpha,\lambda)$ & $e_{1}e_{2}=e_{4},\ e_{1}e_{3}=\alpha e_{5},\ e_{2}e_{1}=\lambda e_{4},\ e_{2}e_{2}=e_{3}$ \\ & $e_{2}e_{4}=e_{5},\ e_{3}e_{1}=(- \alpha \lambda + \lambda + 1) e_{5},\ e_{4}e_{2}=\alpha e_{5}$ \\ & \quad {\small $\theta=\langle \nabla_{3}+\alpha \nabla_{5}\rangle$;\quad $\lambda\neq0,-1,\ \alpha\in\mathbb{C}$} \\[4pt]
$\mathcal{A}_{106}(\lambda)$ & $e_{1}e_{2}=e_{4},\ e_{1}e_{3}=e_{5},\ e_{2}e_{1}=\lambda e_{4},\ e_{2}e_{2}=e_{3},\ e_{3}e_{1}=- \lambda e_{5},\ e_{4}e_{2}=e_{5}$ \\ & \quad {\small $\theta=\langle \nabla_{5}\rangle$;\quad $\lambda\neq0,-1$} \\[4pt]
$\mathcal{A}_{107}(\lambda)$ & $e_{1}e_{1}=(\lambda - 1) e_{5},\ e_{1}e_{2}=e_{4},\ e_{1}e_{3}=(\lambda + 1) e_{5},\ e_{2}e_{1}=\lambda e_{4}$ \\ & $e_{2}e_{2}=e_{3},\ e_{2}e_{4}=(\lambda - 1) e_{5},\ e_{3}e_{1}=(- \lambda - 1) e_{5},\ e_{4}e_{2}=(\lambda + 1) e_{5}$ \\ & \quad {\small $\theta=\langle (\lambda - 1) \nabla_{1}+(\lambda - 1) \nabla_{3}+(\lambda + 1) \nabla_{5}\rangle$;\quad $\lambda\neq0,\pm1$} \\[4pt]
$\mathcal{A}_{108}(\lambda)$ & $e_{1}e_{2}=e_{4},\ e_{1}e_{3}=(- \lambda - 1) e_{5},\ e_{2}e_{1}=\lambda e_{4},\ e_{2}e_{2}=e_{3},\ e_{2}e_{3}=e_{5}$ \\ & $e_{2}e_{4}=e_{5},\ e_{3}e_{1}=(\lambda^{2} + 2 \lambda + 1) e_{5},\ e_{3}e_{2}=e_{5},\ e_{4}e_{2}=(- \lambda - 1) e_{5}$ \\ & \quad {\small $\theta=\langle \nabla_{3}+\nabla_{4}+(- \lambda - 1) \nabla_{5}\rangle$;\quad $\lambda\neq0,-1$} \\[4pt]
$\mathcal{A}_{109}$ & $e_{1}e_{1}=e_{5},\ e_{1}e_{2}=e_{4},\ e_{1}e_{3}=e_{5},\ e_{2}e_{1}=e_{4},\ e_{2}e_{2}=e_{3},\ e_{3}e_{1}=-e_{5},\ e_{4}e_{2}=e_{5}$ \\ & \quad {\small $\theta=\langle \nabla_{1}+\nabla_{5}\rangle$;\quad $\text{extension of }\mathcal{M}_{14}(1)$} \\[4pt]
$\mathcal{A}_{110}$ & $e_{1}e_{2}=e_{4}+e_{5},\ e_{1}e_{3}=e_{5},\ e_{2}e_{1}=e_{4},\ e_{2}e_{2}=e_{3},\ e_{2}e_{4}=e_{5},\ e_{3}e_{1}=e_{5},\ e_{4}e_{2}=e_{5}$ \\ & \quad {\small $\theta=\langle \nabla_{2}+\nabla_{3}+\nabla_{5}\rangle$;\quad $\text{extension of }\mathcal{M}_{14}(1)$} \\[4pt]
\end{longtable}

\textbf{Central extensions of $\M{14}(0)$.} Here $\N1,\dots,\N6$ are the classes of $\D{11}$, $\D{21}$, $\D{24}+\D{31}$, $\D{23}+\D{32}$, $\D{14}+\D{41}$, $\D{13}+\D{42}$, and
\[
\begin{array}{l}
\alpha_1^*=x\big(\alpha_1x+(\alpha_3+\alpha_6)t+2\alpha_5u\big),\\
\alpha_2^*=\alpha_1xy+\alpha_2xz+\alpha_3(uz+vx)+\alpha_4tz+\alpha_5(uy+wx)+\alpha_6ty,\\
\alpha_3^*=xz(\alpha_3z+\alpha_5y),\quad \alpha_4^*=z\big(\alpha_4z^2+(\alpha_3+\alpha_6)yz+\alpha_5y^2\big),\\
\alpha_5^*=\alpha_5x^2z,\quad \alpha_6^*=xz(\alpha_5y+\alpha_6z).
\end{array}
\]

\begin{proposition}\label{prop:M140}
A class $\theta$ is admissible if and only if either $\alpha_5\neq0$ and $(\alpha_3-\alpha_6,\ \alpha_4\alpha_5-\alpha_3^2)\neq(0,0)$, or $\alpha_5=0$ and $(\alpha_3,\alpha_6)\neq(0,0)$.
Every admissible class lies in the orbit of exactly one of
\[
\begin{array}{lllll}
\N3+\alpha\N4+\N5, & \N4+\N5, & \N3+\alpha\N6\ (\alpha\neq-1), & \N6, & \N2+\N6,\\
\N1+\N3+\N4-\N6, & \N1+\N3-\N6, & \N3+\N4-\N6, & \N3-\N6, &
\end{array}
\]
where $\alpha\in\C$ unless stated otherwise, and different values of $\alpha$ give different orbits.
\end{proposition}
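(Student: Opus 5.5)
First I will settle admissibility, exactly as for $\M{11}$ and $\M{13}$. Since $\Ann(\M{14}(0))=\sspan{e_3,e_4}$, Lemma~\ref{lem:rank} applies. Reading off the values of $\theta$ on pairs involving $e_3,e_4$ gives the vectors $(\alpha_6,\alpha_5)$ from $e_1$ on the left, $(\alpha_4,\alpha_3)$ from $e_2$ on the left, $(\alpha_3,\alpha_5)$ from $e_1$ on the right, and $(\alpha_6,\alpha_4)$ from $e_2$ on the right. The first claim then reduces to the vanishing of all $2\times2$ minors.
\begin{itemize}[leftmargin=*]
\item If $\alpha_5=0$, the minors reduce to multiples of $\alpha_3^2,\alpha_6^2,\alpha_3\alpha_6$, so admissibility is $(\alpha_3,\alpha_6)\neq0$.
\item If $\alpha_5\neq0$, the minors are $\alpha_5(\alpha_6-\alpha_3)$, $\alpha_6\alpha_4-\alpha_5^2\cdot(\dots)$, and so on. Solving the vanishing conditions should give exactly $\alpha_3=\alpha_6$ together with $\alpha_4\alpha_5=\alpha_3^2$.
\end{itemize}
I will check this small computation explicitly.

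Next I will split according to the invariant condition $\alpha_5=0$, which is invariant because $\alpha_5^*=\alpha_5x^2z$.

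\emph{Case $\alpha_5\neq0$.} Using $y$, I kill $\alpha_3$: choose $y=-\alpha_3z/\alpha_5$. The stabilizer of $\alpha_3=0$ has $y=0$ and acts by $\alpha_4\mapsto\alpha_4z^3$, $\alpha_5\mapsto\alpha_5x^2z$, $\alpha_6\mapsto\alpha_6xz^2$. The coefficient $\alpha_1$ is killed by $u$ (its coefficient is $2\alpha_5x$), and $\alpha_2$ by $w$ (coefficient $\alpha_5x$). With $\alpha_3=0$, admissibility becomes $(\alpha_6,\alpha_4)\neq0$. There are two subcases.
\begin{itemize}[leftmargin=*]
\item If $\alpha_6\neq0$, normalize $\alpha_5=\alpha_6=1$; this forces $x=z$. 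Then $\alpha_4/\alpha_5$ is multiplied by $z^3/(x^2z)=1$, so it becomes an invariant $\alpha$. This gives the family $\N3+\alpha\N4+\N5$ after the relabeling $\N3\leftrightarrow\N6$. I have to reconcile this with the listed representatives: the list has $\N3$, not $\N6$, so instead I will kill $\alpha_6$ (again by $y$) and keep $\alpha_3$, which is symmetric.
\item If $\alpha_6=0$, scale to get $\N4+\N5$.
\end{itemize}
Distinctness follows from the invariance of the normalized ratio.

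\emph{Case $\alpha_5=0$.} Now $\alpha_3^*=\alpha_3xz^2$ and $\alpha_6^*=\alpha_6xz^2$, so $[\alpha_3:\alpha_6]$ is invariant. Moreover $\alpha_4^*=z^2(\alpha_4z+(\alpha_3+\alpha_6)y)$, so $\alpha_4$ can be killed by $y$ whenever $\alpha_3+\alpha_6\neq0$. The coefficient $\alpha_1$ is killed by $t$ when $\alpha_3+\alpha_6\neq0$, and $\alpha_2$ by $v$ when $\alpha_3\neq0$ (or by $t,u$ otherwise). This gives three subcases.
\begin{itemize}[leftmargin=*]
\item For $\alpha_3\neq0$ and $\alpha_6\neq-\alpha_3$, we obtain $\N3+\alpha\N6$ with $\alpha\neq-1$.
\item For $\alpha_3=0$, so $\alpha_6\neq0$, the coefficient $\alpha_2$ is still killed by $t$ unless the relevant coefficients degenerate. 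Here I must check the stabilizer carefully: $\alpha_2/\alpha_6$ scales by $z^{-1}$, which gives $\N6$ and $\N2+\N6$.
\item The degenerate line $\alpha_6=-\alpha_3$ is the main obstacle. There neither $t$ nor $y$ acts on $\alpha_1,\alpha_4$. The stabilizer scales $\alpha_1/\alpha_3$ by $x/z^2$ and $\alpha_4/\alpha_3$ by $z/x$, and these are independent. This yields the four classes $\N1+\N3+\N4-\N6$, $\N1+\N3-\N6$, $\N3+\N4-\N6$ and $\N3-\N6$, provided $\alpha_2$ is still killable by $v$ (its coefficient is $\alpha_3x\neq0$).
\end{itemize}

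Finally, the orbits are separated by the invariants $\alpha_5=0$, $[\alpha_3:\alpha_6]$ (or the normalized ratio in the first case), and the vanishing of the non-killable coefficients in the degenerate subcases. The step I expect to be hardest is confirming that, on the line $\alpha_3+\alpha_6=0$, the mixed terms $\alpha_6ty$ and $\alpha_5uy$ in $\alpha_2^*$ do not secretly couple $\alpha_1$, $\alpha_4$ and $\alpha_2$. If they do, some of the four degenerate classes would merge.
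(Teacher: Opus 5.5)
Your proposal is correct and follows the paper's proof essentially step for step: admissibility via Lemma~\ref{lem:rank}, the invariant split on $\alpha_5=0$, killing $\alpha_6$ by $y$, $\alpha_1$ by $u$ and $\alpha_2$ by $w$ when $\alpha_5\neq0$, and the invariant $[\alpha_3:\alpha_6]$ with the special line $\alpha_3+\alpha_6=0$ when $\alpha_5=0$. Two small corrections: the vector coming from $e_2$ on the right is $(\alpha_4,\alpha_6)$, not $(\alpha_6,\alpha_4)$ (with your version, $\alpha_3=\alpha_5=\alpha_6=0\neq\alpha_4$ would wrongly come out admissible), and for $\alpha_3=\alpha_5=0$ the parameter $u$ does not enter $\alpha_2^*$ at all; also, the coupling you fear on the line $\alpha_3+\alpha_6=0$ does not occur, since there $\alpha_1^*=\alpha_1x^2$ and $\alpha_4^*=\alpha_4z^3$ exactly while $v$ enters only $\alpha_2^*$ (with coefficient $\alpha_3x\neq0$), so the four classes are separated simply by the vanishing of $\alpha_1$ and $\alpha_4$.
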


\begin{proof}
By Lemma~\ref{lem:rank}, admissibility means that $(\alpha_6,\alpha_5)$, $(\alpha_4,\alpha_3)$, $(\alpha_3,\alpha_5)$, $(\alpha_4,\alpha_6)$ span $\C^2$.
If $\alpha_5=0$ this holds if and only if $(\alpha_3,\alpha_6)\neq0$.
If $\alpha_5\neq0$ it fails if and only if all four vectors are proportional to $(\alpha_6,\alpha_5)$, i.e.\ $\alpha_3=\alpha_6$ and $\alpha_4\alpha_5=\alpha_3\alpha_6=\alpha_3^2$; after the normalization $\alpha_6=0$ below, this means $\alpha_3=\alpha_4=0$.
The condition $\alpha_5=0$ is invariant.

\emph{Case $\alpha_5\neq0$.}
We kill $\alpha_1$ by $u$, $\alpha_6$ by $y$ and $\alpha_2$ by $w$.
Since $\alpha_3^*-\alpha_6^*=xz^2(\alpha_3-\alpha_6)$, the remaining coefficients transform (with $y=0$) by $\alpha_3\mapsto xz^2\alpha_3$, $\alpha_4\mapsto z^3\alpha_4$, $\alpha_5\mapsto x^2z\alpha_5$.
If $\alpha_3\neq0$, the normalization $\alpha_3=\alpha_5$ forces $z=x$, and then $\alpha_4/\alpha_5$ is an invariant: $\N3+\alpha\N4+\N5$.
If $\alpha_3=0$, admissibility forces $\alpha_4\neq0$ and we get $\N4+\N5$.

\emph{Case $\alpha_5=0$.}
Then $[\alpha_3:\alpha_6]$ is an invariant.
If $\alpha_3+\alpha_6\neq0$, we kill $\alpha_1$ by $t$ and $\alpha_4$ by $y$; if moreover $\alpha_3\neq0$, we kill $\alpha_2$ by $v$ and obtain $\N3+\alpha\N6$, $\alpha\neq-1$; if $\alpha_3=0$, the stabilizer has $t=y=0$ and $\alpha_2/\alpha_6\mapsto z^{-1}\alpha_2/\alpha_6$, giving $\N6$ and $\N2+\N6$.
If $\alpha_3+\alpha_6=0$, we normalize $\alpha_3=1$, $\alpha_6=-1$ and kill $\alpha_2$ by $v$; now $\alpha_1\mapsto x^2\alpha_1$ and $\alpha_4\mapsto z^3\alpha_4$ while $\alpha_3\mapsto xz^2\alpha_3$, so that $\alpha_1/\alpha_3$ and $\alpha_4/\alpha_3$ are multiplied by the independent factors $x/z^2$ and $z/x$.
We obtain the four classes $\N3-\N6$, $\N1+\N3-\N6$, $\N3+\N4-\N6$, $\N1+\N3+\N4-\N6$.
\end{proof}

\begin{longtable}{@{}l p{0.90\textwidth}@{}}
$\mathcal{A}_{111}(\alpha)$ & $e_{1}e_{2}=e_{4},\ e_{1}e_{4}=e_{5},\ e_{2}e_{2}=e_{3},\ e_{2}e_{3}=\alpha e_{5},\ e_{2}e_{4}=e_{5},\ e_{3}e_{1}=e_{5},\ e_{3}e_{2}=\alpha e_{5},\ e_{4}e_{1}=e_{5}$ \\ & \quad {\small $\theta=\langle \nabla_{3}+\alpha \nabla_{4}+\nabla_{5}\rangle$;\quad $\alpha\in\mathbb{C}$} \\[4pt]
$\mathcal{A}_{112}$ & $e_{1}e_{2}=e_{4},\ e_{1}e_{4}=e_{5},\ e_{2}e_{2}=e_{3},\ e_{2}e_{3}=e_{5},\ e_{3}e_{2}=e_{5},\ e_{4}e_{1}=e_{5}$ \\ & \quad {\small $\theta=\langle \nabla_{4}+\nabla_{5}\rangle$} \\[4pt]
$\mathcal{A}_{113}(\alpha)$ & $e_{1}e_{2}=e_{4},\ e_{1}e_{3}=\alpha e_{5},\ e_{2}e_{2}=e_{3},\ e_{2}e_{4}=e_{5},\ e_{3}e_{1}=e_{5},\ e_{4}e_{2}=\alpha e_{5}$ \\ & \quad {\small $\theta=\langle \nabla_{3}+\alpha \nabla_{6}\rangle$;\quad $\alpha\neq-1$} \\[4pt]
$\mathcal{A}_{114}$ & $e_{1}e_{2}=e_{4},\ e_{1}e_{3}=e_{5},\ e_{2}e_{2}=e_{3},\ e_{4}e_{2}=e_{5}$ \\ & \quad {\small $\theta=\langle \nabla_{6}\rangle$} \\[4pt]
$\mathcal{A}_{115}$ & $e_{1}e_{2}=e_{4},\ e_{1}e_{3}=e_{5},\ e_{2}e_{1}=e_{5},\ e_{2}e_{2}=e_{3},\ e_{4}e_{2}=e_{5}$ \\ & \quad {\small $\theta=\langle \nabla_{2}+\nabla_{6}\rangle$} \\[4pt]
$\mathcal{A}_{116}$ & $e_{1}e_{1}=e_{5},\ e_{1}e_{2}=e_{4},\ e_{1}e_{3}=-e_{5},\ e_{2}e_{2}=e_{3},\ e_{2}e_{3}=e_{5}$ \\ & $e_{2}e_{4}=e_{5},\ e_{3}e_{1}=e_{5},\ e_{3}e_{2}=e_{5},\ e_{4}e_{2}=-e_{5}$ \\ & \quad {\small $\theta=\langle \nabla_{1}+\nabla_{3}+\nabla_{4}-\nabla_{6}\rangle$} \\[4pt]
$\mathcal{A}_{117}$ & $e_{1}e_{1}=e_{5},\ e_{1}e_{2}=e_{4},\ e_{1}e_{3}=-e_{5},\ e_{2}e_{2}=e_{3},\ e_{2}e_{4}=e_{5},\ e_{3}e_{1}=e_{5},\ e_{4}e_{2}=-e_{5}$ \\ & \quad {\small $\theta=\langle \nabla_{1}+\nabla_{3}-\nabla_{6}\rangle$} \\[4pt]
$\mathcal{A}_{118}$ & $e_{1}e_{2}=e_{4},\ e_{1}e_{3}=-e_{5},\ e_{2}e_{2}=e_{3},\ e_{2}e_{3}=e_{5},\ e_{2}e_{4}=e_{5},\ e_{3}e_{1}=e_{5},\ e_{3}e_{2}=e_{5},\ e_{4}e_{2}=-e_{5}$ \\ & \quad {\small $\theta=\langle \nabla_{3}+\nabla_{4}-\nabla_{6}\rangle$} \\[4pt]
$\mathcal{A}_{119}$ & $e_{1}e_{2}=e_{4},\ e_{1}e_{3}=-e_{5},\ e_{2}e_{2}=e_{3},\ e_{2}e_{4}=e_{5},\ e_{3}e_{1}=e_{5},\ e_{4}e_{2}=-e_{5}$ \\ & \quad {\small $\theta=\langle \nabla_{3}-\nabla_{6}\rangle$} \\[4pt]
\end{longtable}
 
\textbf{Central extensions of $\M{14}(-1)$.} Here $\N1,\dots,\N6$ are the classes of $\D{11}$, $\D{12}$, $\D{14}$, $\D{24}$, $\D{23}+\D{32}$, $\D{13}+\D{31}+\D{42}$, and
\[
\begin{array}{l}
\alpha_1^*=x(\alpha_1x+\alpha_3u+2\alpha_6t),\\
\alpha_2^*=2\alpha_1xy+\alpha_2xz+\alpha_3(uy+wx)+\alpha_4uz+2\alpha_5tz+2\alpha_6(ty+vx)+\alpha_6uz,\\
\alpha_3^*=\alpha_3x^2z,\quad\alpha_4^*=xz(\alpha_3y+\alpha_4z),\quad\alpha_5^*=z^2(\alpha_5z+\alpha_6y),\quad\alpha_6^*=\alpha_6xz^2 .
\end{array}
\]

\begin{proposition}\label{prop:M14m1}
A class $\theta$ is admissible if and only if $\alpha_6\neq0$, or $\alpha_6=0$, $\alpha_5\neq0$ and $(\alpha_3,\alpha_4)\neq(0,0)$.
Every admissible class lies in the orbit of exactly one of
\[
\N3+\alpha\N4+\N6,\quad \alpha\N4+\N6,\quad \N3+\N5,\quad \N4+\N5,\quad \N1+\N4+\N5\quad(\alpha\in\C),
\]
and different values of $\alpha$ give different orbits.
\end{proposition}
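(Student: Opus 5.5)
The plan is to use Lemma~\ref{lem:rank} for admissibility and then the stated action formulas for the orbit analysis, splitting according to whether $\alpha_6$ vanishes.

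\textbf{Admissibility.} Since $\Ann(\M{14}(-1))=\sspan{e_3,e_4}$, I would first read off the values of $\theta=\sum\alpha_i\N{i}$ on pairs involving $e_3,e_4$:
\[
\theta(e_1,e_3)=\theta(e_3,e_1)=\theta(e_4,e_2)=\alpha_6,\quad \theta(e_1,e_4)=\alpha_3,\quad \theta(e_2,e_4)=\alpha_4,\quad \theta(e_2,e_3)=\theta(e_3,e_2)=\alpha_5 .
\]
All other values on these pairs vanish. By Lemma~\ref{lem:rank}, admissibility means that the vectors $(\alpha_6,\alpha_3)$, $(\alpha_5,\alpha_4)$, $(\alpha_6,0)$, $(\alpha_5,\alpha_6)$ span $\C^2$.
\begin{itemize}
\item If $\alpha_6\neq0$, the vectors $(\alpha_6,0)$ and $(\alpha_5,\alpha_6)$ are already independent.
\item If $\alpha_6=0$, the list becomes $(0,\alpha_3)$, $(\alpha_5,\alpha_4)$, $0$, $(\alpha_5,0)$. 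These span $\C^2$ exactly when $\alpha_5\neq0$ and $(\alpha_3,\alpha_4)\neq(0,0)$.
\end{itemize}
This gives the first claim. Since $\alpha_6^*=\alpha_6xz^2$, the condition $\alpha_6=0$ is invariant, so the two cases can be treated separately.

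\textbf{Case $\alpha_6\neq0$.}
\begin{itemize}
\item Choose $y$ with $\alpha_5z+\alpha_6y=0$, so that $\alpha_5^*=0$; this determines $y$ uniquely.
\item Use $t$ to kill $\alpha_1$; its coefficient in $\alpha_1^*$ is $2\alpha_6x\neq0$.
\item Use $v$ to kill $\alpha_2$; its coefficient in $\alpha_2^*$ is $2\alpha_6x\neq0$.
\end{itemize}
The automorphisms preserving $\alpha_1=\alpha_2=\alpha_5=0$ must have $y=0$, and they act by $\alpha_3\mapsto x^2z\alpha_3$, $\alpha_4\mapsto xz^2\alpha_4$, $\alpha_6\mapsto xz^2\alpha_6$. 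Hence $\alpha_4/\alpha_6$ is an invariant, while $\alpha_3/\alpha_6$ is multiplied by the free factor $x/z$. This gives the classes $\N3+\alpha\N4+\N6$ and $\alpha\N4+\N6$, distinguished by whether $\alpha_3=0$ and by the value of $\alpha=\alpha_4/\alpha_6$. The point I must check carefully is that $\alpha_4$ is read off only after fixing $y=0$: a nonzero $y$ contributes $\alpha_3y$ to $\alpha_4^*$, but such $y$ is excluded by the normalization $\alpha_5=0$.

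\textbf{Case $\alpha_6=0$.} Here $\alpha_5\neq0$ and $(\alpha_3,\alpha_4)\neq(0,0)$.
\begin{itemize}
\item If $\alpha_3\neq0$ (an invariant condition, since $\alpha_3^*=\alpha_3x^2z$), kill $\alpha_4$ by $y$, $\alpha_1$ by $u$, and $\alpha_2$ by $w$. The ratio $\alpha_3/\alpha_5$ is multiplied by $x^2/z^2$, so it can be normalized to $1$, giving $\N3+\N5$.
\item If $\alpha_3=0$, then $\alpha_4\neq0$. Now $\alpha_1\mapsto x^2\alpha_1$ cannot be killed. Kill $\alpha_2$ by $t$; its coefficient is $2\alpha_5z\neq0$. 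The ratios $\alpha_4/\alpha_5$ and $\alpha_1/\alpha_5$ are multiplied by $x/z$ and $x^2/z^3$. Their exponent vectors $(1,-1)$ and $(2,-3)$ are independent, so both ratios can be normalized independently. This yields $\N4+\N5$ and $\N1+\N4+\N5$, separated by whether $\alpha_1=0$, which is invariant under the stabilizer ($t=u=0$).
\end{itemize}

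\textbf{Expected obstacle.} The main difficulty is the stabilizer bookkeeping: verifying that each normalization fixes the parameter used to achieve it, so that the claimed invariants ($\alpha_4/\alpha_6$, and the vanishing of $\alpha_3$ and $\alpha_1$) really are invariant on the normalized forms. I would check this directly from the displayed formulas for $\alpha_i^*$.
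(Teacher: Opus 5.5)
Your proof is correct and follows the paper's argument essentially step by step: Lemma~\ref{lem:rank} gives the admissibility criterion, and the same normalizations and stabilizer scalings handle the cases $\alpha_6\neq0$ and $\alpha_6=0$, the only cosmetic difference being that you kill $\alpha_2$ by $w$ rather than by $t$ when $\alpha_6=0\neq\alpha_3$. One harmless imprecision: when $\alpha_3=\alpha_6=0$, the stabilizer of the normal form is not $t=u=0$ (it only requires, e.g., $u=-2t$), but $\alpha_1=0$ is still invariant because $\alpha_1^*=x^2\alpha_1$ holds for every automorphism in that case.
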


\begin{proof}
By Lemma~\ref{lem:rank}, admissibility means that $(\alpha_6,\alpha_3)$, $(\alpha_5,\alpha_4)$, $(\alpha_6,0)$, $(\alpha_5,\alpha_6)$ span $\C^2$, which gives the first claim.
If $\alpha_6\neq0$, we kill $\alpha_1$ by $t$, $\alpha_2$ by $v$ and $\alpha_5$ by $y$; the stabilizer has $y=0$ and acts by $\alpha_3\mapsto x^2z\alpha_3$, $\alpha_4\mapsto xz^2\alpha_4$, $\alpha_6\mapsto xz^2\alpha_6$.
Hence $\alpha=\alpha_4/\alpha_6$ is an invariant and $\alpha_3/\alpha_6\mapsto(x/z)\alpha_3/\alpha_6$, giving $\N3+\alpha\N4+\N6$ and $\alpha\N4+\N6$.
If $\alpha_6=0$, then $\alpha_5\neq0$ and we kill $\alpha_2$ by $t$.
If $\alpha_3\neq0$, we kill $\alpha_4$ by $y$ and $\alpha_1$ by $u$, and normalize $\alpha_3/\alpha_5\mapsto(x/z)^2\alpha_3/\alpha_5$: this gives $\N3+\N5$.
If $\alpha_3=0$, then $\alpha_4\neq0$, and $\alpha_4/\alpha_5$, $\alpha_1/\alpha_5$ are multiplied by the independent factors $x/z$ and $x^2/z^3$; we get $\N4+\N5$ and $\N1+\N4+\N5$.
\end{proof}

\begin{longtable}{@{}l p{0.90\textwidth}@{}}
$\mathcal{A}_{120}(\alpha)$ & $e_{1}e_{2}=e_{4},\ e_{1}e_{3}=e_{5},\ e_{1}e_{4}=e_{5},\ e_{2}e_{1}=-e_{4},\ e_{2}e_{2}=e_{3},\ e_{2}e_{4}=\alpha e_{5},\ e_{3}e_{1}=e_{5},\ e_{4}e_{2}=e_{5}$ \\ & \quad {\small $\theta=\langle \nabla_{3}+\alpha \nabla_{4}+\nabla_{6}\rangle$;\quad $\alpha\in\mathbb{C}$} \\[4pt]
$\mathcal{A}_{121}(\alpha)$ & $e_{1}e_{2}=e_{4},\ e_{1}e_{3}=e_{5},\ e_{2}e_{1}=-e_{4},\ e_{2}e_{2}=e_{3},\ e_{2}e_{4}=\alpha e_{5},\ e_{3}e_{1}=e_{5},\ e_{4}e_{2}=e_{5}$ \\ & \quad {\small $\theta=\langle \alpha \nabla_{4}+\nabla_{6}\rangle$;\quad $\alpha\in\mathbb{C}$} \\[4pt]
$\mathcal{A}_{122}$ & $e_{1}e_{2}=e_{4},\ e_{1}e_{4}=e_{5},\ e_{2}e_{1}=-e_{4},\ e_{2}e_{2}=e_{3},\ e_{2}e_{3}=e_{5},\ e_{3}e_{2}=e_{5}$ \\ & \quad {\small $\theta=\langle \nabla_{3}+\nabla_{5}\rangle$} \\[4pt]
$\mathcal{A}_{123}$ & $e_{1}e_{2}=e_{4},\ e_{2}e_{1}=-e_{4},\ e_{2}e_{2}=e_{3},\ e_{2}e_{3}=e_{5},\ e_{2}e_{4}=e_{5},\ e_{3}e_{2}=e_{5}$ \\ & \quad {\small $\theta=\langle \nabla_{4}+\nabla_{5}\rangle$} \\[4pt]
$\mathcal{A}_{124}$ & $e_{1}e_{1}=e_{5},\ e_{1}e_{2}=e_{4},\ e_{2}e_{1}=-e_{4},\ e_{2}e_{2}=e_{3},\ e_{2}e_{3}=e_{5},\ e_{2}e_{4}=e_{5},\ e_{3}e_{2}=e_{5}$ \\ & \quad {\small $\theta=\langle \nabla_{1}+\nabla_{4}+\nabla_{5}\rangle$} \\[4pt]
\end{longtable}
\section{The classification}\label{sec:main}

We now collect the results of Sections~\ref{sec:ann2}--\ref{sec:twostep}.

\begin{theorem}\label{thm:main}
Let $\mathcal{B}$ be a five-dimensional complex nilpotent right alternative algebra without annihilator component which is not $2$-step nilpotent.
Then $\mathcal{B}$ is isomorphic to exactly one of the algebras $\A1,\dots,\A{124}$ listed below (after the statement), with the following exceptions:
\begin{gather*}
\A{92}(\alpha,\gamma)\cong\A{92}(\gamma^3\alpha^{-1},\gamma) (\gamma\neq0),\
\A{96}(\alpha,\beta)\cong\A{96}(\beta,\alpha),\ \A{101}(\alpha)\cong\A{101}\Big(-\frac{2\alpha+3}{\alpha+2}\Big) (\alpha\neq-2).
\end{gather*}
Here all parameters are arbitrary complex numbers subject to the indicated restrictions.
The algebras $\A1,\dots,\A{29}$ have a two-dimensional annihilator; the remaining ones have a one-dimensional annihilator.
The quotient $\mathcal{B}/\Ann(\mathcal{B})$ is $\RR31$ for $\A1$--$\A{10}$, $\RR33$ for $\A{11}$--$\A{20}$, $\RR34(0)$ for $\A{21}$--$\A{29}$, $\RR41$ for $\A{30}$--$\A{32}$, $\RR44$ for $\A{33}$, $\RR45$ for $\A{34}$--$\A{41}$, $\RR46$ for $\A{42}$--$\A{46}$, $\RR49$ for $\A{47}$, $\M1$ for $\A{48}$--$\A{53}$, $\M2$ for $\A{54}$, $\A{55}$, $\M3$ for $\A{56}$--$\A{72}$, $\M4(0)$ for $\A{73}$--$\A{87}$, $\M6$ for $\A{88}$--$\A{91}$, $\M7(\gamma)$ for $\A{92}(\alpha,\gamma)$, $\M7(1)$ for $\A{93}$--$\A{95}$, $\M{11}$ for $\A{96}$--$\A{100}$, $\M{12}$ for $\A{101}$, $\M{13}$ for $\A{102}$--$\A{104}$, $\M{14}(\lambda)$ for $\A{105}$--$\A{108}$, $\M{14}(1)$ for $\A{109}$, $\A{110}$, $\M{14}(0)$ for $\A{111}$--$\A{119}$ and $\M{14}(-1)$ for $\A{120}$--$\A{124}$.
\end{theorem}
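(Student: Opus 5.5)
The plan is to assemble Theorem~\ref{thm:main} from the reduction machinery of Section~\ref{sec:method} and the orbit computations of Sections~\ref{sec:ann2}--\ref{sec:twostep}.

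Let $\mathcal{B}$ be as in the statement and put $m=\dim\Ann(\mathcal{B})$. Since $\mathcal{B}$ is nilpotent, $m\ge1$.

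\textbf{Step 1: bounding $m$.}
\begin{itemize}
\item If $m=5$, then $\mathcal{B}$ is trivial, hence $2$-step nilpotent; this is excluded.
\item If $m\ge3$, then $\mathcal{B}/\Ann(\mathcal{B})$ has dimension at most $2$. It is trivial by Theorem~\ref{thm:dim3}, except possibly $\RR21$ when $m=3$. In the trivial case $\mathcal{B}$ is $2$-step nilpotent, which is excluded. In the case $\RR21$, the cohomology $\HH(\RR21)$ is spanned by $[\D{12}+\D{21}]$ and $[\D{22}]$ modulo $\B=\sspan{\D{11}}$ (up to a routine check). It is therefore at most $2$-dimensional, so $\T_3(\RR21)=\emptyset$ and $\mathcal{B}$ would have an annihilator component by Lemma~\ref{lem:component}.
\end{itemize}
Hence $m\in\{1,2\}$. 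By Lemma~\ref{lem:quotient}, $\mathcal{B}\cong\mathcal{A}_\theta$, where $\mathcal{A}\cong\mathcal{B}/\Ann(\mathcal{B})$ has dimension $5-m$ and $\Ann(\theta)\cap\Ann(\mathcal{A})=0$.

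\textbf{Step 2: reducing to the cases treated earlier.} The algebra $\mathcal{A}$ is nontrivial, since otherwise $\mathcal{B}$ would be $2$-step nilpotent. By Lemma~\ref{lem:component}, the absence of an annihilator component means exactly that $\sspan{[\theta_i]}\in\T_m(\mathcal{A})$. Corollaries~\ref{cor:empty3} and~\ref{cor:empty4} discard the quotients with $\T_m=\emptyset$. They also dispose of $\RR42,\RR43,\RR47,\RR48$.

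Two further algebras need a short check.
\begin{itemize}
\item For $\M7(\gamma)$, Lemma~\ref{lem:M7} covers $\gamma\neq1$ and Proposition~\ref{prop:M71} covers $\gamma=1$.
\item $\M{14}(\lambda)$ is covered for all $\lambda$ by Propositions~\ref{prop:M14}, \ref{prop:M140} and~\ref{prop:M14m1}.
\end{itemize}
The remaining quotients are exactly those listed in the theorem.

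\textbf{Step 3: assembling the list.} For each quotient, the corresponding proposition or lemma lists representatives of the $\Aut(\mathcal{A})$-orbits on $\T_m(\mathcal{A})$. Lemma~\ref{lem:orbits} converts these orbits bijectively into isomorphism classes of extensions, which gives the algebras $\A1,\dots,\A{124}$. Writing out each extension is routine bookkeeping: read off the multiplication table of $\mathcal{A}$ and add the representative cocycle, as in the displayed tables.

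\textbf{Step 4: distinguishing the algebras.} Algebras arising from different quotients, or with different $m$, are non-isomorphic. This holds because $\mathcal{B}/\Ann(\mathcal{B})$ and $\dim\Ann(\mathcal{B})$ are isomorphism invariants, and the quotients are pairwise non-isomorphic by Theorems~\ref{thm:dim3}, \ref{thm:dim4pure} and~\ref{thm:dim4nonpure}. Within a fixed quotient, the orbit results state which representatives coincide. This yields precisely the listed exceptions:
\begin{itemize}
\item $\A{92}$, from Lemma~\ref{lem:M7};
\item $\A{96}$, from Lemma~\ref{lem:M11};
\item $\A{101}$, from Lemma~\ref{lem:M12}.
\end{itemize}
Finally, every listed algebra is right alternative by Lemma~\ref{lem:RAext}, since its cocycle lies in $\Z$. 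None is $2$-step nilpotent, because its quotient is nontrivial.

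\textbf{Expected main obstacle.} The delicate point is completeness: the quotient list must be exhaustive, and no parameter value may be missed. In particular, the jump values of $\HH$ have to be treated separately: $\RR34(0)$, $\M4(0)$, $\M7(1)$ and $\M{14}(0),\M{14}(-1)$. I would first re-verify these using the dimension remark after Proposition~\ref{prop:data4twostep}. I would also check consistency for the boundary value $\M{14}(1)$, whose extra classes are $\A{109}$ and $\A{110}$.
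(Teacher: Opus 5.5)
Your proposal is correct and follows the paper's proof. Both rest on Lemma~\ref{lem:quotient}, Corollaries~\ref{cor:empty3} and~\ref{cor:empty4}, Lemma~\ref{lem:orbits} together with the case-by-case orbit results, and the invariance of $\mathcal{B}/\Ann(\mathcal{B})$ and $\dim\Ann(\mathcal{B})$. Your Step~1 actually supplies the justification of $\dim\Ann(\mathcal{B})\in\{1,2\}$ that the paper only asserts. There is one harmless slip in it: $\D{22}$ is not a cocycle of $\RR{2}{1}$, because the triple $(e_1,e_1,e_2)$ forces $\theta(e_2,e_2)=0$. So $\HH(\RR{2}{1})=\sspan{[\D{12}+\D{21}]}$ is one-dimensional, which only strengthens your bound.
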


\noindent The algebras of Theorem~\ref{thm:main} (only nonzero products are written; the parameter restrictions are indicated below each algebra):
\begin{longtable}{@{}l p{0.90\textwidth}@{}}
$\mathcal{A}_{1}$ & $e_{1}e_{1}=e_{2},\ e_{1}e_{2}=e_{4},\ e_{2}e_{1}=e_{4},\ e_{3}e_{3}=e_{5}$ \\[4pt]
$\mathcal{A}_{2}$ & $e_{1}e_{1}=e_{2},\ e_{1}e_{2}=e_{4},\ e_{1}e_{3}=e_{4},\ e_{2}e_{1}=e_{4}, \ e_{3}e_{3}=e_{5}$ \\[4pt]
$\mathcal{A}_{3}$ & $e_{1}e_{1}=e_{2},\ e_{1}e_{2}=e_{4},\ e_{1}e_{3}=e_{5},\ e_{2}e_{1}=e_{4}, \ e_{3}e_{3}=e_{5}$ \\[4pt]
$\mathcal{A}_{4}$ & $e_{1}e_{1}=e_{2},\ e_{1}e_{2}=e_{4},\ e_{1}e_{3}=e_{4}+e_{5},\ e_{2}e_{1}=e_{4}, \ e_{3}e_{3}=e_{5}$ \\[4pt]
$\mathcal{A}_{5}(\alpha)$ & $e_{1}e_{1}=e_{2},\ e_{1}e_{2}=e_{4},\ e_{1}e_{3}=e_{5},\ e_{2}e_{1}=e_{4}, \ e_{3}e_{1}=\alpha e_{5},\ e_{3}e_{3}=e_{4}, \ \alpha\in\mathbb{C}$ \\[4pt]
$\mathcal{A}_{6}$ & $e_{1}e_{1}=e_{2},\ e_{1}e_{2}=e_{4},\ e_{2}e_{1}=e_{4},\ e_{3}e_{1}=e_{5}, \ e_{3}e_{3}=e_{4}$ \\[4pt]
$\mathcal{A}_{7}$ & $e_{1}e_{1}=e_{2},\ e_{1}e_{2}=e_{4},\ e_{1}e_{3}=e_{4}+e_{5},\ e_{2}e_{1}=e_{4}, \ e_{3}e_{1}=e_{5},\ e_{3}e_{3}=e_{4}$ \\[4pt]
$\mathcal{A}_{8}(\alpha)$ & $e_{1}e_{1}=e_{2},\ e_{1}e_{2}=e_{4},\ e_{1}e_{3}=e_{5},\ e_{2}e_{1}=e_{4}, \ e_{3}e_{1}=\alpha e_{5}, \ \alpha\in\mathbb{C}$ \\[4pt]
$\mathcal{A}_{9}$ & $e_{1}e_{1}=e_{2},\ e_{1}e_{2}=e_{4},\ e_{2}e_{1}=e_{4},\ e_{3}e_{1}=e_{5}$ \\[4pt]
$\mathcal{A}_{10}$ & $e_{1}e_{1}=e_{2},\ e_{1}e_{2}=e_{4},\ e_{1}e_{3}=e_{4}+e_{5},\ e_{2}e_{1}=e_{4}, \ e_{3}e_{1}=e_{5}$ \\[4pt]
$\mathcal{A}_{11}$ & $e_{1}e_{2}=e_{3},\ e_{1}e_{3}=e_{4},\ e_{2}e_{1}=-e_{3},\ e_{2}e_{3}=e_{5}$ \\[4pt]
$\mathcal{A}_{12}$ & $e_{1}e_{1}=e_{5},\ e_{1}e_{2}=e_{3},\ e_{1}e_{3}=e_{4},\ e_{2}e_{1}=-e_{3}, \ e_{2}e_{3}=e_{5}$ \\[4pt]
$\mathcal{A}_{13}$ & $e_{1}e_{2}=e_{3}+e_{5},\ e_{1}e_{3}=e_{4},\ e_{2}e_{1}=-e_{3},\ e_{2}e_{3}=e_{5}$ \\[4pt]
$\mathcal{A}_{14}$ & $e_{1}e_{1}=e_{5},\ e_{1}e_{2}=e_{3},\ e_{1}e_{3}=e_{4},\ e_{2}e_{1}=-e_{3}, \ e_{2}e_{2}=e_{4},\ e_{2}e_{3}=e_{5}$ \\[4pt]
$\mathcal{A}_{15}$ & $e_{1}e_{1}=e_{5},\ e_{1}e_{2}=e_{3},\ e_{1}e_{3}=e_{4},\ e_{2}e_{1}=-e_{3}$ \\[4pt]
$\mathcal{A}_{16}$ & $e_{1}e_{1}=e_{5},\ e_{1}e_{2}=e_{3},\ e_{1}e_{3}=e_{4},\ e_{2}e_{1}=-e_{3}, \ e_{2}e_{2}=e_{4}$ \\[4pt]
$\mathcal{A}_{17}$ & $e_{1}e_{2}=e_{3}+e_{5},\ e_{1}e_{3}=e_{4},\ e_{2}e_{1}=-e_{3}$ \\[4pt]
$\mathcal{A}_{18}$ & $e_{1}e_{2}=e_{3}+e_{5},\ e_{1}e_{3}=e_{4},\ e_{2}e_{1}=-e_{3},\ e_{2}e_{2}=e_{4}$ \\[4pt]
$\mathcal{A}_{19}$ & $e_{1}e_{2}=e_{3},\ e_{1}e_{3}=e_{4},\ e_{2}e_{1}=-e_{3},\ e_{2}e_{2}=e_{5}$ \\[4pt]
$\mathcal{A}_{20}$ & $e_{1}e_{1}=e_{5},\ e_{1}e_{2}=e_{3},\ e_{1}e_{3}=e_{4},\ e_{2}e_{1}=-e_{3}, \ e_{2}e_{2}=e_{5}$ \\[4pt]
$\mathcal{A}_{21}(\alpha)$ & $e_{1}e_{1}=e_{5},\ e_{2}e_{1}=e_{3},\ e_{2}e_{2}=e_{3}+\alpha e_{5},\ e_{2}e_{3}=e_{4}, \ e_{3}e_{2}=e_{4}, \ \alpha\in\mathbb{C}$ \\[4pt]
$\mathcal{A}_{22}$ & $e_{1}e_{1}=e_{5},\ e_{1}e_{2}=e_{5},\ e_{2}e_{1}=e_{3},\ e_{2}e_{2}=e_{3}, \ e_{2}e_{3}=e_{4},\ e_{3}e_{2}=e_{4}$ \\[4pt]
$\mathcal{A}_{23}$ & $e_{1}e_{1}=e_{5},\ e_{1}e_{2}=e_{5},\ e_{2}e_{1}=e_{3},\ e_{2}e_{2}=e_{3}+e_{5}, \ e_{2}e_{3}=e_{4},\ e_{3}e_{2}=e_{4}$ \\[4pt]
$\mathcal{A}_{24}$ & $e_{1}e_{2}=e_{5},\ e_{2}e_{1}=e_{3},\ e_{2}e_{2}=e_{3},\ e_{2}e_{3}=e_{4}, \ e_{3}e_{2}=e_{4}$ \\[4pt]
$\mathcal{A}_{25}$ & $e_{1}e_{1}=e_{4},\ e_{1}e_{2}=e_{5},\ e_{2}e_{1}=e_{3},\ e_{2}e_{2}=e_{3}, \ e_{2}e_{3}=e_{4},\ e_{3}e_{2}=e_{4}$ \\[4pt]
$\mathcal{A}_{26}$ & $e_{1}e_{2}=e_{5},\ e_{2}e_{1}=e_{3},\ e_{2}e_{2}=e_{3}+e_{5},\ e_{2}e_{3}=e_{4}, \ e_{3}e_{2}=e_{4}$ \\[4pt]
$\mathcal{A}_{27}$ & $e_{1}e_{1}=e_{4},\ e_{1}e_{2}=e_{5},\ e_{2}e_{1}=e_{3},\ e_{2}e_{2}=e_{3}+e_{5}, \ e_{2}e_{3}=e_{4},\ e_{3}e_{2}=e_{4}$ \\[4pt]
$\mathcal{A}_{28}$ & $e_{2}e_{1}=e_{3},\ e_{2}e_{2}=e_{3}+e_{5},\ e_{2}e_{3}=e_{4},\ e_{3}e_{2}=e_{4}$ \\[4pt]
$\mathcal{A}_{29}$ & $e_{1}e_{1}=e_{4},\ e_{2}e_{1}=e_{3},\ e_{2}e_{2}=e_{3}+e_{5},\ e_{2}e_{3}=e_{4}, \ e_{3}e_{2}=e_{4}$ \\[4pt]
$\mathcal{A}_{30}$ & $e_{1}e_{1}=e_{2},\ e_{1}e_{2}=e_{3},\ e_{1}e_{3}=e_{5},\ e_{1}e_{4}=e_{5}, \ e_{2}e_{1}=e_{3},\ e_{2}e_{2}=e_{5},\ e_{3}e_{1}=e_{5},\ e_{4}e_{4}=e_{5}$ \\[4pt]
$\mathcal{A}_{31}$ & $e_{1}e_{1}=e_{2},\ e_{1}e_{2}=e_{3},\ e_{1}e_{3}=e_{5},\ e_{1}e_{4}=e_{5}, \ e_{2}e_{1}=e_{3},\ e_{2}e_{2}=e_{5},\ e_{3}e_{1}=e_{5}$ \\[4pt]
$\mathcal{A}_{32}$ & $e_{1}e_{1}=e_{2},\ e_{1}e_{2}=e_{3},\ e_{1}e_{3}=e_{5},\ e_{2}e_{1}=e_{3}, \ e_{2}e_{2}=e_{5},\ e_{3}e_{1}=e_{5},\ e_{4}e_{4}=e_{5}$ \\[4pt]
$\mathcal{A}_{33}(\alpha)$ & $e_{1}e_{1}=e_{2},\ e_{1}e_{2}=e_{4},\ e_{1}e_{4}=e_{5},\ e_{2}e_{1}=e_{4}, \ e_{2}e_{2}=e_{5},\ e_{2}e_{3}=e_{5},\ e_{3}e_{1}=e_{4},\ e_{3}e_{2}=e_{5}$ \\ & $e_{3}e_{3}=\alpha e_{5},\ e_{4}e_{1}=e_{5}, \ \alpha\in\mathbb{C}$ \\[4pt]
$\mathcal{A}_{34}$ & $e_{1}e_{2}=e_{3},\ e_{1}e_{3}=e_{4},\ e_{2}e_{1}=-e_{3},\ e_{2}e_{4}=e_{5}, \ e_{3}e_{3}=-e_{5},\ e_{4}e_{2}=e_{5}$ \\[4pt]
$\mathcal{A}_{35}$ & $e_{1}e_{1}=e_{5},\ e_{1}e_{2}=e_{3},\ e_{1}e_{3}=e_{4},\ e_{2}e_{1}=-e_{3}, \ e_{2}e_{4}=e_{5},\ e_{3}e_{3}=-e_{5},\ e_{4}e_{2}=e_{5}$ \\[4pt]
$\mathcal{A}_{36}$ & $e_{1}e_{2}=e_{3},\ e_{1}e_{3}=e_{4},\ e_{2}e_{1}=-e_{3},\ e_{2}e_{3}=e_{5}, \ e_{2}e_{4}=e_{5},\ e_{3}e_{3}=-e_{5},\ e_{4}e_{2}=e_{5}$ \\[4pt]
$\mathcal{A}_{37}$ & $e_{1}e_{1}=e_{5},\ e_{1}e_{2}=e_{3},\ e_{1}e_{3}=e_{4},\ e_{2}e_{1}=-e_{3}, \ e_{2}e_{3}=e_{5},\ e_{2}e_{4}=e_{5},\ e_{3}e_{3}=-e_{5},\ e_{4}e_{2}=e_{5}$ \\[4pt]
$\mathcal{A}_{38}$ & $e_{1}e_{2}=e_{3},\ e_{1}e_{3}=e_{4},\ e_{1}e_{4}=e_{5},\ e_{2}e_{1}=-e_{3}, \ e_{4}e_{1}=e_{5}$ \\[4pt]
$\mathcal{A}_{39}$ & $e_{1}e_{2}=e_{3},\ e_{1}e_{3}=e_{4},\ e_{1}e_{4}=e_{5},\ e_{2}e_{1}=-e_{3}, \ e_{2}e_{2}=e_{5},\ e_{4}e_{1}=e_{5}$ \\[4pt]
$\mathcal{A}_{40}$ & $e_{1}e_{2}=e_{3},\ e_{1}e_{3}=e_{4},\ e_{1}e_{4}=e_{5},\ e_{2}e_{1}=-e_{3}, \ e_{2}e_{3}=e_{5},\ e_{4}e_{1}=e_{5}$ \\[4pt]
$\mathcal{A}_{41}$ & $e_{1}e_{2}=e_{3},\ e_{1}e_{3}=e_{4},\ e_{1}e_{4}=e_{5},\ e_{2}e_{1}=-e_{3}, \ e_{2}e_{2}=e_{5},\ e_{2}e_{3}=e_{5},\ e_{4}e_{1}=e_{5}$ \\[4pt]
$\mathcal{A}_{42}$ & $e_{1}e_{2}=e_{3},\ e_{1}e_{3}=e_{4},\ e_{2}e_{1}=-e_{3},\ e_{2}e_{2}=e_{4}, \ e_{2}e_{4}=e_{5},\ e_{3}e_{3}=-e_{5},\ e_{4}e_{2}=e_{5}$ \\[4pt]
$\mathcal{A}_{43}$ & $e_{1}e_{1}=e_{5},\ e_{1}e_{2}=e_{3},\ e_{1}e_{3}=e_{4},\ e_{2}e_{1}=-e_{3}, \ e_{2}e_{2}=e_{4},\ e_{2}e_{4}=e_{5},\ e_{3}e_{3}=-e_{5},\ e_{4}e_{2}=e_{5}$ \\[4pt]
$\mathcal{A}_{44}(\alpha)$ & $e_{1}e_{1}=\alpha e_{5},\ e_{1}e_{2}=e_{3},\ e_{1}e_{3}=e_{4},\ e_{2}e_{1}=-e_{3}, \ e_{2}e_{2}=e_{4},\ e_{2}e_{3}=e_{5}$\\&  $e_{2}e_{4}=e_{5},\ e_{3}e_{3}=-e_{5}, \ e_{4}e_{2}=e_{5}, \ \alpha\in\mathbb{C}$ \\[4pt]
$\mathcal{A}_{45}(\alpha)$ & $e_{1}e_{2}=e_{3},\ e_{1}e_{3}=e_{4},\ e_{1}e_{4}=e_{5},\ e_{2}e_{1}=-e_{3}, \ e_{2}e_{2}=e_{4}$ \\ & $ e_{2}e_{3}=\alpha e_{5},\ e_{3}e_{2}=e_{5},\ e_{4}e_{1}=e_{5}, \ \alpha\in\mathbb{C}$ \\[4pt]
$\mathcal{A}_{46}$ & $e_{1}e_{2}=e_{3},\ e_{1}e_{3}=e_{4}+3 e_{5},\ e_{1}e_{4}=3 e_{5},\ e_{2}e_{1}=-e_{3}$ \\ & $e_{2}e_{2}=e_{4},\ e_{2}e_{3}=-e_{5},\ e_{3}e_{2}=3 e_{5},\ e_{4}e_{1}=3 e_{5}$ \\[4pt]
$\mathcal{A}_{47}$ & $e_{1}e_{1}=e_{2},\ e_{1}e_{2}=e_{3},\ e_{1}e_{3}=e_{4},\ e_{1}e_{4}=e_{5}, \ e_{2}e_{1}=e_{3}$ \\ & $e_{2}e_{2}=e_{4},\ e_{2}e_{3}=e_{5},\ e_{3}e_{1}=e_{4}, \ e_{3}e_{2}=e_{5},\ e_{4}e_{1}=e_{5}$ \\[4pt]
$\mathcal{A}_{48}(\alpha)$ & $e_{1}e_{1}=e_{2},\ e_{1}e_{2}=e_{5},\ e_{2}e_{1}=e_{5},\ e_{3}e_{3}=e_{5}, \ e_{3}e_{4}=e_{5},\ e_{4}e_{3}=-e_{5},\ e_{4}e_{4}=\alpha e_{5}, \ \alpha\in\mathbb{C}$ \\[4pt]
$\mathcal{A}_{49}$ & $e_{1}e_{1}=e_{2},\ e_{1}e_{2}=e_{5},\ e_{2}e_{1}=e_{5},\ e_{3}e_{4}=e_{5}, \ e_{4}e_{3}=-e_{5}$ \\[4pt]
$\mathcal{A}_{50}$ & $e_{1}e_{1}=e_{2},\ e_{1}e_{2}=e_{5},\ e_{2}e_{1}=e_{5},\ e_{3}e_{4}=e_{5}, \ e_{4}e_{3}=e_{5}$ \\[4pt]
$\mathcal{A}_{51}$ & $e_{1}e_{1}=e_{2},\ e_{1}e_{2}=e_{5},\ e_{2}e_{1}=e_{5},\ e_{3}e_{1}=e_{5}, \ e_{3}e_{4}=e_{5},\ e_{4}e_{3}=e_{5}$ \\[4pt]
$\mathcal{A}_{52}$ & $e_{1}e_{1}=e_{2},\ e_{1}e_{2}=e_{5},\ e_{2}e_{1}=e_{5},\ e_{3}e_{1}=e_{5}, \ e_{3}e_{4}=e_{5},\ e_{4}e_{1}=e_{5},\ e_{4}e_{3}=e_{5}$ \\[4pt]
$\mathcal{A}_{53}$ & $e_{1}e_{1}=e_{2},\ e_{1}e_{2}=e_{5},\ e_{2}e_{1}=e_{5},\ e_{3}e_{3}=e_{5}, \ e_{4}e_{1}=e_{5}$ \\[4pt]
$\mathcal{A}_{54}$ & $e_{1}e_{1}=e_{2},\ e_{1}e_{2}=e_{5},\ e_{2}e_{1}=e_{5},\ e_{3}e_{3}=e_{4}, \ e_{3}e_{4}=e_{5},\ e_{4}e_{3}=e_{5}$ \\[4pt]
$\mathcal{A}_{55}$ & $e_{1}e_{1}=e_{2},\ e_{1}e_{2}=e_{5},\ e_{2}e_{1}=e_{5},\ e_{3}e_{1}=e_{5}, \ e_{3}e_{3}=e_{4},\ e_{3}e_{4}=e_{5},\ e_{4}e_{3}=e_{5}$ \\[4pt]
$\mathcal{A}_{56}$ & $e_{1}e_{2}=e_{3},\ e_{2}e_{1}=-e_{3},\ e_{4}e_{3}=e_{5}$ \\[4pt]
$\mathcal{A}_{57}$ & $e_{1}e_{1}=e_{5},\ e_{1}e_{2}=e_{3},\ e_{2}e_{1}=-e_{3},\ e_{4}e_{3}=e_{5}$ \\[4pt]
$\mathcal{A}_{58}$ & $e_{1}e_{2}=e_{3}+e_{5},\ e_{2}e_{1}=-e_{3},\ e_{4}e_{3}=e_{5}$ \\[4pt]
$\mathcal{A}_{59}$ & $e_{1}e_{2}=e_{3},\ e_{1}e_{4}=e_{5},\ e_{2}e_{1}=-e_{3},\ e_{4}e_{3}=e_{5}$ \\[4pt]
$\mathcal{A}_{60}$ & $e_{1}e_{2}=e_{3},\ e_{1}e_{4}=e_{5},\ e_{2}e_{1}=-e_{3},\ e_{2}e_{2}=e_{5}, \ e_{4}e_{3}=e_{5}$ \\[4pt]
$\mathcal{A}_{61}$ & $e_{1}e_{1}=e_{5},\ e_{1}e_{2}=e_{3},\ e_{1}e_{4}=e_{5},\ e_{2}e_{1}=-e_{3}, \ e_{2}e_{2}=e_{5},\ e_{4}e_{3}=e_{5}$ \\[4pt]
$\mathcal{A}_{62}$ & $e_{1}e_{2}=e_{3}+e_{5},\ e_{1}e_{4}=e_{5},\ e_{2}e_{1}=-e_{3},\ e_{4}e_{3}=e_{5}$ \\[4pt]
$\mathcal{A}_{63}$ & $e_{1}e_{1}=e_{5},\ e_{1}e_{2}=e_{3},\ e_{1}e_{4}=e_{5},\ e_{2}e_{1}=-e_{3}, \ e_{4}e_{3}=e_{5}$ \\[4pt]
$\mathcal{A}_{64}(\alpha)$ & $e_{1}e_{2}=e_{3},\ e_{1}e_{3}=e_{5},\ e_{2}e_{1}=-e_{3},\ e_{2}e_{2}=\alpha e_{5}, \ e_{2}e_{4}=e_{5},\ e_{4}e_{4}=e_{5}, \ \alpha\in\mathbb{C}$ \\[4pt]
$\mathcal{A}_{65}$ & $e_{1}e_{2}=e_{3},\ e_{1}e_{3}=e_{5},\ e_{2}e_{1}=-e_{3},\ e_{2}e_{2}=e_{5}, \ e_{4}e_{4}=e_{5}$ \\[4pt]
$\mathcal{A}_{66}$ & $e_{1}e_{2}=e_{3},\ e_{1}e_{3}=e_{5},\ e_{2}e_{1}=-e_{3},\ e_{4}e_{4}=e_{5}$ \\[4pt]
$\mathcal{A}_{67}(\alpha)$ & $e_{1}e_{2}=e_{3},\ e_{1}e_{3}=e_{5},\ e_{2}e_{1}=-e_{3},\ e_{2}e_{4}=\alpha e_{5}, \ e_{4}e_{2}=e_{5}, \ \alpha\in\mathbb{C}$ \\[4pt]
$\mathcal{A}_{68}$ & $e_{1}e_{2}=e_{3},\ e_{1}e_{3}=e_{5},\ e_{2}e_{1}=-e_{3},\ e_{2}e_{2}=e_{5}, \ e_{2}e_{4}=-e_{5},\ e_{4}e_{2}=e_{5}$ \\[4pt]
$\mathcal{A}_{69}$ & $e_{1}e_{2}=e_{3},\ e_{1}e_{3}=e_{5},\ e_{2}e_{1}=-e_{3},\ e_{2}e_{4}=e_{5}$ \\[4pt]
$\mathcal{A}_{70}$ & $e_{1}e_{2}=e_{3},\ e_{1}e_{3}=e_{5},\ e_{2}e_{1}=-e_{3},\ e_{2}e_{4}=e_{5}, \ e_{4}e_{1}=e_{5}$ \\[4pt]
$\mathcal{A}_{71}$ & $e_{1}e_{2}=e_{3},\ e_{1}e_{3}=e_{5},\ e_{2}e_{1}=-e_{3},\ e_{4}e_{1}=e_{5}$ \\[4pt]
$\mathcal{A}_{72}$ & $e_{1}e_{2}=e_{3},\ e_{1}e_{3}=e_{5},\ e_{2}e_{1}=-e_{3},\ e_{2}e_{2}=e_{5}, \ e_{4}e_{1}=e_{5}$ \\[4pt]
$\mathcal{A}_{73}(\alpha)$ & $e_{1}e_{1}=e_{3},\ e_{1}e_{2}=e_{3},\ e_{1}e_{3}=e_{5},\ e_{2}e_{2}=\alpha e_{5}, \ e_{2}e_{4}=e_{5},\ e_{3}e_{1}=e_{5},\ e_{4}e_{4}=e_{5}, \ \alpha\in\mathbb{C}$ \\[4pt]
$\mathcal{A}_{74}(\alpha)$ & $e_{1}e_{1}=e_{3},\ e_{1}e_{2}=e_{3},\ e_{1}e_{3}=e_{5},\ e_{2}e_{2}=\alpha e_{5}$ \\ & $e_{2}e_{4}=e_{5},\ e_{3}e_{1}=e_{5},\ e_{4}e_{1}=-e_{5},\ e_{4}e_{4}=e_{5}, \ \alpha\in\mathbb{C}$ \\[4pt]
$\mathcal{A}_{75}$ & $e_{1}e_{1}=e_{3},\ e_{1}e_{2}=e_{3},\ e_{1}e_{3}=e_{5},\ e_{3}e_{1}=e_{5}, \ e_{4}e_{4}=e_{5}$ \\[4pt]
$\mathcal{A}_{76}$ & $e_{1}e_{1}=e_{3},\ e_{1}e_{2}=e_{3},\ e_{1}e_{3}=e_{5},\ e_{3}e_{1}=e_{5}, \ e_{4}e_{1}=e_{5},\ e_{4}e_{4}=e_{5}$ \\[4pt]
$\mathcal{A}_{77}$ & $e_{1}e_{1}=e_{3},\ e_{1}e_{2}=e_{3},\ e_{1}e_{3}=e_{5},\ e_{2}e_{2}=e_{5}, \ e_{3}e_{1}=e_{5},\ e_{4}e_{4}=e_{5}$ \\[4pt]
$\mathcal{A}_{78}$ & $e_{1}e_{1}=e_{3},\ e_{1}e_{2}=e_{3},\ e_{1}e_{3}=e_{5},\ e_{2}e_{2}=e_{5}, \ e_{3}e_{1}=e_{5},\ e_{4}e_{1}=e_{5},\ e_{4}e_{4}=e_{5}$ \\[4pt]
$\mathcal{A}_{79}$ & $e_{1}e_{1}=e_{3},\ e_{1}e_{2}=e_{3},\ e_{1}e_{3}=e_{5},\ e_{3}e_{1}=e_{5}, \ e_{4}e_{1}=e_{5}$ \\[4pt]
$\mathcal{A}_{80}$ & $e_{1}e_{1}=e_{3},\ e_{1}e_{2}=e_{3},\ e_{1}e_{3}=e_{5},\ e_{2}e_{2}=e_{5}, \ e_{3}e_{1}=e_{5},\ e_{4}e_{1}=e_{5}$ \\[4pt]
$\mathcal{A}_{81}(\alpha)$ & $e_{1}e_{1}=e_{3},\ e_{1}e_{2}=e_{3},\ e_{1}e_{3}=e_{5},\ e_{2}e_{4}=e_{5}, \ e_{3}e_{1}=e_{5},\ e_{4}e_{2}=\alpha e_{5}, \ \alpha\in\mathbb{C}$ \\[4pt]
$\mathcal{A}_{82}(\alpha)$ & $e_{1}e_{1}=e_{3},\ e_{1}e_{2}=e_{3},\ e_{1}e_{3}=e_{5},\ e_{2}e_{4}=e_{5}, \ e_{3}e_{1}=e_{5},\ e_{4}e_{1}=(\alpha - 1) e_{5},\ e_{4}e_{2}=\alpha e_{5}, \ \alpha\neq 1$ \\[4pt]
$\mathcal{A}_{83}$ & $e_{1}e_{1}=e_{3},\ e_{1}e_{2}=e_{3},\ e_{1}e_{3}=e_{5},\ e_{2}e_{4}=e_{5}, \ e_{3}e_{1}=e_{5},\ e_{4}e_{1}=e_{5},\ e_{4}e_{2}=e_{5}$ \\[4pt]
$\mathcal{A}_{84}$ & $e_{1}e_{1}=e_{3},\ e_{1}e_{2}=e_{3},\ e_{1}e_{3}=e_{5},\ e_{2}e_{2}=e_{5}, \ e_{2}e_{4}=e_{5},\ e_{3}e_{1}=e_{5},\ e_{4}e_{2}=-e_{5}$ \\[4pt]
$\mathcal{A}_{85}$ & $e_{1}e_{1}=e_{3},\ e_{1}e_{2}=e_{3},\ e_{1}e_{3}=e_{5},\ e_{2}e_{2}=e_{5}, \ e_{2}e_{4}=e_{5},\ e_{3}e_{1}=e_{5},\ e_{4}e_{1}=-2 e_{5},\ e_{4}e_{2}=-e_{5}$ \\[4pt]
$\mathcal{A}_{86}$ & $e_{1}e_{1}=e_{3},\ e_{1}e_{2}=e_{3},\ e_{1}e_{3}=e_{5},\ e_{3}e_{1}=e_{5}, \ e_{4}e_{2}=e_{5}$ \\[4pt]
$\mathcal{A}_{87}$ & $e_{1}e_{1}=e_{3},\ e_{1}e_{2}=e_{3},\ e_{1}e_{3}=e_{5},\ e_{3}e_{1}=e_{5}, \ e_{4}e_{1}=e_{5},\ e_{4}e_{2}=e_{5}$ \\[4pt]
$\mathcal{A}_{88}(\alpha,\beta)$ & $e_{1}e_{2}=e_{3},\ e_{1}e_{3}=\alpha e_{5},\ e_{1}e_{4}=e_{5},\ e_{2}e_{1}=e_{4}, \ e_{2}e_{2}=-e_{3},\ e_{2}e_{3}=- \alpha e_{5},\ e_{2}e_{4}=\beta e_{5}$ \\ & $e_{3}e_{1}=(\alpha + 1) e_{5}, \ e_{3}e_{2}=- \alpha e_{5},\ e_{4}e_{2}=(\beta + 1) e_{5}, \ (\alpha,\beta)\neq(0,-1)$ \\[4pt]
$\mathcal{A}_{89}(\alpha)$ & $e_{1}e_{2}=e_{3},\ e_{1}e_{3}=e_{5},\ e_{2}e_{1}=e_{4},\ e_{2}e_{2}=-e_{3}, \ e_{2}e_{3}=-e_{5}$ \\ & $e_{2}e_{4}=\alpha e_{5},\ e_{3}e_{1}=e_{5},\ e_{3}e_{2}=-e_{5}, \ e_{4}e_{2}=\alpha e_{5}, \ \alpha\neq 0$ \\[4pt]
$\mathcal{A}_{90}$ & $e_{1}e_{2}=e_{3},\ e_{1}e_{4}=e_{5},\ e_{2}e_{1}=e_{4},\ e_{2}e_{2}=-e_{3}, \ e_{2}e_{4}=-e_{5},\ e_{3}e_{1}=e_{5}$ \\[4pt]
$\mathcal{A}_{91}$ & $e_{1}e_{2}=e_{3}+e_{5},\ e_{1}e_{4}=e_{5},\ e_{2}e_{1}=e_{4},\ e_{2}e_{2}=-e_{3}, \ e_{2}e_{4}=-e_{5},\ e_{3}e_{1}=e_{5}$ \\[4pt]
$\mathcal{A}_{92}(\alpha,\gamma)$ & $e_{1}e_{1}=e_{3},\ e_{1}e_{2}=e_{4},\ e_{1}e_{3}=e_{5},\ e_{1}e_{4}=\alpha e_{5}, \ e_{2}e_{1}=- \gamma e_{3},\ e_{2}e_{2}=-e_{4},$ \\ & $e_{2}e_{3}=- \gamma e_{5},\ e_{2}e_{4}=- \alpha e_{5}, \ e_{3}e_{1}=e_{5},\ e_{3}e_{2}=- \gamma e_{5},\ e_{4}e_{1}=\alpha e_{5},\ e_{4}e_{2}=- \alpha e_{5}$ \\ & \quad {\small $\alpha\neq0,\ \gamma\neq1;\ \mathcal{A}_{92}(\alpha,\gamma)\cong\mathcal{A}_{92}(\gamma^{3}\alpha^{-1},\gamma)$} \\[4pt]
$\mathcal{A}_{93}(\alpha)$ & $e_{1}e_{1}=e_{3},\ e_{1}e_{2}=e_{4},\ e_{1}e_{4}=\alpha e_{5},\ e_{2}e_{1}=-e_{3}, \ e_{2}e_{2}=-e_{4}$ \\ & $e_{2}e_{4}=- \alpha e_{5},\ e_{3}e_{2}=e_{5},\ e_{4}e_{1}=(\alpha - 1) e_{5}, \ e_{4}e_{2}=- \alpha e_{5}, \ \alpha\in\mathbb{C}$ \\[4pt]
$\mathcal{A}_{94}$ & $e_{1}e_{1}=e_{3}+e_{5},\ e_{1}e_{2}=e_{4},\ e_{1}e_{4}=e_{5},\ e_{2}e_{1}=-e_{3}$ \\ & $e_{2}e_{2}=-e_{4},\ e_{2}e_{4}=-e_{5},\ e_{3}e_{2}=e_{5},\ e_{4}e_{2}=-e_{5}$ \\[4pt]
$\mathcal{A}_{95}$ & $e_{1}e_{1}=e_{3},\ e_{1}e_{2}=e_{4},\ e_{1}e_{3}=e_{5},\ e_{1}e_{4}=-e_{5}, \ e_{2}e_{1}=-e_{3},\ e_{2}e_{2}=-e_{4}$ \\ & $e_{2}e_{3}=-e_{5},\ e_{2}e_{4}=e_{5}, \ e_{3}e_{1}=e_{5},\ e_{4}e_{1}=-2 e_{5},\ e_{4}e_{2}=e_{5}$ \\[4pt]
$\mathcal{A}_{96}(\alpha,\beta)$ & $e_{1}e_{2}=e_{3},\ e_{1}e_{3}=\alpha e_{5},\ e_{1}e_{4}=e_{5},\ e_{2}e_{1}=e_{4}$ \\ & $e_{2}e_{3}=e_{5},\ e_{2}e_{4}=\beta e_{5},\ e_{3}e_{1}=(\alpha + 1) e_{5},\ e_{4}e_{2}=(\beta + 1) e_{5}$ \\ & \quad {\small $(\alpha,\beta)\neq(-1,-1);\ \mathcal{A}_{96}(\alpha,\beta)\cong\mathcal{A}_{96}(\beta,\alpha)$} \\[4pt]
$\mathcal{A}_{97}(\alpha)$ & $e_{1}e_{2}=e_{3},\ e_{1}e_{3}=\alpha e_{5},\ e_{1}e_{4}=e_{5},\ e_{2}e_{1}=e_{4}, \ e_{2}e_{4}=e_{5},\ e_{3}e_{1}=(\alpha + 1) e_{5},\ e_{4}e_{2}=e_{5}, \ \alpha\in\mathbb{C}$ \\[4pt]
$\mathcal{A}_{98}(\alpha)$ & $e_{1}e_{2}=e_{3},\ e_{1}e_{3}=\alpha e_{5},\ e_{1}e_{4}=e_{5},\ e_{2}e_{1}=e_{4}, \ e_{3}e_{1}=(\alpha + 1) e_{5}, \ \alpha\neq-1$ \\[4pt]
$\mathcal{A}_{99}(\alpha)$ & $e_{1}e_{2}=e_{3},\ e_{1}e_{3}=\alpha e_{5},\ e_{1}e_{4}=e_{5},\ e_{2}e_{1}=e_{4}, \ e_{2}e_{2}=e_{5},\ e_{3}e_{1}=(\alpha + 1) e_{5}, \ \alpha\neq-1$ \\[4pt]
$\mathcal{A}_{100}$ & $e_{1}e_{2}=e_{3},\ e_{1}e_{3}=e_{5},\ e_{2}e_{1}=e_{4},\ e_{2}e_{4}=e_{5}, \ e_{3}e_{1}=e_{5},\ e_{4}e_{2}=e_{5}$ \\[4pt]
$\mathcal{A}_{101}(\alpha)$ & $e_{1}e_{1}=e_{4},\ e_{1}e_{2}=e_{3},\ e_{1}e_{3}=e_{5},\ e_{1}e_{4}=\alpha e_{5}, \ e_{2}e_{1}=-e_{3},\ e_{2}e_{2}=2 e_{3}+e_{4}$ \\ & $e_{2}e_{3}=(\alpha + 2) e_{5},\ e_{2}e_{4}=-e_{5}, \ e_{3}e_{1}=e_{5},\ e_{3}e_{2}=(\alpha + 2) e_{5},\ e_{4}e_{1}=\alpha e_{5},\ e_{4}e_{2}=-e_{5}$ \\ & \quad {\small $\alpha\neq-1;\ \mathcal{A}_{101}(\alpha)\cong\mathcal{A}_{101}\big(-\tfrac{2\alpha+3}{\alpha+2}\big)$} \\[4pt]
$\mathcal{A}_{102}(\alpha)$ & $e_{2}e_{1}=e_{4},\ e_{2}e_{2}=e_{3},\ e_{2}e_{4}=\alpha e_{5},\ e_{3}e_{1}=e_{5}, \ e_{4}e_{2}=(\alpha - 1) e_{5}, \ \alpha\in\mathbb{C}$ \\[4pt]
$\mathcal{A}_{103}$ & $e_{2}e_{1}=e_{4},\ e_{2}e_{2}=e_{3},\ e_{2}e_{3}=e_{5},\ e_{3}e_{1}=e_{5}, \ e_{3}e_{2}=e_{5},\ e_{4}e_{2}=-e_{5}$ \\[4pt]
$\mathcal{A}_{104}$ & $e_{1}e_{2}=e_{5},\ e_{2}e_{1}=e_{4},\ e_{2}e_{2}=e_{3},\ e_{2}e_{4}=e_{5}, \ e_{3}e_{1}=e_{5}$ \\[4pt]
$\mathcal{A}_{105}(\alpha,\lambda)$ & $e_{1}e_{2}=e_{4},\ e_{1}e_{3}=\alpha e_{5},\ e_{2}e_{1}=\lambda e_{4},\ e_{2}e_{2}=e_{3}, \ e_{2}e_{4}=e_{5}$ \\ & $e_{3}e_{1}=(- \alpha \lambda + \lambda + 1) e_{5},\ e_{4}e_{2}=\alpha e_{5}, \ \lambda\neq0,-1,\ \alpha\in\mathbb{C}$ \\[4pt]
$\mathcal{A}_{106}(\lambda)$ & $e_{1}e_{2}=e_{4},\ e_{1}e_{3}=e_{5},\ e_{2}e_{1}=\lambda e_{4},\ e_{2}e_{2}=e_{3}, \ e_{3}e_{1}=- \lambda e_{5},\ e_{4}e_{2}=e_{5}, \ \lambda\neq0,-1$ \\[4pt]
$\mathcal{A}_{107}(\lambda)$ & $e_{1}e_{1}=(\lambda - 1) e_{5},\ e_{1}e_{2}=e_{4},\ e_{1}e_{3}=(\lambda + 1) e_{5},\ e_{2}e_{1}=\lambda e_{4}, \ e_{2}e_{2}=e_{3}$ \\ & $e_{2}e_{4}=(\lambda - 1) e_{5},\ e_{3}e_{1}=(- \lambda - 1) e_{5},\ e_{4}e_{2}=(\lambda + 1) e_{5}, \ \lambda\neq0,\pm1$ \\[4pt]
$\mathcal{A}_{108}(\lambda)$ & $e_{1}e_{2}=e_{4},\ e_{1}e_{3}=(- \lambda - 1) e_{5},\ e_{2}e_{1}=\lambda e_{4},\ e_{2}e_{2}=e_{3}, \ e_{2}e_{3}=e_{5},\ e_{2}e_{4}=e_{5}$ \\ & $e_{3}e_{1}=(\lambda^{2} + 2 \lambda + 1) e_{5},\ e_{3}e_{2}=e_{5}, \ e_{4}e_{2}=(- \lambda - 1) e_{5}, \ \lambda\neq0,-1$ \\[4pt]
$\mathcal{A}_{109}$ & $e_{1}e_{1}=e_{5},\ e_{1}e_{2}=e_{4},\ e_{1}e_{3}=e_{5},\ e_{2}e_{1}=e_{4}, \ e_{2}e_{2}=e_{3},\ e_{3}e_{1}=-e_{5},\ e_{4}e_{2}=e_{5}$ \\ & \quad {\small $\text{extension of }\mathcal{M}_{14}(1)$} \\[4pt]
$\mathcal{A}_{110}$ & $e_{1}e_{2}=e_{4}+e_{5},\ e_{1}e_{3}=e_{5},\ e_{2}e_{1}=e_{4},\ e_{2}e_{2}=e_{3}, \ e_{2}e_{4}=e_{5},\ e_{3}e_{1}=e_{5},\ e_{4}e_{2}=e_{5}$ \\ & \quad {\small $\text{extension of }\mathcal{M}_{14}(1)$} \\[4pt]
$\mathcal{A}_{111}(\alpha)$ & $e_{1}e_{2}=e_{4},\ e_{1}e_{4}=e_{5},\ e_{2}e_{2}=e_{3},\ e_{2}e_{3}=\alpha e_{5}$ \\ & $e_{2}e_{4}=e_{5},\ e_{3}e_{1}=e_{5},\ e_{3}e_{2}=\alpha e_{5},\ e_{4}e_{1}=e_{5}, \ \alpha\in\mathbb{C}$ \\[4pt]
$\mathcal{A}_{112}$ & $e_{1}e_{2}=e_{4},\ e_{1}e_{4}=e_{5},\ e_{2}e_{2}=e_{3},\ e_{2}e_{3}=e_{5}, \ e_{3}e_{2}=e_{5},\ e_{4}e_{1}=e_{5}$ \\[4pt]
$\mathcal{A}_{113}(\alpha)$ & $e_{1}e_{2}=e_{4},\ e_{1}e_{3}=\alpha e_{5},\ e_{2}e_{2}=e_{3},\ e_{2}e_{4}=e_{5}, \ e_{3}e_{1}=e_{5},\ e_{4}e_{2}=\alpha e_{5}, \ \alpha\neq-1$ \\[4pt]
$\mathcal{A}_{114}$ & $e_{1}e_{2}=e_{4},\ e_{1}e_{3}=e_{5},\ e_{2}e_{2}=e_{3},\ e_{4}e_{2}=e_{5}$ \\[4pt]
$\mathcal{A}_{115}$ & $e_{1}e_{2}=e_{4},\ e_{1}e_{3}=e_{5},\ e_{2}e_{1}=e_{5},\ e_{2}e_{2}=e_{3}, \ e_{4}e_{2}=e_{5}$ \\[4pt]
$\mathcal{A}_{116}$ & $e_{1}e_{1}=e_{5},\ e_{1}e_{2}=e_{4},\ e_{1}e_{3}=-e_{5},\ e_{2}e_{2}=e_{3}, \ e_{2}e_{3}=e_{5}$ \\ & $e_{2}e_{4}=e_{5},\ e_{3}e_{1}=e_{5},\ e_{3}e_{2}=e_{5}, \ e_{4}e_{2}=-e_{5}$ \\[4pt]
$\mathcal{A}_{117}$ & $e_{1}e_{1}=e_{5},\ e_{1}e_{2}=e_{4},\ e_{1}e_{3}=-e_{5},\ e_{2}e_{2}=e_{3}, \ e_{2}e_{4}=e_{5},\ e_{3}e_{1}=e_{5},\ e_{4}e_{2}=-e_{5}$ \\[4pt]
$\mathcal{A}_{118}$ & $e_{1}e_{2}=e_{4},\ e_{1}e_{3}=-e_{5},\ e_{2}e_{2}=e_{3},\ e_{2}e_{3}=e_{5}, \ e_{2}e_{4}=e_{5},\ e_{3}e_{1}=e_{5},\ e_{3}e_{2}=e_{5},\ e_{4}e_{2}=-e_{5}$ \\[4pt]
$\mathcal{A}_{119}$ & $e_{1}e_{2}=e_{4},\ e_{1}e_{3}=-e_{5},\ e_{2}e_{2}=e_{3},\ e_{2}e_{4}=e_{5}, \ e_{3}e_{1}=e_{5},\ e_{4}e_{2}=-e_{5}$ \\[4pt]
$\mathcal{A}_{120}(\alpha)$ & $e_{1}e_{2}=e_{4},\ e_{1}e_{3}=e_{5},\ e_{1}e_{4}=e_{5},\ e_{2}e_{1}=-e_{4}$ \\ & $e_{2}e_{2}=e_{3},\ e_{2}e_{4}=\alpha e_{5},\ e_{3}e_{1}=e_{5},\ e_{4}e_{2}=e_{5}, \ \alpha\in\mathbb{C}$ \\[4pt]
$\mathcal{A}_{121}(\alpha)$ & $e_{1}e_{2}=e_{4},\ e_{1}e_{3}=e_{5},\ e_{2}e_{1}=-e_{4},\ e_{2}e_{2}=e_{3}, \ e_{2}e_{4}=\alpha e_{5},\ e_{3}e_{1}=e_{5},\ e_{4}e_{2}=e_{5}, \ \alpha\in\mathbb{C}$ \\[4pt]
$\mathcal{A}_{122}$ & $e_{1}e_{2}=e_{4},\ e_{1}e_{4}=e_{5},\ e_{2}e_{1}=-e_{4},\ e_{2}e_{2}=e_{3}, \ e_{2}e_{3}=e_{5},\ e_{3}e_{2}=e_{5}$ \\[4pt]
$\mathcal{A}_{123}$ & $e_{1}e_{2}=e_{4},\ e_{2}e_{1}=-e_{4},\ e_{2}e_{2}=e_{3},\ e_{2}e_{3}=e_{5}, \ e_{2}e_{4}=e_{5},\ e_{3}e_{2}=e_{5}$ \\[4pt]
$\mathcal{A}_{124}$ & $e_{1}e_{1}=e_{5},\ e_{1}e_{2}=e_{4},\ e_{2}e_{1}=-e_{4},\ e_{2}e_{2}=e_{3}, \ e_{2}e_{3}=e_{5},\ e_{2}e_{4}=e_{5},\ e_{3}e_{2}=e_{5}$ \\[4pt]
\end{longtable}

\begin{proof}
By Lemma~\ref{lem:quotient}, $\mathcal{B}\cong\mathcal{A}_W$ with $\mathcal{A}=\mathcal{B}/\Ann(\mathcal{B})$ a nontrivial nilpotent right alternative algebra of dimension $5-s$, $s=\dim\Ann(\mathcal{B})\in\{1,2\}$, and $W\in\T_s(\mathcal{A})$; conversely, every such $\mathcal{A}_W$ satisfies the hypotheses of the theorem.
By Corollaries~\ref{cor:empty3} and~\ref{cor:empty4}, $\mathcal{A}$ is one of the algebras $\RR31$, $\RR33$, $\RR34(0)$ (if $s=2$) or $\RR41$, $\RR44$, $\RR45$, $\RR46$, $\RR49$, $\M1$, $\M2$, $\M3$, $\M4(0)$, $\M6$, $\M7(\gamma)$, $\M{11}$, $\M{12}$, $\M{13}$, $\M{14}(\lambda)$ (if $s=1$).
By Lemma~\ref{lem:orbits}, for fixed $\mathcal{A}$ the isomorphism classes of the algebras $\mathcal{A}_W$ correspond to the $\Aut(\mathcal{A})$-orbits on $\T_s(\mathcal{A})$.
These orbits were determined in Propositions~\ref{prop:R31}, \ref{prop:R33}, \ref{prop:R340}, \ref{prop:R41}, \ref{prop:R44}, \ref{prop:R45}, \ref{prop:R46}, \ref{prop:M1}, \ref{prop:M3}, \ref{prop:M40}, \ref{prop:M6}, \ref{prop:M71}, \ref{prop:M13}, \ref{prop:M14}, \ref{prop:M140}, \ref{prop:M14m1}, Lemmas~\ref{lem:M2}, \ref{lem:M7}, \ref{lem:M11}, \ref{lem:M12}, and in the subsection on $\RR49$; the algebras of the list are the corresponding extensions (the class $\theta$ used for each algebra is indicated in Sections~\ref{sec:ann2}--\ref{sec:twostep}).
Finally, algebras with non-isomorphic quotients $\mathcal{B}/\Ann(\mathcal{B})$, or with annihilators of different dimensions, are not isomorphic; the algebras $\RR{3}{i}$, $\RR{4}{i}$ and $\M{i}$ are pairwise non-isomorphic for different values of the indices and parameters by Theorems~\ref{thm:dim3}--\ref{thm:dim4nonpure}.
\end{proof}

\begin{corollary}\label{cor:count}
The list of Theorem~\ref{thm:main} consists of $93$ individual algebras, $27$ one-parameter families and $4$ two-parameter families, namely $\A{88}(\alpha,\beta)$, $\A{92}(\alpha,\gamma)$, $\A{96}(\alpha,\beta)$ and $\A{105}(\alpha,\lambda)$.
In particular, there are infinitely many isomorphism classes of five-dimensional nilpotent right alternative algebras without annihilator component that are not $2$-step nilpotent.
\end{corollary}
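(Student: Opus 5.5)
This corollary is a bookkeeping statement, so I would prove it by a direct count over the list of Theorem~\ref{thm:main}, organised by the quotient algebra $\mathcal{B}/\Ann(\mathcal{B})$. Going through the blocks recorded in the theorem, I would classify each $\A{i}$ as individual, one-parameter or two-parameter by reading off its parameter restrictions. Here $\lambda$ and $\gamma$ count as genuine parameters, because they come from the parameter of the quotient $\M{14}(\lambda)$ or $\M7(\gamma)$. The two-parameter families are then exactly $\A{88}(\alpha,\beta)$, $\A{92}(\alpha,\gamma)$, $\A{96}(\alpha,\beta)$ and $\A{105}(\alpha,\lambda)$.

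The one-parameter families are $\A5,\A8,\A{21},\A{33},\A{44},\A{45},\A{48},\A{64},\A{67},\A{73},\A{74},\A{81},\A{82},\A{89},\A{93},\A{97},\A{98},\A{99},\A{101},\A{102},\A{106},\A{107},\A{108},\A{111},\A{113},\A{120},\A{121}$. I would count these explicitly and check that there are $27$ of them. The remaining $124-27-4=93$ entries are individual algebras. The main risk in this step is a miscount, for instance treating $\A{106}(\lambda)$--$\A{108}(\lambda)$ as individual. To guard against this, I would recount block by block against the sections in which each family was produced.

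For the infinitude statement, I would take a single one-parameter family whose members are pairwise non-isomorphic, for example $\A5(\alpha)$. Corollary~\ref{cor:R31} says that different $\alpha$ give non-isomorphic algebras. By Theorem~\ref{thm:main}, each $\A5(\alpha)$ is five-dimensional, nilpotent and right alternative, has no annihilator component, and is not $2$-step nilpotent. Since $\alpha$ ranges over $\C$, this gives infinitely many isomorphism classes and finishes the proof.
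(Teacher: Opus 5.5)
Your proposal is correct and is essentially the paper's argument: the paper's proof is just ``a direct count in the list of Theorem~\ref{thm:main}''. Your enumeration of the $27$ one-parameter families, together with the four two-parameter ones, leaves $124-31=93$ individual algebras, and this checks out against the list. Your justification of the infinitude via the pairwise non-isomorphic family $\A{5}(\alpha)$ (Corollary~\ref{cor:R31}) simply makes explicit what the paper leaves implicit.
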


\begin{proof}
This is a direct count in the list of Theorem~\ref{thm:main}.
\end{proof}

\begin{remark}\label{rem:complete}
To obtain the complete list of five-dimensional nilpotent right alternative algebras one has to add to Theorem~\ref{thm:main}:
(a) the $2$-step nilpotent five-dimensional algebras, whose classification is a problem about subspaces of bilinear forms and does not involve the right alternative identity (Lemma~\ref{lem:2step});
(b) the algebras with an annihilator component, i.e.\ the algebras $\mathcal{B}'\oplus\C e_5$ with $\mathcal{B}'$ a four-dimensional nilpotent right alternative algebra.
\end{remark}

\end{document}